\definecolor{linkblue}{RGB}{1,1,190}
\definecolor{citered}{RGB}{190,1,1}
\newtheorem{theorem}{Theorem}[section]
\newtheorem{definition}[theorem]{Definition}
\newtheorem{lemma}[theorem]{Lemma}
\newtheorem{proposition}[theorem]{Proposition}
\newtheorem{corollary}[theorem]{Corollary}
\theoremstyle{remark}
\newtheorem{remark}[theorem]{Remark}
\newtheorem{example}[theorem]{Example}
\setlist[enumerate,1]{label=\textup{(\arabic*)}, ref=\textup{(}\arabic*\textup{)},
  itemsep=0.5em plus 0.15em minus 0.05em,
  topsep=0.5em plus 0.15em minus 0.05em,
  leftmargin=0.75cm}
\setlist[enumerate,2]{label=\textup{(\roman*)}, ref=\textup{(}\roman*\textup{)}
  itemsep=0.5em plus 0.15em minus 0.05em,
  topsep=0.5em plus 0.15em minus 0.05em}
\setlist[itemize, 1]{itemsep=0.5em plus 0.15em minus 0.05em,
  topsep=0.5em plus 0.15em minus 0.05em, leftmargin=0.75cm}
\newlist{equivenumerate}{enumerate}{1}
\setlist[equivenumerate,1]{%
  label=\textup{(\alph*)},
  ref=\textup{(}\alph*\textup{)},
  itemsep=0.5em plus 0.15em minus 0.05em,
  topsep=0.5em plus 0.15em minus 0.05em,
  leftmargin=0.75cm
}
\newlist{proofenumerate}{enumerate}{1}
\setlist[proofenumerate,1]{%
  itemsep=0.5em plus 0.15em minus 0.05em,
  topsep=0.5em plus 0.15em minus 0.05em,
  wide, labelindent=0pt
}
\xpatchcmd{\paragraph}{\normalfont}{{\normalfont\bfseries}}{}{}
\newcommand{\defit}[1]{\textsf{#1}}
\newcommand{\C}{\mathbb C}
\newcommand{\Q}{\mathbb Q}
\newcommand{\R}{\mathbb R}
\newcommand{\Z}{\mathbb Z}
\newcommand{\bP}{\mathbb P}
\newcommand{\cA}{\mathcal A}
\newcommand{\algc}[1]{\overline{#1}}
\newcommand{\behav}[1]{\llbracket{#1}\rrbracket} 
\renewcommand{\vec}[1]{\bm{#1}}
\newcommand{\sumset}[2]{ {#1} \cdot {#2}} 
\DeclareMathOperator{\id}{id}
\DeclareMathOperator{\Hom}{Hom}
\DeclareMathOperator{\im}{im}
\DeclareMathOperator{\End}{End}
\DeclareMathOperator{\Tr}{Tr}
\DeclareMathOperator{\GL}{GL}
\DeclareMathOperator{\Ind}{Ind}
\DeclareMathOperator{\lcm}{lcm}
\DeclareMathOperator{\lspan}{span}
\DeclareMathOperator{\characteristic}{char}
\DeclarePairedDelimiter{\abs}{\lvert}{\rvert}
\DeclarePairedDelimiter{\card}{\lvert}{\rvert}
\DeclarePairedDelimiter{\length}{\lvert}{\rvert}
\title[Factoring through monomial representations]{Factoring through monomial representations: arithmetic characterizations and ambiguity of weighted automata}
\author{Antoni Puch}
\address{Faculty of Mathematics, Informatics and Mechanics (MIMUW)\\
  University of Warsaw\\
  Banacha 2\\
  02-097 Warsaw, Poland}
\email{a.puch@student.uw.edu.pl}
\author{Daniel Smertnig}
\address{Faculty of Mathematics and Physics (FMF)\\
  University of Ljubljana
  and Institute of Mathematics, Physics and Mechanics (IMFM)\\
  Jadranska ulica 21\\
  1000 Ljubljana, Slovenia}
\email{daniel.smertnig@fmf.uni-lj.si}
\subjclass[2020]{Primary 20H20; Secondary 11D61, 20G15, 20M35, 68Q70}
\keywords{ambiguity of automata, monomial representations of groups, unit equations, rational series, weighted finite automata}
\begin{document}

\begin{abstract}
  \begin{singlespace}
  We characterize group representations that factor through monomial representations, respectively, block-triangular representations with monomial diagonal blocks, by arithmetic properties.
  Similar results are obtained for semigroup representations by invertible transformations.
  The characterizations use results on unit equations from Diophantine number theory (by Evertse, van der Poorten, and Schlickewei in characteristic zero, and by Derksen and Masser in positive characteristic).

  Specialized to finitely generated groups in characteristic zero, one of our main theorems recovers an improvement of a very recent similar characterization by Corvaja, Demeio, Rapinchuk, Ren, and Zannier that was motivated by the study of the bounded generation (BG) property. In positive characteristic, we get a characterization of linear BG groups, recovering a theorem of Abért, Lubotzky, and Pyber from 2003.

  Our motivation comes from weighted finite automata (WFA) over a field.
  For invertible WFA we show that $M$-ambiguity, finite ambiguity, and polynomial ambiguity are characterized by arithmetic properties.
  We discover a full correspondence between arithmetic properties and a complexity hierarchy for WFA based on ambiguity.
  In the invertible case, this is a far-reaching generalization of a recent result by Bell and the second author, characterizing unambiguous WFA, that resolved a 1979 conjecture of Reutenauer.
  As a consequence, using the computability of the (linear) Zariski closure of a finitely generated matrix semigroup, the $M$-ambiguity, finite ambiguity, and polynomial ambiguity properties are algorithmically decidable for invertible WFA.
  \end{singlespace}
\end{abstract}

\date{}

\maketitle

\vspace*{-1cm}
\setcounter{tocdepth}{1}
{\small\tableofcontents}

\section{Introduction}

Let $f\colon \Z_{\ge 0} \to \Q$ be a linear recurrence sequence (LRS). 
It is well-known that this is equivalent to the formal power series $F=\sum_{n=0}^\infty f(n) x^n$ representing a \emph{rational} function (without a pole at the origin).
Within the class of LRSs, several \emph{structural} properties of the function $F$ are reflected in \emph{arithmetic} properties of the sequence $f$ of coefficients, as demonstrated in the following instances.
\begin{itemize}
\item If each pole of $F$ is at a root of unity, then the partial fraction decomposition immediately shows that the asymptotic growth of $f$, as measured by the logarithmic Weil height (meaning $h(a/b)=\log\max\{\abs{a},\abs{b}\}$ if $\gcd(a,b)=1$), is in $O(\log(n))$.
Hence, the sequence $f$ satisfies a \emph{growth restriction}.
\item If $F$ has only simple poles, then the coefficients of $f$ are linear combinations of exponentials, and hence are contained in a set of the form 
\[
\sumset{M}{\Gamma_0} \coloneqq \Big\{\, \sum_{i=1}^m g_i : m \le M,\, g_i \in \Gamma \,\Big\}
\] 
with $\Gamma \le \Q^\times$ a finitely generated subgroup and $M \in \Z_{\ge 0}$ (a \emph{multiplicative restriction}).
\item If $F$ has only simple poles \emph{and} they are all at roots of unity, then $f$ takes only finitely many values (a \emph{finiteness restriction}).
\end{itemize}

It is much harder to show that one can sometimes recover such structural properties from the induced arithmetic ones, that is, there are corresponding \emph{inverse results}.
For the examples given here, this is possible even when one relaxes the condition of rationality on $F$ and only assumes that $F$ is \emph{$D$-finite}, with rationality being obtained as part of the conclusion:
for finiteness this was shown by van der Poorten and Shparlinski \cite{vanderpoorten-shparlinski96} and by Bell and Chen \cite{bell-chen17}; for the stated growth restriction it is due to Bell, Nguyen, and Zannier \cite[Theorem 1.3]{bell-nguyen-zannier20};
for the stated multiplicative restriction, it is a result of Bézivin \cite[Théorème 4]{bezivin86}, who generalized results of Pólya \cite{polya21}, Benzaghou \cite{benzaghou70}, and himself \cite{bezivin87} which dealt with the case $M=1$.

Recently, numerous new such inverse results have been established, often by using deep theorems from number theory: for \emph{\textup(multivariate\textup) $D$-finite} series there are growth restriction results by Bell, Chen, Nguyen, and Zannier \cite{bell-nguyen-zannier20,bell-nguyen-zannier23,bell-chen-nguyen-zannier24}.
Here, a full growth-based classification is conjectured, but currently unproven \cite[Question 4.3]{bell-nguyen-zannier23}; it is related to long-standing open problems on Siegel $E$-functions. 
Results for multiplicative restrictions have been proven by Bell and the second author \cite{bell-smertnig23}.
For power series \emph{Mahler functions}, a complete growth-based classification of five distinct structural classes has been established by Adamczewski, Bell, and the second author \cite{adamczewski-bell-smertnig23}, characterizing in particular \emph{$k$-automatic} and \emph{$k$-regular} sequences by their growth.
Among the $k$-regular sequences, a more refined growth-based classification was given by Bell, Coons, and Hare \cite[Theorem 2]{bell-coons-hare16}, who in fact establish a growth-based characterization of finitely generated subsemigroups of $\Z^{d \times d}$ \cite[Theorem 1]{bell-coons-hare16}.

A \emph{noncommutative rational series} is a function $f\colon X^* \to K$ from a free monoid $X^*$ on a finite alphabet $X$ to a field $K$ that can be computed by a \emph{weighted finite automaton \textup(WFA\textup)} \cite{berstel-reutenauer11,sakarovitch09}.
The one-letter case corresponds exactly to LRSs.
For noncommutative series, an inverse result shows that the Pólya restriction (all nonzero values contained in $\Gamma$, with $\Gamma \le K^\times$ finitely generated) corresponds precisely to those series computable by \emph{unambiguous automata}.
This was conjectured by Reutenauer in 1979 \cite{reutenauer79} and recently proven by Bell and the second author \cite{bell-smertnig21}.

The motivation, and goal, of the present work is a generalization of the main results of \cite{bell-smertnig21} to establish a full correspondence between an ambiguity hierarchy (unambiguous, finitely ambiguous, polynomially ambiguous) of weighted automata and a hierarchy based on arithmetic restrictions on the outputs of these automata (or the semigroups arising from their minimal linear representations).
We achieve a surprisingly complete and natural correspondence for the special case of \emph{invertible} WFA, that is, WFA for which the transition matrices are invertible (\cref{t:wfa-ambiguity} and \cref{fig:correspondence}). 
It leads to decidability of all of these properties (\cref{cor:intro-wfa-decidability}).

In fact, the problem leads us to consider arithmetic restrictions on linear groups: we obtain arithmetic characterizations of finitely generated linear groups that are epimorphic images of \emph{monomial representations}, respectively, \emph{block-triangular representations with monomial diagonal blocks}. 
In characteristic zero, the results also hold for non-finitely generated linear groups.
As we will explain below, in a sense, these results can be seen as an extension of the classical Burnside--Schur theorem (torsion linear groups are locally finite): we characterize multiplicative-type restrictions, and the classic Burnside--Schur theorem characterizes a corresponding finiteness restriction.
Corresponding to that, we encounter analogous limitations in positive characteristic.

\subsection*{Linear groups}
Let $\Gamma$ be a \emph{finitely generated} multiplicative subgroup of a field $K$.
In this case we always set $\Gamma_0 \coloneqq \Gamma \cup \{0\}$.
For $M \in \Z_{\ge 0}$ the $M$-fold sumset of $\Gamma_0$ is denoted by $\sumset{M}{\Gamma_0} \coloneqq \{\, \sum_{i=1}^M g_i : g_i \in \Gamma_0 \,\}$; we write the explicit dot to distinguish it from the dilation of $\Gamma_0$ by $M$.
A set $S \subseteq K$ is \defit{Bézivin} if it is a subset of a set of the form $\sumset{M}{\Gamma_0}$ with $\Gamma$ finitely generated, and it is \defit{Pólya} if it is a subset of some $\Gamma_0$ (that is, if one can take $M=1$).

A matrix group $G \le \GL_d(K)$ is \defit{Bézivin} if there is a Bézivin set containing every entry of every $A \in G$; the \defit{Pólya} property for $G$ is defined analogously.
The Pólya property is clearly not preserved under a change of basis, but the Bézivin property is (\cref{l:bezivin-base-change}).

Let us stress that even the Bézivin property depends on the particular (faithful) representation of $G$ as a linear group, that is, a group $G$ may have embeddings into some $\GL_d(K)$ that are Bézivin, while others are not.
For instance, the representation $(\Z,+) \to  \GL_1(\Q)$, $n \mapsto 2^n$ is trivially Bézivin, but $(\Z,+) \to \GL_2(\Q)$, $n \mapsto \begin{psmallmatrix} 1 & n \\ 0 & 1 \end{psmallmatrix}$ is not even locally Bézivin.
Thus, the (local) Bézivin property is not a property of the abstract group, but really of its chosen linear representation.

A \defit{monomial matrix} (or \defit{generalized permutation matrix}, or \defit{weighted permutation matrix}) is a matrix $A$ for which every row and column has at most one nonzero entry.
Of course, if $A$ is invertible, it has exactly one nonzero entry in each row and column.
Naturally, any finitely generated group of monomial matrices is Bézivin, even Pólya.
Our first main result is a sharpening of this observation together with an inverse result.%
\footnote{More refined statements of our main theorems will be given in \cref{sec:main-theorems}, after introducing some more technical notions in \cref{sec:notions}.}

The field $K$ is \defit{uniformly power-splitting} for $G \le \GL_d(K)$ if there exists some $N \ge 1$ such that $\lambda^N \in K$ for every eigenvalue of every matrix $A \in G$.
As usual, a group property holds \defit{locally} if it holds for every finitely generated subgroup.

\begin{theorem} \label{t:intro-bezivin}
  Let $K$ be a field and $G \le \GL_d(K)$.
  Suppose that $\characteristic{K}=0$ or that $G$ is finitely generated.
  Then the following statements are equivalent.
  \begin{equivenumerate}
    \item \label{intro-bez:polya} There exists $T \in \GL_d(K)$ such that $TGT^{-1}$ is locally Pólya and $K$ is uniformly power-splitting for $G$.
    \item \label{intro-bezivin:bez} The group $G$ is locally Bézivin and $K$ is uniformly power-splitting for $G$.
    \item \label{intro-bezivin:mon} The group representation $G \hookrightarrow \GL_d(K)$ is an epimorphic image of a monomial representation of $G$ \textup(over $K$\textup).
    \item \label{intro-bezivin:diag} The group $G$ has a finite-index subgroup of simultaneously diagonalizable \textup(over $K$\textup) matrices.
  \end{equivenumerate}
\end{theorem}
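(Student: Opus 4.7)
The plan is to close the cycle \ref{intro-bezivin:diag}$\Rightarrow$\ref{intro-bezivin:mon}$\Rightarrow$\ref{intro-bez:polya}$\Rightarrow$\ref{intro-bezivin:bez} by classical representation-theoretic arguments, and then to prove the remaining implication \ref{intro-bezivin:bez}$\Rightarrow$\ref{intro-bezivin:diag} using the unit equation results announced in the abstract.

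For the easy directions, assume \ref{intro-bezivin:diag} and let $G_0 \le G$ be finite-index with $V|_{G_0}$ decomposing as $\bigoplus_i \chi_i^{n_i}$ for certain characters $\chi_i\colon G_0 \to K^\times$. The canonical $G$-equivariant evaluation
\[
  K[G] \otimes_{K[G_0]} V|_{G_0} \twoheadrightarrow V,\quad g \otimes v \mapsto gv,
\]
surjects onto $V$, and its domain is a direct sum of monomial representations $\Ind_{G_0}^G \chi_i$, each of which, in its standard basis indexed by cosets of $G_0$ in $G$, has entries in $\chi_i(G_0) \cup \{0\} \subseteq K$; hence \ref{intro-bezivin:mon} holds. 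Conversely, assume \ref{intro-bezivin:mon} and fix a monomial $\rho\colon G \to \GL_n(K)$ together with a $G$-equivariant surjection $\pi\colon K^n \twoheadrightarrow V$; pick $d$ standard basis vectors $e_{i_1}, \ldots, e_{i_d}$ of $K^n$ whose $\pi$-images form a basis $f_1, \ldots, f_d$ of $V$. With $\rho(g) e_{i_j} = c_{j,g}\, e_{\sigma_g(i_j)}$ for a monomial entry $c_{j,g}$ of $\rho(g)$ and expansions $\pi(e_\ell) = \sum_k a_{\ell,k} f_k$ involving only $nd$ fixed constants $a_{\ell,k}$, the matrix entries of $g$ acting on $V$ in the basis $f_\bullet$ are exactly the single products $c_{j,g} \cdot a_{\sigma_g(i_j),k}$. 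Locally these are confined to the finitely generated multiplicative group spanned by the monomial entries of $\rho$ and the $a_{\ell,k}$, giving local Pólya after the base change $T$ mapping the canonical basis of $V$ to $f_\bullet$. Each eigenvalue of $\rho(g)$ is an $\ell$-th root of a product of entries around an $\ell$-cycle of the monomial permutation, so $\lambda^{\lcm(1,\ldots,n)} \in K$, giving uniform power-splitting on $V_\rho$ and hence on its quotient $V$. This proves \ref{intro-bez:polya}. Finally, Pólya is the $M=1$ case of Bézivin, the Bézivin property is base-change invariant by \cref{l:bezivin-base-change}, and uniform power-splitting is visibly conjugation invariant, giving \ref{intro-bez:polya}$\Rightarrow$\ref{intro-bezivin:bez}.

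For the hard direction \ref{intro-bezivin:bez}$\Rightarrow$\ref{intro-bezivin:diag}, I first reduce to the finitely generated case (automatic under the positive-characteristic hypothesis; in characteristic zero this should follow via specialization to a finitely generated subfield combined with the local nature of the conclusion). With $G$ finitely generated, fix $M, N \in \N$ and a finitely generated $\Gamma \le K^\times$ so that every entry of every matrix in $G$ lies in $\sumset{M}{\Gamma_0}$ and every eigenvalue of $G$ satisfies $\lambda^N \in K$. Let $\algc{G}$ denote the Zariski closure of $G$ in $\GL_d$ and set $H \coloneqq G \cap \algc{G}^\circ$, which is finite-index in $G$. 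The core claim is that $\algc{G}^\circ$ is a $K$-split torus. I would rule out a nontrivial unipotent element $u \in \algc{G}^\circ$ by combining it with a generic $g \in G$ to produce a sequence of matrices whose entries are explicit polynomials in an integer parameter yet must all lie in $\sumset{M}{\Gamma_0}$, contradicting the finiteness of non-degenerate solutions to $S$-unit equations (Evertse--van der Poorten--Schlickewei in characteristic zero, Derksen--Masser in positive characteristic); similarly, a non-abelian reductive part of $\algc{G}^\circ$ would generate infinitely many trace identities reducing to unit equations with infinitely many solutions, again a contradiction. Once $\algc{G}^\circ$ is known to be a torus, the uniform power-splitting hypothesis forces it to split over $K$, and $H$ inherits simultaneous $K$-diagonalizability, establishing \ref{intro-bezivin:diag}.

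The principal obstacle is the geometric step---translating the coefficient-level Bézivin restriction into the algebro-geometric statement that $\algc{G}^\circ$ is a $K$-split torus---since this is precisely where the full strength of the diophantine input must be deployed. A secondary subtlety is the characteristic-zero reduction to the finitely generated case, which accounts for the side hypothesis in the theorem.
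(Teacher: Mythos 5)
The cycle \ref{intro-bezivin:diag}$\,\Rightarrow\,$\ref{intro-bezivin:mon}$\,\Rightarrow\,$\ref{intro-bez:polya}$\,\Rightarrow\,$\ref{intro-bezivin:bez} in your proposal is essentially sound and close in spirit to what the paper does (induced representation from the finite-index diagonalizable subgroup, cf.\ \cref{l:factor-through-monomial}; single-product entries in a basis pulled back through the monomial cover, cf.\ \cref{l:epi-is-locpolya}; cycle-products for power-splitting). The problem is the implication \ref{intro-bezivin:bez}$\,\Rightarrow\,$\ref{intro-bezivin:diag}, where your argument is an outline whose two crucial steps are left as placeholders and, as sketched, do not go through. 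First, the step ``rule out a nontrivial unipotent $u\in\algc{G}^{\,0}$'' cannot be run as stated: the Bézivin condition constrains the entries of elements of $G$, not of its Zariski closure, and $(\sumset{M}{\Gamma_0})^{d\times d}$ is not Zariski closed, so a unipotent in $\algc{G}^{\,0}$ gives you no matrices in $\sumset{M}{\Gamma_0}$-entries to feed into a unit equation. Likewise ``a non-abelian reductive part would generate infinitely many trace identities'' is not an argument; this is exactly the case (e.g.\ $G$ containing a free subgroup) where all the diophantine work happens. The paper deliberately avoids the algebraic-group route: its engine is \cref{c:term-bound} (a uniform bound on the number of nonzero blocks in arbitrary products, extracted from unit equations via \cref{p:full-term-bound}), which via \cref{l:permuting-spaces,l:simultaneous-decomposition} forces every element to permute the joint eigenspaces of the steady elements, and then Burnside--Schur (\cref{t:locally-bezivin-equivalent}) produces the finite-index diagonalizable subgroup. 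Your plan is closer to the Corvaja--Demeio--Rapinchuk--Ren--Zannier approach, but they need substantially more (quantitative point counting, generic elements) precisely to bridge the gap you elide; moreover your condition only asserts containment in a Bézivin set, not that $G$ \emph{is} a PEP set, so you cannot simply quote their theorem. Note also that in positive characteristic there is no ``finiteness of non-degenerate solutions'': \cref{t:derksen-masser} only gives Frobenius-orbit structure, and one needs a counting/density argument (\cref{p:unit-equation-crude}, \cref{l:no-low-density-cover}) rather than finiteness, so your phrasing of the contradiction does not apply there.

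Second, the reduction of the characteristic-zero, non-finitely-generated case to the finitely generated case is not a routine ``local nature of the conclusion'' observation. Knowing that every finitely generated subgroup is virtually simultaneously diagonalizable does not by itself yield a single finite-index diagonalizable subgroup of $G$: the indices could a priori be unbounded, and the diagonalizing subgroups need not be compatible. This is exactly where the uniform power-splitting hypothesis must enter (without it the statement is false, e.g.\ $\mu(\C)\hookrightarrow\GL_2(\R)$, and in positive characteristic it is false even with it, e.g.\ $(K,+)\hookrightarrow\GL_2(K)$; see \cref{exm:locbez-not-monomial}), so any correct reduction has to use these hypotheses in an essential way. The paper handles this in \cref{t:lbz0-diag}: Jordan's theorem (\cref{t:jordan}) gives a uniform index bound for abelian-mod-diagonal subgroups of the finitely generated pieces, a Tychonoff compactness argument glues these into a coherent finite-index subgroup $H$ of $G$, a commutator computation combined with uniform power-splitting makes a further finite-index subgroup simultaneously diagonalizable over $K$, and Burnside's theorem (finite-exponent torsion linear groups in characteristic zero are finite) closes the index count. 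Until you supply arguments at this level of detail for both steps, the hard direction remains unproven.
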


An arbitrary group $G$ has a faithful monomial representation over $K$ if and only if it has a finite-index subgroup $H$ that is isomorphic to a subgroup of some $(K^\times)^d$ (a monomial representation is obtained as an induced representation from a representation of $H$).
In particular, any such group is virtually abelian.
If $G$ is finitely generated, the uniform power-splitting hypothesis in \ref{intro-bez:polya} and \ref{intro-bezivin:bez} can be dropped and the equivalence still holds.

The interesting direction here is that one can deduce from the arithmetic restriction \ref{intro-bezivin:bez} structural properties of the representation.
We also observe that the local Pólya property and the local Bézivin property characterize the same types of groups, however, the Pólya property only holds for a suitable choice of basis, while the Bézivin property holds in any basis and is therefore much more natural in this context.

The proof of \cref{t:intro-bezivin} makes essential use of the fundamental theorem on unit equation in characteristic zero, a deep theorem from Diophantine number theory proven by Evertse \cite{evertse84} and van der Poorten and Schlickewei \cite{vdpoorten-schlickewei91}, that can nowadays be deduced from Schmidt's subspace theorem.
In positive characteristic, solutions of unit equations are effectively described by a theorem of Derksen and Masser \cite{derksen-masser12}, as suitable orbits of compositions of twisted Frobenius automorphisms, and by a theorem of Adamczewski and Bell \cite[Theorem 3.1]{adamczewski-bell12}, as $p$-automatic sets.
We use the result by Derksen and Masser. 
The necessary background is given in \cref{sec:unit-equations}.
Using a corollary that works in any characteristic (\cref{p:unit-equation-crude}), we are able to give a proof of \cref{t:intro-bezivin} that works for any characteristic.

\begin{remark}
While finishing the write-up of the present paper, we were made aware of the recent papers \cite{corvaja-rapinchuk-ren-zannier22,corvaja-demeio-rapinchuk-ren-zannier22} and preprint \cite{corvaja-demeio-rapinchuk-ren-zannier23} by Corvaja, Demeio, Rapinchuk, Ren, and Zannier.
The special case of \cref{t:intro-bezivin} in which $G$ is finitely generated \emph{and} $\characteristic K=0$, yields a generalization of one of their main theorems \cite[Theorem 1.13]{corvaja-demeio-rapinchuk-ren-zannier23}.
Let us discuss the connection and differences in detail.

Corvaja, Demeio, Rapinchuk, Ren, and Zannier introduce the notions of a \emph{Purely Exponential Parametrization} and \emph{\textup(PEP\textup) sets}.
From their definitions, it is obvious that every (PEP) subset of $K^{d \times d}$ is contained in a set of the form $(\sumset{M}{\Gamma_0})^{d \times d}$; conversely, every set of the form $(\sumset{M}{\Gamma_0})^{d \times d}$ can be seen to be a (PEP) set (by using enough parameters in the purely exponential parametrization).
So, if $G$ is finitely generated and $\characteristic{K}=0$, one could replace \ref{intro-bezivin:bez} in \cref{t:intro-bezivin} by ``$G$ is a subset of a (PEP) set''.

In notation adapted to the present paper, \cite[Theorem 1.13]{corvaja-demeio-rapinchuk-ren-zannier23} states that, for a field $K$ of \emph{characteristic zero} and a group $G \le \GL_d(K)$, the following statements are equivalent.
\begin{equivenumerate}[label=(\Alph*)]
  \item \label{other:pep} $G$ is a (PEP) set.
  \item \label{other:bg} $G$ has bounded generation (BG) and every element of $G$ is diagonalizable (over $\algc{K}$).
  \item \label{other:torus} $G$ is finitely generated and the connected component $\widetilde G^0$ of the Zariski closure $\widetilde G$ is a torus.
\end{equivenumerate}

If $G$ is finitely generated, then \ref{other:pep}$\,\Leftrightarrow\,$\ref{other:torus} is almost \ref{intro-bezivin:bez}$\,\Leftrightarrow\,$\ref{intro-bezivin:diag} of \cref{t:intro-bezivin}.
A difference is that \ref{other:pep} requires $G$ to \emph{be} a (PEP) set, whereas \ref{intro-bezivin:bez} only requires containment \emph{in} a (PEP) set. So \cref{t:intro-bezivin}\ref{intro-bezivin:bez} a priori makes no assumption on the existence of certain elements, making \ref{intro-bezivin:bez}$\,\Rightarrow\,$\ref{intro-bezivin:diag} stronger.
Of course, then we do not quite get \ref{other:torus}$\,\Rightarrow\,$\ref{other:pep}, but this is the ``easier'' direction (although it requires a clever observation to see that finite unions of (PEP) sets are again (PEP) sets \cite[Proposition 4.3]{corvaja-demeio-rapinchuk-ren-zannier23}).

The stronger implication \ref{intro-bezivin:bez}$\,\Rightarrow\,$\ref{intro-bezivin:diag} that we deduce is crucial in the derivation of our second main theorem and the applications to automata, because it is easy to see that the involved sets are Bézivin (i.e., subsets of (PEP) sets) but not that they are indeed full (PEP) sets, as the involved properties do not allow us to deduce enough information about existence of certain elements:
this occurs in \cref{p:absirred-fg-bezivin} in the implications \ref{absirred:fg}$\,\Rightarrow\,$\ref{absirred:trbez} and \ref{absirred:trbez}$\,\Rightarrow\,$\ref{absirred:locbez}, and in \ref{wfa-necc:m-ambig} of \cref{p:wfa-necessary}.

However, the arguments of \cite{corvaja-demeio-rapinchuk-ren-zannier23} can also yield the stronger implication in characteristic zero \cite{corvaja-email24}.
Variants of our main results hold more generally for semigroups of automorphisms (\cref{sec:main-theorems}), and these more general results are also used in our application to weighted automata.

For not necessarily finitely generated groups $G$, the implication \ref{other:pep}$\,\Rightarrow\,$\ref{other:torus} yields finite generation.
Our \cref{t:intro-bezivin} does not directly imply that Bézivin groups in characteristic $0$ are finitely generated, however this can be deduced from \ref{intro-bezivin:diag} using an additional small observation (see \cref{l:subgroup-bezivin,cor:bez-0-is-fg}).
On the other hand, the article \cite{corvaja-demeio-rapinchuk-ren-zannier23} says nothing about the local properties or positive characteristic.

Given the similarity of \cref{t:intro-bezivin} and \cite[Theorem 1.13]{corvaja-demeio-rapinchuk-ren-zannier23}, let us note that both proofs are powered by unit equations, however the approach appears to be quite different: Corvaja, Demeio, Rapinchuk, Ren, and Zannier first derive a quantitative point counting theorem \cite[Theorem 1.5]{corvaja-demeio-rapinchuk-ren-zannier23}.
For this, they need stronger results on unit equations.
They also make use of the theory of \emph{generic elements} developed by Prasad and Rapinchuk \cite{prasad-rapinchuk03,prasad-rapinchuk14,prasad-rapinchuk17}. 

By contrast, we only make use of the basic finiteness result on unit equations (of course, then we also do not get a quantitative point counting theorem). 
We work directly with linear groups, having no need to pass to linear algebraic groups or to use the theory of generic elements.
The key to our proof is instead \cref{c:term-bound}: this allows us to iteratively diagonalize suitable powers of matrices, while forcing a compatible structure on the remaining group (\cref{l:permuting-spaces,l:simultaneous-decomposition}).
In this way, step by step, the locally Bézivin group is compelled to reveal its diagonalizable finite-index subgroup to us.

Interestingly, the motivation of \cite{corvaja-demeio-rapinchuk-ren-zannier23} comes from a very different direction, namely the study of the bounded generation (BG) property for linear groups, but one ends up characterizing the same class of groups.
In particular, a group consisting of diagonalizable matrices (over $\algc{K}$) has BG if and only if it is finitely generated Bézivin.
In characteristic zero we have nothing new to add to this angle.
In positive characteristic it was already known that BG groups are virtually abelian by a theorem of Abért, Lubotzky, and Pyber \cite[Theorem 1.1]{abert-lubotzky-pyber03}.
Since our results hold in any characteristic, in the spirit of \cite{corvaja-demeio-rapinchuk-ren-zannier23}, we now also easily recover this theorem from \cref{t:intro-bezivin} (see \cref{subsec:bg}).
In fact, we get a characterization of linear BG groups in positive characteristic (\cref{cor:bg-pos}).
\end{remark}

The \defit{spectrum} of a group $G \le \GL_d(K)$ is the set of its eigenvalues, as a subset of the algebraic closure $\algc{K}$.
The group $G$ has \defit{finitely generated spectrum} if its spectrum is contained in a finitely generated subgroup of $\algc{K}^\times$.
Since connected solvable linear groups are simultaneously triangularizable over the algebraic closure $\algc{K}$ by the Lie--Kolchin Theorem \cite[Theorem 10.5]{borel91}, it is not hard to see that every virtually solvable group, and thus in particular every block-triangular group with monomial diagonal blocks, has locally finitely generated spectrum (\cref{l:virt-solvable-locfg}).
Our second main theorem is an inverse result for this arithmetic restriction.

\begin{theorem} \label{t:intro-block}
  Let $K$ be a field and $G \le \GL_d(K)$.
  Suppose that $\characteristic{K}=0$ or that $G$ is finitely generated.
  Then the following statements are equivalent.

  \begin{equivenumerate}
    \item The group $G$ has locally finitely generated spectrum and $K$ is uniformly power-splitting for $G$.
    \item The representation $G \hookrightarrow \GL_d(K)$ is the epimorphic image of an \textup(upper\textup) block-triangular representation with monomial diagonal blocks \textup(over $K$\textup).
  \end{equivenumerate}
\end{theorem}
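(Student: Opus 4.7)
The direction from the block-triangular monomial structure to the arithmetic conditions is direct. An eigenvalue of a monomial matrix corresponding to a cycle of length $m$ in its underlying permutation is an $m$-th root of the product of the $m$ nonzero entries along that cycle. So if $G$ is the epimorphic image of a block-triangular monomial representation whose diagonal entries of any given finitely generated subgroup lie in a finitely generated $\Gamma \le K^\times$, then every eigenvalue $\lambda$ satisfies $\lambda^{d!} \in \Gamma$, yielding local finite generation of the spectrum (contained in $\Gamma \cdot \mu_{d!}(\algc{K})$) and uniform power-splitting with $N = d!$. (More abstractly this follows from virtual solvability and Lie--Kolchin, as indicated in the introduction.)

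For the converse, I would start from a $G$-composition series of $K^d$: a maximal ascending chain of $G$-invariant subspaces $0 = V_0 \subset V_1 \subset \cdots \subset V_k = K^d$. Adapting a basis to this flag puts $G$ in block upper-triangular form, and each diagonal block $G_i \le \GL(W_i)$ is an irreducible representation of $G$ on $W_i = V_i/V_{i-1}$. Eigenvalues of $G_i$ are eigenvalues of $G$, so each $G_i$ inherits locally finitely generated spectrum and uniform power-splitting. The problem reduces to the irreducible case: showing that a $K$-irreducible $G \le \GL(W)$ with these two properties is the epimorphic image of a monomial $K$-representation of $G$.

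For the irreducible case my plan is to upgrade ``locally finitely generated spectrum'' to ``locally Bézivin'' and then invoke \cref{t:intro-bezivin}. By Burnside density, the $K$-span of $G$ equals $\End_K(W)$, so one can pick $h_1, \dots, h_{d^2} \in G$ forming a $K$-basis of $\End_K(W)$; entries of $g \in G$ in the dual basis are then $K$-linear combinations of the traces $\Tr(g h_j)$. Each such trace is a sum of at most $d$ eigenvalues and hence lies in $\sumset{d}{\Gamma_0}$ for some finitely generated $\Gamma$. Closure of Bézivin sets under $K$-linear combinations (absorb the finitely many coefficients into $\Gamma$) yields local Bézivin, and \cref{t:intro-bezivin} provides, for each $i$, a monomial representation $\rho_i : G \to \GL_{n_i}(K)$ with a $G$-equivariant surjection $K^{n_i} \twoheadrightarrow W_i$. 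Assembling the $\rho_i$ into a single block-triangular monomial representation surjecting onto $G \hookrightarrow \GL_d(K)$ is a module-theoretic gluing: one lifts each extension $0 \to V_{i-1} \to V_i \to W_i \to 0$ compatibly on top of the block-triangular structure already built from $\rho_1, \dots, \rho_{i-1}$, using that a surjection from $K^{n_i}$ to $W_i$ pulls an extension of $W_i$ back to an extension of $K^{n_i}$ with the same monomial top block.

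The main obstacle I anticipate is the non-finitely-generated case in characteristic zero, where \cref{t:intro-bezivin} does not apply to $G$ as a whole. I would apply it to each finitely generated $G' \le G$, obtain a block-triangular monomialization of $G'$, and glue the resulting structures along an ascending chain using Noetherianity of the relevant flag variety together with stabilization of the weight-space decompositions of the diagonal blocks. A related subtlety to resolve is that $K$-irreducibility is weaker than absolute irreducibility; passing to $\algc K$ may split $W_i$ into a Galois orbit of absolutely irreducible summands, but uniform power-splitting ensures that the monomial structure obtained over $\algc K$ descends back to $K$.
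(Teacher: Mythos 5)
Your reduction to the irreducible blocks and the use of \cref{t:intro-bezivin} is the right overall shape, but two steps fail as written. First, the claim ``by Burnside density, the $K$-span of $G$ equals $\End_K(W)$'' is false for a block that is merely $K$-irreducible: Burnside's theorem requires absolute irreducibility (e.g.\ $\C^\times$ acting irreducibly on $\R^2$ spans only a $2$-dimensional subalgebra of $\R^{2\times 2}$), so the dual-basis/trace recovery of matrix entries breaks down exactly in the case you relegate to a closing remark. Moreover, even where the trace argument applies, the step ``each trace lies in $\sumset{d}{\Gamma_0}$'' only gives containment in a Bézivin set over an \emph{extension} field (the eigenvalues live in $\algc{K}$); converting this into a Bézivin set over $K$ is precisely where the power-splitting hypothesis must be invoked (\cref{l:bezivin-in-extension}), and it is not a formality --- the Fibonacci example (\cref{ex:fibanocci-not-bezivin}) shows traces can lie in $\sumset{2}{\Lambda_0}$ with $\Lambda\le\algc{\Q}^\times$ finitely generated while the block is not Bézivin over $\Q$. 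The paper's route is to take composition factors that are \emph{absolutely} irreducible over a finite extension $L$, apply the trace argument there (\cref{p:absirred-fg-bezivin}), and then descend by restriction of scalars, where \cref{l:locepimon-descent} (weak epimonomiality over $L$ implies weak epimonomiality over $K$, given power-splitting) and the splitting $V\hookrightarrow L\otimes_K V\twoheadrightarrow V$ do real work; ``the monomial structure over $\algc K$ descends back to $K$'' is not something you can wave through, and over $\algc K$ (an infinite extension) restriction of scalars is not even available.

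Second, the gluing step has an obstruction you do not address. Pulling the extension $0\to V_{i-1}\to V_i\to W_i\to 0$ back along the monomial cover $M_i\twoheadrightarrow W_i$ is fine, but you then need to replace the sub $V_{i-1}$ by the already-constructed $U_{i-1}\twoheadrightarrow V_{i-1}$, i.e.\ you need the class of the pulled-back extension to lie in the image of $\mathrm{Ext}^1_G(M_i,U_{i-1})\to \mathrm{Ext}^1_G(M_i,V_{i-1})$; this map is covariant in the sub and is not surjective in general (the cokernel maps into $\mathrm{Ext}^2_G(M_i,\ker)$), so the inductive lift ``with the same monomial top block'' need not exist. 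The paper sidesteps extensions entirely: once each diagonal block is known to be virtually simultaneously diagonalizable (\cref{cor:locbezivin-fg,cor:locbezivin-char0} --- note that in characteristic zero these already cover non--finitely generated $G$ via \cref{t:lbz0-diag}, so your ``main obstacle'' is not where the difficulty lies), one takes the single finite-index subgroup $N\le\rho(G)$ that is block-triangular with diagonal blocks inside the diagonalizable subgroups and forms the induced representation $\Ind^G_N(\rho|_N)$; this is automatically block-triangular with monomial diagonal blocks and carries a natural $G$-equivariant surjection onto $\rho$ (\cref{l:factor-through-monomial}, \cref{cor:trilocmonomial-lift}). Unless you can prove the needed $\mathrm{Ext}^1$-surjectivity (or restructure the assembly, e.g.\ via such an induction from one finite-index subgroup), the block-by-block gluing is a genuine gap.
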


The next corollary shows that, unlike the Bézivin property, having locally finitely generated spectrum does \emph{not} depend on the particular representation of $G$.

\begin{corollary}[\cite{bernik05}] \label{c:intro-spec}
  Let $K$ be a field and $G \le \GL_d(K)$ a linear group.
  Suppose that $\characteristic{K}=0$ or that $G$ is finitely generated.
  Then $G$ has locally finitely generated spectrum if and only if it is virtually solvable.
\end{corollary}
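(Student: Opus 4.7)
The plan is to deduce this from \cref{t:intro-block} by extending scalars to the algebraic closure, where the uniform power-splitting hypothesis becomes automatic.

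The backward direction is essentially the observation recorded just before \cref{t:intro-block}: by the Lie--Kolchin theorem, a virtually solvable group is simultaneously triangularizable on each of its finitely generated subgroups over $\algc{K}$, so the spectrum of any such subgroup is generated as a multiplicative group by the diagonal entries of finitely many triangularized generators.

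For the forward direction, I would view $G$ as a subgroup of $\GL_d(\algc{K})$ and apply \cref{t:intro-block} to this inclusion. Both hypotheses transfer unchanged: $\characteristic K = 0$ implies $\characteristic \algc{K} = 0$, and finite generation of $G$ is not affected. Over $\algc{K}$, every eigenvalue of every matrix in $G$ already lies in $\algc{K}$, so uniform power-splitting holds trivially with $N = 1$. The conclusion of \cref{t:intro-block} then realizes $G$ as a quotient of a linear group $H$ whose elements are (upper) block-triangular matrices with monomial diagonal blocks.

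The final step is a routine structural observation: such an $H$ is virtually solvable, hence so is the quotient $G$. Indeed, reading off the permutation induced on the basis vectors of each diagonal block defines a homomorphism $H \to S_{d_1} \times \cdots \times S_{d_k}$ with finite image; the kernel $H_0$ has finite index in $H$ and consists of block-triangular matrices whose diagonal blocks are \emph{diagonal}. The commutator subgroup of $H_0$ therefore lies in the nilpotent group of strictly upper block-triangular matrices, so $H_0$ is solvable. I anticipate no serious obstacle in this corollary; the heavy lifting is done by \cref{t:intro-block}, and the corollary only requires the base-change maneuver to discharge the power-splitting hypothesis.
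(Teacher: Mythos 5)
Your argument is correct and is essentially the paper's own proof: you likewise pass to $\algc{K}$, where uniform power-splitting is trivial, invoke \cref{t:intro-block} (the paper cites \cref{cor:trilocmonomial-lift} directly), observe that a block-triangular group with monomial diagonal blocks is virtually solvable via the finite-index subgroup whose diagonal blocks are diagonal, and handle the converse by \cref{l:virt-solvable-locfg}. Two cosmetic remarks: the intertwining vector-space epimorphism does induce a surjection (in fact, by faithfulness of the inclusion $G \hookrightarrow \GL_d(\algc{K})$, an isomorphism) from the block-triangular group onto $G$, so your ``quotient'' step is justified, and the commutators of your $H_0$ land in the block-\emph{unitriangular} (unipotent) group, not among strictly upper block-triangular matrices, which are not invertible.
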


\Cref{c:intro-spec} has previously been deduced by Bernik \cite{bernik05} in characteristic zero as a consequence of the theory of generic elements by Prasad and Rapinchuk.
Bernik even obtains virtual solvability under the---a priori---weaker condition that the spectrum is contained in a finitely generated field.
Our proof does not make use of the theory of generic elements or Bernik's result. 

For non-finitely generated groups, \cref{t:intro-bezivin,t:intro-block} do not hold in positive characteristic in general: the group $\GL_d(\algc{\mathbb F_p})$ is not virtually solvable for $d \ge 2$, yet, being a torsion group, it is locally finite by Burnside--Schur, hence it has locally finitely generated spectrum and is even locally Bézivin.

However, by Tits' alternative, a finitely generated linear group is either virtually solvable or contains a non-cyclic free subgroup.
Combining this with \cref{c:intro-spec}, the following does hold in any characteristic even for non-finitely generated groups.

\begin{corollary}
  Let $K$ be a field.
  A group $G \le \GL_d(K)$ has a non-cyclic free subgroup if and only if it does \emph{not} have locally finitely generated spectrum.
\end{corollary}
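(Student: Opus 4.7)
The plan is to reduce both implications to finitely generated subgroups, where \cref{c:intro-spec} is available in arbitrary characteristic, and then combine it with Tits' alternative.

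For the forward direction, suppose $G$ contains a non-cyclic free subgroup. After passing to two free generators, I may assume this subgroup is the rank-$2$ free group $F_2$, which is finitely generated and not virtually solvable: every finite-index subgroup of $F_2$ is itself a non-abelian free group by Nielsen--Schreier, hence non-solvable. Applying \cref{c:intro-spec} to the finitely generated linear group $F_2 \le \GL_d(K)$ then forces $F_2$ not to have finitely generated spectrum. This finitely generated subgroup of $G$ already witnesses the failure of the local property for $G$.

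For the converse, suppose $G$ does \emph{not} have locally finitely generated spectrum. Choose a finitely generated subgroup $H \le G$ whose spectrum is not contained in any finitely generated subgroup of $\algc{K}^\times$. Then \cref{c:intro-spec}, now applied to $H$ in the reverse direction, shows that $H$ is not virtually solvable. Tits' alternative for finitely generated linear groups (valid in any characteristic) then produces a non-cyclic free subgroup of $H$, and \emph{a fortiori} of $G$.

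The only point requiring attention is the reduction to finitely generated witnesses, which is immediate from the definition of the local property but is precisely what allows this corollary to hold in any characteristic without any finite generation hypothesis on $G$ itself. No genuine obstacle arises, since the two key inputs---\cref{c:intro-spec} and Tits' alternative---are both stated in exactly the form needed, and the argument is essentially bookkeeping.
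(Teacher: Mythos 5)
Your proof is correct and follows essentially the same route as the paper, which obtains this corollary precisely by combining Tits' alternative for finitely generated linear groups with \cref{c:intro-spec}, reducing to finitely generated subgroups so that the statement holds in any characteristic without assuming $G$ itself is finitely generated. The extra details you supply (passing to $F_2$ and using Nielsen--Schreier to see it is not virtually solvable) are accurate and merely make explicit what the paper leaves implicit.
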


To put \cref{t:intro-block} into perspective, let us compare it to some other results characterizing linear groups by restrictions on their spectrum.
We say that $G \le \GL_d(\algc{K})$ is \defit{tame} if its spectrum consists of roots of unity.
Equivalently, the group $G$ has \emph{locally finite spectrum}.%
\footnote{A finitely generated group is defined over a finitely generated field and consists of matrices of a fixed dimension. Hence, only finitely many roots of unity are possible for each finitely generated subgroup.}
Tame groups show up in \cite{bell-coons-hare16} \cite[Section 8]{adamczewski-bell-smertnig23}. 
Torsion groups are tame, and a well-known variant of Burnside--Schur shows, using a trace argument, that absolutely irreducible tame groups are locally finite (see for instance \cite[Lemma 9.3]{lam91}; compare to \cref{p:absirred-fg-bezivin}).
In characteristic zero, absolutely irreducible tame groups are even finite, because torsion groups are finite.

Let us now assume that $\characteristic{K}=0$ and $K=\algc{K}$ is algebraically closed.
Let $G \le \GL_d(\algc{K})$ be tame.
Choosing a suitable basis, we can assume that $G$ is block-triangular with absolutely irreducible diagonal blocks.
Now these diagonal blocks are \emph{finite}.
Explicitly,
\[
G = \begin{pmatrix}
   S_1 & K^{d_1 \times d_2} &  \dots & K^{d_1 \times d_r} \\
   0 & S_2 &  \dots & K^{d_2 \times d_r}  \\
   \vdots & \ddots & \ddots & \vdots \\
   0 & \dots & \dots  & S_r
\end{pmatrix}
\]
with finite $S_i \le \GL_{d_i}(\algc{K})$.
Irreducible representations of finite groups are always epimorphic images of the regular representation, and we get that $G$ has a block-triangular representation with permutation matrices in the diagonal blocks.

In this spirit, \cref{t:intro-block} extends this characterization from groups having \emph{locally finite spectrum} to groups having \emph{locally finitely generated spectrum}.

At the other end of the spectrum (pun intended), if the spectrum of an irreducible group $G \le \GL_d(\algc{K})$ is contained in the field $K$, then $G$ is conjugate to a subgroup of $\GL_d(K)$ by a theorem of Bernik \cite{bernik07,radjavi-yahaghi19}.
Thus, in Bernik's theorem, a weaker restriction on the spectrum controls representability over a subfield.

Many other spectral properties of matrix (semi)groups have been studied.
One that is particularly close in spirit to the locally finitely generated spectrum property is the \emph{submultiplicative spectrum} property \cite{radjavi00,kramar05,grunenfelder-kosir-omladic-radjavi12}:
A semigroup $S \subseteq \C^{d \times d}$ has submultiplicative spectrum if, for all $A$,~$B \in S$ every eigenvalue of $AB$ is of the form $\alpha\beta$ with $\alpha$ an eigenvalue of $A$ and $\beta$ an eigenvalue of $B$.
Semigroups with submultiplicative spectrum have locally finitely generated spectrum, but submultiplicativity is much more restrictive.
If $S \subseteq \C^{d \times d}$ is irreducible with submultiplicative spectrum, then there exists a finite nilpotent group $G \le \C^{d \times d}$ such that $\C^\times S = \C^\times G$ \cite[Proposition 2.7]{radjavi00}.

Incidentally, the Burnside--Schur Theorem as well as the mentioned results extend to matrix \emph{semigroups} $S \subseteq K^{d \times d}$, although in the generalizations of the Burnside--Schur Theorem, the restriction to finite generation is now essential, even in characteristic zero (see \cite{steinberg12} for a proof and a comprehensive list of earlier proofs).
Extending \cref{t:intro-bezivin,t:intro-block} to semigroups is beyond the scope of the present work, but may be feasible and would allow us to remove the invertibility restriction in the WFA case (see \cref{sec:conclusion}).

In a related spirit to \cref{t:intro-bezivin}, Bernik, Mastnak, and Radjavi \cite{bernik-mastnak-radjavi11} and Cigler and Drnovšek \cite{cigler-drnovsek13} have shown that irreducible semigroups $S \subseteq \C^{d \times d}$ with \emph{nonnegative diagonal entries} can be conjugated to a monomial semigroup.

\begin{figure}
  \begin{center}
  \includegraphics[height=70mm]{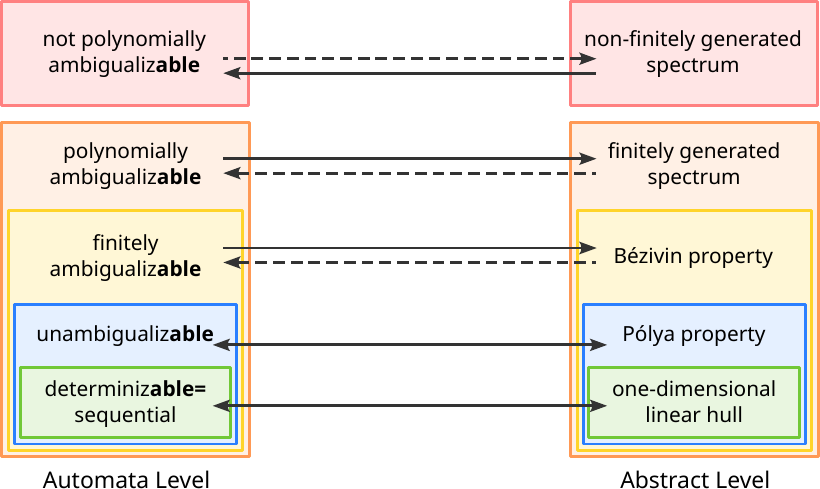}
  \end{center}
  \caption{The natural complexity hierarchy for rational series (functions computable by WFA) based on ambiguity is reflected in arithmetic properties of the outputs, respectively the semigroup of a minimal linear representation.
  Full arrows represent unconditional implications, the implications of the dashed arrows are currently known in the invertible case (\cref{t:wfa-ambiguity}, \cref{p:wfa-necessary}, and \cite{bell-smertnig21}). 
  The picture represents the case of algebraically closed fields, for simplicity.}
  \label{fig:correspondence}
\end{figure}

\subsection*{Ambiguity of WFA}

Let $X$ be a finite alphabet and $K$ a field.
The fundamental Kleene--Schützenberger Theorem shows that a function $f\colon X^* \to K$ is a \emph{noncommutative rational series} if and only if it can be computed by a WFA (this holds even when $K$ is just a semiring instead of a field) \cite[Chapter 1]{berstel-reutenauer11} \cite[Chapter III.2]{sakarovitch09}.

WFA can equivalently be described through their linear representations.
In this context, a \defit{linear representation} is a tuple $(u,\mu,v)$ of vectors $u \in K^{1 \times d}$, $v \in K^{d \times 1}$ and a monoid homomorphism $\mu \colon X^* \to K$ such that $f(w) = u \mu(w) v$ for every word $w \in X^*$.
We say that a WFA (or its linear representation) is \defit{invertible} if $\mu(X^*) \subseteq \GL_d(K)$.
Represented in a suitable way as a directed graph, WFA can be viewed as a computational model; as such they are a central object of study in theoretical computer science, we refer to \cite{sakarovitch09,droste-kuich-vogler09,berstel-reutenauer11} as starting points into the vast literature.

Among WFA, there is a natural complexity hierarchy, determined by how many successful runs a given word can have, leading to deterministic, unambiguous, finitely ambiguous, polynomially ambiguous, and exponentially ambiguous WFA (see \cref{d:wfa-ambiguity}).
The ambiguity of a given WFA does not depend on the weights; it can be elegantly characterized in terms of certain features of the underlying directed graph.
These features can be algorithmically decided by classical work of Weber and Seidl \cite{weber-seidl91}.

However, a given rational series $f \colon X^* \to K$ can be computed by many different WFA.
Over a field, there is, up to a base change, a unique minimal one (minimal number of states, corresponding to minimal dimension of the linear representation).
Unfortunately, the minimal WFA will typically \emph{not} achieve a low ambiguity class, as the low ambiguity comes at the expense of increasing the number of states.
One therefore faces the question: given a rational $f\colon X^* \to K$, represented by \emph{some} WFA, is it decidable what the lowest ambiguity class of a WFA that recognizes $f$ is? 
In particular, one can ask if $f$ is computable by a \emph{deterministic} WFA.
This question has a long history for WFA over tropical semirings (see the survey \cite{lombardy-sakarovitch06} as a starting point).
The general problem is still open in this setting \cite[Problem 2]{lombardy-sakarovitch06}, but \emph{within} the class of polynomially ambiguous WFA the problem has since been resolved by Kirsten and Lombardy \cite{kirsten-lombardy09}.

For fields, there was a long-standing conjecture of Reutenauer that was recently solved in \cite{bell-smertnig21}, and that shows that computability by unambiguous WFA corresponds exactly to the rational series having the Pólya property.
Together with the computability of the (linear) Zariski closure, established for matrix groups by Derksen and Jeandel \cite{derksen-jeandel05} and for matrix semigroups by Hrushovski, Ouaknine, Pouly, and Worrell \cite{hrushovski-ouaknine-pouly-worrell18,hrushovski-ouaknine-pouly-worrell23} (see also \cite{nosan-pouly-schmitz-shirmohammadi-worrell22,bell-smertnig23b}), this shows that both determinizability and unambigualizability over fields is decidable, resolving \cite[Problem 1]{lombardy-sakarovitch06}.
First complexity bounds were given by Benalioua, Lhote, and Reynier \cite{benalioua-lhote-reynier24}, and in the subclass of polynomially weighted automata by Jecker, Mazowiecki, and Purser \cite{jecker-mazowiecki-purser24}.

We mention \cite{kirsten12,colcombet15,colcombet-petrisan17,daviaud-jecker-pierrealain-villevalois17,mohri-riley17,colcombet-petrisan20,kostolanyi22,hrushovski-ouaknine-pouly-worrell23,kostolyani24} as further pointers into the recent literature on determinizability and unambigualizability of weighted automata.
An interesting connection between rational series and one-dimensional topological theories was recently established by Im and Khovanov \cite{im-khovanov22,im-khovanov24}.

\emph{Assuming the WFA is invertible}, our group-theoretic results yield a perfect correspondence between the best possible ambiguity class of a rational $f \colon X^* \to K$, and corresponding arithmetic properties on the values, respectively, the semigroup of a minimal linear representation, as depicted in \cref{fig:correspondence}.
Two WFA are \defit{equivalent} if they compute the same series.

\begin{theorem} \label{t:wfa-ambiguity}
  Let $\cA=(u,\mu,v)$ be a minimal linear representation of an \emph{invertible} WFA over a field $K$, and let $M \in \Z_{\ge 0}$.
  \begin{enumerate}
  \item \label{wfa-ambig:finamb} The WFA is equivalent to an $M$-ambiguous WFA if and only if there exists a finitely generated $\Gamma \le K^\times$ such that $u \mu(X^*) v \subseteq \sumset{M}{\Gamma_0}$.
  In particular, it is equivalent to a finitely ambiguous WFA if and only if the outputs form a Bézivin set.
  \item The WFA is equivalent to a polynomially ambiguous WFA if and only if $\mu(X^*)$ has finitely generated spectrum and $K$ is power-splitting for $\mu(X^*)$.
  \end{enumerate}
\end{theorem}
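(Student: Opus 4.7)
Both parts follow the same two-step template. \emph{First}, derive an arithmetic property of the matrix semigroup $\mu(X^*) \subseteq \GL_d(K)$ from the ambiguity hypothesis (or conversely). \emph{Second}, invoke the semigroup analog of \cref{t:intro-bezivin} for part~(1) or of \cref{t:intro-block} for part~(2) to realize $\mu$ as an epimorphic image of a monomial, respectively block-triangular-monomial, representation $\rho\colon X^*\to\GL_{d'}(K)$, and read off the ambiguity of the resulting equivalent WFA from the structure of $\rho$.

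For the \emph{necessity} direction of~(1), if $\cA$ is equivalent to an $M$-ambiguous WFA $(u',\nu,v')$, then each output $u'\nu(w)v'$ is a sum of at most $M$ contributions, one per accepted run, and each contribution is a product of edge weights of $\nu$ with a coordinate of $u'$ and of $v'$. These factors generate a finitely generated $\Gamma \le K^\times$, and $u'\nu(w)v' \in \sumset{M}{\Gamma_0}$ as required. For~(2), the Weber--Seidl-type structural characterization of polynomial ambiguity, combined with the Pólya characterization of unambiguous WFA from \cite{bell-smertnig21}, conjugates $\nu(X^*)$ into upper block-triangular form with monomial diagonal blocks; the spectrum is then a finite union of monomial spectra (hence finitely generated) and $K$ is uniformly power-splitting. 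Since $(u,\mu,v)$ is minimal, $\mu$ is a subquotient of $\nu$, and both properties descend to $\mu(X^*)$.

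For the \emph{sufficiency} direction, the crucial step is transferring the arithmetic restriction from outputs to the entire semigroup $\mu(X^*)$ using minimality. Choose $w_1,\dots,w_d,w_1',\dots,w_d' \in X^*$ so that $(u\mu(w_i))_i$ is a basis of $(K^d)^*$ and $(\mu(w_j')v)_j$ is a basis of $K^d$; each matrix entry of $\mu(w)$ is then a fixed $K$-linear combination of the outputs $u\mu(w_i w w_j')v$. In part~(1), $u\mu(X^*)v \subseteq \sumset{M}{\Gamma_0}$ forces $\mu(X^*)$ to be locally Bézivin; in part~(2), finitely generated spectrum and uniform power-splitting for $\mu(X^*)$ follow at once (the spectrum is a base-change invariant). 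Applying the relevant semigroup analog of the main theorem yields an equivalent WFA $(u'',\rho,v'')$ whose transition matrices are monomial (resp.\ block-triangular with monomial diagonal blocks). A monomial WFA is finitely ambiguous, since a run is determined by its starting and ending state, so at most $d'$ runs exist per word; a block-triangular-monomial WFA with $r$ diagonal blocks is polynomially ambiguous of degree at most $r-1$, because any accepted run decomposes into at most $r$ segments, one per block.

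The main obstacle is producing the \emph{exact} bound $M$ in part~(1), rather than merely finite ambiguity. The crude monomial construction yields ambiguity at most $d'$, the dimension of the auxiliary monomial representation, whereas the hypothesis only promises that sums have width~$\le M$. Closing this gap requires a sharpened form of the semigroup analog of \cref{t:intro-bezivin} that tracks the parameter $M$ through the monomial construction, a refinement presumably proved among the technical statements deferred to later sections. A secondary point is the passage from semigroups to groups needed to quote the theorems stated above; this is permitted by the invertibility hypothesis on $\cA$, which ensures $\mu(X^*)$ generates a group inside $\GL_d(K)$.
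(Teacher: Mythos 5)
Your overall skeleton matches the paper's (run-counting plus the minimality trick $u\mu(w_iww_j')v$ for necessity; the structural theorems producing an equivalent monomial, respectively block-triangular-with-monomial-blocks, WFA for sufficiency), but two steps you rely on are genuine gaps. First, in the necessity direction of part (2) you assert that Weber--Seidl together with the P\'olya characterization of unambiguous WFA lets you \emph{conjugate} $\nu(X^*)$ into upper block-triangular form with monomial diagonal blocks. No such statement follows from those results, and it essentially assumes the conclusion: having such a form (even after base change, even for an equivalent larger WFA) is exactly what \cref{t:trilocmonomial}/\cref{cor:trilocmonomial-lift} show to be \emph{equivalent} to the finitely generated spectrum plus power-splitting condition you are trying to derive. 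A cycle-unambiguous automaton need not have monomial diagonal blocks in any combinatorially evident basis (a state inside a strongly connected component may have two outgoing $x$-transitions within the component without violating cycle-unambiguity). The paper's argument (\cref{p:wfa-necessary}\ref{wfa-necc:spec}) is instead purely combinatorial: trimness plus polynomial ambiguity gives cycle-unambiguity; for each fixed word $w$ the reachability digraph $G(w)$ has only directed cycles as nontrivial strongly connected components, so $\mu^\circ(w)^{d!}$ is upper-triangular after a permutation of states, and cycle-unambiguity forces its diagonal entries to be single products of automaton weights; hence every eigenvalue raised to the power $d!$ lies in the finitely generated group of weights, and \cref{l:bounded-roots} plus the block-triangular embedding of the minimal representation (\cite[Corollary 2.2.5]{berstel-reutenauer11}) finishes the descent to $\mu(X^*)$.

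Second, you explicitly leave open the exact ambiguity bound $M$ in part (1), which is the heart of that statement (finite ambiguity alone would only give the ``in particular'' clause). This is not a routine refinement that falls out of tracking constants through the monomial construction: the paper proves it in \cref{cor:ambiguity-degree} by choosing an epimonomial decomposition with the terminal vector components $v_j$ taken as basis vectors, inducing up to a monomial representation, selecting a maximally separating steady element (\cref{p:maximally-separating-element}) whose eigenspaces give a block decomposition $W=W_1\oplus\cdots\oplus W_r$, and then invoking the unit-equation bound \cref{p:full-term-bound} to show that at most $M$ block products $\widehat u\,[\mu(x_1)]_{i_0i_1}\cdots[\mu(x_l)]_{i_{l-1}i_l}[\widehat v]_{i_l}$ are nonzero, with a further argument that distinct successful runs of the induced monomial automaton land in distinct block products. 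Without this (or an equivalent argument), your proof only establishes the Bézivin/finitely-ambiguous equivalence, not the $M$-ambiguous statement for each $M$.
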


In fact, in these cases the equivalent automaton $\widehat \cA$ can be chosen in such a way that the transition matrices are monomial, respectively, block-triangular with monomial diagonal blocks (\cref{p:nice-wfa}).
In the special case $M=1$, \cref{t:wfa-ambiguity}\ref{wfa-ambig:finamb} recovers the main result of \cite{bell-smertnig21}, when restricted to invertible WFA (no restriction is necessary in \cite{bell-smertnig21}).

Using \cref{t:wfa-ambiguity}, the complexity classes become decidable. (The restriction on the field can be loosened, see \cref{sec:decidability}.)
\begin{corollary} \label{cor:intro-wfa-decidability}
  Let $\cA=(u,\mu,v)$ be a linear representation of an \emph{invertible} WFA over a number field $K$.
  \begin{enumerate}
  \item It is decidable whether $\cA$ is equivalent to a finitely ambiguous, respectively, a polynomially ambiguous WFA.
  \item If $\cA$ is equivalent to a finitely ambiguous WFA, then the minimal ambiguity $M$ of such a WFA is computable.
  \end{enumerate}
\end{corollary}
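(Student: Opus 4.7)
The first move is to pass to an equivalent minimal linear representation of $\cA$ via the classical Schützenberger algorithm; since minimization proceeds by restriction to an invariant subspace followed by passage to a quotient, invertibility of the transition matrices is inherited, placing us in the setting of \cref{t:wfa-ambiguity}. The central algorithmic ingredient is the computability of the linear Zariski closure $\widetilde{G}$ of the finitely generated matrix semigroup $\mu(X^*) \subseteq \GL_d(K)$, due to Hrushovski, Ouaknine, Pouly, and Worrell. Because the generators are invertible, $\widetilde{G}$ is a linear algebraic group over $K$, and its standard structural invariants (connected component, derived series, solvable radical, and maximal torus with its character lattice) can be read off effectively from defining equations over a number field.

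For polynomial ambiguity, \cref{t:wfa-ambiguity}(2) reduces the decision to two conditions: finitely generated spectrum and uniform power-splitting over $K$. By \cref{c:intro-spec} the first is equivalent to $\widetilde{G}$ being virtually solvable, witnessed by termination of the derived series of $\widetilde{G}^0$. Every eigenvalue of an element of $\mu(X^*)$ arises as a value of a character of a maximal torus of $\widetilde{G}^0$; one can compute explicit generators of this character lattice, and over a number field it is routine to test whether a uniform power of each generator lies in $K$.

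For finite ambiguity, \cref{t:wfa-ambiguity}\ref{wfa-ambig:finamb} requires $u\mu(X^*)v$ to be a Bézivin set. By minimality one may choose words $x_1,\dots,x_d, y_1,\dots,y_d \in X^*$ such that the row vectors $u\mu(x_i)$ form a basis of $K^{1\times d}$ and the column vectors $\mu(y_j)v$ a basis of $K^{d\times 1}$; then the matrix $B = (u\mu(x_iy_j)v)_{i,j}$ is invertible and $\mu(w)$, expressed in the basis $\{\mu(y_j)v\}$, equals $B^{-1}(u\mu(x_iwy_j)v)_{i,j}$, so each of its entries is a fixed $K$-linear combination of the scalars $u\mu(x_iwy_j)v$. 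Absorbing these finitely many coefficients into $\Gamma$, we obtain that $u\mu(X^*)v$ is Bézivin iff the matrix set $\mu(X^*)$ is Bézivin; using the adjugate identity together with the finitely generated group generated by the determinants $\det\mu(x)$, this is in turn equivalent to the group $G = \langle \mu(X^*)\rangle$ being Bézivin (invoking \cref{l:bezivin-base-change} for the change of basis). By \cref{t:intro-bezivin} this is equivalent to $\widetilde{G}^0$ being a torus together with uniform power-splitting over $K$, both decidable as above. For part (2), the minimal $M$ is a priori bounded in terms of $d$ via the constructive monomial form of \cref{p:nice-wfa}, reducing the computation to a bounded enumeration in which each candidate $M$ is checked by the same toolkit.

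\textbf{Main obstacle.} The key subtlety is the passage, in the finite-ambiguity case, from the scalar-output Bézivin condition of \cref{t:wfa-ambiguity}\ref{wfa-ambig:finamb} to a group-theoretic condition on $\mu(X^*)$ accessible through $\widetilde{G}$: this relies essentially on minimality and on preservation of the Bézivin property under inversion inside the finitely generated group of determinants. A secondary difficulty is making the power-splitting test effective, which is exactly the role played by the number-field hypothesis.
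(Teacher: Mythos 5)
Your overall reduction is sound and matches the paper's first step: minimize (invertibility is indeed inherited), then use \cref{t:wfa-ambiguity} (via \cref{p:wfa-necessary} and \cref{p:nice-wfa}) to turn both questions into arithmetic properties of the finitely generated semigroup $\mu(X^*)$, to be decided through the computable Zariski closure. But the two places where the actual decision procedures should appear contain genuine gaps. First, the power-splitting test. You propose to compute a maximal torus of $\widetilde G^0$ with its character lattice and ``test whether a uniform power of each generator lies in $K$''; as stated this is not a well-defined procedure (generators of the character lattice are characters, not field elements), and the property to be decided is that \emph{every eigenvalue of every element of $\mu(X^*)$} has a power in $K$ --- an infinite family of conditions involving elements that need not lie in $\widetilde G^0$, let alone in the chosen torus. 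You give no reduction of this to a finite, checkable condition, and that reduction is exactly the crux. The paper avoids algebraic-group machinery altogether: it works with the \emph{linear} Zariski closure, shows (\cref{p:closure-of-diagonal}) that Bézivin-ness over $K$ is equivalent to the identity component $Z_0$ being simultaneously diagonalizable \emph{over $K$}, and decides this by iterated diagonalization of explicitly computable elements of $Z_0$ (\cref{p:decide-semigroup}); for the finitely-generated-spectrum case it first computes a composition series over a finite extension $L$ and restricts scalars to $K$ before applying the same test, which is precisely how the $K$-power-splitting condition is made effective.

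Second, part (2). Saying the minimal $M$ is ``a priori bounded in terms of $d$'' and can then be found by ``bounded enumeration in which each candidate $M$ is checked by the same toolkit'' leaves the essential point unproved: your toolkit (torus/solvability conditions on $\widetilde G$) characterizes only the \emph{existence} of some $M$ with $u\mu(X^*)v\subseteq \sumset{M}{\Gamma_0}$, and you give no method to decide, for a \emph{specific} candidate $M$, whether such a containment holds. (The bound $M\le d$ is in fact true, but it needs the explicit induced-representation construction --- the terminal vector of the monomial automaton lies in a $d$-dimensional coordinate block --- which you do not carry out.) The paper instead proves \cref{cor:ambiguity-degree}: the constructed monomial automaton provably attains the minimal ambiguity degree (this uses \cref{p:full-term-bound} and \cref{p:maximally-separating-element}), after which the minimal $M$ is computed purely combinatorially from the finite group of underlying permutation matrices and the nonzero patterns of $\mu(w)v$. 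Without an analogue of this step, your argument decides only the qualitative questions in part (1), not the value of $M$.
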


It is also possible to compute an equivalent finitely ambiguous, respectively, polynomially ambiguous WFA if it exists.
In the finitely ambiguous case, this WFA can be computed to be of the minimal possibly ambiguity $M$.
While we show that the problems are decidable, our algorithms are presumably not particularly efficient or practical, as they are based on the computation of the (linear) Zariski closure of a finitely generated group of matrices.
In several special cases better algorithms are known.

\subsection*{Proofs} 
In \cite{bell-smertnig21} unit equations were used together with the linear Zariski topology to great effect in the unambiguous (Pólya) case of WFA.
Because the Pólya property depends on the initial and terminal weights of a WFA, it does not reduce to a purely (semi)group-theoretic question.

Unfortunately, the key lemma \cite[Lemma 4.2]{bell-smertnig21} does not appear to generalize in a suitable form to attack Bézivin set problems we face here.
Instead, we need to inspect the matrix groups much more closely, applying unit equations in a novel but very natural way, in the key results \cref{p:full-term-bound,c:term-bound}.

The heavy lifting for the Bézivin case is done in \cref{sec:locbez,sec:bezchar}. The heart of the arguments is contained in \cref{p:full-term-bound,l:permuting-spaces,l:simultaneous-decomposition}.
The finitely generated spectrum case is dealt with in \cref{sec:locfg}.
As we hope the results may be useful to people with a variety of mathematical backgrounds, we strive to provide detailed and mostly self-contained proofs, where this is reasonably possible. 
In contrast to \cite{bell-smertnig21}, we avoid using the height machinery also in positive characteristic; considerations using the natural density in subsets of the integers end up being sufficient.
\subsection*{Acknowledgements} We thank Ganna (Anja) Kudryavtseva for participating in extensive discussions that helped flesh out the main arguments in this paper.
We also thank Laure Daviaud, Urban Jezernik, Filip Mazowiecki, and David Purser for discussions and Janez Bernik for making us aware of the papers \cite{corvaja-rapinchuk-ren-zannier22,corvaja-demeio-rapinchuk-ren-zannier22,corvaja-demeio-rapinchuk-ren-zannier23}.
Additional thanks go to Pietro Corvaja for confirming that the arguments in \cite{corvaja-demeio-rapinchuk-ren-zannier23} can also be used to deduce our version of \cref{t:intro-bezivin} and to Andrei Rapinchuk for pointing out that \cref{c:intro-spec} has been observed before.

Smertnig was supported by the Slovenian Research and Innovation Agency (ARIS) program P1-0288.
Part of the research was conducted during a research stay of Puch at IMFM in Ljubljana, funded by ARIS program P1-0288 and Polish National Science Centre SONATA BIS-12 grant number 2022/46/E/ST6/00230.

\section{Notions and Preliminaries} \label{sec:notions}

For $m$,~$n \in \Z_{\ge 0}$, we write $[m,n] \coloneqq \{\, x \in \Z : m \le x \le n \}$ for the discrete interval and set $[n]\coloneqq [1,n]$.
If $K$ is a field, we denote by $\mu_n(K)$ the group of all $n$-th roots of unity, by $\mu_n^*(K)$ the set of primitive $n$-th roots of unity, and set $\mu(K)\coloneqq\bigcup_{n \ge 1} \mu_n(K)$.

It will be useful to make use of the language of linear representations of groups (and sometimes semigroups).
Let $K$ be field, embedded in a fixed algebraic closure $\algc{K}$, and let $V$ be a finite-dimensional vector space over $K$.
A ($K$-)representation of a semigroup $S$ is a semigroup homomorphism $\rho\colon S \to \End(V)$, or, more explicitly, it is the pair $(V,\rho)$. 
We only consider finite-dimensional representations.
A representation is \defit{faithful} if $\rho$ is injective.
A \defit{morphism} (or \defit{$S$-equivariant map}) between representations $(V,\rho)$ and $(W,\sigma)$ is a linear map $\Psi\colon V \to W$ such that $\Psi\rho(s) =\sigma(s)\Psi$ for all $s \in S$.

\emph{Almost exclusively} we will only consider representations $S \to \GL(V)$ with invertible images (which is of course a severe restriction), with \cref{p:full-term-bound,c:term-bound} being the notable exceptions.
For the problems we consider, this will turn out to essentially be equivalent to considering group representations $G \to \GL(V)$. 
However, the more general statements are more natural in the applications to automata theory.

A matrix semigroup $S \subseteq \GL_d(K)$ is \defit{Bézivin} if there is a Bézivin set containing every entry of every $A \in S$; the \defit{Pólya} property for $S$ is defined analogously.

\begin{lemma} \label{l:bezivin-base-change}
  If $S \subseteq \GL_d(K)$ is a Bézivin semigroup and $T \in \GL_d(K)$, then also $TST^{-1}$ is Bézivin.
\end{lemma}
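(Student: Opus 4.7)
The plan is to expand the entries of $TAT^{-1}$ in terms of the entries of $T$, $A$, and $T^{-1}$, and then absorb the entries of $T$ and $T^{-1}$ into a larger finitely generated group, while observing that the width $M$ of the sum grows by at most a factor depending only on $d$.

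First I would fix a Bézivin set $\sumset{M}{\Gamma_0}$ containing every entry of every matrix in $S$, with $\Gamma \le K^\times$ finitely generated. Writing each entry $A_{ij}$ of $A \in S$ as $A_{ij} = \sum_{m=1}^M g_{m,ij}$ with $g_{m,ij} \in \Gamma_0$, the $(k,l)$-entry of $TAT^{-1}$ unfolds as
\[
(TAT^{-1})_{kl} = \sum_{i=1}^d \sum_{j=1}^d \sum_{m=1}^M T_{ki}\, (T^{-1})_{jl}\, g_{m,ij}.
\]
Each individual term on the right lies in $\Gamma_0'$, where $\Gamma' \le K^\times$ is the (finitely generated) subgroup generated by $\Gamma$ together with the finitely many nonzero entries of $T$ and of $T^{-1}$: if any of the three factors vanishes the term is $0 \in \Gamma_0'$, and otherwise the product is a product of three elements of $\Gamma'$.

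Setting $M' \coloneqq d^2 M$, the display above expresses each entry of $TAT^{-1}$ as a sum of at most $M'$ elements of $\Gamma_0'$, so every entry of every element of $TST^{-1}$ lies in the Bézivin set $\sumset{M'}{\Gamma_0'}$. Hence $TST^{-1}$ is Bézivin. There is no real obstacle here; this is just a bookkeeping computation whose only purpose is to confirm that, unlike the Pólya property (which would insist on $M'=1$ and therefore is destroyed by off-diagonal entries of $T$), allowing $M'$ to grow by a bounded factor makes the property invariant under conjugation.
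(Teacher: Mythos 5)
Your proof is correct and follows essentially the same route as the paper: expand the $(k,l)$-entry of $TAT^{-1}$ as $\sum_{i,j} T_{ki} A_{ij} (T^{-1})_{jl}$, enlarge $\Gamma$ to the group $\Gamma'$ generated by $\Gamma$ and the nonzero entries of $T$ and $T^{-1}$, and conclude that all entries lie in $\sumset{d^2M}{\Gamma_0'}$. No gaps; the bookkeeping with $M' = d^2M$ matches the paper's argument exactly.
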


\begin{proof}
  Let $\Gamma \le K^\times$ be finitely generated and let $M \ge 0$ be such that all entries of all matrices $A \in S$ are contained in $\sumset{M}{\Gamma_0}$.
  Suppose $T=(t_{ij})_{1\le i,j\le d}$ and $T^{-1}=(t_{ij}')_{1\le i,j\le d}$.
  If $A=(a_{ij})_{1\le i,j\le d} \in S$, then the $rs$-entry of $TST^{-1}$ is $\sum_{i,j=1}^d t_{ri} a_{ij} t_{js}'$.
  Taking $\Gamma'$ to be the group generated by $\Gamma$ together with all $t_{ij}$ and $t_{ij}'$, we see that all elements of $TST^{-1}$ have their entries in $\sumset{d^2M}{\Gamma_0'}$.
\end{proof}

Thus, the Bézivin property is independent of the choice of basis (although the constant $M$ and the group $\Gamma$ may change).
Given a representation $\rho \colon S \to \End(V)$ of a semigroup $S$, we therefore say that $\rho$ has \defit{finitely generated spectrum}, respectively the \defit{Bézivin} property, if this is the case for $\rho(S)$ for some choice of basis on $V$ (and hence for all choices of bases on $V$).

A semigroup $S \subseteq \GL(V)$ is \defit{irreducible} if it acts irreducibly on $V$, that is, there is no nonzero proper $S$-invariant subspace of $V$. 
The semigroup is \defit{absolutely irreducible} if it is irreducible when considered over $\algc{K}$, that is, as a subsemigroup of $\GL(\algc{K} \otimes_K V)$.

The notion of a steady endomorphism will be of central importance in our proofs.

\begin{definition} \label{d:steady}
  An endomorphism $A \in \End(V)$ is \defit{steady} if
  \begin{enumerate}
  \item every subspace $U \subseteq V$ that is $A^n$-invariant for some $n \ge 1$ is $A$-invariant, and 
  \item all eigenvalues of $A$ are contained in $K$.
  \end{enumerate} 
\end{definition}

The following basic properties of steadyness are easily deduced using the Jordan normal form (see \cref{sec:locbez} below for proofs).

\begin{lemma} \label{l:steady-basic} Let $A \in \End(V)$.
  \begin{enumerate}
    \item \label{steady-basic:power} There exists some $n \ge 1$ such that $A^n$ is steady if and only if there exists some $m \ge 1$ such that $\lambda^m \in K$ for all eigenvalues $\lambda \in \algc{K}$ of $A$.
    \item \label{steady-basic:norootofunity} If $A$ is steady and $\lambda$,~$\mu \in K$ are eigenvalues with $\lambda/\mu$ a root of unity, then $\lambda=\mu$.
    \item \label{steady-basic:pos-char-diagonalizable} If $\characteristic{K} > 0$, then every steady $A \in \End(V)$ is diagonalizable.
  \end{enumerate}
\end{lemma}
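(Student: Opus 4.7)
The plan is to analyze $A$ through its Jordan--Chevalley decomposition $A = A_s + A_n$, with $A_s$ semisimple, $A_n$ nilpotent, and $[A_s, A_n] = 0$ (working over $\algc{K}$ if necessary). Correspondingly, $V \otimes_K \algc{K} = \bigoplus_\lambda W_\lambda$ decomposes into generalized eigenspaces, which is the organizing framework throughout.

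For \ref{steady-basic:power}, the forward direction is immediate: if $A^n$ is steady, its eigenvalues $\lambda^n$ lie in $K$ by definition, so $m = n$ works. For the backward direction, I would take $n$ to be a common multiple of (a) the given $m$; (b) the LCM of the orders of those ratios $\lambda_i/\lambda_j$ of distinct nonzero eigenvalues of $A$ that happen to be roots of unity; and (c) in positive characteristic, a sufficiently large power $p^k$ of $\characteristic K$. Conditions (a) and (b) ensure that $\lambda \mapsto \lambda^n$ either collapses two eigenvalues of $A$ to a common $A^n$-eigenvalue or separates them to values whose ratio is no longer a root of unity; condition (c) collapses the nilpotent contributions on each nonzero generalized eigenspace via the Frobenius identity $(A_s + A_n)^{p^k} = A_s^{p^k} + A_n^{p^k}$, since $A_n^{p^k} = 0$ for $k$ large. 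This is crucial to avoid pathologies such as $\begin{psmallmatrix}1 & 1 \\ 0 & 1\end{psmallmatrix}^2 = I$ in characteristic $2$, which fails property (1) of steadiness despite having a trivial eigenvalue configuration. Verifying steadiness of $A^n$ then reduces to checking that for every $s \ge 1$ the $A^{ns}$-eigenspaces coincide with the $A^n$-eigenspaces, with the zero generalized eigenspace handled trivially since $A^n$ acts as $0$ there once $n \ge \dim V$.

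For \ref{steady-basic:norootofunity}, the argument is direct. Given eigenvalues $\lambda \neq \mu$ in $K$ with $\lambda/\mu$ a root of unity of order $r$, I pick eigenvectors $v, w \in V$ with $Av = \lambda v$ and $Aw = \mu w$ (these exist since $\lambda, \mu \in K$). Then $A^r(v + w) = \lambda^r v + \mu^r w = \lambda^r(v + w)$, so $\lspan(v + w)$ is $A^r$-invariant. Steadiness forces it to be $A$-invariant, but $A(v + w) = \lambda v + \mu w$ is a scalar multiple of $v + w$ only when $\lambda = \mu$.

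For \ref{steady-basic:pos-char-diagonalizable}, which I expect to be the main obstacle, I would proceed as follows. The Frobenius identity combined with $A_n^{p^k} = 0$ for $k$ large gives $A^{p^k} = A_s^{p^k}$, which is diagonalizable over $\algc{K}$ and, by steadiness property (2), even over $K$. For each eigenvalue $\mu$ of $A^{p^k}$, the eigenspace $V_\mu$ is $A^{p^k}$-invariant, hence $A$-invariant by steadiness; on $V_\mu$ the minimal polynomial of $A$ divides $x^{p^k} - \mu = (x - \lambda)^{p^k}$ for the unique $p^k$-th root $\lambda$ of $\mu$ in $\algc{K}$, so $\lambda$ is an eigenvalue of $A$ and must lie in $K$. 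The crux is to kill the nilpotent part $N := A|_{V_\mu} - \lambda I$: if some $v \in V_\mu$ satisfied $Nv \neq 0$, then $\lspan(v)$ would be $A^{p^k}$-invariant (since $A^{p^k}v = \mu v$) but not $A$-invariant (since $Av = \lambda v + Nv \notin \lspan(v)$), contradicting steadiness. Hence $N = 0$ on each $V_\mu$, and $A$ is diagonalizable.
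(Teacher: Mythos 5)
Your proposal is correct and takes essentially the same route as the paper: the forward direction of \ref{steady-basic:power} is immediate, the backward direction picks the same exponent (a multiple of $m$ killing the root-of-unity ratios among eigenvalues, a bound for the nilpotent block at $0$, and a large $p$-power in positive characteristic to annihilate the nilpotent part — phrased via Jordan--Chevalley instead of the Jordan normal form), and \ref{steady-basic:norootofunity}, \ref{steady-basic:pos-char-diagonalizable} use the same one-dimensional witnesses $\lspan(v+w)$, respectively $\lspan(v)$ with $Nv \ne 0$, invariant under a power of $A$ but not under $A$. The one loose point is your claim that steadiness of $A^n$ ``reduces'' to eigenspace coincidence for all powers: steadiness quantifies over \emph{all} $A^{ns}$-invariant subspaces, so one still has to argue, e.g., that $A^n$ is a polynomial in $A^{ns}$ for every $s$ (handling the unipotent parts in characteristic zero) — but this is precisely the final verification the paper itself declares ``not hard to see'' and delegates to cited lemmas, so your sketch matches the paper's level of detail rather than falling short of it.
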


Since property \ref{steady-basic:power} of \cref{l:steady-basic} will be important, we make the following definitions.

\begin{definition}
  Let $S \subseteq \GL(V)$ be a semigroup.
  \begin{enumerate}
  \item   A field $L \supseteq K$ is \defit{power-splitting} for $S$ if for every $A \in S$ and every eigenvalue $\lambda \in \algc{K}$ of $A$, there exists some $m \ge 1$ such that $\lambda^m \in K$.
  \item   A field $L \supseteq K$ is \defit{uniformly power-splitting} for $S$ if there is an $N \ge 1$ such that $\lambda^N \in K$ for every eigenvalue $\lambda$ of every $A \in S$.
  \end{enumerate}
\end{definition}

Given a representation $\rho \colon S \to \GL(V)$ every decomposition $V = V_1 \oplus \cdots \oplus V_r$ gives rise to a block-decomposition of $\rho$.
We denote by $\rho_{ij}$ the corresponding map $\rho_{ij}\colon S \to \Hom_K(V_j, V_i)$.
Each diagonal block $\rho_{ii}$ is of course itself a representation of $S$.

\begin{definition} \label{d:monomial-rep}
  A semigroup representation $\rho \colon S \to \GL(V)$ is 
  \begin{enumerate}
    \item \defit{monomial} if there exists a decomposition $V = V_1 \oplus \cdots \oplus V_d$ into one-dimensional spaces, such that for every $s \in S$ and every $j \in [d]$, there exists $i \in [d]$ such that $\rho(s)V_j \subseteq V_i$.
    \item \defit{\textup(upper\textup) block-triangular with monomial diagonal blocks} if there exists a decomposition $V = V_1 \oplus \cdots \oplus V_r$ such that $\rho_{ij} = 0$ for $i < j$ and $\rho_{ii}$ is monomial for all $i \in [r]$.
  \end{enumerate}
\end{definition}

In other words, a representation is \emph{monomial} if and only if, after a suitable choice of basis, every matrix of the representation has exactly one nonzero entry in each row and each column.
A representation is \emph{block-triangular with monomial diagonal blocks} if, after a suitable choice of basis, every matrix has that shape.

In general, locally Bézivin representations (even of finitely generated semigroups) will not be monomial.
For instance, every finite matrix group is trivially Bézivin, but not necessarily monomial.
For this reason, the following weaker notions are useful.

\begin{definition} \label{d:weakly-epimonomial}
  Let $\rho\colon S \to \GL(V)$ be a semigroup representation.
  A \defit{weakly epimonomial decomposition} \textup(of $V$ for $\rho$\textup) is a vector space decomposition $V = V_1 \oplus \cdots \oplus V_r$ such that
  \begin{enumerate}
    \item \label{d-wm:intersection} every $V_i$ is contained in an eigenspace of all steady $\rho(s)$ \textup($s \in S$\textup),
    \item \label{d-wm:permutation} every $\rho(t)$, with $t \in S$, permutes the spaces $V_1$, \dots,~$V_r$, and
    \item \label{d-wm:splitting} the field $K$ is power-splitting for $\rho(S)$.
  \end{enumerate}
  The representation $\rho$ is \defit{weakly epimonomial} if there exists a weakly epimonomial decomposition.
\end{definition}

In different terminology, the spaces $V_1$, \dots,~$V_r$ are an imprimitive system for $\rho$, consisting of subspaces of joint-eigenspaces of the steady elements.
Given a weakly monomial decomposition, we call
\[
  D \coloneqq \{\, \rho(s) \in \rho(S) : \text{every $V_i$ is contained in an eigenspace of $\rho(s)$} \,\}
\]
the \defit{diagonal} of the $\rho$ with respect to the decomposition.
The diagonal $D$ is a commutative subsemigroup of $\rho(S)$ (subgroup if $S=G$ is a group), and it embeds into $(K^\times)^r$.

After a choice of basis that is compatible with the decomposition $V=V_1 \oplus \cdots \oplus V_r$, the set $D$ is indeed precisely the subset of diagonal matrices in $\rho(S)$ that are scalar on each $V_i$.
Restricting a weakly epimonomial representation to a subsemigroup $S'$, trivially gives again a weakly epimonomial representation of $S'$.

We will later see: if $\rho\colon S \to \GL(V)$ is weakly epimonomial and $G = \langle \rho(S) \rangle$ is the group generated by $\rho(S)$, then $G \hookrightarrow \GL(V)$ is weakly epimonomial as well (\cref{l:lift-wepi-to-group}).

\begin{definition} \label{d:epimonomial}
  \begin{enumerate}
  \item Let $\rho\colon S \to \GL(V)$ be a semigroup representation with diagonal $D$, and let $G = \langle \rho(S) \rangle$.
  A weakly epimonomial decomposition for $\rho$ is \defit{epimonomial} if $G/\langle D\rangle$ is finite.

  \item A semigroup representation $\rho\colon S \to \GL(V)$ is \defit{epimonomial} if it has an epimonomial decomposition.
  \end{enumerate}
\end{definition}

If $S \subseteq \GL(V)$ is a subsemigroup, we apply all of this terminology to $S$ as well, via the representation $S \hookrightarrow \GL(V)$ arising from the inclusion.
Note that these notions depend on the representation, respectively, the particular embedding of $S$ into $\GL(V)$, and as such are not properties of the abstract semigroup $S$ (or abstract group $G$).

If a group representation $\rho\colon G \to \GL(V)$ is epimonomial with diagonal $D$, it is easy to see that first restricting to $\rho^{-1}(D)$, and then considering the induced representation on $G$, gives a monomial representation for $G$. Specifically, we have the following (see \cref{l:factor-through-monomial} for a proof).

\begin{proposition} \label{p:epimonomial-lift} 
  If $\rho\colon S \to \GL(V)$ is an epimonomial semigroup representation, then there exists a monomial representation $\widehat\rho\colon S \to \GL(\widehat V)$ and an epimorphism $(\widehat V, \widehat \rho) \twoheadrightarrow (V,\rho)$.
\end{proposition}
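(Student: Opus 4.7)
The plan is to realize $\widehat V$ as an induced representation from a suitable finite-index subgroup of $G \coloneqq \langle \rho(S)\rangle$, chosen so that the action becomes diagonal on a basis adapted to the decomposition $V = V_1 \oplus \cdots \oplus V_r$, and then to observe that the canonical evaluation map $g \otimes v \mapsto gv$ furnishes the required surjection.

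To set this up, let $H \coloneqq \langle D \rangle \le G$, which has finite index by the epimonomial hypothesis. Pick a basis $\{e_{i,j}\}$ of $V$ adapted to $V = V_1 \oplus \cdots \oplus V_r$, and set $L_{i,j} \coloneqq K e_{i,j}$. Every element of $D$ acts on $V_i$ as a scalar, so every element of $H$ does too; each line $L_{i,j}$ is therefore $H$-invariant and $H$ acts on it via a character depending only on $i$. Define
\[
\widehat V \coloneqq \bigoplus_{i,j} \Ind_H^G L_{i,j},
\]
and let $\widehat \rho\colon S \to \GL(\widehat V)$ be the $G$-action on $\widehat V$ restricted along $\rho$.

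To verify monomiality, choose left coset representatives $g_1, \ldots, g_n$ for $H$ in $G$; then $\Ind_H^G L_{i,j}=\bigoplus_k g_k \otimes L_{i,j}$ is a sum of one-dimensional subspaces permuted by $G$ via its left action on cosets. Concatenating these decompositions over all $(i,j)$ realizes $\widehat V$ as a $G$-monomial module, and since $\rho(S) \subseteq G$ the permutation structure descends to $\widehat \rho$. For the epimorphism, each $H$-equivariant inclusion $L_{i,j} \hookrightarrow V$ corresponds by Frobenius reciprocity to a $G$-equivariant map $\Ind_H^G L_{i,j} \to V$ given on pure tensors by $g \otimes v \mapsto gv$. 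Summing them yields a $G$-equivariant map $\Phi\colon \widehat V \to V$ whose image contains every basis vector $e_{i,j}$ (take the coset representative $1$), hence all of $V$. Restricting $\Phi$ along $\rho$ provides the desired epimorphism $(\widehat V,\widehat \rho) \twoheadrightarrow (V,\rho)$.

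The reduction from the semigroup $S$ to the ambient group $G$ is straightforward: everything is constructed as a $G$-representation and then restricted along $\rho$, so I do not anticipate a real obstacle. The only minor point to check is that $H$ acts by scalars on each $L_{i,j}$, which is immediate because $D$ does and $\langle D\rangle$ is generated, within $\GL(V)$, by elements of $D$ and their inverses.
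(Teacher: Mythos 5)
Your proof is correct and follows essentially the same route as the paper: the paper's \cref{l:factor-through-monomial} likewise induces the representation from the finite-index subgroup generated by the diagonal (with the decomposition into lines on which it acts by scalars) and uses the evaluation map $g \otimes v \mapsto \rho(g)v$ as the $G$-equivariant epimorphism, then restricts along $\rho$ to the semigroup $S$. Your splitting of $\mathrm{Res}_H V$ into the lines $L_{i,j}$ before inducing is only a cosmetic difference, since induction commutes with direct sums.
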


\section{Statements of Main Results} \label{sec:main-theorems}

We now state our main results in full generality.
Let $K$ be a field and $V$ a finite-dimensional vector space.
Our first main theorem characterizes locally Bézivin representations (over any field).

\begin{theorem} \label{t:main-loc-bezivin}
  Let $\rho \colon S \to \GL(V)$ be a semigroup representation.
  Then the following statements are equivalent.
  \begin{equivenumerate}
    \item \label{locbez:polya} There exists a basis of $V$ with respect to which $\rho$ is locally Pólya.
    \item \label{locbez:bezivin} The representation $\rho$ is locally Bézivin.
    \item \label{locbez:weakbezivin} The representation $\rho_L \colon S \to \GL(L \otimes_K V)$ is locally Bézivin for some extension field $L/K$, and $K$ is a power-splitting field for $\rho_L(S)$.
    \item \label{locbez:locepimon} The representation $\rho$ is weakly epimonomial.
    \item \label{locbez:finsub} The representation $\rho$ is locally epimonomial.
  \end{equivenumerate}
\end{theorem}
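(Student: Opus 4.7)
I would prove the equivalences via the cycle $\ref{locbez:polya} \Rightarrow \ref{locbez:bezivin} \Rightarrow \ref{locbez:weakbezivin} \Rightarrow \ref{locbez:locepimon} \Rightarrow \ref{locbez:finsub} \Rightarrow \ref{locbez:polya}$. The implications $\ref{locbez:polya} \Rightarrow \ref{locbez:bezivin}$ and $\ref{locbez:finsub} \Rightarrow \ref{locbez:locepimon}$ are essentially formal: the first holds because Pólya sets are Bézivin and the Bézivin property is basis-invariant by \cref{l:bezivin-base-change}; for the second, an epimonomial decomposition is by definition weakly epimonomial, and a compactness argument over the poset of finitely generated subsemigroups will extract a single global weakly epimonomial decomposition from the collection of local epimonomial ones, using that each such decomposition is determined by the joint-eigenspace structure of its diagonal.

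For $\ref{locbez:bezivin} \Rightarrow \ref{locbez:weakbezivin}$ I would take $L = K$, so the only new content is power-splitting of $K$ for $\rho(S)$. Fixing $A = \rho(s)$, the cyclic semigroup $\{A^n\}_{n \ge 1}$ is Bézivin, so every trace $\Tr(A^n) = \sum_i \lambda_i^n$ lies in a common set $\sumset{M}{\Gamma_0}$ with $\Gamma$ finitely generated. After enlarging $\Gamma$ to contain the eigenvalues $\lambda_i$ and invoking the finiteness of non-degenerate solutions to unit equations (\cref{p:unit-equation-crude}), the $\lambda_i$ group into classes whose pairwise ratios are roots of unity; since the elementary symmetric polynomials in the $\lambda_i$ lie in $K$, a Galois-theoretic argument then forces $\lambda_i^m \in K$ for some $m \ge 1$.

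The implication $\ref{locbez:weakbezivin} \Rightarrow \ref{locbez:locepimon}$ is the heart of the theorem and where I expect the main obstacle. My plan is to pick $A \in \rho_L(S)$ and some $N \ge 1$ such that $A^N$ is steady (possible by power-splitting and \cref{l:steady-basic}\ref{steady-basic:power}), producing an eigenspace decomposition $V \otimes_K L = \bigoplus_\alpha U_\alpha$. For any other $B \in \rho_L(S)$, expanding matrix entries of $B A^{nN}$ in the $U_\alpha$-basis yields an exponential polynomial in the eigenvalues $\lambda_\alpha^N$; the Bézivin constraint together with \cref{p:full-term-bound,c:term-bound} will force most coefficients to vanish, so that $B$ must permute the $U_\alpha$ up to tightly controlled degeneracies. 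Iterating this refinement inside each $U_\alpha$ using further elements of $S$, then reconciling the partial decompositions via \cref{l:permuting-spaces,l:simultaneous-decomposition}, will produce a global weakly epimonomial decomposition of $V \otimes_K L$; the descent from $L$ back to $K$ uses the power-splitting hypothesis.

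Finally, for $\ref{locbez:locepimon} \Rightarrow \ref{locbez:finsub} \Rightarrow \ref{locbez:polya}$, I would restrict the weakly epimonomial decomposition $V = V_1 \oplus \cdots \oplus V_r$ to a finitely generated $T \subseteq S$. The finite permutation action of $\rho(T)$ on $\{V_1, \dots, V_r\}$, combined with the within-block unit-equation argument above, will show that the quotient of $\langle \rho(T) \rangle$ by its diagonal is finite, upgrading to epimonomial and establishing $\ref{locbez:finsub}$. Then \cref{p:epimonomial-lift} factors $\rho|_T$ through a monomial representation; fixing a basis of $V$ adapted to the global decomposition and enlarging $\Gamma$ to contain the finitely many scalars appearing in the local monomial representations of the generators of $T$, every entry of every $\rho(t)$ lands in a common $\Gamma_0$, because monomial matrix multiplication produces entries as products, never sums, of entries of the factors.
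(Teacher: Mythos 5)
Your overall architecture (the cycle \ref{locbez:polya}$\Rightarrow$\ref{locbez:bezivin}$\Rightarrow$\ref{locbez:weakbezivin}$\Rightarrow$\ref{locbez:locepimon}$\Rightarrow$\ref{locbez:finsub}$\Rightarrow$\ref{locbez:polya}) and your plan for the core implication \ref{locbez:weakbezivin}$\Rightarrow$\ref{locbez:locepimon} essentially follow the paper's route through \cref{p:full-term-bound,c:term-bound,l:permuting-spaces,l:simultaneous-decomposition}, but two of your steps have genuine gaps. First, \ref{locbez:bezivin}$\Rightarrow$\ref{locbez:weakbezivin} via traces does not work. In positive characteristic the trace simply cannot see power-splitting: an eigenvalue occurring with multiplicity divisible by $p$ vanishes from every $\Tr(A^n)$. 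For instance, over $K=\mathbb F_p(t)$ ($p$ odd) let $A$ be the direct sum of $p$ copies of the companion matrix of $x^2-tx-1$ and a $1\times 1$ identity block; then $\Tr(A^n)=1$ for all $n$, so the traces are trivially Bézivin, yet no power of the eigenvalues lies in $K$. Even in characteristic $0$ your closing step is not a valid implication: the unit-equation analysis only tells you that each eigenvalue either has a power in $\Gamma\subseteq K^\times$ or sits in a block of eigenvalues with root-of-unity ratios whose weighted subsum vanishes along the chosen set of exponents, and ``root-of-unity-ratio classes plus elementary symmetric functions in $K$'' does not force power-splitting: the Galois-stable multiset $\{1+\sqrt2,\,-1-\sqrt2,\,1-\sqrt2,\,-1+\sqrt2\}$ over $\Q$ splits into two such classes and has characteristic polynomial in $\Q[x]$, yet no element has a power in $\Q$. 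Since hypothesis \ref{locbez:bezivin} gives you \emph{all} entries, the robust argument is the paper's: apply the Bézivin condition to the diagonal entries of $A^n$ in a Jordan basis over $\algc K$ and invoke \cref{l:powers-bezivin}, as in \cref{l:bezivin-is-power-splitting}; this works uniformly in both characteristics.

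Second, in \ref{locbez:locepimon}$\Rightarrow$\ref{locbez:finsub} you claim that the finite permutation action together with ``the within-block unit-equation argument'' yields finiteness of $\langle\rho(T)\rangle$ modulo its diagonal $D$. Unit equations do not give this. What weak epimonomiality gives is that every element has a power in $D$ (\cref{l:steady-basic}), i.e.\ the quotient is a finitely generated \emph{torsion} group; to get finiteness one must show this quotient is again linear---the paper does this via the conjugation action on $\End(V_1)\oplus\cdots\oplus\End(V_r)$ in \cref{p:wepi-torsion}---and then apply Burnside--Schur (\cref{t:burnside-schur}). This classical input is unavoidable and is missing from your plan. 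Two smaller points: in \ref{locbez:finsub}$\Rightarrow$\ref{locbez:polya} the summands $V_i$ may have dimension greater than one, so $\rho(t)$ is only block-monomial and the epimorphism from the monomial lift of \cref{p:epimonomial-lift} reintroduces sums and base-change coefficients; the correct argument writes each element of a finitely generated subsemigroup as a fixed coset representative times an element of the diagonal (which is scalar on each block), as in \cref{l:epi-is-locpolya}. And your one-clause descent from $L$ to $K$ in \ref{locbez:weakbezivin}$\Rightarrow$\ref{locbez:locepimon} needs an actual argument, since the $L$-eigenspaces need not be defined over $K$; this is the content of \cref{l:locepimon-descent} and of the nontrivial direction of \cref{p:equiv-field}.
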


For finitely generated $S$, we will get the following corollary.

\begin{corollary} \label{cor:locbezivin-fg}
  If $S$ is finitely generated and $G =\langle \rho(S) \rangle$ is the group generated by $\rho(S)$, then the following statements are equivalent.
  \begin{equivenumerate}
    \item \label{lbzfg:polya} There exists a basis of $V$ with respect to which $\rho$ is Pólya.
    \item \label{lbzfg:bez} The representation $\rho$ is Bézivin.
    \item \label{lbzfg:weakbezivin} The representation $\rho_L\colon S \to \GL(L \otimes_K V)$ is Bézivin for some extension field $L/K$, and $K$ is a power-splitting field for $\rho_L(S)$.
    \item \label{lbzfg:epimon} The representation $\rho$ is epimonomial.
    \item \label{lbzfg:diag} The group $G$ is virtually simultaneously diagonalizable \textup(over K\textup). 
    \item \label{lbzfg:monomial} The representation $\rho$ is the epimorphic image of a monomial representation of $S$.
  \end{equivenumerate} 
\end{corollary}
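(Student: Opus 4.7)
The plan is to reduce most of the equivalences to \cref{t:main-loc-bezivin} by the observation that, as $S$ is finitely generated, each ``local'' condition in that theorem implies the corresponding global condition for all of $S$ (simply by instantiating the quantified finitely generated subsemigroup to be $S$ itself). Concretely, (a) $\Rightarrow$ (b) is immediate with $M=1$; (b) $\Rightarrow$ (c) is immediate with $L=K$; and (c) $\Rightarrow$ (a) comes from the implication (c) $\Rightarrow$ (a) of \cref{t:main-loc-bezivin}, noting that locally Pólya applied to $S$ gives Pólya. Likewise, the implication (b) $\Rightarrow$ (e) of \cref{t:main-loc-bezivin} gives that $\rho$ is locally epimonomial, which instantiated at $S$ itself yields that $\rho$ is epimonomial, i.e., (d). It remains to link (d), (e), and (f) into this cycle, and in particular to close (d) back to (b).

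For (d) $\Rightarrow$ (f) I apply \cref{p:epimonomial-lift} directly. For (f) $\Rightarrow$ (b), choose any linear right inverse $\Psi^+ \colon V \to \widehat V$ to the $S$-equivariant epimorphism $\Psi \colon \widehat V \twoheadrightarrow V$; then $\rho(s) = \Psi \widehat\rho(s) \Psi^+$ for every $s \in S$. Since $S$ is finitely generated and $\widehat\rho$ is monomial in some basis of $\widehat V$, all nonzero weights of $\widehat\rho(s)$ as $s$ ranges over $S$ lie in a single finitely generated subgroup $\Gamma \le K^\times$ generated by the nonzero weights of the generators; moreover each $\widehat\rho(s)$ has at most $\widehat d \coloneqq \dim \widehat V$ nonzero entries. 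Expanding $\Psi \widehat\rho(s) \Psi^+$ shows each entry of $\rho(s)$ is a sum of at most $\widehat d$ products of entries of $\Psi$, weights of $\widehat\rho(s)$, and entries of $\Psi^+$; enlarging $\Gamma$ to $\Gamma'$ to contain the nonzero entries of $\Psi$ and $\Psi^+$ places every entry of $\rho(s)$ in $\sumset{\widehat d}{\Gamma_0'}$, giving (b).

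For (d) $\Rightarrow$ (e): let $V = V_1 \oplus \cdots \oplus V_r$ be the epimonomial decomposition with diagonal $D$. By definition of $D$, each of its elements acts as a scalar on each $V_i$, so any basis of $V$ refining the decomposition simultaneously diagonalizes $\langle D \rangle$ over $K$; since $G/\langle D \rangle$ is finite by definition of epimonomial, $\langle D \rangle$ is a finite-index, simultaneously diagonalizable subgroup of $G$, giving (e). Conversely, (e) $\Rightarrow$ (b): let $H \le G$ be a finite-index subgroup simultaneously diagonalized in some basis of $V$. Since $\rho(S)$ is finitely generated as a semigroup, $G = \langle \rho(S) \rangle$ is finitely generated as a group, and hence the finite-index subgroup $H$ is also finitely generated (Schreier's lemma). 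It follows that all diagonal entries of elements of $H$ lie in a single finitely generated subgroup $\Gamma \le K^\times$. Writing $G = \bigsqcup_{i=1}^n g_i H$, each element of $\rho(S)$ has the form $g_i h$ with $h$ diagonal, so its entries are the single products $(g_i)_{kl}(h)_{ll}$, all lying in $\Gamma_0'$ for the finitely generated $\Gamma' \le K^\times$ generated by $\Gamma$ and the nonzero entries of the $g_i$. Hence $\rho$ is Pólya---and in particular Bézivin---in this basis, yielding (b).

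The truly hard work has already been packaged in \cref{t:main-loc-bezivin}, notably its implication (c) $\Rightarrow$ (a). Within the corollary itself the only delicate step is the argument for (e) $\Rightarrow$ (b), which crucially uses that a finite-index subgroup of a finitely generated group is again finitely generated in order to bound the diagonal entries of $H$ within a single finitely generated multiplicative group. Everything else is bookkeeping exploiting the collapse of ``local'' to ``global'' in the finitely generated setting.
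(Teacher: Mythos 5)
Your argument is essentially complete except for one step: the claim that \ref{lbzfg:bez}$\,\Rightarrow\,$\ref{lbzfg:weakbezivin} is ``immediate with $L=K$''. Statement \ref{lbzfg:weakbezivin} has two clauses, and only the first ($\rho_L$ Bézivin) is immediate for $L=K$; the second clause, that $K$ is power-splitting for $\rho(S)$, does not follow formally from the Bézivin property. In the paper this is precisely \cref{l:bezivin-is-power-splitting}, whose proof rests on the unit-equation input (\cref{l:powers-bezivin}); it is one of the genuinely arithmetic consequences of being Bézivin, not bookkeeping. The gap is easily repaired from material you already have: you derive \ref{lbzfg:epimon} from \ref{lbzfg:bez} via \cref{t:main-loc-bezivin}, and an epimonomial decomposition is in particular weakly epimonomial, so by \cref{d:weakly-epimonomial}\ref{d-wm:splitting} it already asserts that $K$ is power-splitting for $\rho(S)$; alternatively \ref{lbzfg:diag}, with a diagonalizable subgroup of index $N$, gives $\lambda^N\in K$ for every eigenvalue. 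So route \ref{lbzfg:weakbezivin} through \ref{lbzfg:epimon} or \ref{lbzfg:diag} rather than directly through \ref{lbzfg:bez}.

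Apart from this, your proof is correct, and the collapse of ``local'' to ``global'' for finitely generated $S$ is exactly the intended mechanism. Your organization differs modestly from the paper's: the paper obtains \ref{lbzfg:bez}$\,\Leftrightarrow\,$\ref{lbzfg:epimon}$\,\Leftrightarrow\,$\ref{lbzfg:diag} from \cref{t:locally-bezivin-equivalent}, handles \ref{lbzfg:weakbezivin} by \cref{p:equiv-field}, uses \cref{l:epi-is-locpolya} for \ref{lbzfg:polya}, and closes \ref{lbzfg:monomial}$\,\Rightarrow\,$\ref{lbzfg:diag} by pushing the kernel of the permutation homomorphism $\widehat G\to\mathfrak S_l$ forward along the equivariant epimorphism. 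You instead close the loop by \ref{lbzfg:monomial}$\,\Rightarrow\,$\ref{lbzfg:bez} via the direct estimate $\rho(s)=\Psi\widehat\rho(s)\Psi^{+}$ (using only that a monomial matrix has at most $\dim\widehat V$ nonzero entries and that $S$ is finitely generated), and you reprove \ref{lbzfg:diag}$\,\Rightarrow\,$\ref{lbzfg:bez} by hand, essentially reproducing the coset argument inside the proof of \cref{t:locally-bezivin-equivalent}, correctly using that the finite-index subgroup of the finitely generated $G$ is finitely generated. Both computations are sound, so your route is a legitimate, slightly more self-contained alternative for these links; what the paper's route buys is that these verifications are already packaged in \cref{t:locally-bezivin-equivalent} and \cref{p:equiv-field}, while what yours buys is independence from the statement of \cref{t:locally-bezivin-equivalent} beyond what \cref{t:main-loc-bezivin} records.
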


If $S$ is not finitely generated, we still get a similar result in characteristic $0$.

\begin{corollary} \label{cor:locbezivin-char0}
  If $\characteristic K=0$ and $G =\langle \rho(S) \rangle$, the following statements are equivalent.
  \begin{equivenumerate}
    \item \label{lbz0:locbez} The representation $\rho$ satisfies the equivalent conditions of \cref{t:main-loc-bezivin} and $K$ is uniformly power-splitting for $\rho(S)$.
    \item \label{lbz0:diag} The group $G$ is virtually simultaneously diagonalizable \textup(over K\textup). 
    \item \label{lbz0:monomial} The representation $\rho$ is the epimorphic image of a monomial representation of $S$.
  \end{equivenumerate}
\end{corollary}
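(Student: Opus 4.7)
I would prove the three conditions equivalent by the cycle \ref{lbz0:locbez}~$\Rightarrow$~\ref{lbz0:diag}~$\Rightarrow$~\ref{lbz0:monomial}~$\Rightarrow$~\ref{lbz0:locbez}, with essentially all the work in the first implication.

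For \ref{lbz0:monomial}~$\Rightarrow$~\ref{lbz0:locbez}: any monomial representation is locally Pólya, since the nonzero entries of the matrices $\rho(s)$ for $s$ ranging in a finitely generated subsemigroup lie in the finitely generated subgroup of $K^\times$ generated by the entries of finitely many generators. Moreover, an invertible monomial matrix of size $d$ has $d!$-th power diagonal over $K$, so uniform power-splitting holds with $N = d!$. Both properties descend to epimorphic images, using \cref{l:bezivin-base-change} for the Bézivin property and the fact that eigenvalues of a quotient are a subset of the eigenvalues of the source. For \ref{lbz0:diag}~$\Rightarrow$~\ref{lbz0:monomial}, replace the finite-index simultaneously $K$-diagonalizable subgroup $H \le G$ by its normal core (still of finite index), and pick a $K$-basis $\{v_{j,k}\}$ of $V$ adapted to the common $H$-eigenspace decomposition $V = \bigoplus_j V_j$. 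The representation $\widehat{V} := \bigoplus_{j,k} \Ind_H^G (K v_{j,k})$ is a direct sum of $G$-representations induced from characters of $H$, hence monomial; the natural $G$-equivariant map $\widehat V \to V$, $g \otimes v_{j,k} \mapsto g \cdot v_{j,k}$, is surjective because the $v_{j,k}$ span $V$.

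For the main implication \ref{lbz0:locbez}~$\Rightarrow$~\ref{lbz0:diag}, I would reduce to the finitely generated case of \cref{cor:locbezivin-fg} via Noetherianity of the Zariski topology on $\GL(V)$. For each finitely generated subsemigroup $S' \le S$, the group $G' = \langle \rho(S') \rangle$ inherits local Bézivin-ness (from $\rho$) and power-splitting (from uniform power-splitting of $\rho(S)$), so \cref{cor:locbezivin-fg} produces a finite-index subgroup $H' \le G'$ that is simultaneously $K$-diagonalizable. In a $K$-basis diagonalizing $H'$, the Zariski closure $\widetilde{H'}$ sits inside the standard $K$-split torus $T$ of diagonal matrices, so the identity component $\widetilde{G'}^0 = \widetilde{H'}^0$ is a $K$-split sub-torus of $\GL(V)$. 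The family of closures $\{\widetilde{G'}\}_{S'}$ is directed under inclusion (any two finitely generated subsemigroups lie inside a larger one), hence by Noetherianity it admits a maximum $\widetilde{G_{\alpha_0}}$; directedness combined with maximality forces $G \subseteq \widetilde{G_{\alpha_0}}$, so $\widetilde G = \widetilde{G_{\alpha_0}}$ and $\widetilde G^0 = \widetilde{G_{\alpha_0}}^0$ is a $K$-split torus. Since $\widetilde G^0$ has finite index in $\widetilde G$, the intersection $G \cap \widetilde G^0$ has finite index in $G$ and lies inside a $K$-split torus, hence is simultaneously $K$-diagonalizable, which is \ref{lbz0:diag}.

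The main obstacle is ensuring that the tori obtained at each finitely generated stage are $K$-split rather than merely $\algc{K}$-split: this is precisely where the characteristic zero and uniform power-splitting hypotheses enter, via the strength of the conclusion in \cref{cor:locbezivin-fg}. The same subtlety also explains why the analogous statement fails in positive characteristic for non-finitely generated $S$, since examples like $\GL_d(\algc{\mathbb F_p})$ are locally Bézivin with finite spectrum yet admit no such global $K$-split torus structure.
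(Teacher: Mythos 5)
Your implications \ref{lbz0:diag}~$\Rightarrow$~\ref{lbz0:monomial} (induction from the diagonalizable finite-index subgroup, as in \cref{l:factor-through-monomial}) and \ref{lbz0:monomial}~$\Rightarrow$~\ref{lbz0:locbez} are fine and close to what the paper does. The gap is in \ref{lbz0:locbez}~$\Rightarrow$~\ref{lbz0:diag}, and it is fatal as written. Noetherianity of the Zariski topology gives the \emph{descending} chain condition on closed sets, not the ascending one: an increasing directed family of closed subgroups of $\GL(V)$ need not have a maximal element (finite subgroups of unbounded order already give strictly increasing non-stabilizing chains). So you cannot conclude that some $\widetilde{G_{\alpha_0}}$ with $G_{\alpha_0}$ finitely generated contains all the $\widetilde{G'}$, i.e.\ that $\widetilde G$ equals the closure of a single finitely generated piece. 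This is not a repairable technicality: your argument never actually invokes characteristic zero or uniform power-splitting (each finitely generated stage is handled by \cref{cor:locbezivin-fg} alone), so if it worked it would prove the statement without those hypotheses, contradicting the paper's own counterexamples. Concretely, for $\mu(\C)\hookrightarrow\GL_2(\R)$ (\cref{exm:locbez-not-monomial}) every finitely generated subgroup is finite, hence virtually $\R$-diagonalizable with identity component of its closure the trivial (split) torus, yet the closure of the whole group is the non-split torus $\mathrm{SO}_2$ over $\R$ and the group is not virtually $\R$-diagonalizable; similarly $\GL_d(\algc{\mathbb F_p})$ defeats the argument in positive characteristic even though it is uniformly power-splitting.

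In other words, the passage from the finitely generated stages to the whole group is precisely the hard content of the corollary, and it is where the hypotheses must enter. The paper does this in \cref{t:lbz0-diag}: starting from a weakly epimonomial decomposition with diagonal $D$, it uses that $G/D$ is a linear torsion group (\cref{p:wepi-torsion}), Burnside--Schur on each finitely generated piece, Jordan's theorem to get abelian-modulo-$D$ subgroups of \emph{uniformly bounded} index, a Tychonoff compactness argument to choose these subgroups compatibly across all finitely generated pieces, and only then uniform power-splitting (to make $N$-th powers $K$-diagonalizable and central enough via a commutator computation) together with Burnside's finite-exponent theorem in characteristic zero to conclude finite index of the refined diagonal. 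If you want to salvage a closure-based approach, you would have to show directly that $\widetilde G^0$ is a $K$-split torus, which again requires using uniform power-splitting on the whole group, not just on its finitely generated subgroups.
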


The proof of \cref{cor:locbezivin-char0} makes use of the Jordan--Schur Theorem and the variant of the Burnside--Schur Theorem that shows that a linear torsion group of finite exponent over a field of characteristic zero is finite.

For representations with locally finitely spectrum, we obtain the following.

\begin{theorem} \label{t:trilocmonomial}
  Let $\rho\colon S \to \GL(V)$ be a semigroup representation.
  Then the following statements are equivalent.
  \begin{equivenumerate}
  \item \label{triloc:spec} The representation $\rho$ has locally finitely generated spectrum and $K$ is power-splitting for $\rho(S)$.
  \item \label{triloc:repr} The representation $\rho$ is block-triangular with locally epimonomial diagonal blocks.
  \end{equivenumerate}
\end{theorem}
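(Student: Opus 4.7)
The plan is to dispatch the easy direction \ref{triloc:repr}$\,\Rightarrow\,$\ref{triloc:spec} first. The spectrum of a block-triangular matrix is the union of the spectra of its diagonal blocks, so it suffices to check that a locally epimonomial representation has locally finitely generated spectrum. For each finitely generated $S' \le S$, the diagonal $D_i$ of a weakly epimonomial decomposition of $\rho_{ii}|_{S'}$ embeds into a torus $(K^\times)^{r_i}$ and has finite index in $\langle \rho_{ii}(S') \rangle$, so the eigenvalues of $\rho_{ii}(S')$ lie in a finitely generated subgroup of $\algc{K}^\times$ (generated by the scalars from $D_i$ and roots of unity controlling the permutation action of the off-diagonal elements). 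Power-splitting is built into the definition of (weakly) epimonomial.

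For \ref{triloc:spec}$\,\Rightarrow\,$\ref{triloc:repr}, I would take an $S$-invariant composition series $0 = V_0 \subset V_1 \subset \cdots \subset V_r = V$ of $V$ as a $K[S]$-module and pick complements to obtain a $K$-decomposition $V = W_1 \oplus \cdots \oplus W_r$ with respect to which $\rho$ is upper block-triangular and each diagonal block $\rho_{ii}\colon S \to \GL(W_i)$ acts $K$-irreducibly. Each $\rho_{ii}$ inherits the hypotheses: its eigenvalues lie in the spectrum of $\rho$, and $K$ remains power-splitting. The task then reduces to the key claim that every irreducible $K$-representation $\sigma\colon S \to \GL(W)$ with locally finitely generated spectrum and $K$ power-splitting is locally epimonomial. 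Fix finitely generated $S' \le S$ and set $G' = \langle \sigma(S') \rangle$.

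If $\sigma$ is absolutely irreducible, \cref{p:absirred-fg-bezivin} yields that $\sigma(S')$ is Bézivin, and \cref{cor:locbezivin-fg} then gives that $\sigma|_{S'}$ is epimonomial. In the irreducible-but-not-absolutely-irreducible case, pass to a finite Galois extension $L/K$ splitting $\sigma$, so that $\sigma_L \cong \bigoplus_{i=1}^{s} \sigma_i^{\oplus m}$ with $\sigma_i$ absolutely irreducible over $L$, transitively permuted by $\mathrm{Gal}(L/K)$, of Schur index $m$. Apply the absolutely irreducible case over $L$ to obtain epimonomial decompositions $W_i = \bigoplus_j V_{i,j}$ into one-dimensional $L$-subspaces. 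After choosing compatible identifications so that $\{V_{i,j}\}$ is stable under $\mathrm{Gal}(L/K)$, partition this collection into Galois orbits $\mathcal{O}$ and define $U_{\mathcal{O}} \subseteq W$ as the $K$-subspace cut out by the Galois-invariants in $\bigoplus_{(i,j) \in \mathcal{O}} V_{i,j}$, so that $W = \bigoplus_{\mathcal{O}} U_{\mathcal{O}}$ is a $K$-decomposition permuted by $G'$.

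I expect the main obstacle to be verifying that this Galois-orbit decomposition is epimonomial over $K$, rather than merely weakly so. The central arithmetic input is that a finitely generated subgroup $\Gamma \le L^\times$ in which every element power-splits over $K$ satisfies $\Gamma/(\Gamma \cap K^\times)$ finite, since this quotient is a finitely generated torsion abelian group. Applied to the characters $\chi_{i,j}$ describing the action of the $L$-diagonal subgroup $\widetilde H \le G'$ on the $V_{i,j}$, this produces a finite-index subgroup of $\widetilde H$ on which every $\chi_{i,j}$ takes values in $K$; Galois-compatibility $\chi_{\tau(i,j)}(g) = \tau(\chi_{i,j}(g))$ then forces these values to be constant on Galois orbits, so the corresponding elements act as $K$-scalars on each $U_{\mathcal{O}}$. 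Combined with the finite index $[G' : \widetilde H] < \infty$ coming from local epimonomiality over $L$, this yields the required finite-index diagonal of the Galois-orbit decomposition in $G'$, completing the epimonomial structure over $K$.
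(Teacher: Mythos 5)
Your easy direction \ref{triloc:repr}$\,\Rightarrow\,$\ref{triloc:spec} is fine and essentially matches the paper's argument. The problem is in the hard direction, specifically in your treatment of the irreducible-but-not-absolutely-irreducible blocks. You assume you can pass to a \emph{finite Galois} extension $L/K$ over which $\sigma_L$ decomposes as $\bigoplus_i \sigma_i^{\oplus m}$ with each $\sigma_i$ absolutely irreducible. Over an imperfect field of positive characteristic this can fail outright: the center of the endomorphism division algebra $\End_{K[S]}(W)$ may be a purely inseparable extension of $K$, and then no finite separable (in particular, no Galois) extension splits $\sigma$. A concrete instance satisfying the hypotheses of the theorem is $K=\mathbb{F}_p(t)$ and $S$ generated by the companion matrix $A$ of $x^p-t$: the module $K^p$ is irreducible with endomorphism field $K(t^{1/p})$, the spectrum is $\{t^{1/p}\}$ (finitely generated), and $K$ is power-splitting since $(t^{1/p})^p=t\in K$ (indeed $A^p=tI$). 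Here $\sigma_L$ remains irreducible but not absolutely irreducible for every finite Galois $L/K$, so your dichotomy does not cover this case, and descent by Galois invariants cannot be run along the inseparable splitting field $K(t^{1/p})$. Since the theorem is stated for arbitrary $K$, and the positive-characteristic applications live precisely over finitely generated (hence typically imperfect) fields, this is a genuine gap rather than a cosmetic one. A secondary, fixable issue is the phrase ``after choosing compatible identifications so that $\{V_{i,j}\}$ is stable under $\mathrm{Gal}(L/K)$'': when the multiplicity $m>1$, the splitting of the isotypic components into copies and the choice of lines inside them is not canonical, so Galois-stability needs an argument (it can be arranged by working with the canonical joint-eigenspace decomposition of the steady elements, which is permuted by the semilinear Galois action, but as written it is asserted rather than proved).

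For comparison, the paper avoids descent by invariants altogether: it passes to a finite (not necessarily Galois) extension $L$ over which one can choose an $S$-invariant chain with \emph{absolutely irreducible} quotients, applies \cref{p:absirred-fg-bezivin} to those blocks over $L$, then views the $L$-representation as a $K$-representation by restriction of scalars, uses \cref{l:locepimon-descent} (this is exactly where the power-splitting hypothesis enters) to see the diagonal blocks stay weakly epimonomial over $K$, and finally pushes the chain down to $V$ along the split $S$-equivariant projection $\widehat V \to V$, invoking \cref{c:lepi-preserved}; the resulting blocks in $V$ need not be irreducible over $K$, which is harmless for statement \ref{triloc:repr}. Your character-theoretic finiteness argument (finitely generated torsion quotient $\Gamma/(\Gamma\cap K^\times)$ is finite, plus Galois-equivariance of the characters) is sound as far as it goes — note you also need power-splitting for the \emph{group} generated by $\sigma(S')$, which is \cref{l:lift-wepi-to-group} in the paper — and in characteristic zero, or over perfect fields, your route could likely be completed with these repairs; but as proposed it does not prove the theorem in the stated generality.
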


Combining \cref{t:trilocmonomial} with \cref{cor:locbezivin-fg,cor:locbezivin-char0} we obtain the following.

\begin{corollary} \label{cor:trilocmonomial-lift}
  If 
  \begin{itemize}
  \item the semigroup $S$ is finitely generated, \emph{or}
  \item the field $K$ is uniformly power-splitting for $\rho(S)$ and $\characteristic K=0$,
  \end{itemize}
  then the statements of \cref{t:trilocmonomial} are in addition equivalent to
  \begin{equivenumerate}
    \item[\textup{(c)}] There exists a representation $\widehat \rho \colon S \to \GL(\widehat V)$ that is block-triangular with monomial diagonal blocks and an epimorphism $(\widehat V,\widehat \rho) \twoheadrightarrow (V,\rho)$.
  \end{equivenumerate}
\end{corollary}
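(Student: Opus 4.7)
The plan is to prove $(c) \Rightarrow (b)$ easily, and then handle $(b) \Rightarrow (c)$ by constructing $\widehat \rho$ as an induced representation from a finite-index subgroup of $G \coloneqq \langle \rho(S) \rangle$.

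For $(c) \Rightarrow (b)$ I would argue as follows. Given an $S$-equivariant epimorphism $\psi\colon \widehat V \twoheadrightarrow V$ with $\widehat V$ block-triangular with monomial diagonal blocks, the $S$-invariant filtration on $\widehat V$ descends via $\psi$ to an $S$-invariant filtration on $V$, and each diagonal block of $V$ is an $S$-equivariant quotient of a monomial block of $\widehat V$. Since monomial representations are locally Pólya, and the Bézivin property passes to $S$-equivariant quotients (a short computation in block form) and to base changes (\cref{l:bezivin-base-change}), each diagonal block of $V$ is locally Bézivin, hence locally epimonomial by \cref{t:main-loc-bezivin}. Power-splitting is also inherited by quotients, yielding statement (b) of \cref{t:trilocmonomial}.

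For $(b) \Rightarrow (c)$ I would proceed as follows. First, apply \cref{cor:locbezivin-fg} or \cref{cor:locbezivin-char0} (depending on the hypothesis) to each locally epimonomial diagonal block $\rho_{ii}$, to conclude that each is in fact \emph{globally} epimonomial: there is a diagonal $D_i \subseteq \rho_{ii}(G)$ with $[\rho_{ii}(G) : \langle D_i \rangle] < \infty$. Next, set $G^0 \coloneqq \bigcap_{i=1}^{r} \rho_{ii}^{-1}(\langle D_i \rangle)$, a finite-index subgroup of $G$ on which every $V_i$ splits as a direct sum of one-dimensional $G^0$-characters $L_{i,1}, \ldots, L_{i,d_i}$. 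Then I would define $\widehat V \coloneqq \Ind_{G^0}^{G} V$, viewed as an $S$-representation via $S \xrightarrow{\rho} G$. By exactness of induction, the $G$-invariant block-triangular filtration of $V$ lifts to a $G$-invariant filtration of $\widehat V$ with factors $\Ind_{G^0}^{G} V_i = \bigoplus_{j} \Ind_{G^0}^{G} L_{i,j}$; each $\Ind_{G^0}^{G} L_{i,j}$ is monomial (having the one-dimensional basis $\{g G^0 \otimes L_{i,j}\}_{g G^0 \in G/G^0}$ permuted by $G$ up to scalars), so the filtration factors are monomial as well. Choosing vector-space splittings of the filtration presents $\widehat V$ in the block-triangular form of \cref{d:monomial-rep} with monomial diagonal blocks, and the counit of the induction--restriction adjunction supplies the required $S$-equivariant surjection $\widehat V \twoheadrightarrow V$.

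The main obstacle will be the local-to-global upgrade: confirming that local epimonomiality of each diagonal block $\rho_{ii}$ actually promotes to \emph{global} epimonomiality, so that a single finite-index subgroup $G^0 \le G$ works simultaneously for all diagonal blocks. This is precisely where the two extra hypotheses (finite generation of $S$, or $\characteristic K = 0$ with uniform power-splitting) enter; via the corollaries of \cref{t:main-loc-bezivin}, they bridge the local--global gap needed to produce the desired $G^0$.
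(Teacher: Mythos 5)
Your proof is correct and follows essentially the same route as the paper: your subgroup $G^0=\bigcap_i\rho_{ii}^{-1}(\langle D_i\rangle)$ is exactly (the preimage of) the finite-index subgroup $N$ of $\rho(G)$ with diagonal blocks in the $D_{ii}$ that the paper uses, and inducing from it with the counit giving the $S$-equivariant surjection is the paper's construction via \cref{l:factor-through-monomial}. The easy converse you sketch is likewise the argument the paper carries out (in the proof of \cref{t:intro-block}), so no further changes are needed.
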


Here, it is \emph{not} possible to remove the power-splitting hypothesis when $S$ is finitely generated (\cref{ex:fibanocci-not-bezivin}).

\Cref{t:intro-bezivin,t:intro-block,c:intro-spec} are now easily deduced.

\begin{proof}[Proof of \cref{t:intro-bezivin}]
  We consider the faithful representation $\rho\colon G \hookrightarrow \GL_d(K)$.
  If $G$ is finitely generated, then the equivalences, without the uniform power-splitting hypothesis in \ref{intro-bez:polya} and \ref{intro-bezivin:bez}, hold by \cref{cor:locbezivin-fg} applied to $\rho$.
  Since $K$ is trivially uniformly power-splitting for an epimonomial $G$, also the uniform power-splitting hypothesis holds.

  Now suppose $\characteristic{K}=0$. Then the claims hold by \cref{cor:locbezivin-char0}.
\end{proof}

\begin{proof}[Proof of \cref{t:intro-block}]
  Again consider $\rho\colon G \hookrightarrow \GL_d(K)$.
  If $G$ has locally finitely generated spectrum and $K$ is uniformly power-splitting for $G$, then \cref{cor:trilocmonomial-lift} yields the claim.

  In the converse direction, to apply \cref{cor:trilocmonomial-lift} again, it suffices to observe that $K$ is uniformly power-splitting for $G$.
  Let $V \coloneqq K^{d\times 1}$ and let $\widehat\rho \colon G \to \GL(\widehat V)$ be a block-triangular representation with monomial diagonal blocks such that $\rho$ is an epimorphic image of $\widehat\rho$.
  Let $\pi \colon \widehat V \to V$ denote the $G$-equivariant epimorphism.

  There exists $N \ge 1$ such that $\widehat\rho(g)^N$ is upper-triangular for each $g \in G$.
  Thus, the field $K$ is uniformly power-splitting for $\widehat\rho$.
  Considered as a vector space epimorphism, the map $\pi$ splits, so that $\widehat V \cong \ker(\pi) \oplus V$.
  Since $\ker(\pi)$ is $G$-invariant, the elements of $\widehat\rho(G)$ have a block-triangular form with respect to this splitting, and the spectrum of $\rho(g)$ is a subset of the spectrum of $\widehat\rho(g)$.
  Hence, the field $K$ is also uniformly power-splitting for $G$.
\end{proof}

\begin{proof}[Proof of \cref{c:intro-spec}]
  Suppose $G$ has locally finitely generated spectrum.
  Since $\algc{K}$ is trivially uniformly power-splitting for $\overline \rho(G)$, there exists a faithful block-triangular representation of $G$ over $\algc{K}$ with monomial diagonal blocks (\cref{cor:trilocmonomial-lift}).
  The finite-index subgroup whose diagonal blocks are themselves diagonal is solvable.
  Hence, the group $G$ is virtually solvable.
  Conversely, every virtually solvable group has locally finitely generated spectrum (\cref{l:virt-solvable-locfg}).
\end{proof}

We will prove \cref{t:main-loc-bezivin,cor:locbezivin-fg,cor:locbezivin-char0} in \cref{sec:bezchar}, after doing the heavy-lifting in \cref{sec:locbez}.
\Cref{t:trilocmonomial,cor:trilocmonomial-lift} will be proved in \cref{sec:locfg}.

\section{Guiding Examples and Special Cases}

We gather some easy examples that illustrate the conditions and limitations of the theorems as well as some interesting special cases.
The section provides some context for the (uniform) power-splitting conditions and the limitations to characteristic zero in some of our theorems.
The section can otherwise safely be skipped.

\subsection{One-letter WFA: Linear Recurrence Sequences (LRS)}
Let $K$ be a field.
It is well known that a $K$-valued sequence $(a(n))_{n \ge 0}$ is an LRS if and only if there exist $d \ge 0$, a matrix $A \in K^{d \times d}$, and vectors $u \in K^{1 \times d}$ and $v \in K^{d \times 1}$ such that $a(n) = uA^n v$ for all $n  \ge 0$.
Equivalently, the generating function $\sum_{n=0}^\infty a(n) x^n$ is a rational function $P/Q \in K(x)$ with $Q(0) =1$.
Equivalently, the sequence $(a(n))_{n \ge 0}$ is generated by a one-letter weighted finite automaton: if $X=\{x\}$ is the alphabet, then $a(n)$ is the output of the word $x^n$.

\begin{example} \label{ex:fibanocci-not-bezivin}
  Let $A = \begin{psmallmatrix}
    0 & 1 \\
    1 & 1 \\
  \end{psmallmatrix} \in \GL_2(\Q)$, let $u = (1,0)$, and let $v=(0,1)^T$. 
  Then $F_n\coloneqq u A^n v$ is the sequence of Fibonacci numbers.
  The eigenvalues of $A$ are $\varphi_{\pm} \coloneqq \tfrac{1\pm\sqrt{5}}{2}$.
  Since no power of $\varphi_+$ is in $\Q$ (for instance, because $\varphi_+^n = \varphi_+ F_n + F_{n-1}$ and $\varphi_+ \not\in \Q$), the group generated by $A$ is not power-splitting over $\Q$.
  Hence, it does not even have a block-triangular representation with monomial diagonal blocks.
  On the other hand, over the field $L=\Q(\varphi_+)$, the matrix $A$ is diagonalizable.
  Thus, the group $\langle A \rangle$ and the sequence $F_n = \tfrac{1}{\sqrt{5}}\varphi_+^n - \tfrac{1}{\sqrt{5}} \varphi_-^n$ are Bézivin over $L$.
  This shows that the power-splitting condition in the main theorems cannot be removed.
\end{example}

Consider an arbitrary LRS of the form $a(n)=uA^nv$.
By dropping finitely many initial terms, we can assume $A \in \GL_d(K)$.
We also assume that $d$ is minimal for the given LRS.
If $K$ is algebraically closed of characteristic $0$, then $(a(n))_{n \ge 0}$ is Bézivin if and only if $A$ is diagonalizable: this follows from a theorem of Bézivin \cite[Théorème 4]{bezivin86} or as a special case of \cref{p:wfa-necessary}\ref{wfa-necc:bezivin} together with \cref{t:intro-bezivin}.

Similarly, for an arbitrary field $K$, the sequence $(a(n))_{n \ge 0}$ is Bézivin if and only if some power of $A$ is diagonalizable over $K$.
In this case (and only then), one can find a linear representation $(\widehat u, \widehat A, \widehat v)$ of the same LRS $(a(n))_{n \ge 0}$ with a monomial matrix $\widehat A$ by \cref{p:nice-wfa}.
The LRS case, corresponding to one-letter WFA, was also recently studied in detail by Kostolányi \cite{kostolanyi22,kostolyani24}.

By passing to an extension field (in which we can put $A$ into Jordan normal form) an LRS can always be computed by a polynomially ambiguous WFA, while computability by a finitely ambiguous WFA over an extension field corresponds to diagonalizability of $A$.
In particular, the exponentially ambiguous case only appears for alphabets with at least two letters.

\subsection{Non-finitely generated groups}
For non-finitely generated groups two issues appear: one needs to assume \emph{uniform} power-splitting and characteristic zero is essential.

\begin{example} \label{exm:locbez-not-monomial}
  \begin{enumerate}
  \item \label{exm:roots-of-unity-over-R} Consider the embedding $\C^\times \to \GL_2(\R)$ given by $a+bi \mapsto \begin{psmallmatrix} a & b \\ -b & a \end{psmallmatrix}$.
  Restricting to $\mu(\C)$, we get a locally finite, and hence locally Bézivin, representation of $\mu(\C)$ over $\R$.
  However, the group $\mu(\C)$ does not have a faithful monomial representation over $\R$: suppose $j\colon \mu(\C) \to \GL_d(\R)$ is a faithful monomial representation.
  Then $j(\zeta)^{d!}$ is diagonal for each $\zeta \in \mu(\C)$.
  Because $j(\zeta)^{d!}$ has finite order, its diagonal entries must be $\pm 1$, showing that $j(\zeta)^{2d!} = I$.
  This contradicts the existence of elements of arbitrarily large order in $\mu(\C)$. 
  Of course, the field $\R$ is power-splitting but not uniformly power-splitting for our original representation of $\mu(\C)$.

  \item Fix $n \ge 1$ and let $R_n \coloneqq \{\, z \in \C^\times : z^n \in \R \,\}$.
  Then $R_n$ is infinite abelian and $R_n$ can be embedded into $\GL_2(\R)$ as a uniformly power-splitting locally Bézivin subgroup; indeed with the embedding as in \ref{exm:roots-of-unity-over-R}, we have
  \[
  R_n = \bigcup_{\zeta \in \mu_n(\C)} \zeta\, \R^\times,
  \]
  (identifying $\R^\times \subseteq \GL_2(\R)$ with the scalar matrices).
  Hence, the group $R_n$ does have a monomial representation over $\R$ (\cref{cor:locbezivin-char0}).
  Indeed, here $\R^\times$ is simultaneously diagonal of finite index.

  \item Let $K$ be a field of positive characteristic $p > 0$.
  Consider the faithful representation $\rho\colon (K,+) \mapsto \GL_2(K)$ defined by
  \[
    \rho(x) = \begin{pmatrix} 1 & x \\ 0 & 1 \end{pmatrix}.
  \]
  Since $\im\rho$ is an abelian $p$-torsion group, it is locally finite.
  In particular, therefore $\rho$ is locally Bézivin.

  Suppose that $\pi\colon (K,+) \to \GL(V)$ is a faithful monomial representation on a vector space of dimension $d$.
  Then there is a group homomorphism $\psi\colon \im \pi \to \mathfrak S_d$ to the symmetric group, mapping an element of $\im\pi$ to its underlying permutation.
  Let $D \coloneqq \pi^{-1}(\ker(\psi))$.
  Then $D$ maps to diagonal matrices under $\pi$, hence $D \hookrightarrow (K^\times)^d$.
  Now $D$ is $p$-torsion, and hence embeds into the $p$-torsion subgroup of $(K^\times)^d$, that is $D \hookrightarrow \mu_p(K)^d$.
  Thus, the group $D$ is finite, and also of finite index in $K$.
  Hence, also $K$ is finite.

  Thus, over an infinite field of positive characteristic, the group $(K,+)$ has a faithful locally Bézivin representation and $K$ is uniformly power splitting for that representation, but $(K,+)$ has no faithful monomial representations, showing that \cref{cor:locbezivin-char0} does not hold in general in positive characteristic.
  \end{enumerate}
\end{example}

\subsection{$k$-regular and $k$-automatic sequences}

In the 90s, Allouche and Shallit introduced the notion of a $k$-regular sequence as a generalization of a $k$-automatic sequence \cite{allouche-shallit03} (here $k \in \Z_{\ge 2}$ is an arbitrary but fixed parameter).
One of the equivalent definitions is the following: a sequence $(a(n))_{n \ge 0}$ with values in the field $K$ is \defit{$k$-regular} if there exist $d \ge 0$, matrices $A_0$, \dots,~$A_{k-1} \in K^{d \times d}$ and vectors $u \in K^{1 \times d}$, $v \in K^{d \times 1}$ such that,
\[
a\big(\sum_{i=0}^l {n_i k^i}\big) = u A_{n_0} \cdots A_{n_l} v,
\]
whenever $l \ge 0$, and $n_0$, \dots,~$n_l \in [0,k-1]$.
In other words, there is a WFA on the alphabet of $k$-adic digits $X=[0,k-1]$, such that whenever a word $w \in X^*$ represents the number $n$ in base $k$, then $a(n)$ is the output of the WFA for the word $w$.%
\footnote{The word $w$ may have leading zeroes, but all such words representing the same number produce the same output, in this definition.
In defining the class of $k$-regular sequences, it does not actually matter whether the WFA reads the base $k$ representation from least to most significant digit or the other way around.
}
We suppose that the dimension $d$ is minimal for the sequence, that is, the WFA is minimal.

The class of $k$-regular sequences contains the $k$-automatic sequences as a subset; conversely, the generating function of any $k$-regular sequence is $k$-Mahler (see \cite{adamczewski-bell-smertnig23} for definitions).

Let $K=\algc{\Q}$.
Then the class of $k$-Mahler functions can be separated into five distinct classes based on the asymptotic growth of their coefficients, as measured by the logarithmic Weil height \cite{adamczewski-bell-smertnig23}.
The three slowest growing classes correspond to $k$-regular sequences: $O(\log n)$, $O(\log\log n)$ and $O(1)$.
Here the $O(\log\log n)$ class correspond to the matrix semigroup $S$ generated by $A_0$, \dots,~$A_{k-1}$ being \defit{tame}, that is, the semigroup has locally finite spectrum.
The class $O(1)$ corresponds to $k$-automatic sequences, which is equivalent to $S$ being finite,  and is trivially Bézivin.

\emph{Let us restrict to the consideration of $k$-regular sequences for which the matrices $A_0$, \dots,~$A_{k-1}$ are invertible.}
Then the classification by the Bézivin property and finitely generated spectrum, established in this paper, interfaces with the growth classification of \cite{adamczewski-bell-smertnig23} to yield the following picture.

\begin{center}
\begin{tikzcd}[column sep=small]
  & \text{$k$-regular}  & \\
  & \text{f.g. spectrum} \arrow[u, phantom, sloped, "\subsetneq"] \arrow[dr, phantom, sloped, "\supsetneq"] &  \\
  \text{Bézivin} \arrow[ur, phantom, sloped, "\subsetneq"] & & \text{tame} \\
  & \text{$k$-automatic} \arrow[ul, phantom, sloped, "\supsetneq"] \arrow[ur, phantom, sloped, "\subsetneq"]  &
\end{tikzcd}
\end{center}

\subsection{Bounded generation} \label{subsec:bg}
A motivation in \cite{corvaja-rapinchuk-ren-zannier22,corvaja-demeio-rapinchuk-ren-zannier22,corvaja-demeio-rapinchuk-ren-zannier23} is the study of the bounded generation (BG) property for linear groups.
A group $G$ has BG, if there exist $A_1$, \dots, $A_n \in G$ such that $G = \langle A_1 \rangle \cdots \langle A_n \rangle$.
One of the main results of Corvaja, Demeio, Rapinchuk, Ren, and Zannier \cite[Theorem 1.13]{corvaja-demeio-rapinchuk-ren-zannier23} is that, in \emph{characteristic zero}, a linear group $G$, all of whose elements are diagonalizable, has BG if and only if $G$ is finitely generated and the connected component of its Zariski closure is a torus.
In particular, such a group is virtually abelian. 

This is based on the observation that a BG group whose elements are diagonalizable is a PEP set \cite[Example 1.4]{corvaja-demeio-rapinchuk-ren-zannier23}; the same observation shows that a BG group is Bézivin so that \cref{t:intro-bezivin} recovers the result, but this does not give anything new.

In \emph{positive characteristic}, Abért, Lubotzky, and Pyber had already shown before that every linear BG group is virtually abelian \cite[Theorem 1.1]{abert-lubotzky-pyber03}. Here it is not necessary to assume that the elements are diagonalizable.
Since our \cref{t:intro-bezivin} also holds in positive characteristic, we obtain a strengthening of \cite[Theorem 1.1]{abert-lubotzky-pyber03}.
\begin{corollary} \label{cor:bg-pos}
  Suppose $K$ has positive characteristic.
  A group $G \le \GL(V)$ has bounded generation \textup(BG\textup) if and only if it is finitely generated and virtually simultaneously diagonalizable \textup(over $\algc{K}$\textup).
\end{corollary}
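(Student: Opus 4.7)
The plan is to split the equivalence into its two directions, noting that the reverse implication follows from standard abstract group theory while the forward direction will reduce, after a small trick, to \cref{t:intro-bezivin} applied over $\algc{K}$.

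For the reverse direction, suppose $G$ is finitely generated with a finite-index subgroup $H$ that is simultaneously diagonalizable over $\algc{K}$. Then $H$ embeds into $(\algc{K}^\times)^d$, inherits finite generation from $G$, and is therefore isomorphic to $\Z^r \times F$ for some finite abelian $F$; any such group trivially has BG. To transfer BG from $H$ to $G$, I would take coset representatives $g_1$, \dots,~$g_m$ of $H$ in $G$ and observe that each $g_i \in \langle g_i \rangle$, whence $G = \langle g_1 \rangle \cdots \langle g_m \rangle \cdot H$ admits a product decomposition into cyclic subgroups.

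For the forward direction, BG immediately yields finite generation, and since $\algc{K}$ is trivially uniformly power-splitting for any subgroup of $\GL_d(\algc{K})$, it will suffice by the implication \ref{intro-bezivin:bez}$\,\Rightarrow\,$\ref{intro-bezivin:diag} of \cref{t:intro-bezivin} to show that $G \subseteq \GL_d(\algc{K})$ is Bézivin. Writing $G = \langle A_1 \rangle \cdots \langle A_n \rangle$, I plan to use the multiplicative Jordan decomposition $A_i = s_i u_i$ over $\algc{K}$, with $s_i$ semisimple, $u_i$ unipotent, and the two factors commuting. Then $s_i = P_i D_i P_i^{-1}$ with $D_i$ diagonal, so the entries of $s_i^{k_i}$ are bounded sums (of length depending only on $d$) of products involving $k_i$-th powers of eigenvalues of $A_i$ and fixed entries of $P_i^{\pm 1}$. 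Crucially, in positive characteristic $u_i^{p^{m_i}} = I$ for some $m_i$, so $u_i^{k_i}$ ranges over only finitely many matrices as $k_i$ varies. Taking $\Gamma \le \algc{K}^\times$ to be the finitely generated subgroup generated by all eigenvalues of the $A_i$, all nonzero entries of the $P_i^{\pm 1}$, and all nonzero entries of the finitely many $u_i^k$, one finds that every entry of $A_i^{k_i} = s_i^{k_i} u_i^{k_i}$ lies in $\sumset{C}{\Gamma_0}$ for a fixed constant $C$. Bounding the number of summands arising from multiplying out $A_1^{k_1} \cdots A_n^{k_n}$ then gives Bézivin-ness of $G$.

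The main obstacle is precisely this Jordan-decomposition step: in characteristic zero, unipotent elements have infinite order and $u_i^{k_i}$ involves binomial coefficients $\binom{k_i}{j}$, producing polynomial-in-$k_i$ terms that do \emph{not} fit into any $\sumset{C}{\Gamma_0}$. This is why the characteristic-zero analogue in \cite{corvaja-demeio-rapinchuk-ren-zannier23} must require the elements of $G$ to be diagonalizable. Positive characteristic is exactly the setting in which the unipotent part is tamed to finite order (a power of $\characteristic K$), which is what permits the conclusion with no diagonalizability hypothesis.
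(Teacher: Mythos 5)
Your proposal is correct and follows essentially the same route as the paper: for the forward direction, both arguments exploit that in characteristic $p$ the unipotent (off-diagonal Jordan) part of each $A_i$ is killed by a $p$-power --- you via the Jordan--Chevalley decomposition $A_i=s_iu_i$ with $u_i$ of finite order, the paper via Jordan normal form and splitting exponents into residues modulo a fixed $p$-power $q$ --- so that $G$ is Bézivin over $\algc{K}$, after which \cref{t:intro-bezivin} (with $\algc{K}$ trivially uniformly power-splitting) gives the virtually diagonalizable finite-index subgroup. The converse is likewise the paper's observation that virtually abelian groups have BG, which you merely spell out in more detail.
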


\begin{proof}
  First suppose that $G$ has BG.
  We identify $\GL(V) \cong \GL_d(K)$ and may assume $K=\algc{K}$.
  Say $G = \langle A_1 \rangle \cdots \langle A_n \rangle$ with $A_i \in \GL_d(\algc{K})$.
  Then $G$ is obviously finitely generated by $A_1$, \dots,~$A_n$.
  For each $A_i$, choose $T_i \in \GL_d(\algc{K})$ such that $B_i \coloneqq T_i A_i T_i^{-1}$ is in Jordan normal form.
  Since the characteristic is positive, there exists some power $q=p^e$ such that $B_i^{q}$ is diagonal for all $i \in [n]$ (see, for instance, the proof of \cref{l:steady-basic}).
  Now
  \[
  G = \bigcup_{r_1,\dots,r_n \in [0,q-1]} \bigcup_{m_1,\dots,m_n \in \Z} (T_1^{-1} B_1^{q m_1} B_1^{r_1} T_1) \cdots  (T_n^{-1} B_n^{q m_n} B_n^{r_n} T_n).
  \]
  For fixed $r_1$, \dots, $r_n$, these matrices have their entries in some Bézivin set (over $\algc{K}$), because the $B_i^q$ are diagonal and everything else is constant.
  But then the finite union $G$ is also Bézivin.
  Now \cref{t:intro-bezivin} implies the claim.

  For the converse direction, it is sufficient to note that $G$ is virtually abelian, and every virtually abelian group is easily seen to have BG.
\end{proof}

\section{Unit Equations: Linear Equations over Finitely Generated Groups} \label{sec:unit-equations}

Throughout this section, let $K$ be a field and let $\Gamma \le K^\times$ be a \emph{finitely generated} subgroup.
The theory of \emph{unit equations} concerns itself with the study of solutions $(x_0,\dots,x_n) \in \Gamma^{n+1}$ to a linear homogeneous equation
\begin{equation} \label{eq:unit-equation}
a_0 X_0 + \cdots + a_n X_n = 0 \qquad\qquad (a_i \in K).
\end{equation}
The restriction to solutions in $\Gamma$ turns this into a problem in Diophantine number theory.
Obviously, any $\Gamma$-multiple of a solution is again a solution, so it makes sense to consider projective solutions $(x_0:\cdots:x_n) \in \bP^n(\Gamma)$ which have a representative with $x_0$, \dots,~$x_n \in \Gamma$.
If, for a solution $\vec x=(x_0:\cdots:x_n)$, there is a subset $\emptyset \ne I \subsetneq [0,n]$ such that $\sum_{i\in I} a_i x_i=0$, then $\vec x$ is \defit{degenerate}. 
Otherwise, the solution $\vec x$ is \defit{non-degenerate}.

In a degenerate solution, one can multiply subsums by different scalars, potentially generating infinitely many solutions. However, for non-degenerate solutions and $\characteristic K=0$, we have the following celebrated theorem.
It was proven independently by Evertse \cite{evertse84} and van der Poorten--Schlickewei \cite{vdpoorten-schlickewei82} for number fields, and extended to fields of characteristic zero in \cite{vdpoorten-schlickewei91}. The theorem can also be deduced from Schmidt's subspace theorem.
See \cite[Chapter 6]{evertse-gyory15} or \cite[Theorem 7.4.1]{bombieri-gubler06} for details.

\begin{theorem}[Main Theorem on Unit Equations] \label{t:main-thm-unit-equations}
  Suppose $\characteristic K=0$.
  Then \cref{eq:unit-equation} has only finitely many non-degenerate solutions in $\bP^n(\Gamma)$.
\end{theorem}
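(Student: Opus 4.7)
My plan is to follow the classical proof via Schmidt's subspace theorem, which is the deep Diophantine tool underlying the result. The argument proceeds in three phases that I would carry out in order.

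\emph{Normalization.} A non-degenerate solution necessarily has all coordinates nonzero: if some $x_i=0$, then the complementary index set $I=[0,n]\setminus\{i\}$ is proper and nonempty, and $\sum_{j\in I} a_j x_j = \sum_j a_j x_j = 0$ is a vanishing proper subsum. I can therefore pick a projective representative and divide through by $a_0 x_0$, setting $y_i = -a_i x_i/(a_0 x_0)$ for $i=1,\ldots,n$. The equation becomes the standard ``$S$-unit equation''
\[
y_1 + \cdots + y_n = 1, \qquad y_i \in \Gamma',
\]
where $\Gamma' = \langle \Gamma, a_1/a_0, \ldots, a_n/a_0 \rangle$ is still finitely generated, and non-degeneracy becomes: no proper subsum of the $y_i$ equals $0$ or $1$. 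The map from non-degenerate projective solutions of the original equation to non-degenerate solutions of the normalized one is a bijection, so it suffices to bound the latter.

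\emph{Reduction to a number field.} Since $\Gamma'$ is finitely generated, the subfield $K_0 = \mathbb{Q}(\Gamma') \subseteq K$ is a finitely generated extension of $\mathbb{Q}$. Using a standard specialization argument (carried out in detail in Chapter~6 of Evertse--Gy\H{o}ry), a hypothetical infinite family of non-degenerate solutions in $\Gamma'$ would, under a generic specialization of a transcendence basis of $K_0/\mathbb{Q}$, remain infinite and non-degenerate after landing in a number field $K'$, with $\Gamma'$ specializing to a finitely generated subgroup of $(K')^\times$. Thus it suffices to prove the theorem when $K$ is a number field.

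\emph{The number field case.} Enlarge $\Gamma'$ to a full group of $S$-units $\mathcal{O}_S^\times$ for a finite set $S$ of places of $K'$ containing all archimedean ones. To each non-degenerate solution $\vec y = (y_1,\ldots,y_n)$ and each $v \in S$, I pick an index $i(v)$ maximizing $|y_i|_v$ and form the system of $n$ linearly independent linear forms consisting of $X_j$ for $j \ne i(v)$ together with $X_1 + \cdots + X_n - 1$ (appropriately homogenized in order to apply the theorem cleanly). The product formula over $S$, combined with the fact that $y_i$ are $S$-units and the defining equation forces $L_{v,i(v)}(\vec y)$ to be small, shows that the Schmidt product $\prod_{v\in S}\prod_j |L_{v,j}(\vec y)|_v$ decays polynomially in the height of $\vec y$. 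Schmidt's subspace theorem then confines all but finitely many solutions to a finite union of proper linear subspaces of $(K')^n$. On each such subspace the equation restricts to a unit equation in fewer variables, and an induction on $n$ (with the trivial base case $n=1$ having the unique solution $y_1 = 1$) concludes.

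The main obstacle, by a wide margin, is Schmidt's subspace theorem itself, a deep far-reaching generalization of Roth's theorem on rational approximation of algebraic numbers; there is no elementary route around it. Everything else---the rescaling to the $S$-unit form, the specialization to a number field, the precise choice of auxiliary linear forms with the accompanying height estimates, and the inductive descent on the number of variables---is technical but standard once Schmidt's theorem is in hand. For the intricate details I would defer to Theorem~7.4.1 of Bombieri--Gubler or Chapter~6 of Evertse--Gy\H{o}ry rather than reproduce the full argument here.
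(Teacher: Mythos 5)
The paper gives no proof of this statement: it is the classical theorem of Evertse and of van der Poorten--Schlickewei, quoted with pointers to Evertse--Gy\H{o}ry (Chapter~6) and Bombieri--Gubler (Theorem~7.4.1), and your sketch reproduces exactly the standard argument from those sources --- normalization to the inhomogeneous $S$-unit equation $y_1+\cdots+y_n=1$, specialization to the number-field case, and the Schmidt subspace-theorem step with induction on the number of variables. Your proposal is therefore correct in outline and takes essentially the same route the paper relies on, with the deep input (the subspace theorem) appropriately identified and deferred.
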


Now suppose $\characteristic{K} = p > 0$.
Then the conclusion of the theorem does not hold, because the Frobenius automorphism can generate infinitely many solutions.
However, these solutions can still be partitioned into finitely many orbits by a theorem of Derksen and Masser \cite[Theorem 3]{derksen-masser12}, that we describe in the following.
Let $\sqrt{\Gamma} \le K^\times$ be the group of all $\gamma \in K^\times$ for which there exists $n \ge 1$ with $\gamma^n \in \Gamma$. 
A \defit{$\sqrt \Gamma$-automorphism} is a map
\[
  \psi \colon \mathbb P^{n}(K) \to \mathbb P^{n}(K),\ (x_0:\cdots:x_n) \mapsto (\alpha_0 x_0: \dots : \alpha_n x_n)
\]
with $\alpha_0$, \dots,~$\alpha_n \in \sqrt{\Gamma}$.
For $q$ a power of $p$, let
\[
  \varphi_q(x_0:\cdots:x_n) \coloneqq (x_0^q : \cdots : x_n^q).
\]
For $\sqrt{\Gamma}$-automorphisms $\psi_1$, \dots,~$\psi_k$ and $\vec y\in \mathbb P^{n}(K)$ let
\begin{equation} \label{eq:frob-orbit}
  [ \psi_{1}, \ldots, \psi_{k} ]_q\{\vec y\} \coloneqq  \big\{\, (\psi_{1}^{-1}\varphi_q^{e_1} \psi_{1}) (\psi_{2}^{-1} \varphi_q^{e_2} \psi_{2}) \cdots (\psi_{k}^{-1} \varphi_q^{e_k} \psi_{k})(\vec y) : e_1, \ldots, e_k \ge 0  \,\big\}.
\end{equation}
Thus, the set $[ \psi_{1}, \ldots, \psi_{k} ]_q\{\vec y\}$ is obtained from the point $\vec y$ by applying powers of Frobenius automorphisms, twisted by suitable $\sqrt{\Gamma}$-automorphisms.

\begin{theorem}[{\cite[Theorem 3]{derksen-masser12}}] \label{t:derksen-masser}
  Let $\characteristic K=p > 0$.
  If $\sqrt{\Gamma}$ is a finitely generated group and $S \subseteq \bP(\Gamma)$ is the set of non-degenerate solutions of \cref{eq:unit-equation}, then there exists some power $q$ of $p$ such that $S$ is contained in a finite union of sets of the form as in \cref{eq:frob-orbit} \textup(with $k \le n-1$\textup).
\end{theorem}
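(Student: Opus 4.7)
\medskip
\noindent\textbf{Proof proposal (sketch).}
The plan is to reduce the statement to a manageable inductive situation and then extract the Frobenius-twisted structure from an auxiliary ``height'' or ``valuation'' count. As a preliminary simplification, I would replace $K$ by the subfield generated by the coefficients $a_0,\dots,a_n$ together with a finite generating set of $\sqrt{\Gamma}$, which is a finitely generated extension of $\mathbb{F}_p$. On such a field, one has a well-behaved family of valuations (coming, after a choice of transcendence basis and model, from prime divisors of a smooth model of $K$), so one can measure solutions by a notion of height. I would also fix a splitting $\sqrt{\Gamma}=T\times\mathbb{Z}^r$ with $T$ a finite torsion subgroup, so that a solution $\vec{x}=(x_0:\cdots:x_n)\in\bP^n(\Gamma)$ corresponds, up to torsion, to a vector of exponents in $\mathbb{Z}^{r(n+1)}$ modulo the diagonal $\mathbb{Z}^r$.

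The argument then proceeds by induction on $n$. The case $n=1$ is trivial since there is only one projective solution. For the inductive step, assume the set $S$ of non-degenerate solutions is infinite. The first key step is to reduce to solutions lying in a single ``Frobenius-closed'' stratum: by pigeonholing on the torsion part and on the image of the exponent vector modulo $q=p^e$ for a well-chosen $e$, one can extract an infinite subset of $S$ on which a twisting by a $\sqrt{\Gamma}$-automorphism converts Frobenius into a partial symmetry. The second key step is the Frobenius-descent idea: whenever $(x_0:\cdots:x_n)$ and $\varphi_q(x_0:\cdots:x_n)$ are both (twisted) solutions, their appropriate difference kills one of the $n+1$ monomials and yields a solution to an equation in at most $n$ variables, which by induction lies in finitely many Frobenius-orbits of the form \eqref{eq:frob-orbit} of lower length.

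To make the pigeonhole in the first step rigorous I would use a height argument: on a finitely generated field of characteristic $p$, the product formula together with the elementary height inequality forces most non-degenerate solutions to have \emph{some} coordinate whose valuation at a fixed place is either much smaller or much larger than the others, and one shows that sufficiently ``tall'' solutions must arise from shorter ones via Frobenius twisting. This is essentially the $S$-unit version of Mason's theorem combined with a careful lifting back to $\sqrt{\Gamma}$. Stitching together the finitely many strata (one per residue class of exponent vectors modulo $q$ and per choice of torsion) then gives a finite union of orbits of the form $[\psi_1,\dots,\psi_k]_q\{\vec y\}$ with $k\le n-1$, as desired.

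The main obstacle, and the reason this theorem is much harder than its characteristic zero counterpart (\cref{t:main-thm-unit-equations}), is precisely that Schmidt's subspace theorem fails in positive characteristic: the Frobenius produces genuinely infinite orbits of non-degenerate solutions, and these must be quantified explicitly rather than ruled out. All the real work is in the combinatorial/height-theoretic bookkeeping needed to show that every infinite family of non-degenerate solutions ultimately traces back, via a bounded number of Frobenius twists, to a single base point $\vec y$ and a fixed tuple of twisting automorphisms $\psi_1,\dots,\psi_k$. In particular, controlling the number $k$ of twists by $n-1$ (rather than by some unbounded function of $n$) requires squeezing the induction tightly so that each Frobenius descent strictly reduces the number of active variables.
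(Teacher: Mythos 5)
First, note that the paper does not prove this statement at all: \cref{t:derksen-masser} is imported verbatim as \cite[Theorem 3]{derksen-masser12} and used as a black box, so there is no internal proof to compare your argument against. You are in effect proposing to reprove a deep theorem of Derksen and Masser from scratch, and judged on that basis your sketch has genuine gaps at its two load-bearing points. The ``Frobenius-descent'' step is not justified as stated: if $\vec x$ satisfies \cref{eq:unit-equation} and some twisted Frobenius image of $\vec x$ happens to satisfy it as well, one can indeed eliminate one monomial by subtracting a suitable scalar multiple of the Frobenius-transformed equation, but nothing in your setup guarantees that the solutions in $S$ come in such pairs; producing the twisting $\sqrt{\Gamma}$-automorphisms $\psi_1,\dots,\psi_k$, showing that the resulting shorter equation still has \emph{non-degenerate} solutions (so that the induction applies), and then lifting finitely many orbits for the shorter equation back to orbits of the form \cref{eq:frob-orbit} for the original equation are precisely the hard parts, and they are asserted rather than argued. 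Likewise, the pigeonhole/height step is a hope, not a proof: the claim that ``sufficiently tall solutions must arise from shorter ones via Frobenius twisting'' is essentially the content of the theorem itself, and it does not follow from the product formula plus a Mason--Stothers-type inequality alone (those tools suffice for the case $n=2$, treated earlier by Masser and by Voloch, but the general case in \cite{derksen-masser12} requires a substantially more intricate induction over heights on a finitely generated function field, together with a careful analysis of how the Frobenius orbits interact with degenerate subsums). Finally, the bound $k\le n-1$ is again asserted (``squeezing the induction tightly'') without any mechanism forcing each descent to strictly reduce the number of active variables.

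If your goal is to use the result in the framework of this paper, the intended reading is simply to cite Derksen--Masser, as the authors do, and then derive the consequences actually needed (\cref{l:powers-bezivin}, \cref{p:unit-equation-crude}), where only the qualitative ``finitely many twisted Frobenius orbits'' structure enters. If your goal is genuinely to reprove \cref{t:derksen-masser}, you would need to supply the missing arguments above; the sketch as written identifies reasonable ingredients but does not yet constitute a proof.
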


If $K$ is a finitely generated field and $\Gamma$ is finitely generated, then $\sqrt{\Gamma}$ is also finitely generated (see, for instance, \cite[Lemma 4.1]{bell-smertnig21} for a proof of this well-known fact).
When applying \cref{t:derksen-masser}, we will always be able to work in a finitely generated field.

An easy consequence of \cref{t:main-thm-unit-equations,t:derksen-masser} is the following useful lemma.

\begin{lemma} \label{l:powers-bezivin}
  Let $\lambda \in K$ and let $p=\characteristic{K} \ge 0$.
  Suppose there exist $a_0$, \dots,~$a_M \in K^\times$ such that
  \[
  a_0 \lambda^n \in a_1 \Gamma_0 + \cdots + a_M \Gamma_0
  \]
  for infinitely many $n \in \Z \setminus p\Z$.
  Then there exists $m \ge 1$ such that $\lambda^m \in \Gamma_0$.
\end{lemma}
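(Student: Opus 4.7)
The proof proceeds by induction on $M$. After disposing of $\lambda = 0$ trivially (with $m = 1$), the base case $M = 1$ is immediate: $g_1^{(n)} = (a_0/a_1)\lambda^n$ is nonzero for each $n$ in the hypothesized infinite set, hence lies in $\Gamma$, and for any two such $n \ne n'$ one has $\lambda^{n-n'} = g_1^{(n)}/g_1^{(n')} \in \Gamma$.

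For the inductive step, I pigeonhole on the finitely many possible vanishing patterns $\{i : g_i^{(n)} = 0\}$ to produce an infinite subfamily on which this pattern is constant. If the pattern is nonempty, dropping the corresponding zero terms yields a strictly shorter relation of the same form and the claim follows by induction. Otherwise all $g_i^{(n)} \in \Gamma$, and the key step is to reinterpret the relation as an inhomogeneous unit equation over the finitely generated group $\Gamma' \coloneqq \langle \Gamma, \lambda \rangle$: setting $Z_i^{(n)} \coloneqq g_i^{(n)}/\lambda^n$ and $b_i \coloneqq a_i/a_0$, the hypothesis reads $\sum_{i=1}^{M} b_i Z_i^{(n)} = 1$ with $Z_i^{(n)} \in \Gamma'$, and homogenizing gives projective solutions $\vec x_n \coloneqq (Z_1^{(n)} : \cdots : Z_M^{(n)} : 1) \in \bP^M(\Gamma')$ of the linear equation $\sum_{i=1}^{M} b_i X_i - X_{M+1} = 0$. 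The crucial feature of this normalization is that any projective equality $\vec x_n = \vec x_{n'}$ forces the affine equality $Z_i^{(n)} = Z_i^{(n')}$ (the final coordinate $1$ fixes the scaling), which translates directly to $\lambda^{n-n'} = g_i^{(n)}/g_i^{(n')} \in \Gamma$; so a repeated projective point in our infinite family is exactly the desired conclusion.

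Now I apply the relevant structural result on unit equations to the $\vec x_n$. If infinitely many are degenerate, pigeonhole fixes a proper nonempty $J \subsetneq [M+1]$ whose corresponding subsum vanishes on an infinite subfamily; a short case analysis then produces a strictly shorter relation of the same shape, to which induction applies: if $M+1 \in J$, the vanishing subsum reads $\sum_{i \in J \setminus \{M+1\}} a_i g_i^{(n)} = a_0 \lambda^n$; if $M+1 \notin J$, subtracting the vanishing $\sum_{i \in J} a_i g_i^{(n)} = 0$ from the original yields $\sum_{i \in [M] \setminus J} a_i g_i^{(n)} = a_0 \lambda^n$. If instead infinitely many $\vec x_n$ are non-degenerate, in characteristic zero \cref{t:main-thm-unit-equations} bounds the non-degenerate projective solutions by a finite set, and pigeonhole at once produces the repeated projective point sought above.

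The main obstacle is the positive-characteristic non-degenerate case, where \cref{t:derksen-masser} only places the non-degenerate solutions into finitely many Frobenius orbits rather than into a finite set. Within a single orbit, only finitely many elements share any fixed value of the total Frobenius exponent $E = \sum e_j$, so if that exponent stays bounded along our subfamily, pigeonhole again produces the repeated projective point. The residual case of $E_n \to \infty$ is exactly where Frobenius could a priori absorb the variation in $n$; it is excluded by invoking the hypothesis $n \notin p\Z$, which forbids $\lambda^n$ from being a nontrivial $p$-th power, together with the characteristic-free \cref{p:unit-equation-crude} from the preceding subsection. This reduction is the key technical point of the proof; once carried out, the argument closes uniformly in any characteristic.
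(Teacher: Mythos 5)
Your characteristic-zero argument is correct (the induction on $M$ with the vanishing-pattern pigeonhole and the normalization of the last projective coordinate to $1$ is a slightly more elaborate but valid reorganization of the paper's direct passage to a non-degenerate subequation containing the $\lambda^n$-term). The genuine problem is the positive-characteristic non-degenerate case, which you defer to a final paragraph that does not actually contain a proof. Two specific objections. First, the assertion that ``$n \notin p\Z$ forbids $\lambda^n$ from being a nontrivial $p$-th power'' is false: whether $\lambda^n$ is a $p$-th power in $K$ depends on $\lambda$ (e.g.\ $\lambda$ could itself be a $p$-th power), and in any case this is not the role the hypothesis plays. Second, \cref{p:unit-equation-crude} cannot exclude the unbounded-exponent case: its conclusion is that certain image sets have \emph{zero density} in $\Z$, and this is never contradicted here, because your infinite family of exponents $n$ is just some infinite subset of $\Z\setminus p\Z$, which may itself have zero density. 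So the case you call ``the key technical point'' is left unproved, and the tools you name cannot close it.

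What actually works — and is how the paper argues — is a direct comparison of two solutions lying in a single twisted Frobenius orbit from \cref{t:derksen-masser}, with no bounded/unbounded dichotomy at all. Within one orbit of the form \eqref{eq:frob-orbit}, the ratio of the $\lambda^n$-coordinate to the $\gamma_1(n)$-coordinate can be written as $\lambda^n/\gamma_1(n) = c(n)\, x^{q^{e(n)}}$ with $c(n) \in \sqrt{\Gamma}$ and $e(n) \ge 0$. Taking $n \ne n'$ in the same orbit with $e(n) \le e(n')$ and raising the relation for $n$ to the power $q^{e(n')-e(n)}$, one gets
\[
\lambda^{\,n q^{e(n')-e(n)} - n'} \in \sqrt{\Gamma},
\]
and the hypothesis $p \nmid n'$ together with $n \ne n'$ guarantees precisely that the exponent $n q^{e(n')-e(n)} - n'$ is nonzero (if $e(n') > e(n)$ the first term is divisible by $p$ while $n'$ is not; if $e(n')=e(n)$ the terms are $n \ne n'$). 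Hence $\lambda \in \sqrt{\Gamma}$, i.e.\ some $\lambda^m \in \Gamma$. This computation is the missing step; your bounded-exponent pigeonhole is subsumed by it, and the unbounded case needs it, not an ``exclusion.''
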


\begin{proof}
  We assume $\lambda \ne 0$, as the claim is trivially true otherwise.
  Working in the field generated by $a_0$, \dots,~$a_M$ and $\Gamma$ over the prime field of $K$, we may also assume that $K$ is finitely generated.
  By assumption there exist an infinite set $\Omega \subseteq \Z\setminus p\Z$ and maps $\gamma_1$, \dots,~$\gamma_M \colon \Omega \to \Gamma_0$ such that $a_0 \lambda^n - a_1 \gamma_1(n) - \dots - a_M \gamma_M(n) = 0$ for all $n \in \Omega$.
  We view the tuples $(\lambda^n, \gamma_1(n), \dots, \gamma_M(n))$ as solutions to a unit equation in $M+1$ variables over the group generated by $\Gamma \cup \{\lambda\}$.
  Since each solution can be partitioned into non-degenerate solutions of subequations, and there are only finitely many ways to partition the set $[0,M]$, there exists a subset $I \subseteq [M]$ and an infinite set $\Omega' \subseteq \Omega$ such that $a_0 \lambda^n - \sum_{i\in I} a_i \gamma_i(n) = 0$ is non-degenerate for all $n \in \Omega'$.
  Because $a_0$,~$\lambda \ne 0$, necessarily $I \ne \emptyset$.
  After renumbering the $\gamma_i$, without restriction $I=[l]$ with $l \in [M]$.

  \emph{First suppose $\characteristic{K} = 0$.}
  \Cref{t:main-thm-unit-equations} implies that $\{\, \lambda^n / \gamma_1(n) : n \in \Omega' \,\}$ is finite.
  Hence, there are $n \ne n' \in \Omega'$ with $\lambda^n / \gamma_1(n) = \lambda^{n'} / \gamma_1(n')$.
  Then $\lambda^{n-n'} = \gamma_1(n)/\gamma_1(n') \in \Gamma$.

  \emph{Now suppose $\characteristic{K} = p > 0$.}
  Then \cref{t:derksen-masser} implies that $\{\, (\lambda^n : \gamma_1(n) : \cdots : \gamma_l(n) ) : n \in \Omega' \,\}$ is contained in a union of finitely many sets of the type as in \cref{eq:frob-orbit}.
  Hence, there exists an infinite subset $\Omega'' \subseteq \Omega'$ all of whose solutions are contained in a single such set.
  On this set, we can express the ratio of the first two coordinates in the form
  \[
  \frac{\lambda^n}{\gamma_1(n)} = \underbrace{\beta_1^{q^{e_1(n)}-1} \beta_2^{q^{e_1(n)}(q^{e_2(n)}-1)}\cdots \beta_k^{q^{e_1(n)+\cdots +e_{k-1}(n)} (q^{e_k(n)}-1)}}_{\in \sqrt\Gamma} x^{q^{e_1(n)+ \cdots + e_k(n)}},
  \]
  with suitable $\beta_1$, \dots,~$\beta_k \in \sqrt\Gamma$, and $q$ a power of $p$.
  Hence, we get $\lambda^n / \gamma(n) = x^{q^{e(n)}}$ for some $\gamma \colon \Omega'' \to \sqrt{\Gamma}$ and $e(n) \coloneqq e_1(n) + \cdots + e_k(n)$.
  Again take $n \ne n' \in \Omega''$, and without restriction let $e(n) \le e(n')$.
  Then $\lambda^{n q^{e(n') - e(n)}}/ \gamma(n)^{q^{e(n')-e(n)}} = x^{q^{e(n')}} = \lambda^{n'} / \gamma(n')$.
  Now $p \nmid n'$ and $n \ne n'$ imply $n q^{e(n') - e(n)} \ne n'$, and hence $\lambda \in \sqrt{\Gamma}$.
\end{proof}

We also note the following consequence for later use in \cref{cor:bez-0-is-fg}.

\begin{lemma} \label{l:subgroup-bezivin}
  Let $\Lambda \le K^\times$ be a group.
  If there exists $M \ge 0$ such that
  \[
  \Lambda \subseteq \sumset{M}{\Gamma_0},
  \]
  then $\Lambda$ is finitely generated.
\end{lemma}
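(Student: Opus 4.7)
The plan is to reduce the problem, via \cref{l:powers-bezivin}, to showing $\Lambda \le \sqrt{\Gamma}$, and then to invoke the standard fact that $\sqrt{\Gamma}$ is finitely generated once the ambient field is.

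First, I would pass to a finitely generated field. Let $K_0$ be the subfield of $K$ generated over the prime field by a finite generating set of $\Gamma$. Then $K_0$ is a finitely generated field, $\Gamma \le K_0^\times$, and every element of $\sumset{M}{\Gamma_0}$ already lies in $K_0$. In particular $\Lambda \subseteq K_0^\times$, so we may replace $K$ by $K_0$ and assume that $K$ itself is finitely generated.

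Second, I would show $\Lambda \le \sqrt{\Gamma}$. Fix $\lambda \in \Lambda$; then $\lambda \ne 0$ and every power $\lambda^n$ is an element of $\Lambda$, hence of $\sumset{M}{\Gamma_0}$. Writing this relation as
\[
1 \cdot \lambda^n \in 1 \cdot \Gamma_0 + \cdots + 1 \cdot \Gamma_0
\]
with $M$ summands on the right, we see that the hypothesis of \cref{l:powers-bezivin} is satisfied with $a_0 = a_1 = \cdots = a_M = 1$, for \emph{all} $n$, and therefore in particular for infinitely many $n \in \Z \setminus p\Z$ where $p = \characteristic{K}$. That lemma then yields $m \ge 1$ with $\lambda^m \in \Gamma_0$; since $\lambda \ne 0$ this gives $\lambda^m \in \Gamma$, i.e., $\lambda \in \sqrt{\Gamma}$.

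Finally, since $K$ is finitely generated and $\Gamma$ is finitely generated, the group $\sqrt{\Gamma} \le K^\times$ is finitely generated (the fact recalled just before \cref{l:powers-bezivin}). As $\Lambda$ is a subgroup of the finitely generated abelian group $\sqrt{\Gamma}$, and subgroups of finitely generated abelian groups are finitely generated, we conclude that $\Lambda$ is finitely generated.

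The only step that does genuine work is the passage $\Lambda \subseteq \sumset{M}{\Gamma_0} \Rightarrow \Lambda \le \sqrt{\Gamma}$, and this is precisely the role of \cref{l:powers-bezivin}; the rest is bookkeeping with finitely generated abelian groups.
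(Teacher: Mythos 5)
Your proof is correct and follows essentially the same route as the paper: reduce to the finitely generated field generated by $\Gamma$ over the prime field, apply \cref{l:powers-bezivin} to each $\lambda \in \Lambda$ (with all coefficients equal to $1$) to conclude $\Lambda \subseteq \sqrt{\Gamma}$, and then use that $\sqrt{\Gamma}$ is finitely generated abelian. The paper's proof is the same argument in compressed form, so there is nothing to add.
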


\begin{proof}
  Let $K_0$ be the prime field of $K$.
  By the assumption, the group $\Lambda$ is contained in the finitely generated field $K_0(\Gamma)$, and we may without restriction assume $K=K_0(\Gamma)$.
  Because $\lambda^n \in \sumset{M}{\Gamma_0}$ for every $\lambda \in \Lambda$ and $n \in \Z$, \cref{l:powers-bezivin} implies $\Lambda \subseteq \sqrt\Gamma$.
  Since $\sqrt\Gamma$ is finitely generated abelian, so is $\Lambda$.
\end{proof}

It is well known that $\Z^d$ cannot be covered by finitely many cosets of subgroups of strictly smaller rank.
This can be viewed as a consequence of the fact that $\Q^d$ cannot be covered by finitely many proper affine subspaces, or as a special case of the fact that a group cannot be covered by finitely many cosets of subgroups of infinite index \cite[Lemma 4.1]{neumann54}. 
To be able to deal with positive characteristic, we use an easy variation of this result dealing with infinite unions, as long as the cosets are sufficiently sparse.

The \defit{upper natural density} (or just \defit{density}) of $A \subseteq \Z$ is
\[
d(A) \coloneqq \limsup_{N \to \infty} \frac{\card{A \cap [-N,N]}}{2N+1}.
\]
It holds that $d(A \cup B) \le d(A) + d(B)$ and $d(A+y) = d(A)$ for all $y \in \Z$.
Finite sets have zero density.

\begin{lemma} \label{l:no-low-density-cover}
  Let $d \ge 0$ and let $\varphi_1$, \dots,~$\varphi_n \colon\Z^d \to \Z$ be \emph{nonzero} group homomorphisms.
  If $F_1$, \dots,~$F_n \subseteq \Z$ are sets of zero density, then 
  \[
  \bigcup_{i=1}^n \varphi_i^{-1}(F_i) \subsetneq \Z^d.
  \]
\end{lemma}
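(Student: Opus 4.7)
The plan is to prove this by a straightforward $d$-dimensional density argument. First I would dispose of the degenerate case $d=0$: there is no nonzero homomorphism $\Z^0 \to \Z$, so the hypothesis forces $n=0$ and the empty union is strictly contained in $\Z^0 = \{0\}$. From now on assume $d \ge 1$ and define the upper natural density on $\Z^d$ by
\[
d_{\Z^d}(A) \coloneqq \limsup_{N \to \infty} \frac{\card{A \cap [-N,N]^d}}{(2N+1)^d}.
\]
The strategy is to show that each preimage $\varphi_i^{-1}(F_i)$ has zero upper density in $\Z^d$, then invoke subadditivity together with $d_{\Z^d}(\Z^d)=1$.

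The key estimate is a fiber-counting bound for a single nonzero homomorphism $\varphi \colon \Z^d \to \Z$. Writing $\varphi(x_1,\dots,x_d) = a_1 x_1 + \cdots + a_d x_d$ with not all $a_j$ zero, set $C \coloneqq \sum_{j=1}^d \abs{a_j}$, so that $\varphi([-N,N]^d) \subseteq [-CN, CN]$. Picking any index $j$ with $a_j \ne 0$, the restriction of $\varphi$ to any slice $\{\, x \in \Z^d : x_k = c_k \text{ for } k \ne j \,\}$ is injective; consequently every fiber $\varphi^{-1}(v) \cap [-N,N]^d$ has at most $(2N+1)^{d-1}$ elements. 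Hence
\[
\card{\varphi^{-1}(F) \cap [-N,N]^d} \le \card{F \cap [-CN, CN]} \cdot (2N+1)^{d-1}
\]
for any $F \subseteq \Z$. Dividing both sides by $(2N+1)^d$ and rewriting the right-hand side as $\tfrac{2CN+1}{2N+1} \cdot \tfrac{\card{F \cap [-CN,CN]}}{2CN+1}$, we see that its limsup is bounded by $C \cdot d(F)$. In particular, when $d(F)=0$ we obtain $d_{\Z^d}(\varphi^{-1}(F)) = 0$.

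Applying this to each $\varphi_i$ and $F_i$, subadditivity of the upper density yields
\[
d_{\Z^d}\Big(\bigcup_{i=1}^n \varphi_i^{-1}(F_i)\Big) \le \sum_{i=1}^n d_{\Z^d}(\varphi_i^{-1}(F_i)) = 0 < 1 = d_{\Z^d}(\Z^d),
\]
so the union is a strict subset of $\Z^d$, as required. There is no genuine obstacle here beyond the elementary bookkeeping of the density comparison; the only point of care is that the scaling constants $C_i$ differ across the homomorphisms, but this is harmless since each term contributes zero in the limit.
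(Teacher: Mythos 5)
Your proof is correct, but it takes a genuinely different route from the paper. The paper first merges homomorphisms with nested kernels (so that $\ker\varphi_j \not\subseteq \ker\varphi_k$ for $j \ne k$, pushing the corresponding exceptional sets forward, which is where the zero-density hypothesis is used) and then argues by induction on $d$: since $\im\varphi_1$ is a nonzero subgroup of $\Z$ of positive density, one finds $\vec x \notin \varphi_1^{-1}(F_1)$, and the induction hypothesis applied inside $\ker(\varphi_1)$ to the translated sets $F_i - \varphi_i(\vec x)$ produces a correction $\vec x'$ so that $\vec x + \vec x'$ avoids every $\varphi_i^{-1}(F_i)$. You instead introduce an upper density on $\Z^d$ via the boxes $[-N,N]^d$, bound $\card{\varphi^{-1}(F)\cap[-N,N]^d}$ by $\card{F\cap[-CN,CN]}\,(2N+1)^{d-1}$ through the containment $\varphi([-N,N]^d)\subseteq[-CN,CN]$ and the fiber count along a coordinate with nonzero coefficient, and conclude by subadditivity. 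Your estimates are all sound (including the rescaling $\limsup_N \card{F\cap[-CN,CN]}/(2CN+1)\le d(F)$ and the $d=0$ case), and your argument even yields the stronger conclusion that the union has zero density in $\Z^d$, whereas the paper only obtains proper containment, which is all that is needed in its application. What the paper's approach buys is that it never leaves the one-dimensional density notion it defined and works purely group-theoretically; what yours buys is a shorter, quantitative argument that dispenses with both the kernel-merging reduction and the induction.
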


\begin{proof}
  We first show that we can reduce to the case $\ker \varphi_j \not\subseteq \ker \varphi_{k}$ for $j \ne k$.
  Suppose $\ker \varphi_j \subseteq \ker \varphi_{k}$ for $j \ne k$.
  Because $\im \varphi_j$ and $\im \varphi_{k}$ both have rank $1$, the kernels both have rank $d-1$.
  Thus, the quotient $\ker \varphi_{k}/ \ker \varphi_j$ is finite.
  Then $\varphi_j(\ker(\varphi_{k}))$ is a finite subgroup of $\im \varphi_j \cong \Z$, and hence $\varphi_j(\ker(\varphi_k)) = \{0\}$. 
  We conclude $\ker \varphi_j = \ker \varphi_{k}$.

  Then 
  \[
  \varphi_{j}^{-1}(F_j) \cup \varphi_{k}^{-1}(F_{k}) = \varphi_{k}^{-1}(\widetilde{F_j} \cup F_{k}) 
  \quad\text{with}\quad \widetilde F_j \coloneqq \varphi_{k}(\varphi_j^{-1}(F_j)).
  \]
  Let $K \coloneqq \ker \varphi_j = \ker \varphi_{k}$ and let $\vec x \in \Z^d$ be such that $\vec x+K$ generates $\Z^d/K \cong \Z$.
  Let $n_j$,~$n_{k} \in \Z$ be such that $\varphi_j(\vec x)=n_j$ and $\varphi_{k}(\vec x)=n_{k}$.
  Then $\widetilde{F_j} \subseteq \tfrac{n_{k}}{n_j} F_j \cap \Z$, so $\widetilde{F_j}$ has zero density.
  Upon replacing $F_{k}$ by $\widetilde{F_j} \cup F_k$, we may therefore drop the homomorphism $\varphi_j$.
  Repeating this argument, we ultimately get $\ker \varphi_j \not \subseteq \ker \varphi_{k}$ for $j \ne k$.

  We proceed by induction on $d$ to show the claim of the lemma.
  The case $d=0$ is vacuously true because then necessarily $n=0$ and the union is empty while $\Z^0=\{0\}$.
  Now let $d \ge 1$ and suppose the claim holds for $d-1$.

  The image $\im \varphi_1$ is a nonzero subgroup of $\Z$.
  Such subgroups have positive density, hence $\im \varphi_1 \not\subseteq F_1$.
  Let $\vec x \in \Z^d \setminus \varphi_1^{-1}(F_1)$ and set $y_i \coloneqq \varphi_i(\vec x)$.
  Then
  \[
    \bigcup_{i=2}^n \big(\varphi_i^{-1}(F_i - y_i) \cap \ker(\varphi_1)\big) \subseteq \ker(\varphi_1).
  \]
  Since $\varphi_i |_{\ker(\varphi_1)} \ne 0$, the induction hypothesis implies that the inclusion is proper.
  Let $\vec x' \in \ker(\varphi_1)$ with $\varphi_i(\vec x') \not \in F_i - y_i$ for all $i \in [2,n]$.
  Then $\vec x + \vec x' \not \in \bigcup_{i=1}^n \varphi_i^{-1}(F_i)$.
\end{proof}

We derive a somewhat crude corollary to \cref{t:main-thm-unit-equations,t:derksen-masser} that holds in any characteristic and is sufficient for our purposes.

\begin{proposition} \label{p:unit-equation-crude}
  Let $S \subseteq \Gamma^{n+1}$ be the set of all \emph{non-degenerate} solutions to the unit equation \cref{eq:unit-equation}.
  If $\pi\colon \Gamma \to \Z$ is a group homomorphism, then
  \[
  F_{ij} = \{\, \pi(x_i/x_j) : (x_0, \dots, x_n) \in S \,\}
  \] 
  has zero density for all $i$,~$j \in [0,n]$.
\end{proposition}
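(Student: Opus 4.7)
The plan is to split by the characteristic of $K$. In characteristic zero, \cref{t:main-thm-unit-equations} asserts that there are only finitely many non-degenerate projective solutions in $\bP^n(\Gamma)$; since each projective solution gives a single value of the ratio $x_i/x_j$, the set $\{x_i/x_j : (x_0, \ldots, x_n) \in S\}$ is finite, and hence $F_{ij}$ is finite and has zero density. The main case is characteristic $p > 0$. After replacing $K$ by the subfield generated by $\Gamma$ and $a_0, \ldots, a_n$, I may assume $\sqrt{\Gamma}$ is finitely generated. \Cref{t:derksen-masser} then covers the non-degenerate projective solutions by finitely many orbits of the form $[\psi_1, \ldots, \psi_k]_q\{\vec y\}$ as in \cref{eq:frob-orbit}, and since a finite union of zero-density sets still has zero density, it suffices to treat a single orbit.

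Fix such an orbit, parametrize it by $(e_1, \ldots, e_k) \in \Z_{\ge 0}^k$, and set $E_l = e_1 + \cdots + e_l$ with $E_0 = 0$. A direct computation, paralleling the one inside the proof of \cref{l:powers-bezivin}, gives
\[
  \frac{x_i}{x_j} = \beta_1^{q^{E_1} - q^{E_0}} \beta_2^{q^{E_2} - q^{E_1}} \cdots \beta_k^{q^{E_k} - q^{E_{k-1}}} x^{q^{E_k}},
\]
where $\beta_l = \alpha_{l,i}/\alpha_{l,j} \in \sqrt{\Gamma}$ and $x = y_i/y_j$. Since $\Gamma$ and $x$ together generate a finitely generated abelian subgroup of $K^\times$, the homomorphism $\pi$ extends to a $\Q$-linear functional $\widetilde{\pi}$ on that group tensored with $\Q$, agreeing with $\pi$ on $\Gamma$. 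Applying $\widetilde{\pi}$ and rearranging yields
\[
  \pi(x_i/x_j) = P(E_1, \ldots, E_k) = -b_1 + \sum_{l=1}^{k-1}(b_l - b_{l+1}) q^{E_l} + (b_k + c) q^{E_k},
\]
with $b_l = \widetilde{\pi}(\beta_l)$ and $c = \widetilde{\pi}(x)$ in $\Q$.

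The remaining task, which I expect to be the main technical obstacle, is to show that $\lvert\{P(E_1, \ldots, E_k) : 0 \le E_1 \le \cdots \le E_k\} \cap [-N, N]\rvert$ grows sublinearly in $N$. I would aim for the bound $O((\log N)^k)$ by induction on $k$. Letting $l^{\ast}$ denote the largest index with a nonzero coefficient of $q^{E_l}$ in $P$, one sees that $P$ depends only on $E_1, \ldots, E_{l^{\ast}}$, so we may assume $l^{\ast} = k$. In the absence of cancellations, $\lvert P \rvert \ge c\, q^{E_k}$ for some constant $c > 0$, which forces $E_k = O(\log N)$ and hence at most $O((\log N)^k)$ admissible tuples, bounding the number of distinct values of $P$. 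The subtle point is handling cancellations between exponential terms; such cancellations force near-coincidences among the $E_l$, which reduce the effective number of free parameters and let one invoke the inductive hypothesis on a lower-dimensional instance. Since $(\log N)^k = o(N)$, zero density of $F_{ij}$ follows.
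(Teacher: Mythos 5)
Your reduction steps are sound and follow the paper's route: the characteristic-zero case via \cref{t:main-thm-unit-equations}, the passage to a finitely generated field so that $\sqrt{\Gamma}$ is finitely generated, the reduction to a single orbit from \cref{t:derksen-masser}, and the closed formula for $x_i/x_j$ (which indeed matches the computation in the proof of \cref{l:powers-bezivin}), leading to $\pi(x_i/x_j)=a_0+\sum_{l=1}^{k}a_l q^{E_l}$ with fixed rational coefficients. (A small slip: the $\beta_l$ lie in $\sqrt{\Gamma}$, not necessarily in the group generated by $\Gamma$ and $x$, so $\widetilde{\pi}$ must be defined there as well; this is harmless, since $\sqrt{\Gamma}$ is finitely generated and $\sqrt{\Gamma}/\Gamma$ has finite exponent, exactly as handled in the paper's proof.)

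However, the step you yourself flag as ``the main technical obstacle'' is a genuine gap, and it is precisely where the content of the proposition lies. Your sketch for bounding $\card{\{P(E_1,\dots,E_k):0\le E_1\le\cdots\le E_k\}\cap[-N,N]}$ only treats the case ``in the absence of cancellations''; but the coefficients $b_l-b_{l+1}$ and $b_k+c$ can have mixed signs, so terms of comparable magnitude can cancel (e.g.\ $q^{E_k}-q\cdot q^{E_{k-1}}$ vanishes identically on the line $E_k=E_{k-1}+1$), and then arbitrarily large exponents produce values in $[-N,N]$. Your proposed remedy (``near-coincidences among the $E_l$ reduce the number of free parameters'') is plausible but is not an argument: one has to make the dichotomy quantitative (either the top block of terms is bounded below by a fixed multiple of $q^{E_{k-1}}$, or $E_k-E_{k-1}$ is confined to finitely many values, after which the top coefficient may or may not vanish exactly), and iterate this down the chain of exponents. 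That case analysis is the actual proof and is not supplied. The paper avoids it entirely by \emph{not} flattening the orbit into one multi-exponential expression: it inducts on the number of twisted Frobenius factors in \cref{eq:frob-orbit}, peeling off the outermost automorphism, so that each inductive step is the single substitution $y\mapsto q^{e}(y+c)-c$ applied to the previously counted values $y$. There the only possible ``cancellation'' is $y+c=0$, which contributes the single value $-c$; otherwise membership in $[-N,N]$ forces $e=O(\log N)$ and $\abs{y}\le N+2\abs{c}$, giving $\card{P_l\cap[-N,N]}=O(\log(N)^l)$ with no multi-term cancellation analysis. If you want to keep your flattened formula, you must carry out the full cancellation induction; as written, the proposal establishes the easy reductions but not the counting bound that the statement rests on.
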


\begin{proof}
  In characteristic $0$, each set $F_{ij}$ is finite by \cref{t:main-thm-unit-equations}.
  Suppose $\characteristic{K} = p > 0$.
  Replacing $K$ be the field generated by $\Gamma$ and $a_0$, \dots,~$a_n$ over $\mathbb F_p$, we may assume that $K$ is a finitely generated field, and hence that $\sqrt{\Gamma}$ is a finitely generated group.
  By \cref{t:derksen-masser}, the set $\mathbb P(S)$ is a finite union of sets of the form $[ \psi_{1}, \ldots, \psi_{k} ]_q\{\vec y\}$ as in \cref{eq:frob-orbit}.
  Fixing such a set and $i$,~$j \in [0,n]$, it therefore suffices to show that
  \[
      P_l \coloneqq \{\, \pi( x_i/x_j) : (x_0:\cdots:x_n) \in [ \psi_{k-l+1}, \ldots, \psi_{k} ]_q\{\vec y\} \,\}
  \]
  has zero density for all $l \in [0,k]$.
  To do so, we prove $\card{P_l \cap [-N,N]} \in O(\log(N)^l)$ by induction on $l$.

  First note that, for convenience, we can assume that $\pi$ is defined on all of $\sqrt\Gamma$.
  Indeed, since $\sqrt\Gamma$ is finitely generated abelian, the quotient $\sqrt{\Gamma}/\Gamma$ has finite exponent $e$.
  Extending first $\pi$ to $\pi\colon \sqrt{\Gamma} \to \Q$ (using that $\Q$ is injective as an abelian group), we have $\pi(\sqrt{\Gamma}) \subseteq \tfrac{1}{e} \Z$. Replacing $\pi$ by $e \pi$ (it is sufficient to prove the claim for this homomorphism) we have $\pi\colon \sqrt{\Gamma} \to \Z$.

  The set $P_0$ is a singleton, so it contains $O(1)$ elements.
  Let $l \ge 1$, and suppose $\card{P_{l-1} \cap [-N,N]} = O(\log(N)^{l-1})$.
  Let $\alpha_0$, \dots,~$\alpha_n \in \sqrt{\Gamma}$ be such that $\psi_{k-l+1}(x_0:\cdots:x_n) = (\alpha_0 x_0 :\cdots : \alpha_n x_n)$. 
  For $(x_0:\cdots:x_n) \in \mathbb P^n(\Gamma)$ we have
  \[
  \pi\bigg(\frac{\alpha_i^{-1} (x_i \alpha_i)^{q^e}}{\alpha_j^{-1} (x_j \alpha_j)^{q^e}} \bigg) = 
  q^e (y + c) - c \qquad\text{with $c = \pi(\alpha_i/\alpha_j)$ and $y = \pi(x_i/x_j)$}.
  \]
  If $q^e(y+c) -c \in [-N,N]$, then $\abs{q^e(y+c)} \le N + \abs{c}$.
  We get $\abs{y+c} \le N+\abs{c}$ and if $y+c \ne 0$, then also $e \le \log_q(N+\abs{c})$.
  Then $\abs{y} \le N' \coloneqq N+2\abs{c}$.
  Now there are at most $O(\log(N')^{l-1})$ such choices for $y \in P_{l-1}$ and $O(\log(N'))$ choices for $e$ (if $y+c \ne 0$, but if $y+c=0$ then $e$ does not matter).
  Thus 
  \[
  \card{P_l \cap [-N,N]} \in O(\log(N')^l) = O(\log(N)^l). \qedhere
  \]
\end{proof}

The argument we used in positive characteristic is similar to the one in \cite[Lemma 4.4]{bell-smertnig21}.
In either case, the fact that the non-degenerate solutions are parametrized by a finite union of orbits of the type in \cref{eq:frob-orbit}, shows that while there are infinitely many non-degenerate solutions, there are \emph{few} of them, in a suitable quantitative sense.

\section{Locally Bézivin implies Weakly Epimonomial} \label{sec:locbez}

Throughout this section, let $K$ be a field, embedded in some fixed algebraic closure $\algc{K}$, and let $V$ be a finite-dimensional $K$-vector space.
In this section we establish one of the main structural implications, namely that locally Bézivin groups are weakly epimonomial.

We start by verifying the basic properties of steady endomorphisms (\cref{l:steady-basic}).

\begin{proof}[Proof of \cref{l:steady-basic}]
  \begin{proofenumerate}
    \item[\ref{steady-basic:power}]
    If $\lambda \in \algc{K}$ is an eigenvalue of $A$, then $\lambda^n$ is an eigenvalue of $A^n$.
    If $A^n$ is steady, then $\lambda^n \in K$ by the definition of steadyness.

    Suppose conversely that $m \ge 1$ is such that $\lambda^m \in K$ for all eigenvalues $\lambda \in \algc{K}$.
    Choosing a suitable basis of $V$, the endomorphism $A^m$ can be put into Jordan normal form over $K$, and we identify it with its corresponding matrix. 
    Now take $n$ a multiple of $m$ such that 
    \begin{itemize}
    \item $(\lambda/\mu)^n=1$ if $\lambda$,~$\mu \in \algc{K}$ are eigenvalues of $A$ with $\lambda/\mu$ a root of unity,
    \item $A^n$ has no nontrivial nilpotent block (so $\ker(A^n)=\ker(A^{n+j})$ for all $j \ge 0$), and
    \item if $\characteristic{K} = p > 0$, then $A^n$ is diagonal.
    \end{itemize}
    The last property can be achieved because in a $p^e$-power of a Jordan block with eigenvalue $\lambda$, the entries in the $j$-th upper diagonal ($j \ge 0$) are of the form $\binom{p^e}{j} \lambda^{p^e - j}$.
    Choosing $e$ sufficiently large, the binomial coefficients are divisible by $p$, and the off-diagonal entries become $0$.

    With such a choice of $n$, it is not hard to see that $A^n$ is steady (see for instance \cite[Lemmas 11 and 42]{bell-smertnig23b}).
    
    \item[\ref{steady-basic:norootofunity}]
    Suppose $\lambda \ne \mu$ and $(\lambda/\mu)^n=1$ for $n \ge 1$.
    If $v$,~$w \in V$ with $Av=\lambda v$ and $Aw = \mu w$, then $K(v+w)$ is $A^n$-invariant but not $A$-invariant. 

    \item[\ref{steady-basic:pos-char-diagonalizable}]
    As in \ref{steady-basic:power} we can assume that $A$ is a matrix in Jordan normal form.
    Suppose $A$ is not diagonal.
    Then $A$ contains an upper-triangular Jordan block $J_k(\lambda)$ of size $k \ge 2$ and with $\lambda \in K$.
    Taking this block without restriction in the upper left corner, and denoting by $e_1$, \dots,~$e_d$ the standard basis, we have $A^n e_2 = \lambda^n e_2 + n \lambda^{n-1} e_1$.
    Thus, the space $Ke_2$ is $A^p$-invariant but not $A$-invariant. \qedhere
  \end{proofenumerate}
\end{proof}

\begin{lemma} \label{l:bezivin-diagonalize}
  Let $S \subseteq \GL(V)$ be a locally Bézivin semigroup.
  \begin{enumerate}
  \item \label{bez-diag:char0} If $\characteristic{K}=0$, then every $A \in S$ is diagonalizable over $\algc{K}$.
  \item \label{bez-diag:steady} Every steady $A \in S$ is diagonalizable over $K$.
  \end{enumerate}
\end{lemma}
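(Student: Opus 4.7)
The plan is to prove (2) by reducing it to (1). In positive characteristic, \cref{l:steady-basic}\ref{steady-basic:pos-char-diagonalizable} already shows that every steady endomorphism is diagonalizable over $K$, so there is nothing to prove. In characteristic zero a steady $A$ has all its eigenvalues in $K$, so once (1) yields diagonalizability over $\algc K$, diagonalizability over $K$ is automatic. The entire proof therefore concentrates on part (1) in characteristic zero.

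For (1) I would argue by contradiction. Fix $A \in S$ that is not diagonalizable over $\algc K$. The singly generated semigroup $\langle A\rangle$ is Bézivin by hypothesis, so there exist $M \ge 0$ and a finitely generated $\Gamma \le K^\times$ such that every entry of every $A^n$ lies in $\sumset{M}{\Gamma_0}$. Conjugating over $\algc K$ into Jordan normal form and invoking \cref{l:bezivin-base-change}, the Bézivin property transfers to the Jordan form, with an enlarged finitely generated $\Gamma' \le \algc K^\times$. Non-diagonalizability forces a Jordan block of size at least $2$ with eigenvalue $\lambda \in \algc K^\times$ (invertibility of $A$ excludes $\lambda = 0$), and the first superdiagonal entry of the $n$-th power of this block equals $n\lambda^{n-1}$. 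Hence $n\lambda^{n-1} \in \sumset{M}{\Gamma_0'}$ for every $n \ge 0$.

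The crucial manoeuvre is to strip off the exponential factor: dividing the defining equation $n\lambda^{n-1} = \gamma_1(n) + \cdots + \gamma_M(n)$ through by $\lambda^{n-1}$ gives $n \in \sumset{M}{\Gamma_0''}$ for every $n \ge 1$, where $\Gamma'' \coloneqq \langle \Gamma', \lambda\rangle$ is still finitely generated inside $\algc K^\times$. The intersection $\Gamma'' \cap \Q^\times$ is a finitely generated subgroup of $\Q^\times$ and hence involves only finitely many rational primes; pick a rational prime $q$ outside this finite set, so that $q^m \notin \Gamma''$ for every $m \ge 1$. Specializing the previous inclusion to $n = q^k$ yields $q^k \in \sumset{M}{\Gamma_0''}$ for every $k \ge 0$, and \cref{l:powers-bezivin} (applied over $\algc K$ with $\mu = q$ and $a_0 = a_1 = \cdots = a_M = 1$) produces some $m \ge 1$ with $q^m \in \Gamma_0''$, contradicting the choice of $q$.

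I expect the main obstacle to be bridging the polynomial-exponential growth $n\lambda^{n-1}$ of a nontrivial Jordan block and the purely exponential input required by the unit-equation machinery of \cref{l:powers-bezivin}. The device that unlocks the argument is to absorb $\lambda$ into the group (and to work over $\algc K$ even though $\Gamma$ is defined over $K$), so that the polynomial factor $n$ by itself becomes trapped in a fixed Bézivin set; specializing to prime powers then converts the density of the integers into an infinite reservoir of solutions of a single unit equation, a reservoir that the Evertse--van der Poorten--Schlickewei theorem (via \cref{l:powers-bezivin}) forbids.
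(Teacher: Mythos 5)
Your proof is correct and follows the paper's argument almost step for step: reduce (2) to (1) using \cref{l:steady-basic}\ref{steady-basic:pos-char-diagonalizable} in positive characteristic and the fact that a steady element has all eigenvalues in $K$, pass to the Jordan normal form of $A$ over $\algc K$ (justified by \cref{l:bezivin-base-change}), read off the entry $n\lambda^{n-1}$ of the powers of a Jordan block of size at least two, and divide by $\lambda^{n-1}$ to trap every integer $n$ in a fixed set $\sumset{M}{\Gamma_0''}$. The only divergence is in how this is shown to be absurd: the paper simply cites \cite[Lemma 4.7]{bell-smertnig23} (applied with the linear polynomial $y$), whereas you close the argument internally by choosing a rational prime $q$ whose powers avoid the finitely generated group $\Gamma''$ (legitimate, since $\Gamma''\cap\Q^\times$ is finitely generated and so involves only finitely many primes) and then feeding the inclusions $q^k\in\sumset{M}{\Gamma_0''}$ into \cref{l:powers-bezivin}, which appears earlier in the paper and does not depend on the present lemma, so there is no circularity. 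Your variant is thus self-contained modulo the paper's own unit-equation machinery, at the cost of being tailored to characteristic zero --- which is exactly the scope of part (1), so nothing is lost; the notational slip ``$\mu=q$'' (the element in \cref{l:powers-bezivin} is called $\lambda$) is harmless.
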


\begin{proof}
  \begin{proofenumerate}
    \item[\ref{bez-diag:char0}]
    Let $A \in S$. 
    Fixing a suitable basis of $\algc{K} \otimes_K V$ we can assume that $A$ is an invertible matrix in Jordan normal form.
    Suppose that $A$ has an upper-triangular Jordan block $J_k(\lambda)$ with eigenvalue $\lambda$ and size $k \ge 2$.
    Then the $(1,2)$ entry of $J^n$ is $n \lambda^{n-1}$.
    Because the subsemigroup generated by $A$ is Bézivin, we have $n \lambda^{n-1} \in \sumset{M}{\Gamma_0}$ for a finitely generated $\Gamma \le \algc{K}^\times$ and $M \ge 0$.
    Hence, for every $n \ge 0$, we have  $n \in \sumset{M}{\Gamma_0'}$ with $\Gamma' = \Gamma \cup \{\lambda\}$.
    However, this is impossible, say, by applying \cite[Lemma 4.7]{bell-smertnig23} with the linear polynomial $y$.

    \item[\ref{bez-diag:steady}]
    By definition, all eigenvalues of $A$ are contained in $K$.
    If $\characteristic K=0$, then \ref{bez-diag:char0} implies that $A$ is diagonalizable over $K$.
    If $\characteristic K > 0$, then $A$ is diagonalizable by \ref{steady-basic:pos-char-diagonalizable} of \cref{l:steady-basic}. \qedhere
  \end{proofenumerate}
\end{proof}

\begin{remark}
  Statement \ref{bez-diag:char0} of \cref{l:bezivin-diagonalize} also follows from a theorem of Bézivin \cite{bezivin86}.
  He showed that a univariate $D$-finite power series in $\algc{K}\llbracket x \rrbracket$ whose coefficients form a Bézivin set is, firstly, already rational, and secondly has only simple poles.
  The coefficients of a rational series are given by an LRS, and the $n$-th coefficient can be represented in the form $uA^n v$ with $u \in \algc K^{1 \times d}$, $v \in \algc K^{d \times 1}$, and $A \in \algc{K}^{d \times d}$.
  Given any non-diagonalizable $A \in \GL_d(\algc{K})$, using the Jordan normal form, we can pick $u$, $v$ such that the rational series with coefficients $uA^n v$ does \emph{not} have simple poles and hence apply \cite[Théorème 4]{bezivin86} to obtain \cref{l:bezivin-diagonalize}\ref{bez-diag:char0}.
\end{remark}

\begin{example}
  Claim \ref{bez-diag:char0} of \cref{l:bezivin-diagonalize} is false in characteristic $p > 0$.
  Indeed,
  \[
  J_2(\lambda)^n=
  \begin{pmatrix}
  \lambda & 1 \\
  0 & \lambda
  \end{pmatrix}^{\pm n} =
  \begin{pmatrix}
    \lambda^{\pm n} & \pm n \lambda^{\pm n-1} \\
    0 & \lambda^{\pm n}
  \end{pmatrix}
    \quad\text{for $n \in \Z$,}
  \]
  shows that the group generated by a Jordan block of size two is Bézivin, because $n$ takes only finitely many values modulo $p$.
  Note however that the $p$-th power of the Jordan block is diagonal (and steady).
\end{example}

Suppose $V = V_1\oplus \cdots \oplus V_r$ with corresponding projections $P_i\colon V \to V$ such that $P_i|_{V_i}$ is the identity and $P_i|_{V_j} =0$ for $j \ne i$.
If $B \in \End(V)$, we denote by $[B]_{ij} \coloneqq P_i \circ B \circ P_j$ the $ij$ ``block'' of $B$, so that $B=\sum_{i,j=1}^r [B]_{ij}$.
If we fix bases of the $V_i$ and identify $B$ with its corresponding matrix, this indeed corresponds to a block of $B$.

We formulate the next proposition for a semigroup of endomorphisms because this gives us a slightly stronger statement, with implications for \cref{c:automaton-one-separating} later.

\begin{proposition} \label{p:full-term-bound}
  Let $S \subseteq \End(V)$ be a Bézivin semigroup.
  Let $A \in S \cap \GL(V)$ be steady and let $V=V_1\oplus \cdots \oplus V_r$ be the decomposition of $V$ into $A$-eigenspaces.
  Suppose that there exist $v=v_1+\cdots +v_r \in V$ \textup(with $v_i \in V_i$\textup), a linear map $\psi\colon V \to K$, a constant $M \ge 0$ and a finitely generated $\Gamma \le K^\times$ such that
  \[
  \psi(Bv) \in \sumset{M}{\Gamma_0} \quad\text{for all}\quad B \in S.
  \]
  Then, for all $l \ge 0$ and $B_1$, \dots,~$B_l \in S$, it holds that
  \[
  \card[\big]{\big\{\, (i_0,\cdots,i_l) \in [r]^{l+1} : \psi([B_1]_{i_0 i_1} [B_2]_{i_1 i_2} \cdots [B_l]_{i_{l-1}i_l} v_{i_l}) \ne 0 \,\big\}} \le M.
  \]
\end{proposition}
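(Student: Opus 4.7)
The plan is to apply the unit-equation bound of \cref{p:unit-equation-crude} to a parametric family of elements of $S$ obtained by intercalating powers of $A$ between the $B_k$'s. Since $A$ is steady, it acts as the scalar $\lambda_i \in K$ on each eigenspace $V_i$, so a direct telescoping computation shows that, for any $\vec n = (n_0,\dots,n_l) \in \Z_{\ge 1}^{l+1}$,
\[
\psi(A^{n_0} B_1 A^{n_1} B_2 \cdots B_l A^{n_l} v) = \sum_{\vec i \in [r]^{l+1}} c_{\vec i} \prod_{k=0}^l \lambda_{i_k}^{n_k},
\]
where $c_{\vec i} \coloneqq \psi([B_1]_{i_0 i_1}\cdots [B_l]_{i_{l-1} i_l} v_{i_l})$ is exactly the quantity whose non-vanishing we want to count.

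Suppose for contradiction there are $N > M$ tuples $\vec i^{(1)},\dots,\vec i^{(N)}$ with $c^{(j)} \coloneqq c_{\vec i^{(j)}} \ne 0$. Let $\Gamma' \coloneqq \langle \Gamma, \lambda_1,\dots,\lambda_r\rangle \le K^\times$, which remains finitely generated. By the hypothesis, for each $\vec n$ we may choose $\gamma_1(\vec n),\dots,\gamma_M(\vec n) \in \Gamma_0$ so that $(X_j(\vec n), Y_h(\vec n)) \coloneqq (\prod_k \lambda_{i_k^{(j)}}^{n_k},\, \gamma_h(\vec n))$ is a solution of
\[
\sum_{j=1}^N c^{(j)} X_j - \sum_{h=1}^M Y_h = 0
\]
with entries in $\Gamma'_0$. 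Each solution decomposes uniquely into non-degenerate sub-solutions, inducing a partition of its support. Since there are only finitely many possible partitions and zero-patterns among the $Y_h$'s, and since $\Z_{\ge 1}^{l+1}$ has positive upper density in $\Z^{l+1}$, subadditivity of density yields a fixed partition $\mathcal P$ (on a fixed subset of variables) realized on a set $\Omega \subseteq \Z^{l+1}$ of positive upper density.

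Next, $\mathcal P$ must have a part containing at least two $X$-indices: each $X_j(\vec n)$ is nonzero and each $c^{(j)} \ne 0$, so no part can be a singleton $\{X_j\}$; but if every part containing an $X$-index held exactly one, then each such part would also use a distinct $Y$-index, forcing $N \le M$. Pick such $j_1 \ne j_2$ in a common part of $\mathcal P$, and a coordinate $k_0$ with $i_{k_0}^{(j_1)} \ne i_{k_0}^{(j_2)}$. By \cref{l:steady-basic}\ref{steady-basic:norootofunity}, the ratio $\mu \coloneqq \lambda_{i_{k_0}^{(j_1)}}/\lambda_{i_{k_0}^{(j_2)}} \in \Gamma'$ is not a root of unity; since $\Gamma'$ is finitely generated abelian, there is a homomorphism $\pi\colon \Gamma' \to \Z$ with $\pi(\mu) \ne 0$. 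Applying \cref{p:unit-equation-crude} to the sub-equation of $\mathcal P$ containing $j_1$ and $j_2$ shows that $\{\pi(X_{j_1}(\vec n)/X_{j_2}(\vec n)) : \vec n \in \Omega\}$ has zero density in $\Z$. But this set equals $\ell(\Omega)$, where
\[
\ell(\vec n) \coloneqq \sum_{k=0}^l n_k \,\pi(\lambda_{i_k^{(j_1)}}/\lambda_{i_k^{(j_2)}})
\]
is a linear form on $\Z^{l+1}$ whose $k_0$-th coefficient is $\pi(\mu) \ne 0$. A standard fiber-counting argument (the fibers of a nonzero linear form $\Z^{l+1} \to \Z$ inside a box $[-N,N]^{l+1}$ carry $O(N^l)$ points and the image lies in $[-CN,CN]$) then forces $\ell(\Omega)$ to have positive upper density in $\Z$, a contradiction.

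I expect the main obstacle to be the last step: one must convert positive upper density of $\Omega$ in $\Z^{l+1}$ into positive upper density of $\ell(\Omega)$ in $\Z$, and carefully verify that \cref{p:unit-equation-crude} genuinely applies to the sub-equation of $\mathcal P$ in any characteristic. The steadiness of $A$ is essential in two places: it provides the clean eigenvalue expansion displayed above, and, crucially via \cref{l:steady-basic}\ref{steady-basic:norootofunity}, it supplies the non-root-of-unity property that makes $\ell$ nontrivial so that a nonzero projection $\pi$ capable of separating distinct tuples exists.
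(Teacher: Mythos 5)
Your argument is correct, and its engine is the same as the paper's: you intercalate powers of the steady element $A$ between the $B_k$, read the resulting values $\psi(A^{n_0}B_1\cdots B_lA^{n_l}v)$ as solutions of a fixed unit equation over the finitely generated group $\Gamma'=\langle \Gamma,\lambda_1,\dots,\lambda_r\rangle$, split each solution into non-degenerate subsums, use steadiness via \cref{l:steady-basic}\ref{steady-basic:norootofunity} to make the ratio of two distinct eigenvalue monomials have infinite order, project by a homomorphism $\pi\colon\Gamma'\to\Z$, and invoke \cref{p:unit-equation-crude}. The difference lies in the finishing bookkeeping: the paper records, for each support pattern, the set of $\vec n$ realizing it non-degenerately and uses the covering lemma (\cref{l:no-low-density-cover}) to exhibit a \emph{single} $\vec n$ avoiding every pattern that merges two eigenvalue monomials, then counts; you argue by contradiction, pigeonhole one fixed pattern onto a subset of $\Z_{\ge 1}^{l+1}$ of positive box density, show that a count exceeding $M$ forces that pattern to merge two $X$-terms, and replace \cref{l:no-low-density-cover} by a direct fiber-counting argument (a nonzero integral linear form cannot map a positive-density subset of $\Z^{l+1}$ into a zero-density subset of $\Z$), which is sound as you sketch it. A genuine advantage of your version is that you only use exponents $n_k\ge 1$, so every intercalated product really lies in the semigroup $S$; the paper writes its identity for all $\vec n\in\Z^{l+1}$, which strictly requires this restriction (or a remark), since $A^{-1}$ need not belong to $S$. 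Three cosmetic repairs for a write-up: the decomposition into non-degenerate subsums is not unique (e.g.\ $1+1-1-1=0$), but you only need to \emph{choose} one, so say so; the tuples $\vec i^{(1)},\dots,\vec i^{(N)}$ must be \emph{all} tuples with $c_{\vec i}\ne 0$ (otherwise your displayed identity fails), so take $N$ to be their total number and assume $N>M$; and you should define the box density on $\Z^{l+1}$ you use, since the paper only defines density on $\Z$. Finally, the existence of the eigenspace decomposition itself rests on the steady $A$ being diagonalizable over $K$, i.e.\ on \cref{l:bezivin-diagonalize}, which is worth citing at the outset.
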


Thus, if we evaluate $\psi(B_1 B_2 \cdots B_l v)$ as a block product corresponding to the decomposition $V=V_1\oplus \cdots \oplus V_r$, the number of nonzero terms in the sum is bounded by $M$, no matter how large $l$ is.

\begin{proof}
  By \cref{l:bezivin-diagonalize}, the automorphism $A$ is diagonalizable over $K$, so that the required decomposition of $V$ indeed exists.
  Let $\lambda_i$ be the eigenvalue of $A$ on $V_i$.
  Keep in mind that $\lambda_1$, \dots,~$\lambda_r$ are pairwise distinct.

  We use multi-index notation $\vec n = (n_0,\dots, n_{l}) \in \Z^{l+1}$, $\vec i = (i_0,\dots, i_l) \in [r]^{l+1}$, $\vec \lambda(\vec i) = (\lambda_{i_0},\dots, \lambda_{i_l})$, and set
  \[
  \vec \lambda(\vec i)^{\vec n} \coloneqq \lambda_{i_0}^{n_0} \cdots \lambda_{i_{l}}^{n_{l}} \quad\text{and}\quad B(\vec i) \coloneqq [B_1]_{i_0i_1} [B_2]_{i_1 i_2} \cdots [B_l]_{i_{l-1}i_l}.
  \]
  We have to show $\{\, \vec i  \in [r]^{l+1} : \psi(B(\vec i)v_{i_l}) \ne 0 \,\} \le M$.

  For all $\vec n \in \Z^{l+1}$, 
  \[
  A^{n_0} B_1 A^{n_1} B_2 A^{n_2} B_3 \cdots A^{n_{l-1}} B_l A^{n_l}
  = \sum_{\vec i \in [r]^{l+1}} \vec \lambda(\vec i)^{\vec n} B(\vec i).
  \] 

  By assumption, there exist functions $\gamma_1$, \dots,~$\gamma_M \colon \Z^{l+1} \to \Gamma_0$ such that
  \begin{equation} \label{eq:bezivin-unit-eqn}
  \sum_{\vec i \in [r]^{l+1}} \vec \lambda(\vec i)^{\vec n} \psi(B(\vec i) v_{i_l}) = \gamma_1(\vec n) + \dots + \gamma_M(\vec n)
  \qquad\text{for all $\vec n \in \Z^{l+1}$}.
  \end{equation}
  We consider the equations~\eqref{eq:bezivin-unit-eqn} as solutions to a unit equation over a group $\Gamma'$, where $\Gamma'$ is generated by $\Gamma$ together with
  \begin{itemize}
  \item the elements $\psi(B(\vec i) v_{i_l})$ for $\vec i \in [r]^{l+1}$, and 
  \item the elements $\lambda_j$ for $j \in [r]$.
  \end{itemize}
  Note that these data remain fixed as $\vec n$ varies, so that $\Gamma'$ is indeed finitely generated and the same for all $\vec n$.

  For all subsets $\Omega \subseteq [r]^{l+1}$ and $\Omega' \subseteq [M]$, let $N(\Omega, \Omega') \subseteq \Z^{l+1}$ be the set of all those $\vec n$ for which
  \[ 
   \sum_{\vec i \in \Omega} \vec \lambda(\vec i)^{\vec n} \psi(B(\vec i) v_{i_l})= \sum_{k \in \Omega'} \gamma_k(\vec n),
  \]
  is a non-degenerate solution to a unit equation (that is, equality does not hold for a nonempty proper subsum).
  Because we can partition any solution to \cref{eq:bezivin-unit-eqn} into non-degenerate solutions, certainly $\Z^{l+1} \subseteq \bigcup_{\Omega, \Omega'} N(\Omega, \Omega')$.

  We now consider one such set $N(\Omega,\Omega')$ with $\card{\Omega} \ge 2$.
  Let $\vec i \ne \vec j \in \Omega$.
  By non-degeneracy, necessarily $\psi(B(\vec i)v_{i_l})$,~$\psi(B(\vec j)v_{j_l}) \ne 0$.
  Consider the group homomorphism
  \[
  \varphi_{\vec i, \vec j}\colon \Z^{l+1} \to \Gamma', \quad \vec n \mapsto \frac{\vec \lambda(\vec i)^{\vec n}}{\vec \lambda(\vec j)^{\vec n}} = 
  \bigg(\frac{\lambda_{i_0}}{\lambda_{j_0}}\bigg)^{n_0} \cdots \bigg(\frac{\lambda_{i_{l}}}{\lambda_{j_{l}}}\bigg)^{n_{l}}.
  \]
  Because $\vec i \ne \vec j$, there exists some $\nu \in [0,l]$ such that $i_\nu \ne j_\nu$.
  Then also $\lambda_{i_\nu} \ne \lambda_{j_\nu}$, since these eigenvalues are pairwise distinct.
  But $A$ is steady, so $\lambda_{i_\nu} / \lambda_{j_\nu}$ has infinite order in $K^\times$ by \ref{steady-basic:norootofunity} of \cref{l:steady-basic}.
  Thus, the image $\im \varphi_{\vec i, \vec j} \le \Gamma'$ has a torsion-free component.
  Hence, there exists a homomorphism $\pi_{\vec i, \vec j}\colon \Gamma' \to \Z$ that is nonzero on $\im \varphi_{\vec i, \vec j}$. 
  By \cref{p:unit-equation-crude}, the set $F_{\vec i,\vec j} \coloneqq \pi_{\vec i, \vec j} \circ\varphi_{\vec i, \vec j}\big(N(\Omega,\Omega')\big)$ has zero density.

  Now
  \[
  N \coloneqq \bigcup_{\substack{\Omega, \Omega' \\ \card{\Omega} \ge 2}} N(\Omega, \Omega') \,\subseteq\, \bigcup_{\vec i, \vec j} (\pi_{\vec i,\vec j} \circ \varphi_{\vec i, \vec j})^{-1}(F_{\vec i,\vec j}).
  \]
  By \cref{l:no-low-density-cover} necessarily $N \subsetneq \Z^{l+1}$.
  
  Fix $\vec n \in \Z^{l+1} \setminus N$.
  Partitioning the solution \cref{eq:bezivin-unit-eqn} into non-degenerate subequations for this specific $\vec n$, we can, for every $\vec i \in [r]^{l+1}$, find a subset $\Omega'(\vec i) \subseteq [M]$ such that $\vec n \in N(\{\vec i\}, \Omega'(\vec i))$ and such that $\Omega'(\vec i) \cap \Omega'(\vec j) =\emptyset$ for $\vec i \ne \vec j$.
  If $\varphi(B(\vec i)v_{i_l}) \ne 0$, then necessarily $\Omega'(\vec i) \ne \emptyset$.
  The pairwise disjointness of these sets implies $\card{\{ \vec i \in [r]^{l+1} : \varphi(B(\vec i)v_{i_l}) \ne 0\}} \le M$.
\end{proof}

\Cref{p:full-term-bound} in this form will only be needed at the end, when we deal with automata.
In dealing with linear groups, we use the following corollary.

\begin{corollary} \label{c:term-bound}
  Let $S \subseteq \End(V)$ be a Bézivin semigroup.
  Suppose that there exists a steady $A \in S \cap \GL(V)$.
  Let $V=V_1\oplus \cdots \oplus V_r$ be the decomposition of $V$ into $A$-eigenspaces.
  Then there exists a constant $C \in \Z_{\ge 0}$ such that, for all $l \ge 0$ and $B_1$, \dots,~$B_l \in S$, it holds that
  \[
  \card[\big]{\big\{\, (i_0,\cdots,i_l) \in [r]^{l+1} : [B_1]_{i_0 i_1} [B_2]_{i_1 i_2} \cdots [B_l]_{i_{l-1}i_l} \ne 0 \,\big\}} \le C.
  \]
\end{corollary}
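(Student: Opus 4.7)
The plan is to reduce \cref{c:term-bound} directly to \cref{p:full-term-bound} via a duality argument. First I would choose a basis $e_1, \ldots, e_d$ of $V$ compatible with the decomposition $V = V_1 \oplus \cdots \oplus V_r$, so that each $e_a$ lies in some $V_{i(a)}$; let $e_1^*, \ldots, e_d^*$ denote the dual basis of $V^*$. Because $S$ is Bézivin, one may fix a finitely generated $\Gamma \le K^\times$ and an $M \ge 0$ such that every matrix entry (in this basis) of every element of $S$ belongs to $\sumset{M}{\Gamma_0}$.

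The next step is the elementary observation that $[B_1]_{i_0 i_1} [B_2]_{i_1 i_2} \cdots [B_l]_{i_{l-1} i_l}$, viewed as a linear map $V_{i_l} \to V_{i_0}$, is nonzero if and only if there exist $a, b \in [d]$ with $i(a) = i_0$ and $i(b) = i_l$ such that
\[
e_a^*\bigl([B_1]_{i_0 i_1} [B_2]_{i_1 i_2} \cdots [B_l]_{i_{l-1} i_l} e_b\bigr) \ne 0.
\]
This converts the vanishing/nonvanishing of block products into a statement about scalar evaluations of linear functionals.

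For each fixed pair $(a, b) \in [d]^2$ I would then apply \cref{p:full-term-bound} with $\psi = e_a^*$ and $v = e_b$. The decomposition of $v = e_b$ along $V_1 \oplus \cdots \oplus V_r$ is trivial, with $v_{i(b)} = e_b$ and all other components zero; the Bézivin hypothesis $\psi(Bv) = e_a^*(B e_b) \in \sumset{M}{\Gamma_0}$ for every $B \in S$ holds by the choice of basis. The proposition then bounds by $M$ the number of tuples $(i_0, \ldots, i_l) \in [r]^{l+1}$ for which $e_a^*([B_1]_{i_0 i_1} \cdots [B_l]_{i_{l-1} i_l} e_b) \ne 0$.

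Taking the union over the $d^2$ pairs $(a, b) \in [d]^2$ covers every tuple $(i_0, \ldots, i_l)$ at which the block product is nonzero, which yields the desired bound $C \coloneqq d^2 M$, independent of $l$ and of the choice of $B_1, \ldots, B_l \in S$. There is no real obstacle beyond \cref{p:full-term-bound}, which already carries the entire weight of the unit-equation argument; the only conceptual point is the duality observation that lets one test nonvanishing of a linear map entrywise in a chosen basis.
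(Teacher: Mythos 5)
Your proposal is correct and follows essentially the same route as the paper: the paper's proof likewise fixes a basis with dual basis, applies \cref{p:full-term-bound} with $\psi=e_\mu^*$ and $v=e_\nu$ for each of the $d^2$ pairs, and concludes with the bound $C=Md^2$. Your additional choice of a basis compatible with $V_1\oplus\cdots\oplus V_r$ is a harmless (slightly clarifying) refinement, not a different argument.
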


In other words, in evaluating $B_1\cdots B_l$ as a block-matrix product with respect to that decomposition of $V$, there are at most $C$ nonzero terms (independent of $l$).

\begin{proof}
  We use the same notation as in the proof of \cref{p:full-term-bound}.
  Now we have to show $\{\, \vec i  \in [r]^{l+1} : B(\vec i) \ne 0 \,\} \le C$.
  Let $e_1$, \dots,~$e_d$ be a basis of $V$ with dual basis $e_1^*$, \dots,~$e_d^*\colon V \to K$.
  It suffices to show that for all $\mu$,~$\nu \in [d]$, there exists at most $M \ge 0$ multi-indices $\vec i$ for which $e_\mu^* B(\vec i) e_\nu \ne 0$.
  Then the required bound holds with $C=Md^2$.
  But for fixed $\mu$, $\nu$ this follows from \cref{p:full-term-bound} by setting $\psi =e_\mu^*$ and $v=e_\nu$.
\end{proof}

\begin{lemma} \label{l:permuting-spaces}
  Let $S \subseteq \GL(V)$ be a locally Bézivin semigroup, let $A \in S$ be steady, and let $V = V_1 \oplus \cdots \oplus V_r$ be the decomposition of $V$ into eigenspaces of $A$.
  \begin{enumerate}
  \item \label{perm:general} For every $B \in S$, there exist decompositions $V_i = V_{i1} \oplus \cdots \oplus V_{is_i}$ such that $B$ permutes the spaces $\{\, V_{ij} : i \in [r],\, j \in [s_i] \,\}$. 
  \item \label{perm:invariant} If $B$ is steady, then $BV_{ij}=V_{ij}$ for every $i$,~$j$ in a decomposition as in \ref{perm:general}. In particular, it holds that $BV_i = V_i$ for all $i \in [r]$.
  \end{enumerate}
\end{lemma}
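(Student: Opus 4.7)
The plan is to apply \cref{c:term-bound} to the finitely generated, hence Bézivin, subsemigroup $\langle A, B\rangle \subseteq S$, obtaining a constant $C$ such that for every $l$ and all $B_1,\ldots,B_l \in \langle A,B\rangle$, the number of tuples $(i_0,\ldots,i_l)$ yielding a nonzero block composite $[B_1]_{i_0i_1}\cdots[B_l]_{i_{l-1}i_l}$ is at most $C$. Specializing to $B_k=B$, the directed graph on $[r]$ with edges $j\to i$ whenever $[B]_{ij}\neq 0$ has at most $C$ length-$l$ paths of nonvanishing block composite, uniformly in $l$.

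For \ref{perm:general} I would associate to each doubly-infinite sequence $\bar\tau = (\tau_n)_{n\in\Z}\in[r]^\Z$ the \emph{itinerary subspace}
\[
  V^{\bar\tau} := \bigcap_{n\in\Z} B^{-n} V_{\tau_n} \,\subseteq\, V_{\tau_0},
\]
which satisfies $B V^{\bar\tau}=V^{\sigma\bar\tau}$, where $\sigma$ denotes the shift on $[r]^\Z$. A nonzero $V^{\bar\tau}$ forces the block composite along every finite window $(\tau_{-l},\ldots,\tau_l)$ to be nonzero, so the $C$-bound together with a compactness argument on $[r]^\Z$ shows that only finitely many $V^{\bar\tau}$ are nonzero and that $\sigma$ permutes them. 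The sum $\sum_{\bar\tau} V^{\bar\tau}$ is direct, since if $v$ lies in $V^{\bar\tau}\cap V^{\bar\tau'}$ and $\tau_n\neq\tau_n'$ at some $n$, then $B^n v \in V_{\tau_n}\cap V_{\tau_n'}=0$, forcing $v=0$ by invertibility of $B$.

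The main obstacle is proving the spanning identity $V_i = \bigoplus_{\bar\tau:\,\tau_0=i} V^{\bar\tau}$. The plan is to induct on $\dim V$: the subspace $W := \sum_{\bar\tau} V^{\bar\tau}$ is $A$- and $B$-invariant and compatible with the $A$-decomposition ($W = \bigoplus_i(W\cap V_i)$); if $W = V$ we are done, and otherwise the quotient $V/W$ carries an induced locally Bézivin action with steady element $\bar A$, to which the induction hypothesis applies. The technical point is to lift the quotient refinement back to $V$ in a $B$-equivariant manner, which I would control by exploiting the finite order $N$ of the shift permutation on the admissible itineraries of $V/W$ together with the $B^N$-invariance of each itinerary subspace, producing enough $B$-stable splittings inside $V$ to augment $W$—contradicting its assumed maximality. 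Granting spanning, the finitely many nonzero $V^{\bar\tau}$ with $\tau_0=i$ are labeled as the $V_{ij}$, and $B$ permutes them via $\sigma$, yielding \ref{perm:general}.

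For \ref{perm:invariant}, assume $B$ is steady. Applying \ref{perm:general} provides a decomposition $V=\bigoplus_{i,j} V_{ij}$ permuted by $B$; the induced permutation on the finite index set has some finite order $N$, so $B^N V_{ij}=V_{ij}$ for all $i,j$. Since every $B^N$-invariant subspace is $B$-invariant by the steady hypothesis, we obtain $B V_{ij}\subseteq V_{ij}$, and equality then follows from invertibility of $B$. Summing over $j$ yields the ``in particular'' assertion $BV_i=V_i$.
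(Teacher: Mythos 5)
Your overall framework (itinerary subspaces built from the $A$-eigenspace decomposition, the shift action giving the permutation, and \cref{c:term-bound} applied to $\langle A,B\rangle$) is the right one and matches the paper in spirit, and your part \ref{perm:invariant} is exactly the paper's argument. But the proof has a genuine gap precisely where you flag "the main obstacle": the spanning identity $V=\sum_{\bar\tau}V^{\bar\tau}$ is the heart of the lemma, and your inductive plan does not close it. Passing to $V/W$ and decomposing the quotient does not help unless you can \emph{lift} a quotient itinerary vector $\bar v$ to some $v\in V$ with $B^nv\in V_{\tau_n}$ for all $n$; such lifts need not exist for a general invertible $B$ (when $S$ fails to be Bézivin the lemma itself is false, so the lifting step is exactly where the arithmetic input must enter), and "enough $B$-stable splittings" is not available -- a $B$-invariant subspace has no $B$-equivariant complement in general, and appealing to $B^N$-invariance of itinerary subspaces presupposes the decomposition you are trying to construct. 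The paper instead gets spanning directly from the term bound: since $B$ is invertible, every length-$l$ tuple with nonzero block composite extends to a length-$(l+1)$ one, so the number $\card{\Omega(l)}$ of such tuples is non-decreasing and bounded by $C$, hence stabilizes at some $l_0$; after stabilization each tuple extends \emph{uniquely}, which forces $B\cdot B(\vec i)V\subseteq V_{i_{l+1}}$ and hence $B(\vec i)V\subseteq W(\vec m)$ for a suitable itinerary $\vec m$, and then surjectivity of $B^{l_0}=\sum_{\vec i}B(\vec i)$ gives $V=\sum_{\vec i\in\Omega(l_0)}B(\vec i)V\subseteq\sum_{\vec m}W(\vec m)$. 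You use the $C$-bound only to count nonzero itinerary spaces, not for this stabilization/unique-extension step, and without it the spanning claim is unproved.

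A secondary, fixable flaw: your directness argument only shows that the itinerary subspaces intersect pairwise in $0$, which does not imply that the sum of more than two subspaces is direct. The correct argument (as in the paper) takes a shortest nontrivial relation $0=\sum_{\bar\tau}w(\bar\tau)$, applies $B^n$ at an index $n$ where two participating itineraries differ, and uses directness of $V_1\oplus\cdots\oplus V_r$ together with invertibility of $B$ to split it into strictly shorter nontrivial relations, a contradiction. Also note the paper works with one-sided itineraries $(m_j)_{j\ge 0}$, which suffices and avoids any discussion of $B^{-n}$; your two-sided version is harmless but unnecessary.
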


\begin{proof}
  \begin{proofenumerate}
  \item[\ref{perm:general}]
  For every sequence $\vec m = (m_j)_{j\ge 0}$ with $m_j \in [r]$, let 
  \[
  W(\vec m) = \{\, v \in V : B^j v \in V_{m_j} \text{ for all $j \ge 0$}\,\}.
  \]
  Observe that $W(\vec m)$ is a vector space and $BW(m_0,m_1,m_2,\dots)\subseteq W(m_1,m_2,\dots)$.
  We will show
  \begin{equation} \label{eq:decomp-seq}
  V = \bigoplus_{\vec m} \, W(\vec m).
  \end{equation}
  
  Because $W(\vec m) \subseteq V_{m_0}$, and thus in particular for each $\vec m$ and $i \in [r]$ either $W(\vec m) \cap V_{i}=0$ or $W(\vec m) \subseteq V_i$, it will then follow that
  \[
  V_i = \bigoplus_{\substack{\vec m \\ W(\vec m) \subseteq V_i}} W(\vec m).
  \]
  Renaming the finitely many nonzero spaces in \cref{eq:decomp-seq} to $V_{i1}$, \dots,~$V_{is_i}$ will then yield claim \ref{perm:general}.

  First, we show that the sum in \cref{eq:decomp-seq} is direct.
  Let 
  \begin{equation} \label{eq:wm-relation}
    0 = \sum_{\vec m} w(\vec m)
  \end{equation}
  with $w(\vec m) \in W(\vec m)$, almost all of which are zero.
  Supposing, for the sake of contradiction, that some $w(\vec m)$ is not zero, we can also take the sum in \cref{eq:wm-relation} in such a way that the number of nonzero summands is minimal.
  Then there are at least two nonzero summands, say $w(\vec m')$ and $w(\vec m'')$.
  We may suppose that $\vec m'=(m_k')_{k \ge0}$ and $\vec m''=(m_k'')_{k \ge 0}$ differ in the $j$-th place.
  Applying $B^j$ to \cref{eq:wm-relation},
  \[
  0 = \sum_{\vec m} B^j w(\vec m) = \sum_{i=1}^r \underbrace{\sum_{\substack{\vec m=(m_k)_{k \ge 0}\\m_j = i}} B^j w(\vec m)}_{\in V_i}.
  \]
  Because the sum $V_1 \oplus \cdots \oplus V_r$ is direct and $B$ is invertible, we obtain
  \[
  0 = \sum_{\substack{\vec m=(m_k)_{k \ge 0}\\m_j = m_j'}} w(\vec m) = \sum_{\substack{\vec m=(m_k)_{k \ge 0}\\m_j = m_j''}} w(\vec m).
  \]
  Now $w(\vec m')$ appears in the first sum, but not the second, and conversely for $w(\vec m'')$.
  Thus, both sums are nonempty, but shorter than the one in \cref{eq:wm-relation}, contradicting the minimality of its choice.

  We still need to show $V = \sum_{\vec m} W(\vec m)$.
  As in \cref{c:term-bound}, we write $[A]_{ij}$ and $[B]_{ij}$ for blocks of $A$, respectively, of $B$ corresponding to the decomposition $V_1 \oplus \cdots \oplus V_r$. 
  For $l \ge 1$ and $\vec i=(i_0,\cdots,i_l) \in [r]^{l+1}$, define
  \[
  B(\vec i)\coloneqq B_{i_l i_{l-1}} \cdots B_{i_2 i_1}B_{i_1 i_0}.
  \]
  and let  
  \[
  \Omega(l) \coloneqq \{\, \vec i \in [r]^{l+1} :  B(\vec i)\ne 0\,\}.
  \]

  By \cref{c:term-bound}, applied to the Bézivin subsemigroup $\langle A, B \rangle$ of $S$, there exists $C \ge 0$ such that $\card{\Omega(l)} \le C$ for all $l$.
  If $\vec i=(i_0,\dots,i_l) \in \Omega(l)$, then $B \cdot B(\vec i) \ne 0$, since $B$ is invertible.
  Hence, there must exist $i_{l+1}$ with $B(i_0,i_1,\dots,i_{l+1}) \ne 0$ and hence $\vec i'=(i_1,\dots,i_{l+1}) \in \Omega(l+1)$.

  Thus, the cardinality $\card{\Omega(l)}$ is non-decreasing in $l$ and must stabilize, say at some $l=l_0$.
  But then every $\vec i \in \Omega(l)$ with $l \ge l_0$ can be extended \emph{uniquely} to some $\vec i'\in \Omega(l+1)$.
  This uniqueness implies $B \cdot B(\vec i) V \subseteq V_{i_{l+1}}$.
  Doing this for all $l \ge l_0$, we find a sequence $\vec m=(m_k)_{k\ge 0}$ such that $B^k B(\vec i)V \subseteq V_{m_k}$ for all $k \ge 0$. 
  Hence, for every $\vec i \in \Omega(l_0)$, there exists a sequence $\vec m$ such that $B(\vec i) V \subseteq W(\vec m)$.

  Surjectivity of $B$ implies $\sum_{\vec i \in \Omega(l_0)} B(\vec i) V = V$, and thus
  \[
    V = \sum_{\vec i \in \Omega(l_0)} B(\vec i) V \subseteq \sum_{\vec m} W(\vec m).
  \]

  \item[\ref{perm:invariant}]
  Suppose that we have a decomposition as in \ref{perm:general}.
  Then there exists some $n \ge 1$ such that $B^n V_{ij}=V_{ij}$ for all $i$,~$j$.
  Since $B$ is steady, therefore $BV_{ij}=V_{ij}$. \qedhere
  \end{proofenumerate}
\end{proof}

\begin{lemma} \label{l:bezivin-is-power-splitting}
  If $S \subseteq \GL(V)$ is a locally Bézivin semigroup, then $K$ is a power-splitting field for $S$.
\end{lemma}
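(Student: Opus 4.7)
The plan is to fix $A \in S$ and show each eigenvalue of $A$ has a power in $K$; this suffices, since power-splitting is an element-wise condition, and the cyclic subsemigroup $\langle A\rangle \subseteq S$ is Bézivin because $S$ is locally Bézivin. I would proceed by induction on $\dim V$.

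The base case $\dim V = 1$ is immediate, since $\GL(V) = K^\times$. For the inductive step, I split into cases by whether $A$ has a nontrivial proper $K$-invariant subspace $W \subsetneq V$. If such a $W$ exists, then in a basis adapted to $W$ the matrix of $A$ is block upper-triangular with diagonal blocks $A|_W \in \GL(W)$ and $\bar A \in \GL(V/W)$; the entries of $(A|_W)^n$ and $\bar A^n$ appear as entries of $A^n$, so both $\langle A|_W\rangle$ and $\langle \bar A\rangle$ are Bézivin (invoking \cref{l:bezivin-base-change} to pass to the adapted basis). The induction hypothesis then handles the eigenvalues of $A$, which are precisely those of $A|_W$ and $\bar A$.

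The core of the argument is the irreducible case: $V$ has no nontrivial proper $A$-invariant subspace. Then $V$ is cyclic as a $K[A]$-module with irreducible minimal polynomial $g \in K[X]$ of some degree $s$, so $V \cong L \coloneqq K[X]/(g) = K(\lambda)$ as $K$-vector spaces, with $A$ acting by multiplication by $\lambda$. In the basis $\mathcal{B} = \{1, \lambda, \ldots, \lambda^{s-1}\}$ of $V$, writing $\lambda^n = c_{n,0} + c_{n,1}\lambda + \cdots + c_{n,s-1}\lambda^{s-1}$ with $c_{n,j} \in K$, the matrix of $A^n$ in $\mathcal{B}$ has entries among the $c_{m,j}$. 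Applying \cref{l:bezivin-base-change} to the Bézivin property of $\langle A\rangle$ (now in the basis $\mathcal{B}$), there exist a finitely generated $\Gamma' \le K^\times$ and $M' \ge 0$ with all $c_{n,j} \in \sumset{M'}{\Gamma'_0}$. Consequently $\lambda^n$ lies in $\lambda^0 \cdot \sumset{M'}{\Gamma'_0} + \cdots + \lambda^{s-1} \cdot \sumset{M'}{\Gamma'_0}$ for every $n \ge 0$. I would then apply \cref{l:powers-bezivin} inside the field $L$, to the finitely generated group $\Gamma' \le K^\times \le L^\times$ with $a_0 = 1$ and the remaining $a_k \in \{1, \lambda, \ldots, \lambda^{s-1}\} \subseteq L^\times$, to conclude that $\lambda^m \in \Gamma'_0$ for some $m \ge 1$.

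The key conceptual point, and the only genuine subtlety, is that \cref{l:powers-bezivin} must be invoked in the extension $L$ rather than in $K$, since the coefficients $a_k$ are powers of $\lambda$ that need not lie in $K$. Nevertheless, its conclusion still lands in $K$: the group $\Gamma'$ was chosen inside $K^\times$, so $\lambda^m \in \Gamma'_0 \subseteq K$. Finally, every other eigenvalue of $A$ is a Galois conjugate of $\lambda$ over $K$, and Galois invariance of $\lambda^m \in K$ forces each such conjugate to share the same $m$-th power in $K$, completing the irreducible case and hence the induction.
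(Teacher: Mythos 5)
Your proof is correct, and it rests on the same key ingredient as the paper's, namely \cref{l:powers-bezivin} applied with coefficients $a_i$ from an extension field while $\Gamma$ stays inside $K^\times$; what differs is the reduction. The paper argues in one step: fix $A$, conjugate to Jordan normal form over $\algc{K}$, observe (as in the proof of \cref{l:bezivin-base-change}) that after this base change every entry of every $A^n$ lies in a set $a_1\Gamma_0+\cdots+a_M\Gamma_0$ with $a_i\in\algc{K}^\times$ but $\Gamma\le K^\times$, and since $\lambda^n$ is such an entry, \cref{l:powers-bezivin} gives $\lambda^m\in\Gamma_0\subseteq K$ directly. You instead induct on $\dim V$, split off $A$-invariant subspaces (you wrote ``$K$-invariant'', which should read $A$-invariant), and in the irreducible case use the stem-field/companion basis of $K(\lambda)$ to express $\lambda^n$ as a $K$-combination of matrix entries, then apply \cref{l:powers-bezivin} over $L=K(\lambda)$. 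This buys you that all conjugations happen over $K$ and the unit-equation input is only needed over the finite extension $K(\lambda)$, at the cost of the induction and the final conjugacy step; note that in positive characteristic ``Galois conjugate'' is not quite the right phrase when $g$ is inseparable, but the step is fine (and even simpler than you state: $\lambda^m=c\in K$ forces $g\mid X^m-c$, so every root of $g$ has $m$-th power $c$). The paper's route shows the induction and the case distinction are avoidable, since the Jordan-form base change already exhibits $\lambda^n$ itself inside a translated Bézivin set.
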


\begin{proof}
  Fix $A \in S$ and consider $\overline{V}=\algc{K} \otimes_K V$.
  Without restriction, we can consider the finitely generated subgroup $S$ generated by $A$, so that $S$ is Bézivin.
  We identify $\GL(\overline{V}) \cong \GL_d(\overline{K})$ using a basis in which $A$ has Jordan normal form.
  Then there exist a finitely generated subgroup $\Gamma \le K^\times$ and coefficients $a_1$, \dots,~$a_m \in \algc{K}^\times$ such that all entries of elements of $S$ are contained in $\sum_{i=1}^M a_i \Gamma_0$.
  We stress that $\Gamma$ can be taken over the base field $K$ by our assumption, while the coefficients can only be taken in $\algc{K}$.
  If $\lambda$ is an eigenvalue of $A$, then $\lambda^n \in \sum_{i=1}^M a_i \Gamma_0$ for all $n \in \Z$.
  \Cref{l:powers-bezivin} shows $\lambda^m \in \Gamma_0 \subseteq K$ for some $m \ge 1$.
\end{proof}

Given a set $\mathcal S \subseteq \End(V)$, we say that $W \subseteq V$ is a \defit{joint-eigenspace of $\mathcal S$} if for every $A \in \mathcal S$ there exists an eigenspace $W_A$ such that
\[
W = \bigcap_{A \in \mathcal S} W_A.
\]

\begin{lemma} \label{l:simultaneous-decomposition}
  Let $S \subseteq \GL(V)$ be a locally Bézivin semigroup.
  Let $V_1$, \dots,~$V_r$ be the joint-eigenspaces of all steady elements of $S$.
  Then $V_1\oplus \cdots \oplus V_r$ is a weakly epimonomial decomposition for $S$.
\end{lemma}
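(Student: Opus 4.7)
\emph{Proof plan.} We verify the three conditions of \cref{d:weakly-epimonomial} for the joint-eigenspace decomposition $V = V_1 \oplus \cdots \oplus V_r$.

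The first move is to show that any two steady elements of $S$ commute. Given $A, B \in S$ both steady, \cref{l:permuting-spaces}\ref{perm:invariant} (applied with the fixed steady $A$ and the steady $B$) yields a decomposition of each $A$-eigenspace into pieces $V_{ij}$ with $B V_{ij} = V_{ij}$; in particular, $B$ preserves every $A$-eigenspace $V_\alpha$. Since $A$ acts on $V_\alpha$ as the scalar $\alpha$, the restrictions $A|_{V_\alpha}$ and $B|_{V_\alpha}$ commute, hence $AB=BA$ on $V_\alpha$, and globally.

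Consequently, the set $\mathcal A$ of steady elements of $S$ is a commuting family of $K$-diagonalizable endomorphisms (the latter by \cref{l:bezivin-diagonalize}\ref{bez-diag:steady}). They are therefore simultaneously diagonalizable over $K$, and their joint-eigenspaces $V_1,\ldots,V_r$ partition $V$ into a direct sum. Condition \ref{d-wm:intersection} of \cref{d:weakly-epimonomial} is then immediate from the definition of joint-eigenspace, and condition \ref{d-wm:splitting} is precisely \cref{l:bezivin-is-power-splitting}.

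For the permutation condition \ref{d-wm:permutation}, fix $t \in S$. The strategy is first to show that for each steady $A \in \mathcal A$, the automorphism $t$ permutes the $A$-eigenspaces, and then to combine these permutations over all $A$ to produce a permutation of the joint-eigenspaces. For the first, consider the block decomposition of $t$ with respect to $V = \bigoplus_\alpha V_\alpha^A$, and form the directed graph $G_A(t)$ whose vertices are the $A$-eigenspaces and whose edges $V_\beta^A \to V_\alpha^A$ encode the nonzero blocks $[t]_{\alpha\beta}^A$. Applying \cref{c:term-bound} with $B_1 = \cdots = B_l = t$ shows that the number of nonzero composed block products along walks of length $l$ in $G_A(t)$ is bounded uniformly in $l$. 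Invertibility of $t$ forces every vertex to have both in- and out-degree $\ge 1$, so every vertex lies on a cycle; if any vertex had out-degree $\ge 2$, the number of walks through that vertex (and hence of nonzero block products, using invertibility to preclude systematic cancellation) would grow unboundedly with $l$, contradicting the uniform bound. Hence $G_A(t)$ is a disjoint union of cycles and $t$ permutes the $A$-eigenspaces via a permutation $\sigma_A$.

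Finally, each joint-eigenspace $V_i$ is uniquely characterized by the tuple of eigenvalues $(\alpha_{A,i})_{A \in \mathcal A}$. Since $tV_i \subseteq V^A_{\sigma_A(\alpha_{A,i})}$ for every $A$, the image $tV_i$ lies in the intersection $\bigcap_A V^A_{\sigma_A(\alpha_{A,i})}$, which is the joint-eigenspace $V_{\tau(i)}$ corresponding to the tuple $(\sigma_A(\alpha_{A,i}))_{A \in \mathcal A}$; invertibility of $t$ together with dimension counting forces $\tau$ to be a bijection. The main technical subtlety sits in the penultimate paragraph: \cref{c:term-bound} bounds walks with nonzero block \emph{products} rather than walks as such, and converting this into a structural statement about $G_A(t)$ requires carefully leveraging invertibility of $t$ to exclude accidental cancellation in the compositions.
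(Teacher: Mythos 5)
Your opening step is fine, and in fact it is a tidy alternative to the paper's inductive refinement: using \cref{l:permuting-spaces}\ref{perm:invariant} to see that steady elements preserve each other's eigenspaces, hence commute, hence (being $K$-diagonalizable by \cref{l:bezivin-diagonalize}) are simultaneously diagonalizable, does give the direct sum of joint-eigenspaces together with conditions \ref{d-wm:intersection} and \ref{d-wm:splitting} of \cref{d:weakly-epimonomial}. The gap is in your treatment of the permutation condition \ref{d-wm:permutation}: the intermediate claim that every $t \in S$ permutes the eigenspaces of each single steady $A$ is false. Take $V=\Q^3$, $A=\mathrm{diag}(2,2,3)$, and $t$ the cyclic permutation matrix with $te_1=e_2$, $te_2=e_3$, $te_3=e_1$; the group they generate is monomial with all entries in $\Gamma_0$ for $\Gamma=\langle 2,3\rangle$, hence Bézivin, and $A$ is steady with eigenspaces $\lspan(e_1,e_2)$ and $\lspan(e_3)$, yet $t\,\lspan(e_1,e_2)=\lspan(e_2,e_3)$ is contained in no $A$-eigenspace. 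Your argument breaks exactly where you flag the subtlety: the blocks $[t]_{\alpha\beta}$ of an invertible $t$ need not be injective, so a composition of nonzero blocks can simply be the zero map (in the example $[t]_{11}^2=0$); no ``cancellation'' is involved, and consequently a vertex of out-degree $\ge 2$ in $G_A(t)$ does not produce unboundedly many nonzero block products, so \cref{c:term-bound} yields no contradiction. Indeed, in the example the graph has a vertex of out-degree $2$ while the bound of \cref{c:term-bound} holds, and the conclusion you want fails. (Note also that \cref{l:permuting-spaces}\ref{perm:general} is deliberately weaker than your claim: it only asserts that $t$ permutes a \emph{refinement} of the $A$-eigenspaces.)

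What the correct argument (and the paper's proof) has to do instead is work with that refinement and show it is compatible with the joint-eigenspace decomposition: by \cref{l:permuting-spaces}\ref{perm:general} one refines $V_i = V_{i1}\oplus\cdots\oplus V_{is_i}$ so that $t$ permutes the pieces $V_{ij}$, and the essential point is that the pieces of a fixed $V_{i'}$ cannot be sent into \emph{different} joint-eigenspaces. If $tV_{i'j'}$ and $tV_{i'j''}$ lay in distinct joint-eigenspaces, choose a steady $A$ with distinct eigenvalues on those two target spaces, use power-splitting (\cref{l:bezivin-is-power-splitting} and \cref{l:steady-basic}) to find $n$, $m$ with $t^n$ and $C\coloneqq(t^{n-1}At)^m$ steady; then $C$ has distinct eigenvalues on $V_{i'j'}$ and $V_{i'j''}$, contradicting that $V_{i'}$ is a joint-eigenspace of \emph{all} steady elements. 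This descent step is the heart of the lemma and is missing from your proposal; your route bypasses it by asserting a statement that is simply not true, so the proof cannot be repaired by merely sharpening the counting argument around \cref{c:term-bound}.
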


The weakly epimonomial decomposition in the lemma is the coarsest possible. Since each component of any weakly epimonomial decomposition is contained in an eigenspace of all steady elements, every weakly epimonomial decomposition of $S$ must be a refinement of the one obtained here.

\begin{proof}
  We first show by induction on $n$: if $A_1$, \dots,~$A_n \in S$ are steady, then there exists a decomposition $V=V_1\oplus \cdots \oplus V_r$ such that every $V_i$ is a joint-eigenspace of $A_1$, \dots,~$A_n$. 

  For $n=0$, the claim holds vacuously.
  Let $n\ge 1$ and suppose the claim holds for $n-1$.
  Let $i \in [r]$. 
  We know $V_i = W_{i,1} \cap \cdots \cap W_{i,{n-1}}$ with each $W_{i, j}$ an eigenspace of $A_j$ for $j \in [n-1]$.
  By \ref{perm:invariant} of \cref{l:permuting-spaces}, each $W_{i, j}$ is invariant under $A_n$.
  Hence, each $V_i$ is invariant under $A_n$.
  Since $A_n$ is diagonalizable over $K$ by \cref{l:bezivin-diagonalize}, the restriction $A_n|_{V_i}$ is also diagonalizable.
  We can therefore decompose $V_i$ into eigenspaces of $A_n|_{V_i}$, and since each such space is of the form $U \cap V_i$ with $U$ an eigenspace of $A_n$, the claim of the induction follows.

  We can now decompose $V= V_1 \oplus \cdots \oplus V_r$ in such a way that each $V_i$ is a joint-eigenspace of \emph{all} steady $A \in S$.
  Indeed, decomposing iteratively, at each step each $V_i$ is either already contained in an eigenspace of each steady $A \in S$, and we are done, or there exists some steady $A \in S$ for which this is not the case.
  In the second case, we use $A$ to decompose $V_i$ further.
  Since $V$ is finite-dimensional, this process stops after finitely many refinements.

  Let $B \in S$.
  We now show that $B$ permutes the spaces $V_1$, \dots,~$V_r$.
  By \ref{perm:general} of \cref{l:permuting-spaces}, we can refine $V_i = V_{i1} \oplus \cdots \oplus V_{is_i}$ so that $B$ permutes the spaces $\{\, V_{ij} : i \in [r], j \in [s_i] \,\}$.
  Let $f$,~$g $ be the functions for which $B V_{ij} = V_{f(i,j),g(i,j)}$.
  We will be done if we can show that $f(i,j)$ does not actually depend on $j$, because then $BV_i \subseteq V_{f(i,*)}$.

  Suppose there exist $i' \in [r]$ and $j' \ne j'' \in [s_{i'}]$ such that $f(i',j') \ne f(i',j'')$.
  By construction of the $V_i$, there exists a steady $A \in S$ such that $A$ has distinct eigenvalues on $V_{f(i',j')}$ and $V_{f(i',j'')}$.
  By \cref{l:bezivin-is-power-splitting} the field $K$ is power-splitting for $S$.
  Hence, there exists $n \ge 1$ such that $B^n$ is steady (using \ref{steady-basic:power} of \cref{l:steady-basic}).
  Then each $V_i$ is also a subspace of an eigenspace of $B^n$.
  Let $\alpha_i$ be the eigenvalue of $A$ on $V_i$, and $\beta_i$ the eigenvalue of $B^n$.

  Since $B$ permutes the spaces $V_{ij}$, we have
  \[
  B^{n-1} A B|_{V_{ij}} = \alpha_{f(i,j)} B^n|_{V_{ij}} = \alpha_{f(i,j)} \beta_i \id_{V_{ij}}\qquad\text{for $i \in [r]$ and $j \in s_i$}.
  \]
  Let $C \coloneqq (B^{n-1} A B)^m$ with $m \ge 1$ such that $C$ is steady (again using \cref{l:bezivin-is-power-splitting,l:steady-basic}).
  Then $C$ has eigenvalue $\alpha_{f(i',j')}^m \beta_{i'}^m$ on $V_{i'j'}$ and eigenvalue $\alpha_{f(i',j'')}^m \beta_{i'}^m$ on $V_{i'j''}$.
  Because $A$ is steady, these two eigenvalues are distinct (using \ref{steady-basic:norootofunity} of \cref{l:steady-basic}).
  This contradicts the choice of $V=V_1\oplus \cdots \oplus V_r$. 
  (In other words, the automorphism $C$ would allow us to further refine the decomposition, contradicting the maximally refined choice).
\end{proof}

\section{Characterization of Locally Bézivin Groups} \label{sec:bezchar}

Let $K$ be a field and $V$ a finite-dimensional vector space.
We are now in a position to establish the characterization of locally Bézivin groups.
Let us first show that a weakly epimonomial decomposition of a semigroup of invertible endomorphisms lifts to the group.

\begin{lemma} \label{l:lift-wepi-to-group}
  Let $S \subseteq \GL(V)$ be a semigroup and let $G = \langle S \rangle \le \GL(V)$.
  \begin{enumerate}
  \item \label{lift:wepi} If $S$ is weakly epimonomial, then $G$ is weakly epimonomial.
  \item \label{lift:power-splitting} If $S$ is weakly epimonomial as a subsemigroup of $\GL(L \otimes_K V)$ for some extension field $L$ and $K$ is power-splitting for $S$, then $K$ is also power-splitting for $G$.
  \end{enumerate}
\end{lemma}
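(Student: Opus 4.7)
My plan for part~\ref{lift:wepi} is to show that a weakly epimonomial decomposition $V = V_1\oplus\cdots\oplus V_r$ for $S$ also witnesses $G = \langle S\rangle$. The permutation condition is immediate, since inverses of permutations are permutations. The power-splitting condition is exactly part~\ref{lift:power-splitting} applied with $L = K$, discussed below. The substantive task is to verify that each $V_i$ lies in an eigenspace of every steady $g \in G$. Since $g$ permutes $\{V_1,\ldots,V_r\}$, some power $g^{r!}$ fixes each $V_i$, and \cref{d:steady}(1) applied to this $g^{r!}$-invariance forces $gV_i = V_i$, so $g|_{V_i}$ is an endomorphism. It remains to show $g|_{V_i}$ is scalar.

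The main ingredient is the following conjugation identity: if $s \in S$ is steady with $s|_{V_j} = \lambda_{s,j}\,\id_{V_j}$, and $h \in G$ induces the permutation $\sigma$ of $\{V_1,\ldots,V_r\}$ (so $hV_j = V_{\sigma(j)}$), then a direct computation gives
\[
hsh^{-1}|_{V_i} \;=\; \lambda_{s,\sigma^{-1}(i)}\,\id_{V_i}.
\]
Hence the diagonal subgroup $D_G = \{\, h \in G : h|_{V_j}\text{ is scalar for each } j \,\}$ is preserved by $G$-conjugation of elements of $D_S := D_G \cap S$. Combined with the semigroup closure of $S$, this produces a sufficiently rich $D_G$ so that steady $g \in G$ fixing the $V_i$ must lie in $D_G$: one finds suitable products of $g$ with elements of $S$ that lie in $D_S$, and the prohibition on root-of-unity ratios between distinct eigenvalues of a steady element from \cref{l:steady-basic}\ref{steady-basic:norootofunity} then forces $g|_{V_i}$ itself to be scalar.

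For part~\ref{lift:power-splitting}, let $g \in G$ and $\lambda \in \algc K$ be an eigenvalue of $g$. Since the action of $G$ on $\{W_1,\ldots,W_r\}$ factors through $\mathfrak S_r$, the power $g^{r!}$ fixes each $W_i$, and $\lambda^{r!}$ equals an eigenvalue of $g^{r!}|_{W_i}$ for some $i$. It therefore suffices to show, for every $h \in G$ fixing each $W_j$, that every eigenvalue of $h|_{W_i}$ has some power lying in $K^\times$. Writing such an $h$ as a word in $S \cup S^{-1}$, one views this word as a closed loop in the directed graph on $\{W_1,\ldots,W_r\}$ with edges $W_j \to W_{\tau_s(j)}$ labeled by the intertwiners $s|_{W_j}$ (where $\tau_s$ denotes the permutation induced by $s$); sufficiently high iterates of the loop-compositions lie in $S$ by semigroup closure, and the power-splitting hypothesis for $S$ then propagates to the eigenvalues of $h|_{W_i}$, and hence back to $g$. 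The main technical obstacle in both parts is the asymmetry between $S$ (closed only under products) and $G$ (closed also under inverses); this is handled by the conjugation-stability of $D_G$ in part~\ref{lift:wepi}, and by the loop-composition argument in part~\ref{lift:power-splitting}.
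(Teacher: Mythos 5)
Your skeleton is the same as the paper's (use the given decomposition of $V$ for $G$ itself; the permutation condition is immediate; reduce part \ref{lift:wepi} to showing a steady $g\in G$ is scalar on each $V_i$, and part \ref{lift:power-splitting} to controlling eigenvalues of block-fixing elements), but the two steps that carry all the weight are gaps, and they are the same gap: you never handle inverses of elements of $S$ occurring \emph{inside} a word representing $g\in G$. In part \ref{lift:wepi}, the assertion that ``one finds suitable products of $g$ with elements of $S$ that lie in $D_S$'' is unjustified and false in general: $D_S\subseteq S$, and a product $ugv$ with $u,v\in S$ lies in $S$ only under an Ore-type condition that is not assumed (take $S$ free of rank two and $g$ a word with an internal inverse letter). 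In part \ref{lift:power-splitting}, ``sufficiently high iterates of the loop-compositions lie in $S$ by semigroup closure'' fails for the same reason: if $h$ involves some $s^{-1}$ with $s\in S$, no power of $h$ need lie in $S$; and even granting membership, eigenvalues of a product are not products of eigenvalues without extra simultaneous-diagonal structure, so the power-splitting hypothesis for $S$ does not simply ``propagate''. There is also a flaw at your very last step of part \ref{lift:wepi}: the root-of-unity prohibition \ref{steady-basic:norootofunity} of \cref{l:steady-basic} would at best force $g|_{V_i}$ to have a single eigenvalue, not to be scalar, since steady elements need not be diagonalizable in characteristic zero; one needs the subspace-invariance half of \cref{d:steady}.

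The paper supplies exactly the missing mechanism. Because the decomposition is weakly epimonomial for $S$, every $s\in S$ has a power $s^n$ that is steady (\cref{l:steady-basic}) and hence lies in the diagonal, so $s^{-1}=s^{n-1}(s^n)^{-1}$; since conjugating a map that is scalar on each $V_i$ by a map permuting the $V_i$ again gives such a map, all these diagonal inverses can be pushed to one end, yielding $g=AB^{-1}$ with $A\in S$ and $B\in\GL(V)$ scalar on each $V_i$, with the eigenvalues of $B$ lying in the group generated by the spectrum of $S$. Choosing $n$ with $A^n$ steady, the identity $(AB^{-1})^n=A^n\cdot(\text{a map scalar on each }V_i)$ shows that some power of $g$ is scalar on each $V_i$. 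This gives power-splitting for $G$ (the eigenvalues of that power are products of eigenvalues of elements of $S$, each of which has a power in $K$), which is your part \ref{lift:power-splitting} (over $L$, with the same factorization); and for part \ref{lift:wepi} one concludes via steadiness: every line in $V_i$ is invariant under $g^n$, hence under $g$, so $g|_{V_i}$ is scalar. If you want to keep your plan, you must prove this factorization (or an equivalent device for eliminating internal inverses); as written, the proposal asserts the crux rather than proving it.
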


\begin{proof}
  \begin{proofenumerate}
    \item[\ref{lift:wepi}]
    We have to verify the properties in \cref{d:weakly-epimonomial} for the inclusion $G \hookrightarrow \GL(V)$. 
    Since property \ref{d-wm:permutation} holds for all $A \in S$, it also holds for all inverses and products, and hence holds for all $A \in G$.

    Let $D \subseteq S$ be the diagonal.
    Let $\widehat D \le \GL(V)$ consists of all $A \in \GL(V)$ for which every $V_i$ is contained in an eigenspace of $A$.
    Thus, we have $D = S \cap \widehat D$.
    After fixing bases of $V_1$, \dots,~$V_r$ to obtain a basis of $V$, the elements of $\widehat D$ are the diagonal matrices that are scalar when restricted to each $V_i$.
    If $P \in \GL(V)$ permutes the spaces $V_1$, \dots,~$V_r$, then $P^{-1}\widehat D P \subseteq \widehat D$.

    To verify \ref{d-wm:intersection} and \ref{d-wm:splitting} of \cref{d:weakly-epimonomial}, we first check:
    \begin{enumerate}[label=(\roman*)]
    \item\label{lift-wepi:repr} every element of $G$ is of the form $AB^{-1}$ with $A \in S$ and $B \in \widehat D$, where the eigenvalues of $B$ are contained in the group generated by the spectrum of $S$.
    \item\label{lift-wepi:power} if $A \in S$ and $B \in \widehat D$, then some power of $AB^{-1}$ is in $\widehat D$.
    \end{enumerate}

    To verify \ref{lift-wepi:repr}, note that if $A \in S$, then weak epimonomiality of $S$ together with \cref{l:steady-basic} implies $A^n \in D$ for some $n \ge 1$.
    If $B \coloneqq A^n$, then $A (A^{n-1}B^{-1})=\id$, so that $A^{-1}=A^{n-1}B^{-1}$.
    Thus, as a monoid, the group $G$ is generated by $S$ and $D^{-1}$.
    Explicitly, elements of $G$ can be represented as $A_1B_1^{-1} A_2 B_2^{-1} \cdots A_k B_k^{-1}$ with $A_i \in S \cup \{\id\}$ and $B_i \in D \cup \{\id\}$.
    We can express $B_1^{-1} A_2 = A_2 (A_2^{-1} B_1^{-1} A_2)$ with $(A_2^{-1} B_1^{-1} A_2) \in \widehat D$, because $A_2$ permutes $V_1$, \dots,~$V_r$.
    Iterating this, we find $A_1B_1^{-1} A_2 B_2^{-1} \cdots A_k B_k^{-1} = A_1\cdots A_k B^{-1}$ with $B \in \widehat D$ (but possibly $B \not \in G$).
    
    For the claim about the eigenvalues, note that the eigenvalues of $P^{-1}B^{-1}P$, with $B \in S$ and $P$ permuting the spaces $V_1$, \dots,~$V_r$, are contained in the group $\Gamma \le K^\times$ generated by all the eigenvalues of $S$.
    Since the matrices of the form $P^{-1}B^{-1}P$ are moreover diagonal, the same holds true for a product of such matrices.

    To see \ref{lift-wepi:power}, note
    \begin{equation} \label{eq:ab-power}
    (AB^{-1})^n = A^n \underbrace{ (A^{-(n-1)} B A^{n-1}) (A^{-(n-2)} B A^{n-2}) \cdots    (A^{-1} B^{-1} A) B^{-1} }_{\in \widehat D}.
    \end{equation}
    Since $S$ is weakly epimonomial, there exists some $n$ such that $A^n$ is steady (by \cref{l:steady-basic}), hence $A^n \in D$. 
    We get $(AB^{-1})^n \in \widehat D$.
    
    Now \ref{lift-wepi:repr} and \ref{lift-wepi:power} together imply that $K$ is power-splitting for $G$, verifying \ref{d-wm:splitting} of weak epimonomiality.
    Let $C \in G$ be steady.
    Since we can express $C=AB^{-1}$ with $A \in S$ and $B \in \widehat D$, each $V_i$ is contained in an eigenspace of some power $C^n$ of $C$ (again using \ref{lift-wepi:repr} and \ref{lift-wepi:power}).
    Hence, every subspace of $V_i$ is invariant under $C^n$, and by steadyness, also under $C$.
    It follows that $V_i$ is contained in an eigenspace of $C$.

    \item[\ref{lift:power-splitting}]
    Let $C \in G$.
    Let $\widehat V = L \otimes_K V$ and let $\widehat V_1 \oplus \cdots \oplus \widehat V_r$ be a weakly epimonomial decomposition for $S \subseteq \GL(\widehat V)$.
    Applying \ref{lift-wepi:repr} from the proof of \ref{lift:wepi}, we can again express $C=AB^{-1}$ with $A \in S$ and $B \in \GL(\widehat V)$ such that each $\widehat V_i$ is contained in an eigenspace of $B$.
    The eigenvalues of $B$ are contained in the group $\Gamma \le L^\times$ generated by all the eigenvalues of $S$.
    By \cref{l:steady-basic}, there exists $n \ge 1$ such that $A^n$ is steady, and hence, in particular, diagonal with respect to any basis obtained by refining $\widehat V_1 \oplus \cdots \oplus \widehat V_r$.
    \cref{eq:ab-power} shows that $(AB^{-1})^n$ is also diagonal with all eigenvalues in $\Gamma$.
    By the power-splitting hypothesis for $S$, some power of $(AB^{-1})^n$ has all its eigenvalues in $K$.
    \qedhere
  \end{proofenumerate}
\end{proof}

\begin{proposition} \label{p:wepi-torsion}
  If $G \le \GL(V)$ has a weakly epimonomial decomposition with diagonal $D$, then $D \trianglelefteq G$ and $G/D$ is a linear torsion group.
\end{proposition}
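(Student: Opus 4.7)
The plan is to verify four claims in sequence: that $D$ is a subgroup of $G$, that it is normal, that $G/D$ is torsion, and finally that $G/D$ embeds into a general linear group. Throughout, one uses the fact that, by property \ref{d-wm:permutation} of a weakly epimonomial decomposition, each $g \in G$ induces a permutation of the family $\{V_1, \ldots, V_r\}$, giving a homomorphism $\sigma \colon G \to \mathfrak S_r$. Any $d \in D$ acts as a scalar $\lambda_i(d) \in K^\times$ on each $V_i$ (nonzero because $d$ is invertible and $V_i \neq 0$), so $\sigma(d) = \id$; since scalars are closed under products and inverses, $D$ is a subgroup of $G$.

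Normality will follow from a direct conjugation computation: for $g \in G$ with $\sigma(g) = \tau$ and $d \in D$, the element $gdg^{-1}$ acts on $V_i$ as $\lambda_{\tau^{-1}(i)}(d) \cdot \id_{V_i}$, and hence lies in $D$. For the torsion property, fix $g \in G$: the power-splitting hypothesis \ref{d-wm:splitting} combined with \ref{steady-basic:power} of \cref{l:steady-basic} furnishes $n \ge 1$ such that $g^n$ is steady, and property \ref{d-wm:intersection} then places each $V_i$ in an eigenspace of $g^n$, so that $g^n \in D$.

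The main step, and the only one requiring a little idea, will be linearity. For this, consider $W \coloneqq \bigoplus_i \End(V_i) \subseteq \End(V)$, the subspace of block-diagonal endomorphisms with respect to the decomposition. Since every $g \in G$ permutes the $V_i$, conjugation sends block-diagonal operators to block-diagonal operators, so $X \mapsto g X g^{-1}$ restricts to a linear action $G \to \GL(W)$. The crux is to show that the kernel of this action is exactly $D$: testing against rank-one endomorphisms supported in a single $\End(V_i)$ forces any element acting trivially on $W$ to satisfy $\sigma(g) = \id$, after which triviality on all of $\End(V_i)$ forces $g|_{V_i}$ to be central in $\End(V_i)$, hence scalar; conversely, elements of $D$ clearly act trivially. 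This will yield an embedding $G/D \hookrightarrow \GL(W)$, which, combined with the torsion property already established, completes the proof.
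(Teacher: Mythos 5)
Your proposal is correct and follows essentially the same route as the paper's proof: the same conjugation computation for normality, the same steady-power argument (power-splitting plus \cref{l:steady-basic}) for torsion, and the same key idea for linearity, namely the conjugation action of $G$ on $W = \End(V_1) \oplus \cdots \oplus \End(V_r)$ whose kernel is identified with $D$ via the fact that the center of each $\End(V_i)$ consists of scalars. The only cosmetic difference is that you detect $\sigma(g)=\id$ by testing against rank-one block-supported endomorphisms, whereas the paper writes out the permuted-block formula for the action explicitly; the substance is identical.
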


\begin{proof}
  Let $V=V_1\oplus\cdots\oplus V_r$ be the given weakly epimonomial decomposition.
  By definition, the diagonal $D$ contains all steady $A \in G$, and $D$ is clearly a subgroup of $G$.
  To see that it is normal, let $A \in D$ and $B \in G$.
  Let $i \in [r]$ and let $k \in [r]$ be the unique index for which $BV_i = V_k$.
  If $\lambda$ is the eigenvalue of $A$ on $V_k$, then $B^{-1}AB|_{V_i} = B^{-1}\lambda B|_{V_i} = \lambda \id|_{V_i}$.
  Hence, we have $B^{-1}A B \in D$.

  For every $B \in G$ some power $B^n$ is steady (using \cref{l:steady-basic}), and hence $B^n \in D$.
  Thus, the quotient $G/D$ is a torsion group.
  We show that it is a linear group, that is, that it embeds into some $\GL(W)$ with $W$ a finite-dimensional $K$-vector space.
  Note that $G$ acts on $\End(V)$ by conjugation, that is, there is a group homomorphism $\Phi\colon G \to \GL(\End(V))$, $B \mapsto \Phi_B$ with $\Phi_B(X) = BXB^{-1}$ for all $X \in \End(V)$.
  Since $B$ permutes $V_1$, \dots,~$V_r$, this restricts to a representation $\varphi$ of $G$ on $W \coloneqq \End(V_1) \oplus \cdots \oplus \End(V_r)$ (or, after fixing a basis, on block-diagonal matrices).
  Explicitly $\varphi\colon G \to \GL(W)$, $B \mapsto \varphi_B$ is given by
  \[
  \varphi_B(X_1,\dots, X_r) = (BX_{\pi(1)}B^{-1}|_{V_1}, \dots, B X_{\pi(r)}B^{-1}|_{V_r}),
  \]
  with $\pi$ the permutation for which $B^{-1}V_i = V_{\pi(i)}$, and $X_i \in \End(V_i)$.
  Because the center of $\End(V_i)$ consists of scalar multiples of the identity, we see that $B \in \ker \varphi$ if and only if $\pi=\id$ and $B|_{V_i}$ is a scalar multiple of the identity for each $i \in [r]$.
  Hence, we find $\ker(\varphi) = D$, and $G/D \cong \im \varphi$ is a linear group.
\end{proof}

\begin{remark}
  Let $\widetilde G$ denote the Zariski closure of $G$.
  Then $\widetilde D(K)$ consists of elements for which each $V_i$ is contained in an eigenspace,
  and hence $D = \widetilde D(K) \cap G$ by the definition of the diagonal (here $\widetilde D(K)$ denotes the set of $K$-rational points of the scheme $\widetilde D$).
  Since $\widetilde D \trianglelefteq \widetilde G$ is a closed normal subgroup, Chevalley's theorem on linear algebraic groups implies that $\widetilde G / \widetilde D$ is a linear algebraic group as well, and therefore $G/D$ is a linear group.
  The previous proof makes the arguments in Chevalley's theorem explicit in this special case without reference to linear algebraic groups.
  Since the argument is short, and also has the benefit of making explicit the dimension of $W$ (which is useful for estimates as in \cite[Appendix C]{bell-smertnig23b}), we have chosen to give this explicit variant.
\end{remark}

We obtain a characterization of locally Bézivin (semi)groups that holds over any field.
We need the following classical theorems of Burnside and Schur.
\begin{theorem}[Burnside--Schur] \label{t:burnside-schur}
  Let $G \le \GL(V)$.
  \begin{enumerate}
  \item \textup{(Schur)} If $G$ is a torsion group, then $G$ is locally finite.
  \item \textup{(Burnside)} If $\characteristic{K}=0$ and $G$ has finite exponent, then $G$ is finite.
  \end{enumerate}
\end{theorem}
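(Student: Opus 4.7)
The plan is to prove Burnside's part first and to derive Schur's part in characteristic zero as a corollary; the positive characteristic case of Schur requires a separate and more delicate argument.

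For Burnside's part, suppose $G \le \GL(V)$ has exponent $n$ and $\characteristic K = 0$. The first step is reduction to the absolutely irreducible case. Picking a $KG$-composition series $0 = V_0 \subset V_1 \subset \cdots \subset V_r = V$ and considering the induced homomorphism $\varphi\colon G \to \prod_i \GL(V_i/V_{i-1})$, every $g \in \ker\varphi$ is unipotent, say $g = I + N$ with $N$ nilpotent. Expanding $(I+N)^n = I$ via the binomial theorem and choosing $k$ minimal with $N^k = 0$, multiplication of the resulting identity by $N^{k-2}$ collapses all terms except $n N^{k-1} = 0$, which forces $N^{k-1} = 0$ since $\characteristic K = 0$; thus $N = 0$ and $\varphi$ is injective. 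After extending scalars to $\algc K$ it then suffices to treat each absolutely irreducible composition factor. Burnside's classical theorem on matrix algebras gives that the $\algc K$-linear span of $G$ inside $\End(\algc K \otimes_K V)$ equals $\End(\algc K \otimes_K V)$. Picking $g_1, \ldots, g_{d^2} \in G$ that form an $\algc K$-basis, non-degeneracy of the trace pairing implies that the map $g \mapsto (\Tr(gg_1), \ldots, \Tr(gg_{d^2})) \in \algc K^{d^2}$ is injective on $G$. Each $g \in G$ is diagonalizable over $\algc K$ with eigenvalues in $\mu_n(\algc K)$, so each component $\Tr(gg_i)$ is a sum of $d$ elements of $\mu_n(\algc K)$ and lies in a fixed finite subset of $\algc K$. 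Therefore $G$ is finite.

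For Schur's part in characteristic zero, let $H \le G$ be finitely generated, so $H$ lies in $\GL_d(K_0)$ for some finitely generated subfield $K_0 \le K$. The algebraic closure of $\Q$ inside $K_0$ is a number field, which contains only finitely many roots of unity; hence there exists $N \ge 1$ such that $\zeta^N = 1$ for every eigenvalue of every $h \in H$ (all such eigenvalues lie in $\mu(\algc{K_0})$ by the torsion hypothesis). Then $h^N$ is unipotent of finite order, which by the observation used in the Burnside reduction forces $h^N = I$. Thus $H$ has exponent dividing $N$ and Burnside's part yields that $H$ is finite.

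The positive characteristic case of Schur is the main obstacle, because unipotent elements of finite order genuinely exist and the above reduction to Burnside breaks down. Here I would work in $\GL_d(R)$ for a finitely generated $\mathbb F_p$-subalgebra $R \le K_0$ and exploit that finitely generated $\mathbb F_p$-algebras are Jacobson rings with finite residue fields at maximal ideals; reducing modulo $\mathfrak m \subset R$ produces a finite quotient $\overline H$. The congruence kernel $H_1 \coloneqq \ker(H \to \overline H)$ is a torsion subgroup of the principal congruence subgroup, and an $\mathfrak m$-adic filtration argument shows that its torsion elements are unipotent of bounded $p$-power order; together with the finite generation of $H$ this gives finiteness of $H_1$, and hence of $H$. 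As the theorem is classical, I would prefer to reference an existing treatment rather than reprove the positive-characteristic step in detail (see \cite{steinberg12} and the references therein for proofs even in the semigroup setting).
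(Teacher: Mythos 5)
The paper does not actually prove this statement: it is invoked as a classical theorem (with pointers to the literature elsewhere, e.g.\ \cite[Lemma 9.3]{lam91} for a variant and \cite{steinberg12} for semigroup versions), so there is no internal proof to compare against. Your write-up follows the standard classical route, and the Burnside part is correct as written: the unipotent-kernel reduction along a composition series (with the binomial-coefficient argument showing unipotent torsion is trivial in characteristic zero) and the trace argument via Burnside's theorem on absolutely irreducible algebras are exactly the textbook proof. One cosmetic point: to literally land in \emph{absolutely} irreducible factors you should take the composition series after extending scalars to $\algc K$ (or refine the $K$-series over $\algc K$); the kernel argument is unaffected.

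Two steps need repair or caution. First, in the characteristic-zero Schur reduction, the uniform bound $N$ does not follow from the fact that the number field $K_0\cap\algc{\Q}$ contains finitely many roots of unity: the eigenvalues of $h\in H\le\GL_d(K_0)$ lie in $\algc{K_0}$, not in $K_0$. The correct justification is that each eigenvalue is a root of the degree-$d$ characteristic polynomial, hence a root of unity $\zeta_m$ with $[K_0(\zeta_m):K_0]\le d$; since $\Q(\zeta_m)/\Q$ is Galois, $[K_0(\zeta_m):K_0]=[\Q(\zeta_m):\Q(\zeta_m)\cap K_0]\ge \phi(m)/[K_0\cap\algc{\Q}:\Q]$, which bounds $m$ and yields the desired $N$; after that your deduction ($h^N$ unipotent of finite order, hence trivial, then apply Burnside) is fine. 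Second, in the positive-characteristic sketch, the inference ``torsion elements of $H_1$ are unipotent of bounded $p$-power order, which together with finite generation gives finiteness of $H_1$'' is not valid as stated: a finitely generated group of bounded exponent need not be finite (this is the Burnside problem), and for linear groups that finiteness is essentially the statement being proved. The standard completion is different: since $H$ is torsion, \emph{every} element of the congruence kernel $H_1$ is unipotent, so by Kolchin's theorem $H_1$ is conjugate into the unitriangular group and hence nilpotent, and a finitely generated nilpotent torsion group is finite. As you explicitly defer this part to the literature (consistent with how the paper itself treats the theorem), this is acceptable, but the sketch should not be read as a complete argument in its current form.
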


\begin{theorem} \label{t:locally-bezivin-equivalent}
  Let $S \subseteq \GL(V)$ be a semigroup and let $G = \langle S \rangle$.
  Then the following statements are equivalent.
  \begin{equivenumerate}
    \item \label{locequiv:bezivin} The group $G$ is locally Bézivin.
    \item \label{locequiv:semigroup-bezivin} The semigroup $S$ is locally Bézivin.
    \item \label{locequiv:semigroup-wepimon} The semigroup $S$ is weakly epimonomial.
    \item \label{locequiv:wepimon} The group $G$ is weakly epimonomial.
    \item \label{locequiv:finsub} The group $G$ is locally epimonomial.
    \item \label{locequiv:locdiag} The group $G$ is locally virtually simultaneously diagonalizable \textup(over $K$\textup).
  \end{equivenumerate}
\end{theorem}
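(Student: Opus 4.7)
The plan is to prove the cycle \ref{locequiv:bezivin}$\,\Rightarrow\,$\ref{locequiv:semigroup-bezivin}$\,\Rightarrow\,$\ref{locequiv:semigroup-wepimon}$\,\Rightarrow\,$\ref{locequiv:wepimon}$\,\Rightarrow\,$\ref{locequiv:locdiag}$\,\Rightarrow\,$\ref{locequiv:finsub}$\,\Rightarrow\,$\ref{locequiv:bezivin}. The first implication is immediate, since every finitely generated subsemigroup of $S$ is contained in a finitely generated subgroup of $G$. Both \ref{locequiv:semigroup-bezivin}$\,\Rightarrow\,$\ref{locequiv:semigroup-wepimon} and \ref{locequiv:semigroup-wepimon}$\,\Rightarrow\,$\ref{locequiv:wepimon} are already in hand: the first is the content of \cref{l:simultaneous-decomposition}, applied to $S$ with the joint-eigenspaces of its steady elements; the second is \cref{l:lift-wepi-to-group}\ref{lift:wepi}.

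For \ref{locequiv:wepimon}$\,\Rightarrow\,$\ref{locequiv:locdiag} I would apply \cref{p:wepi-torsion} to obtain a normal subgroup $D\trianglelefteq G$ whose elements act as scalars on each component $V_i$ of the weakly epimonomial decomposition, and such that $G/D$ is a linear torsion group. For any finitely generated $H\le G$, the image $H/(H\cap D)\hookrightarrow G/D$ is a finitely generated linear torsion group, hence finite by Schur's theorem (\cref{t:burnside-schur}). In a basis compatible with $V=V_1\oplus\cdots\oplus V_r$, the finite-index subgroup $H\cap D$ is then simultaneously diagonalizable over $K$, which is the content of \ref{locequiv:locdiag}.

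For \ref{locequiv:locdiag}$\,\Rightarrow\,$\ref{locequiv:finsub}, given finitely generated $H\le G$ with a finite-index simultaneously diagonalizable subgroup $H'$, I would replace $H'$ by its normal core $N\trianglelefteq H$ (still diagonalizable and of finite index) and take the decomposition $V=V_1\oplus\cdots\oplus V_r$ into joint-eigenspaces of $N$. Normality gives that every $h\in H$ permutes the $V_i$, and $h^n\in N$ for some $n$ forces every eigenvalue of $h$ to have a power in $K$, so $K$ is power-splitting for $H$. The delicate step is checking that every steady $h\in H$ acts as a scalar on each $V_i$: choosing $m$ that is simultaneously a multiple of $[H:N]$ and of the order of the permutation induced by $h$, the automorphism $h^m$ lies in $N$ and stabilizes each $V_i$, so $h^m|_{V_i}$ is a scalar; every subspace of $V_i$ is therefore $h^m$-invariant, and steadyness of $h$ (via \cref{l:steady-basic}) promotes this to $h$-invariance, forcing $h|_{V_i}$ itself to be a scalar. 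The resulting decomposition is weakly epimonomial with diagonal containing $N$, and $H/\langle D\rangle$ is finite, hence the decomposition is epimonomial.

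Finally, for \ref{locequiv:finsub}$\,\Rightarrow\,$\ref{locequiv:bezivin}, I would translate the structural description back into an arithmetic one. Let $H\le G$ be finitely generated with an epimonomial decomposition and diagonal $D_H$ of finite index. As a finite-index subgroup of a finitely generated group, $D_H$ is itself finitely generated, so its scalars on the components generate a finitely generated $\Gamma\le K^\times$. In a compatible basis, elements of $D_H$ are diagonal with entries in $\Gamma$, and writing $H=\bigcup_{j=1}^t g_j D_H$ for finitely many coset representatives shows that every entry of every element of $H$ lies in the finitely generated group $\Gamma'=\langle \Gamma, (g_j)_{ab} : j,a,b\rangle$. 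Thus $H$ is P\'olya in this basis, and Bézivin in any basis by \cref{l:bezivin-base-change}. The main obstacle is the steadyness argument in \ref{locequiv:locdiag}$\,\Rightarrow\,$\ref{locequiv:finsub}, which is the only point where one must carefully combine the permutation action on the $V_i$ with the intrinsic consequences of the definition of steadyness.
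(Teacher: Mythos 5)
Your proposal is correct, and it uses the same toolkit as the paper: \ref{locequiv:bezivin}$\,\Rightarrow\,$\ref{locequiv:semigroup-bezivin} trivially, \ref{locequiv:semigroup-bezivin}$\,\Rightarrow\,$\ref{locequiv:semigroup-wepimon} via \cref{l:simultaneous-decomposition}, \ref{locequiv:semigroup-wepimon}$\,\Rightarrow\,$\ref{locequiv:wepimon} via \cref{l:lift-wepi-to-group}, and then \cref{p:wepi-torsion} plus Burnside--Schur, finite generation of finite-index subgroups, and coset representatives for the arithmetic conclusion. The only real difference is the order in which you traverse the last three implications: the paper goes \ref{locequiv:wepimon}$\,\Rightarrow\,$\ref{locequiv:finsub}$\,\Rightarrow\,$\ref{locequiv:locdiag}$\,\Rightarrow\,$\ref{locequiv:bezivin}, so that \ref{locequiv:finsub}$\,\Rightarrow\,$\ref{locequiv:locdiag} is immediate from the definition of the diagonal, whereas you go \ref{locequiv:wepimon}$\,\Rightarrow\,$\ref{locequiv:locdiag}$\,\Rightarrow\,$\ref{locequiv:finsub}$\,\Rightarrow\,$\ref{locequiv:bezivin} and therefore have to prove the nontrivial converse ``virtually simultaneously diagonalizable $\Rightarrow$ epimonomial'' directly. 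Your argument for that step (normal core $N$, joint-eigenspaces of $N$, the Clifford-style permutation of the $V_i$ from normality, power-splitting from $h^{[H:N]} \in N$, and the steadiness argument forcing steady elements to be scalar on each $V_i$) is sound; two small remarks: requiring $m$ to be a multiple of the order of the induced permutation is redundant, since $h^{[H:N]} \in N$ already acts as a scalar on each $V_i$, and the promotion from $h^m$-invariance to $h$-invariance is just part (1) of \cref{d:steady} rather than \cref{l:steady-basic}. So your route is correct but does strictly more work than the paper's ordering; the payoff is that you obtain the implication \ref{locequiv:locdiag}$\,\Rightarrow\,$\ref{locequiv:finsub} as an explicit standalone construction, which the paper only gets implicitly by closing the cycle.
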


The last statement means that every finitely generated subgroup $H \le G$ has a finite-index subgroup $D \le H$ that is simultaneously diagonalizable.

\begin{proof}
  \begin{proofenumerate}
    \item[\ref{locequiv:bezivin}$\,\Rightarrow\,$\ref{locequiv:semigroup-bezivin}:] Trivial.
    
    \item[\ref{locequiv:semigroup-bezivin}$\,\Rightarrow\,$\ref{locequiv:semigroup-wepimon}:] By \cref{l:simultaneous-decomposition}.
    
    \item[\ref{locequiv:semigroup-wepimon}$\,\Rightarrow\,$\ref{locequiv:wepimon}:]
    By \cref{l:lift-wepi-to-group}.
    
    \item[\ref{locequiv:wepimon}$\,\Rightarrow\,$\ref{locequiv:finsub}:]
    Let $H \le G$ be a finitely generated subgroup.
    Then $H$ is also weakly epimonomial.
    Let $D \triangleleft H$ be the diagonal with respect to some weakly epimonomial decompositon.
    By \cref{p:wepi-torsion} the quotient $H/D$ is a finitely generated linear torsion group.
    Burnside--Schur (\cref{t:burnside-schur}) implies that $H/D$ is finite.

    \item[\ref{locequiv:finsub}$\,\Rightarrow\,$\ref{locequiv:locdiag}:]
    Let $H \le G$ be finitely generated. 
    Then $H$ is epimonomial with a diagonal $D \triangleleft H$ that has finite index, and $D$ is simultaneously diagonalizable (over $K$).

    \item[\ref{locequiv:locdiag}$\,\Rightarrow\,$\ref{locequiv:bezivin}:]
    Let $H \le G$ be finitely generated and let $D \le H$ be a finite-index subgroup of simultaneously diagonalizable elements.
    Let $B_1$, \dots,~$B_n$ be a system of representatives for the left cosets of $H/D$.
    It suffices to show that each element of $B_i D$ has entries in $\sumset{M}{\Gamma_0}$ for some $M \ge 0$ and a finitely generated $\Gamma \le K^\times$. 
    Now $D$, as a finite-index subgroup in the finitely generated group $H$, is also finitely generated \cite[1.6.11]{robinson96}.
    Since the elements of $D$ are moreover simultaneously diagonalizable, the claim follows.  \qedhere
  \end{proofenumerate}
\end{proof}

In particular, we see that the local Bézivin property for a semigroup $S \subseteq \GL(V)$ is equivalent to the local Bézivin property for the group $G=\langle S \rangle$.
Hence, we may usually deal with groups for the remainder of the section.

\begin{corollary} \label{l:fg-bezivin-epimonomial}
  A finitely generated $G \le \GL(V)$ is Bézivin if and only if it is epimonomial.  
\end{corollary}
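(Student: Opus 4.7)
The corollary is essentially a direct specialization of \cref{t:locally-bezivin-equivalent} to the case when $G$ itself is finitely generated, so the plan is to extract both implications from that theorem with minimal extra work.

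For the forward direction, I would argue as follows. If $G$ is Bézivin, then $G$ is \emph{a fortiori} locally Bézivin, hence by the equivalence \ref{locequiv:bezivin}$\,\Leftrightarrow\,$\ref{locequiv:finsub} in \cref{t:locally-bezivin-equivalent} the group $G$ is locally epimonomial. Local epimonomiality means that every finitely generated subgroup of $G$ is epimonomial; applying this to the finitely generated subgroup $G \le G$ itself, we conclude that $G$ is epimonomial. (So for finitely generated groups, the ``local'' qualifier disappears, which is all that needs to be said.)

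For the converse direction, I would reuse the argument of the implication \ref{locequiv:locdiag}$\,\Rightarrow\,$\ref{locequiv:bezivin} in the proof of \cref{t:locally-bezivin-equivalent}. Suppose $G$ is epimonomial, with weakly epimonomial decomposition $V = V_1 \oplus \cdots \oplus V_r$ and diagonal $D$. By definition of epimonomial, the subgroup $D$ has finite index in $G$. Since $G$ is finitely generated, the finite-index subgroup $D$ is also finitely generated (cf.\ \cite[1.6.11]{robinson96}, as used in the theorem). Choosing a basis of $V$ that refines the decomposition into the $V_i$, the elements of $D$ are simultaneously diagonal, so their entries lie in $\Gamma_0$ for some finitely generated $\Gamma \le K^\times$. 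Picking coset representatives $B_1, \ldots, B_n$ for $G/D$, we can write $G = B_1 D \cup \cdots \cup B_n D$, and the same base-change argument used in \cref{l:bezivin-base-change} shows that each coset $B_i D$ has its entries in $\sumset{M}{\Gamma_0'}$ for some $M \ge 0$ and a slightly enlarged finitely generated group $\Gamma' \le K^\times$ (absorbing the entries of the finitely many $B_i$ into the generators). Hence $G$ is Bézivin.

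Neither direction is an obstacle; the corollary is essentially a bookkeeping consequence of \cref{t:locally-bezivin-equivalent} once one observes that ``finitely generated'' removes the need for the ``locally'' qualifier.
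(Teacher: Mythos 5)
Your proposal is correct and matches the paper's approach: the paper proves this corollary simply by citing \cref{t:locally-bezivin-equivalent}, and your argument just spells out that specialization (the forward direction by applying local epimonomiality to the finitely generated subgroup $G$ itself, the converse by the same coset-plus-finite-generation-of-$D$ argument the paper uses for \ref{locequiv:locdiag}$\,\Rightarrow\,$\ref{locequiv:bezivin}).
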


\begin{proof}
  By \cref{t:locally-bezivin-equivalent}.
\end{proof}

By \cref{t:locally-bezivin-equivalent} and \cref{l:simultaneous-decomposition} every weakly epimonomial $G \le \GL(V)$ gives the coarsest (hence canonical) decomposition $V=V_1 \oplus \cdots \oplus V_r$ with $V_i$ an intersection of eigenspaces of all steady elements of $G$.
However, even if $G$ is epimonomial, it may not be epimonomial with respect to this particular decomposition, as the next example shows.

\begin{example}
  Let
  \[
  G \coloneqq \begin{pmatrix} \mu(\C) & 0 \\ 0 & 1 \end{pmatrix} \le \GL_2(\C).
  \]
  Then $G$ is an infinite torsion group and the identity is the only steady element.
  The weakly epimonomial decomposition of $\C^2$ arising from the steady elements is therefore just $\C^2$ itself, and $G$ is not epimonomial with respect to the corresponding diagonal.
  However, clearly $\C^2 = \C \oplus \C$ is an epimonomial decomposition.
\end{example}

We now show that, in \cref{t:locally-bezivin-equivalent}, we can also relax the locally Bézivin property to hold over an extension field, as long as $K$ is power-splitting for $S$.

\begin{proposition} \label{p:equiv-field}
  For a finitely generated semigroup $S \subseteq \GL(V)$ the following statements are equivalent.
  \begin{equivenumerate}
  \item \label{equiv-field:bezivin} The semigroup $S$ is Bézivin.
  \item \label{equiv-field:weakbezivin-all} The semigroup $S \subseteq \GL(L \otimes_K V)$ is Bézivin for every extension field $L/K$ and $K$ is a power-splitting field for $S$. 
  \item \label{equiv-field:weakbezivin} The semigroup $S \subseteq \GL(L \otimes_K V)$ is Bézivin for some extension field $L/K$ and $K$ is a power-splitting field for $S$. 
  \end{equivenumerate}
\end{proposition}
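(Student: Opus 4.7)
The implication \ref{equiv-field:weakbezivin-all}$\,\Rightarrow\,$\ref{equiv-field:weakbezivin} is immediate. For \ref{equiv-field:bezivin}$\,\Rightarrow\,$\ref{equiv-field:weakbezivin-all}, I would observe that if all entries of matrices in $S$ lie in $\sumset{M}{\Gamma_0}$ for some finitely generated $\Gamma \le K^\times$, the same inclusion witnesses Bézivinness of $S$ as a subsemigroup of $\GL(L \otimes_K V)$ for any extension $L/K$; that $K$ is power-splitting for $S$ follows from \cref{l:bezivin-is-power-splitting} applied over $K$.

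The substantive direction is \ref{equiv-field:weakbezivin}$\,\Rightarrow\,$\ref{equiv-field:bezivin}. My plan is to extract weakly epimonomial structure from the hypothesis over $L$ via \cref{t:locally-bezivin-equivalent}, and then use the power-splitting assumption to descend the simultaneous diagonalization from $L$ to $K$. Set $G \coloneqq \langle S \rangle$, which is finitely generated as a group since $S$ is finitely generated as a semigroup. First, \cref{t:locally-bezivin-equivalent} applied over $L$ gives that $G$ is weakly epimonomial over $L$ with some decomposition $L \otimes_K V = W_1 \oplus \cdots \oplus W_r$ and diagonal $D \trianglelefteq G$. By \cref{l:lift-wepi-to-group}\ref{lift:power-splitting}, $K$ is also power-splitting for $G$. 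Then \cref{p:wepi-torsion} combined with Burnside--Schur (\cref{t:burnside-schur}) forces the torsion linear group $G/D$ to be finite, so that $D$ is a finite-index, finitely generated, abelian subgroup of $G$, simultaneously diagonalizable over $L$.

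The key step is the descent from $L$ to $K$. Define $D_K \coloneqq \{\, A \in D : \text{all eigenvalues of } A \text{ lie in } K \,\}$; this is a subgroup of $D$ because over a common diagonalizing basis for $D$ over $L$, the entries of products and inverses are products and inverses of eigenvalues of $A \in D_K$. Power-splitting of $K$ for $G$ says every $A \in D$ has some power $A^m \in D_K$, so $D/D_K$ is a torsion quotient of the finitely generated abelian group $D$, hence finite. Thus $D_K$ has finite index in $G$. Its elements are pairwise commuting and each is diagonalizable over $K$ (eigenvalues in $K$, diagonalizable over $L$), so $D_K$ is simultaneously diagonalizable over $K$. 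Condition \ref{locequiv:locdiag} of \cref{t:locally-bezivin-equivalent} now holds for $G$ over $K$, so $G$ is locally Bézivin over $K$; finite generation upgrades this to $G$ being Bézivin over $K$, and $S \subseteq G$ yields \ref{equiv-field:bezivin}.

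The main obstacle is precisely the descent in the previous paragraph: power-splitting is a condition on powers of eigenvalues of \emph{individual} elements, and a priori no single nontrivial element of $D$ need have all eigenvalues in $K$. The resolution is that $D$ inherits the structure of a finitely generated abelian group (from its embedding into $(L^\times)^d$ and its finite index in the finitely generated $G$), which forces any torsion quotient to be finite, and thus forces a finite-index subgroup of $D$ to sit inside the $K$-diagonal subgroup.
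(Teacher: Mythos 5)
Your proof is correct, and while its first half coincides with the paper's argument, the final descent step is genuinely different. Both proofs handle \ref{equiv-field:bezivin}$\,\Rightarrow\,$\ref{equiv-field:weakbezivin-all} and the trivial implication identically, and both begin the hard direction \ref{equiv-field:weakbezivin}$\,\Rightarrow\,$\ref{equiv-field:bezivin} the same way: apply \cref{t:locally-bezivin-equivalent} over $L$, use \cref{l:lift-wepi-to-group}\ref{lift:power-splitting} to get that $K$ is power-splitting for $G=\langle S\rangle$, and extract a finite-index, finitely generated, abelian subgroup that is simultaneously diagonalizable over $L$ (you do this via \cref{p:wepi-torsion} plus Burnside--Schur; the paper cites condition \ref{locequiv:locdiag} of \cref{t:locally-bezivin-equivalent}, which amounts to the same thing). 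The divergence is in how one returns to $K$. The paper descends the \emph{Bézivin set}: it records that, in the original basis, all entries of elements of $G$ lie in $\sum_i c_i \Gamma_0$ with $c_i \in L$ and $\Gamma \le K^\times$, then chooses a maximal $K$-linearly independent subset of the $c_i$ containing $1$ and intersects with $K$ to produce a Bézivin bound with a finitely generated group inside $K^\times$. You instead descend the \emph{diagonalization}: the subgroup $D_K$ of diagonal elements with all eigenvalues in $K$ has finite index (since $D$ is finitely generated abelian and $D/D_K$ is torsion by power-splitting) and is simultaneously diagonalizable over $K$ — each element is diagonalizable over $L$ with eigenvalues in $K$, hence its minimal polynomial (unchanged under field extension) splits with distinct roots in $K$, and a commuting family of $K$-diagonalizable operators is simultaneously $K$-diagonalizable — so condition \ref{locequiv:locdiag} of \cref{t:locally-bezivin-equivalent} holds over $K$ and its easy direction finishes; there is no circularity, as that implication is proved independently of the present proposition. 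Your route is arguably more structural: it directly exhibits the virtually simultaneously diagonalizable (over $K$) subgroup, i.e., the content of \cref{cor:locbezivin-fg}\ref{lbzfg:diag}, and avoids the coefficient manipulation, whereas the paper's computation keeps explicit track of the Bézivin data ($M$ and $\Gamma'$). One cosmetic point: power-splitting is stated per eigenvalue, so to get $A^m \in D_K$ you should take $m$ to be a common multiple over the finitely many eigenvalues of $A$ — this is immediate but worth saying.
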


\begin{proof}
  \begin{proofenumerate}
    \item [\ref{equiv-field:bezivin}$\,\Rightarrow\,$\ref{equiv-field:weakbezivin-all}:]
    Clearly, the semigroup $S$ is still Bézivin over $L$.
    By \cref{l:bezivin-is-power-splitting} the field $K$ is power-splitting for $S$.
  
    \item [\ref{equiv-field:weakbezivin-all}$\,\Rightarrow\,$\ref{equiv-field:weakbezivin}:] Trivial.

    \item [\ref{equiv-field:weakbezivin}$\,\Rightarrow\,$\ref{equiv-field:bezivin}:]
    Let $G = \langle S \rangle$.
    We identify $\GL(V) = \GL_d(K)$ by choosing a basis of $V$.
    Extending scalars (that is, viewing $G$ as a subgroup of $\GL_d(L)$), we obtain an $L$-representation of $G$.
    By \cref{t:locally-bezivin-equivalent} the group $G$ is weakly epimonomial as a subgroup of $\GL_d(L)$.
    Then \ref{lift:power-splitting} of \cref{l:lift-wepi-to-group} shows that $K$ is power-splitting for $G$.
    
    By \ref{locequiv:locdiag} of \cref{t:locally-bezivin-equivalent}, there exists a finite-index subgroup $T \le G$ that is simultaneously diagonalizable over $L$.
    Let $B \in \GL_d(L)$ be such that all elements of $BTB^{-1}$ are diagonal.
    As a finite-index subgroup of a finitely generated group, the group $T$ is also finitely generated.
    Since $K$ is power-splitting for $G$, for each $A \in BTB^{-1}$ there exists some $n(A)$ such that $A^{n(A)}$ has all entries in $K$.
    Let $T' \le T$ be the subgroup generated by all these $A^{n(A)}$. 
    Since $T$ is finitely generated abelian, so is $T'$. 
    Since $T/T'$ is torsion, it is finite.

    Let $\Gamma \le K^\times$ be the (finitely generated) group generated by all entries of the matrices in $BT'B^{-1}$.
    Transforming back into the original basis and keeping in mind that $T'$ has finite index in $G$, there exist $c_1=1$,~$c_2$, \dots,~$c_m \in L$ such that every entry of every element of $G$ is contained in $\sum_{i=1}^m c_i \Gamma_0$.
    Among these elements $c_1$, \dots,~$c_m$, we pick a maximal $K$-linearly independent set; after reindexing, let this be $c_1=1$, $c_2$, \dots,~$c_n$ with $n \le m$.
    Express each $c_i$ (with $i\in[m]$) as $c_i = \sum_{j=1}^n \gamma_{i,j} c_j$  with $\gamma_{i,j} \in K$.
    Then the entries of elements of $G$ are contained in
    \[
    K \cap \sum_{i=1}^m c_i \Gamma_0 = K \cap \sum_{j=1}^n \Big( \sum_{i=1}^m \gamma_{i,j} \Big) c_j \Gamma_0 \subseteq \sum_{i=1}^m \gamma_{i,1} \Gamma_0,
    \]
    where the last inclusion follows from $K$-linear independence of $c_1$, \dots,~$c_n$.
    Letting $\Gamma' \le K^\times$ be the subgroup generated by $\Gamma$ and all $\gamma_{i,1}$, all entries of all elements of $G$ are contained in $\sumset{m}{\Gamma'_0}$, and thus $G$ is Bézivin.
    Hence, so is $S$.
     \qedhere
  \end{proofenumerate}
\end{proof}

The following is not essential, but allows us to add the Pólya property to the characterization in \cref{t:intro-bezivin}, for completeness.

\begin{lemma} \label{l:epi-is-locpolya}
  If $G \le \GL(V)$ is epimonomial, then there exists a basis of $V$ with respect to which $G$ is locally Pólya.
\end{lemma}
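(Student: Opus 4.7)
The plan is to exploit the block monomial structure provided by the epimonomial decomposition. Let $V = V_1 \oplus \cdots \oplus V_r$ realize $G$ as epimonomial with diagonal $D \le G$, so $[G:D] < \infty$, every $g \in G$ permutes the spaces $V_1, \ldots, V_r$, and each $d \in D$ acts as a scalar on every $V_i$. I would fix a basis of $V$ obtained by concatenating arbitrary bases of $V_1, \ldots, V_r$. In this basis, every element of $D$ is a diagonal matrix, while every element of $G$ is \emph{block monomial}: its matrix has exactly one nonzero block per block-row and per block-column.

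Given a finitely generated subgroup $H \le G$, the subgroup $H' = H \cap D$ has finite index in $H$ because $G/D$ is finite, hence is itself finitely generated \cite[1.6.11]{robinson96}; since $D$ is abelian, so is $H'$. The scalars by which the elements of $H'$ act on the pieces $V_i$ therefore generate a finitely generated subgroup $\Gamma \le K^\times$, and every element of $H'$ is a diagonal matrix with entries in $\Gamma$. I would then fix a transversal $h_1, \ldots, h_k$ for $H'$ in $H$, and let $F \subseteq K^\times$ be the finite set of nonzero entries appearing in $h_1, \ldots, h_k$. Every $h \in H$ factors as $h = h_i h'$ with $h' \in H'$; because $h_i$ is block monomial and $h'$ is diagonal, the nonzero entries of $h_i h'$ are precisely products of a nonzero entry of $h_i$ with a diagonal entry of $h'$, and therefore all lie in $F \cdot \Gamma$. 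Taking $\Gamma_H$ to be the subgroup of $K^\times$ generated by $\Gamma \cup F$ gives a finitely generated $\Gamma_H$ containing every nonzero entry of every element of $H$, so $H$ is Pólya.

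The argument is essentially bookkeeping with the block monomial form, so there is no significant obstacle; the only inputs are the defining properties of an epimonomial decomposition together with the standard fact that a finite-index subgroup of a finitely generated group is finitely generated. Note that $\Gamma_H$ genuinely depends on $H$, but the chosen basis does not, which is exactly what local Pólya demands.
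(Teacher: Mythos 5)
Your proof is correct and follows essentially the same route as the paper's: fix a basis adapted to the epimonomial decomposition, note that elements of $G$ are block monomial and the diagonal $D$ acts by scalars, and for a finitely generated $H \le G$ use that $H \cap D$ is finitely generated of finite index together with a finite set of coset representatives to generate the required group $\Gamma_H$. The only cosmetic difference is that the paper enlarges $H$ to contain representatives of $G/D$, whereas you take a transversal of $H/(H\cap D)$ inside $H$; both work equally well.
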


\begin{proof}
  Let $V = V_1 \oplus \cdots \oplus V_r$ be an epimonomial decomposition of $V$, and let $D \trianglelefteq G$ be the diagonal of this decomposition.
  Choose a basis of $V$ by choosing bases on each $V_i$, and identify $G = \GL_d(K)$ using this basis.
  Then the elements of $G$ have a monomial block-structure with respect to the decomposition $V=V_1\oplus\cdots \oplus V_r$.
  The elements of $D$ are diagonal, and each diagonal block is a scalar multiple of the corresponding identity matrix.
  Let $A_1$, \dots,~$A_n$ be a set of representatives for $G/D$.
  If $C \in D$, then $A_i C$ is obtained from $A_i$ by multiplying each block by a scalar.

  Let $H \le G$ be finitely generated and assume without restriction $A_1$, \dots,~$A_n \in H$.
  Then $H \cap D$ is finitely generated, as a finite-index subgroup of $H$.
  Let $\Gamma \le K^\times$ be generated by the nonzero entries of $A_1$, \dots, $A_n$ together with the entries of a set of generators of $H \cap D$.
  Then $H \subseteq \Gamma_0^{d \times d}$.
\end{proof}

\subsection{Maximal separating elements}
For the coarsest weakly epimonomial decomposition of a weakly epimonomial group, it is sometimes useful to know that there is a single element that creates this decomposition.

\begin{lemma} \label{p:maximally-separating-element}
  Let $G \le \GL(V)$ be weakly epimonomial and $V = V_1\oplus\cdots \oplus V_r$ with $V_i$ the joint-eigenspaces of all steady elements of $G$.
  Then there exists a steady $A \in G$ such that the eigenvalues $\lambda_1$, \dots,~$\lambda_r$ of $A$ on $V_1$, \dots,~$V_r$ are pairwise distinct.
\end{lemma}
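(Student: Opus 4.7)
The plan is to build the required steady element $A$ as a product of powers of commuting steady ``pair-separators'' $A_{ij}$ that distinguish the eigenspaces $V_i$ and $V_j$ individually, and then choose the exponents via a simple covering argument for $\Z^N$ by proper subgroups.

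First, for each pair $i < j$ in $[r]$, I would find a steady $A_{ij} \in G$ whose eigenvalues on $V_i$ and $V_j$ are different. Such an element must exist, because by definition of the decomposition the spaces $V_i$ and $V_j$ are \emph{distinct} joint-eigenspaces of the set of all steady elements of $G$; if every steady element had the same eigenvalue on both, then $V_i$ and $V_j$ would lie in the same joint-eigenspace, contradicting distinctness. Let $\alpha_k^{(ij)} \in K$ be the eigenvalue of $A_{ij}$ on $V_k$ (well-defined, since each $V_k$ sits inside an eigenspace of the steady element $A_{ij}$ by \cref{d:weakly-epimonomial}\ref{d-wm:intersection}); by construction $\alpha_i^{(ij)} \ne \alpha_j^{(ij)}$.

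Next, because every steady element lies in the diagonal $D$, the matrices $A_{ij}$ are simultaneously diagonal with respect to any basis compatible with $V=V_1\oplus\cdots\oplus V_r$, so they pairwise commute. For any $\vec n = (n_{ij})_{i<j} \in \Z^{\binom{r}{2}}$, the product
\[
A_{\vec n} \coloneqq \prod_{i<j} A_{ij}^{n_{ij}} \in G
\]
is therefore again diagonal with eigenvalue $\lambda_k(\vec n) = \prod_{i<j} (\alpha_k^{(ij)})^{n_{ij}} \in K$ on $V_k$. The goal becomes: choose $\vec n$ so that for every pair $k<l$, the ratio $\lambda_k(\vec n)/\lambda_l(\vec n) = \prod_{i<j} \mu_{kl,ij}^{n_{ij}}$ with $\mu_{kl,ij}\coloneqq \alpha_k^{(ij)}/\alpha_l^{(ij)}$ is \emph{not} a root of unity. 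This is strictly stronger than pairwise distinctness, and I would claim it also suffices for steadyness: if $A_{\vec n}$ has pairwise distinct eigenvalues whose ratios are non-roots of unity, then the eigenspaces of $A_{\vec n}^m$ coincide with those of $A_{\vec n}$ for every $m \ge 1$, so any $A_{\vec n}^m$-invariant subspace decomposes along $V_1,\dots,V_r$ and is $A_{\vec n}$-invariant; together with $\lambda_k(\vec n) \in K$ this gives \cref{d:steady}.

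Finally, I would dispatch the exponent choice by a standard argument. Let $H \le K^\times$ be the finitely generated subgroup generated by all $\mu_{kl,ij}$, with torsion subgroup $T$, so $H/T$ is free abelian of finite rank. For each pair $k<l$, the homomorphism
\[
\psi_{kl}\colon \Z^{\binom{r}{2}} \to H/T,\quad \vec n \mapsto \Big(\prod_{i<j} \mu_{kl,ij}^{n_{ij}}\Big)\, T
\]
is nonzero, since evaluating at the standard basis vector indexed by $(k,l)$ gives the image of $\mu_{kl,kl} = \alpha_k^{(kl)}/\alpha_l^{(kl)}$ in $H/T$, which is nontrivial by \cref{l:steady-basic}\ref{steady-basic:norootofunity} applied to the steady $A_{kl}$ (this is where steadyness of the separators is essential). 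Thus each $\ker \psi_{kl}$ is a proper subgroup of $\Z^{\binom{r}{2}}$. Since a free abelian group of finite rank is not covered by finitely many proper subgroups (each such subgroup spans a proper $\Q$-subspace of $\Q^{\binom{r}{2}}$), I can pick $\vec n$ outside all the $\ker\psi_{kl}$, and $A \coloneqq A_{\vec n}$ is the required element. The main conceptual point, and the only step that required real input, is the realization that differences cannot be used but products can: the commutativity of the separators $A_{ij}$ (which comes for free from their membership in $D$) reduces the construction to an elementary group-theoretic selection problem, which is solvable precisely because steadyness forces each basic ratio $\mu_{kl,kl}$ to have infinite order modulo torsion.
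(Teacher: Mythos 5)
Your proof is correct and takes essentially the same route as the paper: choose steady pair-separating elements, form a product of their powers, and pick the exponent vector outside the kernels of the eigenvalue-ratio homomorphisms after killing torsion (the paper composes with $\algc{K}^\times \to \algc{K}^\times/\mu(\algc{K})$ rather than passing to $H/T$, but the argument is the same). One caution on phrasing: the blanket claim that a free abelian group of finite rank is not covered by finitely many proper subgroups is false as stated ($\Z^2$ is the union of three index-two subgroups); what actually saves the step, and what your parenthetical correctly supplies, is that each kernel has rank strictly less than $\binom{r}{2}$ because the target $H/T$ is torsion-free, so the kernels span proper $\Q$-subspaces and cannot cover $\Z^{\binom{r}{2}}$.
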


\begin{proof}
  The spaces $V_i$ are intersections of eigenspaces of steady $A \in G$.
  Thus, there exist $N \ge 0$ and steady $A_1$, \dots,~$A_N$ such that, for every $i \ne i'$ there exists some $A_j$ having distinct eigenvalues on $V_i$ and $V_{i'}$.

  Let $\lambda_{ij}$ be the eigenvalue of $A_{j}$ on $V_i$.
  For all $\vec n = (n_1,\dots,n_N) \in \Z^N$, we consider the product
  \[
    B(\vec n) \coloneqq \prod_{j \in [N]} A_{j}^{n_{j}},
  \]
  which has on $V_i$ the eigenvalue
  \[
    \mu_i(\vec n) \coloneqq \prod_{j \in [N]} \lambda_{ij}^{n_{j}}.
  \]
  We seek $\vec n$ such that $\mu_i(\vec n) / \mu_{i'}(\vec n)$ has infinite order for all $i \ne i'$.

  Note that $\varphi_{ii'}\colon \Z^N \mapsto \algc K^\times$, $\vec n \mapsto \mu_i(\vec n) / \mu_{i'}(\vec n)$ is a group homomorphism.
  For every pair $i \ne i'$ there exists some $j$ with $\lambda_{ij} \ne \lambda_{i'j}$.
  Since $A_j$ is steady, the ratio $\lambda_{ij}/\lambda_{ij'}$ even has infinite order.
  Thus, the image $\im \varphi_{ii'}$ has a torsion-free part.
  Composing $\varphi_{ii'}$ with $\pi \colon \algc K^\times \to \algc K^\times / \mu(\algc K)$, we see that $\ker(\pi \circ \varphi_{ii'})$ has rank strictly smaller than $N$.

  Thus, we get $\bigcup_{i \ne i'} \ker(\pi \circ \varphi_{ii'}) \subsetneq \Z^N$.
  Taking any $\vec n$ outside this union ensures that $\mu_i(\vec n)/\mu_{i'}(\vec n)$ has infinite order for all $i \ne i'$.
  The element $B(\vec n)$ is steady and has pairwise distinct eigenvalues on $V_1$, \dots,~$V_n$.
\end{proof}

\subsection{Lifting to monomial representations}
We now show that epimonomial representations are epimorphic images of monomial representations.

\begin{lemma} \label{l:factor-through-monomial}
  Let $\rho\colon G \to \GL(V)$ be a representation and let $T \le G$ be a finite-index subgroup such that $\rho(T)$ is simultaneously diagonalizable over $K$.
  Then the induced representation $\pi \coloneqq \Ind_T^G(\rho|_T)\colon G \to \GL(W)$ is monomial and there exists a $G$-equivariant epimorphism $(W,\pi) \twoheadrightarrow (V,\rho)$.
\end{lemma}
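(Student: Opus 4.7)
The plan is to work with the explicit model of the induced representation $W = K[G] \otimes_{K[T]} V$, and use the simultaneous diagonalizability of $\rho(T)$ to exhibit a basis of $W$ that makes the monomial structure manifest.

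First I would choose a system of left coset representatives $g_1, \ldots, g_n$ for $G/T$, so that $W = \bigoplus_{i=1}^n g_i \otimes V$ as a $K$-vector space. Because $\rho(T)$ is simultaneously diagonalizable over $K$, there exists a basis $e_1, \ldots, e_d$ of $V$ and characters $\chi_1, \ldots, \chi_d \colon T \to K^\times$ with $\rho(t) e_j = \chi_j(t) e_j$ for all $t \in T$ and $j \in [d]$. Setting $W_{ij} \coloneqq K(g_i \otimes e_j)$, we obtain a decomposition
\[
W = \bigoplus_{i \in [n],\, j \in [d]} W_{ij}
\]
into one-dimensional subspaces. To verify that $\pi$ is monomial with respect to this decomposition, fix $g \in G$ and $(i,j) \in [n] \times [d]$. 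Write $g g_i = g_{i'} t$ with the unique $i'\in [n]$ and $t \in T$. Then
\[
\pi(g)(g_i \otimes e_j) = g g_i \otimes e_j = g_{i'} \otimes \rho(t) e_j = \chi_j(t)\, (g_{i'} \otimes e_j) \in W_{i'j},
\]
so $\pi(g) W_{ij} \subseteq W_{i'j}$, as required by \cref{d:monomial-rep}.

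For the epimorphism, I would apply Frobenius reciprocity, or equivalently, define $\Phi \colon W \to V$ directly by $\Phi(g \otimes v) \coloneqq \rho(g) v$. This is well-defined on the tensor product over $K[T]$, because for $t \in T$,
\[
\Phi(gt \otimes v) = \rho(gt) v = \rho(g)\rho(t) v = \Phi(g \otimes \rho(t) v).
\]
It is $G$-equivariant by construction: $\Phi(\pi(h)(g \otimes v)) = \Phi(hg \otimes v) = \rho(hg) v = \rho(h) \Phi(g \otimes v)$. Finally, $\Phi$ is surjective because $\Phi(1 \otimes v) = v$ for every $v \in V$.

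There is no real obstacle here; the statement is essentially a repackaging of the classical construction of induced representations combined with the hypothesis of simultaneous diagonalizability. The only point to be careful about is the bookkeeping of left versus right cosets in the identification $g g_i = g_{i'} t$, which ensures that $\pi(g)$ permutes the ``column indices'' $i$ while preserving the ``row index'' $j$ attached to the character $\chi_j$.
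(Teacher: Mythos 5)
Your proof is correct and follows essentially the same route as the paper: choose coset representatives and a basis diagonalizing $\rho(T)$, verify monomiality via the coset computation $g g_i = g_{i'}t$, and take the evaluation map $g \otimes v \mapsto \rho(g)v$ as the $G$-equivariant epimorphism, which is exactly the paper's map $(a_i v_i)_i \mapsto \sum_i \rho(a_i)v_i$. The only (cosmetic) difference is that you phrase the induced representation as $K[G]\otimes_{K[T]}V$, which requires the short well-definedness check you supply, whereas the paper works directly with the decomposition $\bigoplus_i a_i V$.
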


\begin{proof}
  We check that $\pi$ is a monomial representation.
  Let $V = V_1 \oplus \cdots \oplus V_r$ with $\dim(V_i)=1$ and such that $\rho(T)$ is diagonal with respect to this decomposition.
  Let $a_1$, \dots,~$a_n$ represent the left cosets of $G/T$.
  For each $b \in G$ and $i \in [n]$, let $c_{i,b} \in T$ and $f(i,b) \in [n]$ be such that
  \[
  ba_i = a_{f(i,b)} c_{i,b}.
  \]

  Recall that the induced representation is defined on the $nr$-dimensional space
  \[
  W = \bigoplus_{i=1}^n a_i V = \bigoplus_{i=1}^n \bigoplus_{j=1}^r a_i V_j.
  \]
  For each $b \in G$ the endomorphism $\pi(b) \in \GL(W)$ is defined by
  \[
    \pi(b)(a_i v) = a_{f(i,b)}\big(\rho(c_{i,b})(v)\big) \qquad\text{for $v \in V$.}
  \]
  Since $\rho(c_{i,b})$ is diagonal, there exists $\lambda_{i,b} \in K$ for which $\rho(c_{i,b})(v_j)= \lambda_{i,b} v_j$ for $v_j \in V_j$, so in fact $\pi(b)(a_i v_j)= a_{f(i,b)} (\lambda_{i,b} v_j)$.
  Hence, the representation $\pi$ is monomial.

  Define a linear epimorphism
  \[
  \psi\colon W \twoheadrightarrow V, \quad ( a_i v_i)_{i \in[n]} \mapsto \sum_{i=1}^n \rho(a_i) v_i \qquad(\text{for }v_i \in V).
  \]
  Then $\psi$ is $G$-equivariant, because 
  \begin{align*}
  \psi\big( \pi(b)  (a_i v_i)_{i \in [n]} \big) &=  \psi\Big( \big(a_{f(i,b)} (\rho(c_{i,b})(v_i)) \big)_{i \in [n]} \Big) = \sum_{i=1}^n \rho(a_{f(i,b)} c_{i,b})(v_i) =  \sum_{i=1}^n \rho(b a_i)(v_i) \\
  &= \rho(b)\Big( \sum_{i=1}^n \rho(a_i)v_i \Big) = \rho(b)\Big( \psi\big( (a_iv_i)_{i \in [n]} \big)\Big). \qedhere
  \end{align*}
\end{proof}

\subsection{Not necessarily finitely generated groups}
\Cref{exm:locbez-not-monomial} shows that not every locally Bézivin group has a monomial representation.
In \ref{exm:roots-of-unity-over-R} of \cref{exm:locbez-not-monomial}, the issue is that $\R$ is power-splitting for $\mu(\C) \hookrightarrow \GL_2(\R)$ but not uniformly so.
In characteristic $0$, we can still get the following two results for not necessarily finitely generated groups, as long as we assume \emph{uniform} power-splitting.
First, we have the following easy observation.

\begin{corollary} \label{p:char0-epimon}
  Suppose $\characteristic{K}=0$ and $\card{\mu(K)} < \infty$.
  For $G \le \GL(V)$ the following statements are equivalent.
  \begin{equivenumerate}
    \item \label{c0-epimon:uniform} The group $G$ satisfies the equivalent conditions of \cref{t:locally-bezivin-equivalent} and $K$ is uniformly power splitting for $G$.
    \item \label{c0-epimon:epimon} The group $G$ is epimonomial.
  \end{equivenumerate}
\end{corollary}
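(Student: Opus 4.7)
The plan is to handle the two directions of \cref{p:char0-epimon} separately: \ref{c0-epimon:epimon}$\,\Rightarrow\,$\ref{c0-epimon:uniform} is almost immediate, while \ref{c0-epimon:uniform}$\,\Rightarrow\,$\ref{c0-epimon:epimon} is the substantive direction.

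For \ref{c0-epimon:epimon}$\,\Rightarrow\,$\ref{c0-epimon:uniform}, suppose $G$ has an epimonomial decomposition with diagonal $D$. Then $N\coloneqq \card{G/\langle D\rangle}$ is finite, and after choosing bases of the $V_i$, the elements of $\langle D\rangle$ are simultaneously diagonal over $K$. For every $g \in G$, we have $g^N \in \langle D\rangle$, so each eigenvalue $\lambda$ of $g$ satisfies $\lambda^N \in K$, giving uniform power-splitting. The remaining locally Bézivin conditions of \cref{t:locally-bezivin-equivalent} follow from the implication epimonomial $\Rightarrow$ weakly epimonomial.

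For \ref{c0-epimon:uniform}$\,\Rightarrow\,$\ref{c0-epimon:epimon}, I would fix any weakly epimonomial decomposition $V=V_1\oplus\cdots\oplus V_r$ with diagonal $D$, provided by \cref{t:locally-bezivin-equivalent}. By \cref{p:wepi-torsion}, the quotient $G/D$ is a linear torsion group, and the Burnside part of \cref{t:burnside-schur}---which crucially requires $\characteristic K=0$---then reduces the problem to showing that $G/D$ has \emph{finite exponent}.

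The core step is to produce one integer $M\ge 1$ such that $g^M$ is steady for every $g\in G$; condition \ref{d-wm:intersection} of \cref{d:weakly-epimonomial} then places $g^M \in D$, giving $G/D$ of exponent dividing $M$. Let $N$ witness uniform power-splitting and set $M\coloneqq N\card{\mu(K)}$. I claim $g^M$ is steady for every $g\in G$, by repeating the proof of \cref{l:steady-basic}\ref{steady-basic:power} with the uniform constant $M$ in place of the $A$-dependent one. Three conditions must be verified uniformly. First, $\lambda^M \in K$ for every eigenvalue $\lambda$ of any $g\in G$, directly from uniform power-splitting. Second, whenever $\lambda/\mu$ is a root-of-unity ratio of two eigenvalues of some $g \in G$, then $(\lambda/\mu)^N = \lambda^N/\mu^N \in K \cap \mu(\algc K) = \mu(K)$, so $(\lambda/\mu)^M = 1$ by the choice of $M$. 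Third, no power of any $g$ has a nontrivial nilpotent Jordan block, because \cref{l:bezivin-diagonalize}\ref{bez-diag:char0} forces every element of a locally Bézivin group in characteristic zero to be diagonalizable over $\algc K$.

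The main obstacle is exactly this uniformization: each of the three steadiness conditions must be bounded by a single integer independent of $g \in G$, and this is where both hypotheses enter essentially---the characteristic zero assumption (used via Burnside and via \cref{l:bezivin-diagonalize}) and the finiteness of $\mu(K)$ (used to control root-of-unity ratios). \Cref{exm:locbez-not-monomial}\ref{exm:roots-of-unity-over-R} already shows that merely assuming power-splitting rather than uniform power-splitting can break the conclusion, so neither hypothesis can be weakened.
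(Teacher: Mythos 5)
Your proof is correct and follows essentially the same route as the paper: the easy direction reads off uniform power-splitting from $A^N\in D$ for $N$ the index of the diagonal, and the substantive direction takes the uniform exponent $M=N\card{\mu(K)}$ to make every $g^M$ steady (hence in $D$), so that $G/D$ is a linear torsion group of finite exponent, which is finite by Burnside in characteristic zero. The only cosmetic difference is that you spell out the steadiness verification (via \cref{l:bezivin-diagonalize} and the root-of-unity ratio argument) a bit more explicitly than the paper does.
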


\begin{proof}
  \begin{proofenumerate}
  \item[\ref{c0-epimon:uniform}$\,\Rightarrow\,$\ref{c0-epimon:epimon}:] By assumption $G$ is a weakly epimonomial.
  Let $D \trianglelefteq G$ be a diagonal with respect to a weakly epimonomial decomposition of $V$.
  Then, using \cref{p:wepi-torsion}, the quotient $G/D$ is a linear torsion group, and we show that it has finite exponent.
  By assumption, there exists $N \ge 1$ such that every $A^N$, with $A \in G$, has its eigenvalues in $K$.
  Let $M \coloneqq \card{\mu(K)} N$.
  If $A \in G$, then $A^N$ is diagonalizable over $K$ (\cref{l:bezivin-diagonalize}).
  Raising $A^N$ to the power $\card{\mu(K)}$ ensures $\lambda/\mu$ cannot be a nontrivial root of unity for two distinct eigenvalues $\lambda$, $\mu$ of $A$.
  Therefore, the power $A^M$ is steady, and hence $A^M \in D$.
  Because $\characteristic{K}=0$, every linear torsion group with finite exponent is finite (\cref{t:burnside-schur}).

  \item[\ref{c0-epimon:epimon}$\,\Rightarrow\,$\ref{c0-epimon:uniform}:]
  Let $D$ be a diagonal corresponding to an epimonomial decomposition of $V$.
  Since $G/D$ is finite, there exists $N \ge 1$ such that $A^N \in D$ for all $A \in G$.
  Let $A \in G$ and let $\lambda \in \algc{K}$ be an eigenvalue of $A$.
  Then $\lambda^N$ is an eigenvalue of $A^N$.
  But the eigenvalues of $A^N$ are in $K$ by definition of $D$, hence $\lambda^N \in K$. \qedhere
  \end{proofenumerate}
\end{proof}

Achieving an analogous result when $\mu(K)$ may be infinite is harder.
We need a classical theorem of Jordan.

\begin{theorem}[Jordan] \label{t:jordan}
  Let $K$ be a field of characteristic $0$.
  Then there is a function $f\colon \Z_{\ge 0} \to \Z_{\ge 0}$ such that every finite subgroup $G \le \GL_d(K)$ has an abelian subgroup of index at most $f(d)$.
\end{theorem}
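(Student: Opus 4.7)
The plan is to prove Jordan's theorem via the classical analytic approach, reducing to a compact unitary group. First I would reduce to $K=\C$ and $G \le U(d)$. Since $G$ is finite, the matrix entries of $G$ generate a finitely generated subfield of $K$; as $\characteristic K=0$, this subfield embeds into $\C$, so without loss of generality $G \le \GL_d(\C)$. Averaging the standard Hermitian inner product over the finite group $G$ produces a $G$-invariant Hermitian form, and after a suitable change of basis $G \le U(d)$.

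Second, I would set up a small neighborhood of the identity. Fix (to be chosen) $\epsilon > 0$ and set $V_\epsilon := \{A \in U(d) : \|A - I\|_{\mathrm{op}} < \epsilon\}$, and define $H := \langle G \cap V_\epsilon \rangle$. Because the operator norm is invariant under unitary conjugation, $G \cap V_\epsilon$ is closed under $G$-conjugation, hence $H \trianglelefteq G$. The key elementary inequality is
\[
\|ABA^{-1}B^{-1} - I\|_{\mathrm{op}} \le 2\|A-I\|_{\mathrm{op}}\|B-I\|_{\mathrm{op}} \qquad \text{for } A, B \in U(d),
\]
which follows by writing $AB - BA = (A-I)(B-I) - (B-I)(A-I)$ and using $\|A^{-1}\|_{\mathrm{op}}=\|B^{-1}\|_{\mathrm{op}}=1$.

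Third, and the main obstacle, is to show that for $\epsilon$ small enough depending only on $d$, the subgroup $H$ is abelian. The idea is a minimality argument. Since $H$ is finite, pick $A_0 \in H \setminus \{I\}$ minimizing $\|A_0 - I\|_{\mathrm{op}}$. For any generator $B \in G \cap V_\epsilon$ of $H$, the commutator $[A_0, B]$ lies in $H$ and satisfies $\|[A_0, B] - I\|_{\mathrm{op}} \le 2\epsilon\|A_0 - I\|_{\mathrm{op}}$; for $\epsilon < 1/2$ this is strictly less than $\|A_0 - I\|_{\mathrm{op}}$, forcing $[A_0, B]=I$ by minimality. Hence $A_0$ commutes with every generator of $H$, so $A_0 \in Z(H)$. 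An induction on $d$ (passing to the invariant-subspace decomposition of $A_0$ and applying the same argument to the restriction of $H$ to each eigenspace of $A_0$, which is again a finite unitary group of strictly smaller dimension) then yields that $H$ itself is abelian. Making this induction watertight — ensuring the chosen $\epsilon$ works uniformly across all dimensions up to $d$ — is the delicate part and is where the classical Jordan–Bieberbach analysis lives.

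Finally, I would bound the index by a compactness argument. With $\epsilon$ fixed as above, $U(d)$ is compact, so it is covered by finitely many translates $x_1 V_\epsilon, \dots, x_N V_\epsilon$ with $N = N(d, \epsilon)$ depending only on $d$. If two elements $g, g' \in G$ lie in the same translate $x_i V_\epsilon$, then $g^{-1}g' \in V_\epsilon \cap G \subseteq H$, so $gH = g'H$. Therefore $[G:H] \le N$, and setting $f(d) := N(d, \epsilon)$ for the chosen $\epsilon$ completes the proof.
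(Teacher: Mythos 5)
The paper states Jordan's theorem without proof, as a classical imported result, so there is no internal argument to compare with; judged on its own, your proposal has a genuine gap at its central step. The reduction to $\C$ and to $U(d)$, the normality of $H=\langle G\cap V_\epsilon\rangle$, the commutator inequality, and the compactness argument for the index bound are all fine (modulo the small slip that two elements of the same translate $x_iV_\epsilon$ only give $g^{-1}g'\in V_{2\epsilon}$, fixed by covering with translates of $V_{\epsilon/2}$). The problem is the claim that $H$ is abelian. Your minimality argument does show that the non-identity element $A_0\in H$ closest to $I$ is central in $H$, but the proposed induction on $d$ via the eigenspace decomposition of $A_0$ collapses exactly when $A_0$ is a nontrivial \emph{scalar} matrix: then there is a single eigenspace and no drop in dimension. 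This case cannot be ruled out --- in the natural candidate counterexamples (e.g.\ an extraspecial $p$-group in its irreducible unitary representation) the non-identity elements nearest the identity are precisely the nontrivial central scalars --- so the induction as described does not close, and no choice of $\epsilon$ repairs it.

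What the classical Frobenius--Bieberbach analysis supplies here is a different, non-inductive finishing move. Assume $H$ is non-abelian and let $A$ be a non-central element of $H$ of minimal distance to $I$ (such an $A$ lies in $V_\epsilon$, since a group generated by central elements would be abelian, so some generator is non-central); choose a generator $B$ with $C\coloneqq [A,B]\ne I$. Your inequality gives $\|C-I\|\le 2\epsilon\|A-I\|<\|A-I\|$, so by minimality $C$ must be central in $H$. From $BAB^{-1}=C^{-1}A$ and centrality of $C$ one gets $B^kAB^{-k}=C^{-k}A$, so $A$ and $C^{-k}A$ have the same spectrum for every $k$; since $A$ and $C$ commute and $C\ne I$ has finite order, the orbit of a suitable eigenvalue of $A$ under multiplication by a nontrivial root of unity consists of equally spaced points on the unit circle, one of which has distance at least $\sqrt2$ from $1$, contradicting $\|A-I\|<1/2$. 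Some argument of this kind, exploiting finiteness of orders together with the spectrum rather than an induction on dimension, is the missing ingredient in your third step; with it a single universal $\epsilon<1/2$ works in every dimension, and the rest of your outline goes through.
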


\begin{theorem} \label{t:lbz0-diag}
  Suppose $\characteristic{K}=0$ and let $G \le \GL(V)$.
  For $G \le \GL(V)$ the following statements are equivalent.
  \begin{equivenumerate}
    \item \label{lbz0-diag:uniform} The group $G$ satisfies the equivalent conditions of \cref{t:locally-bezivin-equivalent} and $K$ is uniformly power splitting for $G$.
    \item \label{lbz0-diag:epimon} The group $G$ is virtually simultaneously diagonalizable over $K$.
  \end{equivenumerate}
\end{theorem}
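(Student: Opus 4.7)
The converse $(b) \Rightarrow (a)$ is immediate: a finite-index simultaneously diagonalizable subgroup $T \le G$ has all entries (in a compatible basis) in a finitely generated subgroup of $K^\times$ and its eigenvalues in $K$; then every finitely generated $H \le G$ is a finite union of cosets of the finitely generated abelian $H \cap T$, so $H$ is Bézivin, and $g^{[G:T]} \in T$ yields uniform power-splitting with $N = [G:T]$. For $(a) \Rightarrow (b)$ my approach proceeds in three stages.

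Stage 1: set up. From \cref{t:locally-bezivin-equivalent} and \cref{p:wepi-torsion}, obtain a weakly epimonomial decomposition $V = V_1 \oplus \cdots \oplus V_r$ with diagonal $D \trianglelefteq G$ and $G/D$ a linear torsion group. Apply the Jordan--Schur theorem to produce an abelian normal subgroup of $G/D$ of finite index and let $H \trianglelefteq G$ be its preimage; after intersecting with the finite-index stabilizer of the partition we may assume $H$ preserves every $V_i$ and $H/D$ is abelian.

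Stage 2: extract an abelian finite-index subgroup $T'$. For $g, h \in H$ the commutator $[g, h] \in D$ is block-scalar on the $V_i$ and, being a commutator, has determinant $1$ on each block, so $[g, h]|_{V_i}$ is a root of unity of order dividing $\dim V_i$; hence $[H, H]$ is finite. To promote this finiteness into abelianness of a finite-index subgroup, I pass to the Zariski closure $\bar H$ of $H$ in $\GL(V \otimes_K \algc K)$: by continuity of the commutator map $[\bar H, \bar H] = [H, H]$ is still finite, so the closed connected subgroup $[\bar H^0, \bar H^0]$ is trivial and $\bar H^0$ is connected abelian. In its structural decomposition $\bar H^0 = T \times U$ (torus times vector group), the dense subset $H \cap \bar H^0$ consists of semisimple elements by \cref{l:bezivin-diagonalize}, forcing $U = 1$; thus $T' := H \cap \bar H^0$ is an abelian finite-index subgroup of $H$ (hence of $G$), simultaneously diagonalizable over $\algc K$, and normal in $G$ because the Zariski closure of the normal subgroup $H$ is itself normal.

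Stage 3: descend to $K$ and conclude by Burnside--Schur. Write the joint-eigenspace decomposition $V \otimes_K \algc K = \bigoplus_\alpha W_\alpha$ indexed by characters $\alpha \colon T' \to \algc K^\times$. The Galois group $\operatorname{Gal}(\algc K / K)$ acts by $\sigma \cdot \alpha = \sigma \circ \alpha$ and sends $W_\alpha$ to $W_{\sigma\alpha}$; gathering into Galois orbits $O$ yields $K$-rational subspaces $V_O$ with $V = \bigoplus_O V_O$. Uniform power-splitting enters decisively here: since $\alpha(z)^N \in K^\times$, all characters in a single Galois orbit agree after raising to the $N$-th power, so $z^N|_{V_O}$ is the single scalar $\alpha(z)^N \in K^\times$ for every $z \in T'$. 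Since $T' \trianglelefteq G$ and $G$-conjugation on characters commutes with the Galois action, $G$ permutes the $V_O$; let $G_*$ be the finite-index stabilizer of this partition and $D^* := \{\, g \in G_* : g|_{V_O} \text{ is scalar for every } O \,\}$, visibly simultaneously diagonalizable over $K$ in any basis compatible with $V = \bigoplus_O V_O$. Then $(T' \cap G_*)/(T' \cap D^*)$ is an abelian torsion linear group of exponent dividing $N$ (it embeds in $G_*/D^*$, which is linear via conjugation on $\bigoplus_O \End(V_O)$), so it is finite by the finite-exponent variant of the Burnside--Schur theorem. Therefore $T' \cap D^*$ is a finite-index subgroup of $G$ that is simultaneously diagonalizable over $K$, proving (b).

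The main obstacle I anticipate is Stage 2, namely promoting the finiteness of $[H, H]$ into a genuine abelian finite-index subgroup $T'$ (the Zariski closure route replaces the alternative invocation of B.H. Neumann's theorem on groups with finite commutator). The characteristic zero hypothesis is used exactly twice---for semisimplicity in \cref{l:bezivin-diagonalize}, and for the finite-exponent Burnside--Schur theorem in Stage 3---which is why positive characteristic requires the extra hypotheses seen in \cref{cor:locbezivin-fg}.
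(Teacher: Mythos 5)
Your proof is correct, and in the main direction \ref{lbz0-diag:uniform}$\,\Rightarrow\,$\ref{lbz0-diag:epimon} it takes a genuinely different route from the paper's. The paper stays over $K$ throughout and avoids algebraic groups: it does not cite Jordan--Schur for the torsion group $G/D$, but instead combines Jordan's theorem (\cref{t:jordan}) for the \emph{finite} quotients $G_S/D_S$ (finite by Burnside--Schur) with a Tychonoff-compactness gluing argument to produce the finite-index $H$ with $H/D$ abelian; it then exploits $D \subseteq Z(G')$, so $H$ is nilpotent of class at most $2$, and uses the identity $[A^N,B]=[A,B]^N=\id$ (with $\lcm(d_1,\dots,d_r)\mid N$ and $N$ a uniform power-splitting exponent) to show that all $A^N$ are central and diagonalizable over $K$; refining $V_1\oplus\cdots\oplus V_r$ into joint eigenspaces of these $A^N$ gives a weakly epimonomial decomposition of $H$, and Burnside (\cref{t:burnside-schur}) applied to the exponent-$N$ quotient $H/T$ finishes. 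You instead start from the same commutator computation (finiteness of $[H,H]$) but then pass to the Zariski closure over $\algc{K}$: connectedness makes $\overline H^0$ abelian, semisimplicity of the elements of $H$ (\cref{l:bezivin-diagonalize}) kills the unipotent part, so $\overline H^0$ is a torus and $T'=H\cap\overline H^0$ is a finite-index abelian subgroup diagonalizable over $\algc{K}$; Galois descent of the character decomposition together with uniform power-splitting (Galois-conjugate characters have equal $N$-th powers) yields the $K$-rational decomposition $\bigoplus_O V_O$ on which $N$-th powers act as scalars, and Burnside applied to the exponent-$N$ linear quotient (linearity via the conjugation representation, exactly as in \cref{p:wepi-torsion}) gives the finite-index subgroup $T'\cap D^*$ diagonal over $K$. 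Your version buys brevity (citing Jordan--Schur, which the paper names but deliberately reproves in the form it needs) and a clean structural picture via the torus; the paper's version buys self-containedness, never leaves the ground field, and is consistent with its stated aim of avoiding linear algebraic groups. Both use characteristic zero in exactly the two places you identify.

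Two cosmetic points. In \ref{lbz0-diag:epimon}$\,\Rightarrow\,$\ref{lbz0-diag:uniform}, the entries of $T$ itself need not lie in a finitely generated subgroup of $K^\times$ when $T$ is not finitely generated; your subsequent argument correctly works with the finitely generated $H\cap T$, so nothing is lost, but the opening clause should be phrased that way. Also, $g^{[G:T]}\in T$ needs $T$ normal; replace $T$ by its normal core or take $N=[G:T]!$ --- a slip the paper's own proof of this direction shares.
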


\begin{proof}
  \begin{proofenumerate}
  \item[\ref{lbz0-diag:uniform}$\,\Rightarrow\,$\ref{lbz0-diag:epimon}:]
  By \cref{t:locally-bezivin-equivalent} there exists a weakly epimonomial decomposition $V = V_1 \oplus \cdots \oplus V_r$ of $V$.
  Let $d_i\coloneqq \dim V_i$.
  Let $D$ be the diagonal of $G$.
  Then $G/D$ is a linear torsion group that embeds into $\GL(W)$ with $W = \End(V_1) \oplus \cdots \oplus \End(V_r)$ (see \cref{p:wepi-torsion}).
  Let $G' \le G$ be the subgroup of all $B \in G$ with $BV_i=V_i$ for all $i \in [r]$.
  Then $(G:G') \le r!$ and it suffices to prove that $G'$ is virtually simultaneously diagonalizable.
  Note that $D \subseteq Z(G')$ by choice of $G'$.

  Let $\bP_{\text{fin}}(G')$ denote the set of all finite subsets of $G$. 
  For every $S \in \bP_{\text{fin}}(G')$ let $G_S=\langle S \rangle$.
  Then $G' = \bigcup_{S \in \bP_{\text{fin}(G')}} G_S$.
  The decomposition of $V$ is weakly epimonomial for each $G_S$ as well, and $D_S \coloneqq D \cap G_S$ is the corresponding diagonal.
  The map $G \to \GL(W)$ with kernel $D$ therefore gives rise to an embedding $G_S/D_S \hookrightarrow \GL(W)$, so that each $G_S/D_S$ embeds into the fixed $\GL(W)$.
  As a finitely generated linear torsion group, each $G_S/D_S$ is finite by Burnside--Schur (\cref{t:burnside-schur}).
  Now Jordan's Theorem (\cref{t:jordan}) implies that there exist a constant $C$ and subgroups $D_S \le H_S \le G_S$ such that $H_S/D_S$ is abelian and $(G_S:H_S) \le C$ for all $S \in \bP_{\text{fin}}(G')$.

  With a similar compactness argument as in a proof of Jordan--Schur \cite[Chapter 2.1]{tao14} we can choose the groups $H_S$ so that $H_T = H_S \cap G_T$ whenever $T \subseteq S$:
  Let $\mathfrak X_S$ denote the set of all subgroups $H_S$ with $D_S \le H_S \le G_S$ such that $H_S/D_S$ is abelian and $(G_S:H_S)\le C$.
  We endow $\mathfrak X_S$ with the discrete topology.
  Since each $\mathfrak X_S$ is finite, hence compact, the product space $\mathfrak X=\prod_{S \in \bP_{\text{fin}}(G')} \mathfrak X_S$ is compact by Tychonoff's Theorem.
  For each $S \in \bP_{\text{fin}}(G')$ the set 
  \[
  \mathfrak C(S) \coloneqq \big\{\, (H_T)_{T \in \bP_{\text{fin}}(G')} \in \mathfrak X : H_T = G_T \cap H_S \text{ for $T \subseteq S$}\,\big\}
  \]
  is closed (because it is defined by only finitely many conditions).
  Each $\mathfrak C(S)$ is nonempty, because we may take $H_S \in G_S$ with $H_S/D_S$ abelian and $(G_S:H_S) \le C$ and then $H_T \coloneqq H_S \cap G_T$ for $T \subseteq S$ will also satisfy $(G_T:H_T) \le C$.
  Since $\mathfrak C(S_1) \cap \cdots \cap \mathfrak C(S_n) \supseteq \mathfrak C(S)$ for $S = S_1 \cup \cdots \cup S_n$, any finite intersection of such sets is also nonempty.
  The finite intersection property for compact sets implies $\bigcap_{S \in \bP_{\text{fin}}(G')} \mathfrak C(S) \ne \emptyset$.
  Taking any element $(H_S)_{S \in \mathbb P_{\text{fin}}(G')}$ in this intersection, we have $H_T = H_S \cap G_T$ for \emph{all} $S$,~$T \in \mathbb P_{\text{fin}}(G')$ with $T \subseteq S$.

  Let $H = \bigcup_{S \in \bP_{\text{fin}}(G')} H_S$.
  Then $(G':H) \le C$: suppose $A_1$, \dots,~$A_n \in G'$ are in distinct cosets modulo $H$. 
  With $S=\{A_1,\dots,A_n\}$ we have $A_1$, \dots,~$A_n \in G_S$.
  Then $A_1$, \dots,~$A_n$ must represent distinct cosets of $H_S$ in $G_S$, hence $n \le C$.
  Furthermore, by construction $H/D$ is abelian with $D \subseteq Z(G')$.
  In particular, the group $H$ is nilpotent of nilpotency index at most $2$.
  By definition of $D$, each element of $D$, when restricted to $V_i$, consists of a scalar multiple of the identity on $V_i$.
  Let $[A,B]\coloneqq ABA^{-1}B^{-1}$ be the multiplicative commutator.
  For all $A$,~$B \in H$ it holds that $\det([A,B])=1$ and $[A,B] \in D$.
  Thus, we have $[A,B]|_{V_i} = \zeta_i \id$ for some $\zeta_i \in \mu_{d_i}(K)$. 
  By nilpotency class at most $2$, we have the identity $[XY,Z]=[X,Z][Y,Z]$ for all $X$, $Y$,~$Z \in H$. Therefore $[A^{d_i}, B]|_{V_i} = ([A,B]|_{V_i})^{d_i}=\zeta_i^{d_i}\id|_{V_i} = \id|_{V_i}$, so $[A^{d_i},B]=\id$.
  
  By \cref{l:bezivin-diagonalize}, all elements of $H$ are diagonalizable over $\algc{K}$.
  Since $K$ is uniformly power-splitting for $G$, there exists an $N \ge 1$ such that $A^N$ is diagonalizable over $K$ for each $A \in H$.
  We may choose $N$ so that also $\lcm(d_1,\dots,d_r) \mid N$.
  Then $[A^N,B]=[A,B]^N=\id$ shows that $B$ and $A^N$ commute for every $B \in H$.
  Hence, each $B \in H$ leaves every eigenspace of $A^N$ invariant.
  Since $H \le G'$, also each $V_i$ is $H$-invariant.
  We can therefore refine $V_i= W_{i1} \oplus \cdots \oplus W_{is_{i}}$ into a decomposition of joint-eigenspaces of all $A^{N}$, where $A \in H$.
  Then each $W_{ij}$ is $H$-invariant; in particular the refined decomposition $V=\bigoplus_{i,j} W_{ij}$ is a weakly epimonomial decomposition for $H$.

  Let $T$ be the diagonal of $H$ with respect to the refined decomposition.
  Then $H/T$ is a linear torsion group by \cref{p:wepi-torsion}, and it has exponent dividing $N$.
  Since $\characteristic{K}=0$, this already implies that $H/T$ is finite (\cref{t:burnside-schur}).
  Since $G/G'$, $G'/H$, and $H/T$ are finite, we conclude that $G/T$ is finite.

  \item[\ref{lbz0-diag:epimon}$\,\Rightarrow\,$\ref{lbz0-diag:uniform}:]
  Identifying $\GL(V) \cong \GL_d(K)$ along a suitable basis, we find a finite-index subgroup $T \le G$ of diagonal matrices.
  Hence, property \ref{locequiv:locdiag} of \cref{t:locally-bezivin-equivalent} holds.
  Taking $N = (G:T)$ we see that $\lambda^N$ is in $K$ for every eigenvalue of every element of $G$. \qedhere
  \end{proofenumerate}
\end{proof}

Now it is easy to see that if we assume $G$ to be not just locally Bézivin, but Bézivin, it is actually necessarily finitely generated.

\begin{corollary} \label{cor:bez-0-is-fg}
  Suppose $\characteristic{K}=0$ and let $G \le \GL(V)$.
  If $G$ is Bézivin, then $G$ is finitely generated.
\end{corollary}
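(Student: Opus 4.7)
The plan is to apply \cref{t:lbz0-diag} to obtain a finite-index simultaneously diagonalizable subgroup $T \le G$, and then use \cref{l:subgroup-bezivin} to conclude that $T$, and hence $G$, is finitely generated. The main obstacle is verifying the \emph{uniform} power-splitting hypothesis of \cref{t:lbz0-diag}, which is strictly stronger than what \cref{l:bezivin-is-power-splitting} gives for free.

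First, I would reduce to the case where $K$ is a finitely generated field. Since all entries of matrices in $G$ lie in $\sumset{M}{\Gamma_0}$ with $\Gamma \le K^\times$ finitely generated, we may replace $K$ by $\mathbb{Q}(\Gamma)$. Then $K \cap \algc{\mathbb{Q}}$ is a number field, and the standard cyclotomic estimate $[K(\zeta_n):K] \ge \phi(n)/[K \cap \algc{\mathbb{Q}}:\mathbb{Q}]$ shows that, for any fixed $D \ge 1$, only finitely many roots of unity $\zeta \in \algc{K}$ satisfy $[K(\zeta):K] \le D$; hence their orders are bounded by a constant $N_0 = N_0(K,D)$.

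The key step is then to show that $K$ is uniformly power-splitting for $G$. By \cref{l:bezivin-is-power-splitting}, $K$ is power-splitting, so for every $A \in G$ and every eigenvalue $\lambda$ of $A$ there exists $m \ge 1$ (a priori depending on $A$ and $\lambda$) with $\lambda^m \in K$. For any $\sigma \in \operatorname{Aut}(\algc{K}/K)$, the image $\sigma(\lambda)$ is also a root of $\chi_A(t) \in K[t]$, hence an eigenvalue of $A$, and $(\sigma(\lambda)/\lambda)^m = 1$ shows that $\zeta \coloneqq \sigma(\lambda)/\lambda$ is a root of unity. It lies in $K(\lambda, \sigma(\lambda))$, a field of degree at most $d^2$ over $K$, so by the preliminary observation its order divides a constant $N_0$ depending only on $K$ and $d$. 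Thus $\lambda^{N_0}$ is fixed by every element of $\operatorname{Aut}(\algc{K}/K)$ and therefore lies in $K$ (since $K$ is perfect), delivering uniform power-splitting with bound $N_0$.

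Since $G$ is locally Bézivin (trivially from being Bézivin), \cref{t:lbz0-diag} now applies to produce a finite-index subgroup $T \le G$ that is simultaneously diagonalizable over $K$. Choosing a basis in which $T$ is diagonal, \cref{l:bezivin-base-change} preserves the Bézivin property, so each diagonal coordinate projection $\pi_i \colon T \to K^\times$ has image contained in some $\sumset{M'}{\Gamma'_0}$. \cref{l:subgroup-bezivin} then forces each $\pi_i(T)$ to be finitely generated, so $T \hookrightarrow \pi_1(T) \times \cdots \times \pi_d(T)$ is finitely generated. Since $[G:T]$ is finite, $G$ is finitely generated.
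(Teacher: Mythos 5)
Your proof is correct, and its skeleton is the same as the paper's: apply \cref{t:lbz0-diag} to extract a finite-index simultaneously diagonalizable subgroup $T$, then use \cref{l:subgroup-bezivin} on the group of diagonal entries to get $T$ (hence $G$) finitely generated. The difference lies entirely in how the uniform power-splitting hypothesis of \cref{t:lbz0-diag} is secured. The paper dispenses with it in one line: finite generation is a purely group-theoretic conclusion and the Bézivin property survives scalar extension, so one may assume $K=\algc{K}$, where uniform power-splitting is vacuous. You instead stay over $K$, reduce to $K$ finitely generated, and prove uniform power-splitting directly: by \cref{l:bezivin-is-power-splitting} each eigenvalue $\lambda$ satisfies $\lambda^m\in K$, so $\sigma(\lambda)/\lambda$ is a root of unity of degree at most $d^2$ over $K$, and since $[K(\zeta_n):K]=[\Q(\zeta_n):K\cap\Q(\zeta_n)]\ge \phi(n)/[K\cap\algc{\Q}:\Q]$ (here $K\cap\algc{\Q}$ is a number field because $K$ is finitely generated), the orders of such roots of unity are bounded; hence $\lambda^{N_0}\in K$ for a uniform $N_0=N_0(K,d)$. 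This Galois/cyclotomic detour is more work than the paper's reduction, but it is sound and yields a small by-product the paper only obtains indirectly (via finite generation and \cref{cor:locbezivin-fg}): a Bézivin group over a field of characteristic zero is automatically \emph{uniformly} power-splitting over that field, and the diagonalization in your argument takes place over $K$ itself rather than over $\algc{K}$. The concluding steps (base change via \cref{l:bezivin-base-change}, coordinate projections $\pi_i(T)\le K^\times$, finite generation of a finite-index overgroup) are all valid.
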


\begin{proof}
  Since the claimed property for $G$ is purely group theoretic, and the Bézivin property is preserved under scalar extension, we may without restriction assume $K=\algc{K}$, so $K$ is uniformly power-splitting for $G$ for trivial reasons.
  
  By \cref{t:lbz0-diag} there exists a finite-index subgroup $T \le G$ that is simultaneously diagonalizable.
  Identify $\GL(V)=\GL_d(K)$ using a basis in which $T$ is diagonal.
  Let $\Gamma \le K^\times$ be finitely generated and let $M \ge 0$ be such that $G \subseteq (\sumset{M}{\Gamma_0})^{d \times d}$.
  Let $\Lambda \le K^\times$ be the subgroup consisting of all diagonal entries of elements of $T$.
  Then $\Lambda \subseteq \sumset{M}{\Gamma_0}$.
  \Cref{l:subgroup-bezivin} implies that $\Lambda$ is finitely generated.
  Hence, the subgroup $T$ is finitely generated, and because $G/T$ is finite, also $G$ is finitely generated.
\end{proof}

\subsection{Proofs of \cref{t:main-loc-bezivin,cor:locbezivin-fg,cor:locbezivin-char0}}

With the results we proved, the main theorems on locally Bézivin representations now follow easily.
\begin{itemize}
  \item For the proof of \cref{t:main-loc-bezivin}, note that \ref{locbez:polya}$\,\Rightarrow\,$\ref{locbez:bezivin} is trivial, and \ref{locbez:bezivin}$\,\Leftrightarrow\,$\ref{locbez:weakbezivin} follows from \cref{p:equiv-field}.
  Now \ref{locbez:bezivin}$\,\Rightarrow\,$\ref{locbez:locepimon} follows from \cref{t:locally-bezivin-equivalent}.
  For \ref{locbez:locepimon}$\,\Rightarrow\,$\ref{locbez:finsub} first note that weak epimonomiality passes from $\rho(S)$ to the group it generates by \cref{l:lift-wepi-to-group}.
  Then the claim again follows from \cref{t:locally-bezivin-equivalent}.
  The final implication \ref{locbez:finsub}$\,\Rightarrow\,$\ref{locbez:polya} follows from \cref{l:epi-is-locpolya}.

  \item We again work with $G=\langle \rho(S)\rangle$.
  In \cref{cor:locbezivin-fg} the equivalence of \ref{lbzfg:bez}, \ref{lbzfg:epimon}, and \ref{lbzfg:diag} follows from \cref{t:locally-bezivin-equivalent}.
  The implication \ref{lbzfg:polya}$\,\Rightarrow\,$\ref{lbzfg:bez} is trivial, and \ref{lbzfg:epimon}$\,\Rightarrow\,$\ref{lbzfg:polya} holds by \cref{l:epi-is-locpolya}.
  The equivalence of these conditions with \ref{lbzfg:weakbezivin} follows from \cref{p:equiv-field}.

  Now \ref{lbzfg:diag}$\,\Rightarrow\,$\ref{lbzfg:monomial} follows from \cref{l:factor-through-monomial}.
  For the converse, \ref{lbzfg:monomial}$\,\Rightarrow\,$\ref{lbzfg:diag}, let $(\widehat V, \widehat \rho)$ be a monomial representation that maps epimorphically onto $(V,\rho)$, say via an $S$-equivariant $\pi \colon \widehat V \to V$.
  Because of $\pi \widehat \rho(s) = \rho(s) \pi$ for all $s \in S$, we get a semigroup homomorphism $\psi\colon \widehat\rho(S) \to \rho(S)$, which extends to a group homomorphism $\psi \colon \widehat G \to G$ with $\widehat G = \langle \widehat\rho(S) \rangle$ and $G = \langle \rho(S)\rangle$.
  If $l=\dim(\widehat V)$, then there is a group homomorphism $\widehat G \to \mathfrak S_l$ to the symmetric group, mapping a monomial matrix to its underlying permutation matrix.
  Its kernel $D$ has finite index in $\widehat G$ and is simultaneously diagonal.
  Choose a basis $e_1$, \dots,~$e_l$ of $\widehat V$ on which $D$ is diagonal, and take as basis of $V$ a subset of $\{\, \pi(e_i) : i \in l \,\}$.
  Then $\psi(D)$ is a diagonal finite-index subgroup of $G$.

  \item In \cref{cor:locbezivin-char0} the equivalence \ref{lbz0:locbez}$\,\Leftrightarrow\,$\ref{lbz0:diag} holds by \cref{t:lbz0-diag}.
  The implication \ref{lbz0:diag}$\,\Rightarrow\,$\ref{lbz0:monomial} follows again from \cref{l:factor-through-monomial}, and the converse follows in the same way as we just argued for \cref{cor:locbezivin-fg}.
\end{itemize}

We record that \ref{lbzfg:monomial} of \cref{cor:locbezivin-fg} establishes the following.
\begin{corollary} \label{c:lepi-preserved}
  Let $\widehat \rho\colon S \to \GL(\widehat V)$ and $\rho\colon S \to \GL(V)$ be semigroup representations.
  If there exists an $S$-equivariant epimorphism $\pi \colon \widehat \rho \to \rho$ and $\widehat \rho$ satisfies the equivalent conditions of \cref{t:locally-bezivin-equivalent}, then so does $\rho$.
\end{corollary}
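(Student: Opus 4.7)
The plan is to reduce the claim to finitely generated subsemigroups and then invoke the characterization of locally Bézivin representations as epimorphic images of monomial representations from \cref{cor:locbezivin-fg}\ref{lbzfg:monomial}. Since the conditions in \cref{t:main-loc-bezivin} are all equivalent, it suffices to verify any one of them for $\rho$; the locally Bézivin property is the most convenient target, because it is defined in terms of finitely generated subsemigroups and is easily transported along epimorphisms via the monomial representation characterization.

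First I would fix a finitely generated subsemigroup $T \le S$ and aim to show that $\rho|_T$ is Bézivin. By hypothesis $\widehat\rho|_T$ is Bézivin, so \cref{cor:locbezivin-fg}\ref{lbzfg:monomial} applied to the finitely generated semigroup $T$ produces a monomial representation $\sigma\colon T \to \GL(W)$ together with a $T$-equivariant epimorphism $\alpha\colon (W,\sigma) \twoheadrightarrow (\widehat V, \widehat\rho|_T)$. Composing with $\pi|_T$ yields a $T$-equivariant epimorphism $\pi|_T \circ \alpha \colon (W,\sigma) \twoheadrightarrow (V, \rho|_T)$ from a monomial representation onto $\rho|_T$. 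Applying \cref{cor:locbezivin-fg}\ref{lbzfg:monomial} in the other direction to this composite then gives that $\rho|_T$ is Bézivin. As $T$ was arbitrary, $\rho$ is locally Bézivin.

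There is no serious obstacle here: the argument is essentially formal transitivity of the ``epimorphic image'' relation, combined with the fact that \cref{cor:locbezivin-fg} has already done the work of identifying the locally Bézivin representations of a finitely generated semigroup with the epimorphic images of monomial representations. The only point to check is that the composition of two $T$-equivariant epimorphisms of finite-dimensional vector spaces is again a $T$-equivariant epimorphism, which is immediate.
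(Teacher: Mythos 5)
Your argument is correct and is exactly the route the paper intends: the paper records this corollary as an immediate consequence of \ref{lbzfg:monomial} of \cref{cor:locbezivin-fg}, and your proof simply spells out that reduction — restrict to a finitely generated subsemigroup, realize $\widehat\rho$ there as an epimorphic image of a monomial representation, and compose the two equivariant epimorphisms to conclude via \cref{cor:locbezivin-fg} that $\rho$ is locally Bézivin.
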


\section{Characterization of Locally Finitely Generated Spectrum}
\label{sec:locfg}

Again let $K$ be a field and $V$ a finite-dimensional vector space.
To establish the characterization of linear groups with locally finitely generated spectrum, we first show that virtually solvable groups have this property. We use the following observation.
\begin{lemma} \label{l:bounded-roots}
  Let $\Gamma' \le \algc K^\times$ be finitely generated and $N \ge 1$.
  Then
  \[
    \Gamma \coloneqq \{\, \gamma \in \algc{K}^\times : \gamma^N \in \Gamma' \,\}
  \]
  is finitely generated as well.
\end{lemma}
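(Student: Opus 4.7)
The plan is to exploit the fact that $\Gamma$ is an abelian group sitting in a short exact sequence whose kernel is finite and whose image is finitely generated, so that $\Gamma$ itself must be finitely generated.

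First, I would observe that $\Gamma$ is a subgroup of $\algc K^\times$: it is closed under products and inverses since $(\gamma_1\gamma_2)^N = \gamma_1^N \gamma_2^N \in \Gamma'$ and $(\gamma^{-1})^N = (\gamma^N)^{-1} \in \Gamma'$. In particular, $\Gamma$ is abelian. Next, I would consider the group homomorphism
\[
  \varphi \colon \Gamma \to \Gamma', \qquad \gamma \mapsto \gamma^N,
\]
which is well-defined by the very definition of $\Gamma$.

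The kernel of $\varphi$ is $\{\,\gamma \in \algc K^\times : \gamma^N = 1\,\} = \mu_N(\algc K)$. This group is finite: the equation $X^N - 1 = 0$ has at most $N$ roots in the field $\algc K$. The image $\im \varphi$ is a subgroup of the finitely generated abelian group $\Gamma'$, hence is itself finitely generated (subgroups of finitely generated abelian groups are finitely generated, by the structure theorem or directly).

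Finally, I would conclude via the short exact sequence
\[
  1 \to \mu_N(\algc K) \to \Gamma \xrightarrow{\varphi} \im\varphi \to 1.
\]
Picking preimages $\gamma_1, \dots, \gamma_k \in \Gamma$ of a finite generating set of $\im\varphi$, together with the finitely many elements of $\mu_N(\algc K)$, yields a finite generating set of $\Gamma$: any $\gamma \in \Gamma$ can be written as $\gamma = \zeta \cdot \gamma_1^{e_1} \cdots \gamma_k^{e_k}$ for some $\zeta \in \mu_N(\algc K)$ and integers $e_i$. There is no real obstacle here; the argument is essentially the observation that a group extension of a finitely generated group by a finite group is finitely generated, specialized to the abelian setting where it follows from the structure theorem.
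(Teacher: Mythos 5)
Your proposal is correct and takes essentially the same route as the paper: the paper also considers the homomorphism $\varphi\colon \Gamma \to \Gamma'$, $\gamma \mapsto \gamma^N$, notes that its kernel $\mu_N(\algc K)$ is finite and its image lies in the finitely generated abelian group $\Gamma'$, and concludes that $\Gamma$ is finitely generated. You have merely spelled out the extension argument in more detail than the paper does.
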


\begin{proof}
  There is a homomorphism of abelian groups $\varphi\colon \Gamma \to \Gamma',\ \gamma \mapsto \gamma^N$.
  Since $\Gamma'$ is finitely generated and $\ker(\varphi)=\mu_N(K)$ is finite, also $\Gamma$ is finitely generated.
\end{proof}

\begin{lemma} \label{l:virt-solvable-locfg}
  If $G$ is a virtually solvable group, then every representation $\rho\colon G \to \GL(V)$ has locally finitely generated spectrum.
\end{lemma}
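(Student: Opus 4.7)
The plan is to reduce to a finitely generated solvable subgroup, apply a triangularization theorem (Mal'cev, or equivalently Lie--Kolchin applied to the identity component of the Zariski closure) to extract a finite-index triangularizable subgroup, read off the spectrum from its diagonal entries, and finally handle the remaining $N$-th roots via \cref{l:bounded-roots}.

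In detail, I would first fix a finitely generated subgroup $G_0 \le G$ and let $H \le G$ be a solvable subgroup of finite index witnessing virtual solvability. Then $H_0 \coloneqq G_0 \cap H$ has finite index in $G_0$, hence is finitely generated (a standard fact, e.g.\ \cite[1.6.11]{robinson96}), and is solvable. Its image $\rho(H_0)$ is thus a finitely generated solvable subgroup of $\GL(V)$.

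Next I would invoke Mal'cev's theorem (or, equivalently, pass to the Zariski closure $\widetilde{\rho(H_0)}$ in $\GL_d(\algc K)$, note that its identity component $\widetilde{\rho(H_0)}^0$ is a connected solvable linear algebraic group, and apply Lie--Kolchin \cite[Theorem 10.5]{borel91} to conjugate it into upper-triangular form over $\algc K$). Either way, one obtains a subgroup $T \le \rho(H_0)$ of finite index that, in a suitable basis of $\algc K \otimes_K V$, consists of upper-triangular matrices. Since $\rho(H_0)$ has finite index in $\rho(G_0)$, the subgroup $T$ also has finite index $N$ in $\rho(G_0)$. As a finite-index subgroup of the finitely generated group $\rho(G_0)$, the subgroup $T$ is itself finitely generated, so the subgroup $\Gamma' \le \algc K^\times$ generated by all diagonal entries of all elements of $T$ is finitely generated. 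The spectrum of $T$ lies in $\Gamma'$.

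Finally, for an arbitrary $g \in G_0$ and an eigenvalue $\lambda \in \algc K$ of $\rho(g)$, we have $\rho(g)^N \in T$ and $\lambda^N$ is an eigenvalue of $\rho(g)^N$, hence $\lambda^N \in \Gamma'$. Applying \cref{l:bounded-roots} to $\Gamma'$ and $N$, the group $\Gamma \coloneqq \{\, \gamma \in \algc K^\times : \gamma^N \in \Gamma' \,\}$ is still finitely generated, and the spectrum of $\rho(G_0)$ is contained in $\Gamma$. This gives the locally finitely generated spectrum property.

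The main subtlety is keeping the bookkeeping consistent between the (possibly non-injective) representation $\rho$ and the abstract group $G$: one must be careful to first pass to $\rho(G_0)$ before triangularizing, because solvability is inherited by quotients but the index $N$ and the triangularizability live naturally on the image side. Once one commits to this order of operations, every step is routine, with \cref{l:bounded-roots} exactly absorbing the $N$-th root extraction at the end.
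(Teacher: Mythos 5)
Your proposal follows essentially the same route as the paper: restrict to a finitely generated subgroup, intersect with the solvable finite-index subgroup, triangularize via Mal'cev (or Lie--Kolchin applied to the Zariski closure), generate $\Gamma'$ from the diagonal entries, and absorb the $N$-th roots with \cref{l:bounded-roots}. The paper merely orders things slightly differently (it applies Mal'cev once to all of $\rho(G)$ and then intersects with each finitely generated subgroup), which makes no essential difference.

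One step is stated incorrectly, though it is easily repaired. You claim that if $T$ has index $N$ in $\rho(G_0)$, then $\rho(g)^N \in T$ for every $g \in G_0$. This is only valid when $T$ is normal in $\rho(G_0)$ (so that the quotient is a genuine group whose exponent divides $N$); for a non-normal subgroup of index $N$ it can fail (e.g.\ an index-$3$ subgroup of order $2$ in $\mathfrak{S}_3$ and a transposition outside it). Your $T$ is produced inside $\rho(H_0)$, and neither $H_0$ in $G_0$ nor $T$ in $\rho(G_0)$ need be normal. The fix is immediate: either apply Mal'cev directly to the virtually solvable group $\rho(G_0)$, which yields a \emph{normal} triangularizable subgroup of finite index; or replace $T$ by its normal core in $\rho(G_0)$ (still of finite index, finitely generated, and triangularizable, being a subgroup of $T$); or simply note that for every $g$ some power $\rho(g)^{k}$ with $k \le N$ lies in $T$, so $\rho(g)^{N!} \in T$, and run the last step with $N!$ in place of $N$. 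With any of these adjustments the argument is complete and coincides with the paper's.
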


\begin{proof}
  We may assume that $K$ is algebraically closed and replace $G$ by $\rho(G)$.
  Since $G$ is virtually solvable, a theorem of Mal'cev \cite[Theorem 3.6]{wehrfritz73} implies that there exists a finite-index subgroup $U \le G$ that is simultaneously triangularizable.\footnote{Alternatively, consider the Zariski closure $\widetilde{G}$ and apply the Lie--Kolchin Theorem \cite[Theorem 10.5]{borel91} to see that the connected component of the identity is triangularizable.}

  Let $H$ be a finitely generated subgroup of $G$.
  Then $H \cap U$ has finite index in $H$, and is thus also finitely generated.
  Let $\Gamma' \le \algc{K}^\times$ be generated by the eigenvalues of the elements of $H \cap U$.
  Because $H \cap U$ is simultaneously triangularizable, the group $\Gamma'$ is finitely generated.
  Let $\Gamma \le \algc{K}^\times$ be generated by all eigenvalues of $H$ and let $N \coloneqq \exp(G/U)$.
  Then $\Gamma^N = \{\, \gamma^N : \gamma \in \Gamma \,\} \le \Gamma'$ and hence $\Gamma$ is finitely generated by \cref{l:bounded-roots}.
\end{proof}

In the absolutely irreducible case locally Bézivin and locally finitely generated spectrum (plus power-splitting) coincide.
Before showing this, we need one more lemma.

\begin{lemma} \label{l:bezivin-in-extension}
  Let $L/K$ be a field extension.
  If $\Lambda \le L^\times$ is a finitely generated subgroup such that $\Lambda/(\Lambda \cap K^\times)$ is torsion and $M \ge 0$, then there exists a finitely generated $\Gamma \le K^\times$ such that $K \cap \sumset{M}{\Lambda} \subseteq \sumset{M}{\Gamma}$.
\end{lemma}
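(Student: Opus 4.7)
The plan is to reduce the problem to a $K$-linear combination of sums from $\Lambda \cap K^\times$ by passing to a $K$-basis of the relevant $K$-subspace of $L$, and then collecting the $K$-coefficients into a larger finitely generated group together with $\Lambda \cap K^\times$.

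First, observe that $\Lambda/(\Lambda \cap K^\times)$ is finitely generated and torsion, hence finite. Set $H \coloneqq \Lambda \cap K^\times$; as a finite-index subgroup of a finitely generated group, $H$ is itself finitely generated. Pick coset representatives $c_1 = 1, c_2, \ldots, c_r$ of $\Lambda/H$, so that every $\lambda \in \Lambda$ has a unique expression $\lambda = c_i h$ with $i \in [r]$ and $h \in H$. Grouping an arbitrary element of $\sumset{M}{\Lambda}$ by coset gives the shape $\sum_{i=1}^r c_i s_i$, where each $s_i$ is a sum of $m_i$ elements of $H$ (with $s_i = 0$ if $m_i = 0$) and $\sum_i m_i = M$.

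The key step is to extract the $K$-part of such an expression. Let $W \coloneqq K c_1 + \cdots + K c_r$ be the finite-dimensional $K$-subspace of $L$ generated by the $c_i$, and choose a $K$-basis $b_1 = 1, b_2, \ldots, b_d$ of $W$ that contains $1$. Writing $c_i = \sum_k \alpha_{ik} b_k$ with $\alpha_{ik} \in K$, any element $x = \sum_i c_i s_i$ that lies in $K = K b_1$ must, by uniqueness of coordinates in the $b$-basis, satisfy
\[
x = \sum_{i=1}^r \alpha_{i,1} s_i.
\]
(The vanishing of the coefficients on $b_2,\ldots,b_d$ gives $K$-linear constraints on the $s_i$ that we do not need to solve explicitly — only the resulting formula for $x$ is used.)

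Finally, let $\Gamma \le K^\times$ be the subgroup generated by $H$ together with the (finitely many) nonzero $\alpha_{i,1}$; this is finitely generated. Expanding each $s_i$ as a sum of $m_i$ elements of $H$, every summand $\alpha_{i,1} h$ lies in $\Gamma \cup \{0\}$, and altogether $x$ is written as a sum of $M$ elements of $\Gamma_0$, as required. The one point of care is bookkeeping the possible zero summands arising when $\alpha_{i,1} = 0$ or $m_i = 0$, which is handled by working with $\Gamma_0$; no serious obstacle presents itself beyond this.
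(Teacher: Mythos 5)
Your proof is correct and takes essentially the same route as the paper's: pass to the finite quotient $\Lambda/(\Lambda\cap K^\times)$, expand the coset representatives over a $K$-basis containing $1$, extract the coefficient of $1$ by uniqueness of coordinates, and absorb those coefficients together with $\Lambda\cap K^\times$ into $\Gamma$. The only (cosmetic) difference is that your conclusion lands in $\sumset{M}{\Gamma_0}$ rather than literally $\sumset{M}{\Gamma}$, but the paper's own argument has the same feature, and the $\Gamma_0$ version is all that is used in the application.
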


\begin{proof}
  Let $\Gamma' \coloneqq \Lambda \cap K^\times$.
  By assumption, the quotient $\Lambda / \Gamma'$ is a finitely generated abelian torsion group, and hence finite.
  Let $A \subseteq \Lambda$ be a set of representatives for $\Lambda/\Gamma'$ with $1 \in A$.
  Let $A' \subseteq A$ be a $K$-basis of the finite-dimensional vector space $\lspan_K A$.
  We may without restriction assume $1 \in A'$.
  Expressing each $\alpha \in A$ as $\alpha = \sum_{\alpha' \in A'} c_{\alpha,\alpha'} \alpha'$ with $c_{\alpha,\alpha'} \in K$, we have
  \[
     \Lambda \subseteq \bigcup_{\alpha \in A} \alpha \Gamma' \subseteq \bigcup_{\alpha \in A} \sum_{\alpha' \in A'} c_{\alpha,\alpha'} \alpha' \Gamma' \subseteq \sum_{\alpha' \in A'} \alpha' \Gamma,
  \]
  where $\Gamma$ is the group generated by $\Gamma'$ and the finitely many coefficients $c_{\alpha,\alpha'}$ that are nonzero.
  Thus, 
  \[
   \sumset{M}{\Lambda} \subseteq \sum_{\alpha' \in A'} \alpha' ( \sumset{M}{\Gamma} ).
  \]
  Linear independence of $A'$ implies $\sumset{M}{\Lambda} \cap K \subseteq \sumset{M}{\Gamma}$.
\end{proof}

\begin{proposition} \label{p:absirred-fg-bezivin}
  Let $S \subseteq \GL(V)$ be an \emph{absolutely irreducible} semigroup.
  Then the following statements are equivalent.
  \begin{equivenumerate}
  \item \label{absirred:fg} The semigroup $S$ has locally finitely generated spectrum and $K$ is a power-splitting field for $S$.
  \item \label{absirred:trbez} For every finitely generated subsemigroup $S' \subseteq S$, the set $\Tr(S')$ is Bézivin.
  \item \label{absirred:locbez} The semigroup $S$ satisfies the equivalent conditions of \cref{t:locally-bezivin-equivalent}.
  \end{equivenumerate}
\end{proposition}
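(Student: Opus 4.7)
The plan is to prove the cyclic chain (a)$\,\Rightarrow\,$(b)$\,\Rightarrow\,$(c)$\,\Rightarrow\,$(a), with the core of the argument in (b)$\,\Rightarrow\,$(c), where absolute irreducibility will be essential.

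For (a)$\,\Rightarrow\,$(b), I would fix a finitely generated $S' \subseteq S$ and let $\Lambda \le \algc{K}^\times$ be a finitely generated group containing the spectrum of $S'$. Power-splitting forces every generator, and hence every element, of $\Lambda$ to have a power in $K^\times$, so $\Lambda/(\Lambda \cap K^\times)$ is torsion. For any $A \in S'$ the trace $\Tr(A)$ is a sum of at most $d$ eigenvalues, so $\Tr(A) \in K \cap \sumset{d}{\Lambda}$, and \cref{l:bezivin-in-extension} rewrites this inside $\sumset{d}{\Gamma}$ for a finitely generated $\Gamma \le K^\times$. In the reverse direction, (c)$\,\Rightarrow\,$(a) combines \cref{l:bezivin-is-power-splitting} (for power-splitting) with \cref{t:locally-bezivin-equivalent}: a finite-index simultaneously diagonalizable subgroup $T \le \langle S' \rangle$ has its eigenvalues in a finitely generated $\Gamma' \le K^\times$, and setting $N = (\langle S' \rangle : T)$, every eigenvalue $\lambda$ of every element of $\langle S' \rangle$ satisfies $\lambda^N \in \Gamma'$, so \cref{l:bounded-roots} places the spectrum inside a finitely generated subgroup of $\algc{K}^\times$.

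The main implication (b)$\,\Rightarrow\,$(c) will use the Burnside--Jacobson density theorem: absolute irreducibility forces the $K$-span of $S$ to equal $\End_K(V)$, so I can pick $A_1, \ldots, A_{d^2} \in S$ forming a $K$-basis of $\End_K(V)$, with a dual basis $A_1^*, \ldots, A_{d^2}^*$ relative to the non-degenerate trace form. Given a finitely generated $S' \subseteq S$, I would enlarge it to the finitely generated subsemigroup $S''$ generated by $S' \cup \{A_1, \ldots, A_{d^2}\}$; by (b), $\Tr(S'') \subseteq \sumset{M}{\Gamma_0}$ for some $M \ge 0$ and some finitely generated $\Gamma \le K^\times$. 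For every $B \in S'$, each product $A_i B$ lies in $S''$, and the expansion $B = \sum_{i=1}^{d^2} \Tr(A_i B)\, A_i^*$ exhibits the entries of $B$ as $K$-linear combinations of elements of $\sumset{M}{\Gamma_0}$ with coefficients depending only on the (fixed) $A_i^*$. Enlarging $\Gamma$ to include the entries of the $A_i^*$ then yields a uniform Bézivin bound on the entries across all $B \in S'$, showing $S'$ is Bézivin.

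The hard part is securing a basis of $\End_K(V)$ \emph{inside} $S$ itself, and this is precisely what absolute irreducibility, via Burnside--Jacobson density, provides; without it, the trace-to-entries conversion cannot be uniformly carried out and the implication (b)$\,\Rightarrow\,$(c) would fail.
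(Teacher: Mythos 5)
Your proposal is correct and takes essentially the same route as the paper: the same cycle \ref{absirred:fg}$\,\Rightarrow\,$\ref{absirred:trbez}$\,\Rightarrow\,$\ref{absirred:locbez}$\,\Rightarrow\,$\ref{absirred:fg}, using \cref{l:bezivin-in-extension} for the first implication, Burnside's spanning theorem together with trace-form duality for the second (you merely make explicit the enlarged subsemigroup $S''$ that the paper leaves implicit), and for the third the same finite-index-diagonalizable-subgroup argument that the paper packages via \cref{l:virt-solvable-locfg} and \cref{p:equiv-field}. The only nitpick: in \ref{absirred:locbez}$\,\Rightarrow\,$\ref{absirred:fg} your step $\lambda^N \in \Gamma'$ needs $A^N \in T$, so take $T$ normal (e.g.\ the diagonal of an epimonomial decomposition, or the normal core of $T$), which is immediate.
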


\begin{proof}
  \begin{proofenumerate}
  \item[\ref{absirred:fg}$\,\Rightarrow\,$\ref{absirred:trbez}:]
  By assumption $S'$ has finitely generated spectrum.
  Let $\Lambda \le \algc{K}^\times$ be finitely generated such that all eigenvalues of elements of $S'$ are contained in $\Lambda$.
  Then $\Tr(S') \subseteq \sumset{d}{\Lambda} \cap K$.
  Since $K$ is power-splitting for $S$, the group $\Lambda / (\Lambda \cap K^\times)$ is torsion.
  \Cref{l:bezivin-in-extension} implies that $\Tr(S')$ is Bézivin.

  \item[\ref{absirred:trbez}$\,\Rightarrow\,$\ref{absirred:locbez}:]
  We verify that $S$ is locally Bézivin.
  Without restriction $\GL(V)=\GL_d(K)$.
  Because $S$ is absolutely irreducible, there exists a $K$-basis $A_1$, \dots,~$A_{d^2} \in S$ of $K^{d \times d}$
  (see for instance \cite[Theorems 3.32 and 3.43]{curtis-reiner90} for this classical result by Burnside).
  The map 
  \[ 
  K^{d \times d} \to K^{d^2},\quad X \mapsto \big(\Tr(XA_1), \dots, \Tr(XA_{d^2})\big)
  \]
  is a vector space isomorphism.
  In particular there exist $c_{ijk} \in K$ such that $x_{ij} = \sum_{k=1}^{d^2} c_{ijk} \Tr(X A_k)$ for all $X=(x_{ij}) \in K^{d \times d}$.
  Because of \ref{absirred:trbez}, the sets $\sum_{k=1}^{d^2} c_{ijk} \Tr(X A_k)$ are Bézivin.

  \item[\ref{absirred:locbez}$\,\Rightarrow\,$\ref{absirred:fg}:] 
  By \ref{locequiv:locdiag} of \cref{t:locally-bezivin-equivalent}, the group $G\coloneqq \langle S \rangle$ is in particular virtually abelian.
  Then \cref{l:virt-solvable-locfg} implies that $G$ has locally finitely generated spectrum.
  \Cref{p:equiv-field} implies that $K$ is power-splitting for $G$.
  Both properties then also hold for $S$. \qedhere
  \end{proofenumerate}
\end{proof}

In dealing with the locally finitely generated spectrum case, we will need to pass to a finite extension field $L$ of $K$.
It is not trivial, but still true, that the resulting weakly epimonomial $L$-representations are weakly epimonomial when viewed as $K$-representations, \emph{as long as $K$ is power-splitting for $S$}.
This is the following lemma.

\begin{lemma} \label{l:locepimon-descent}
  Let $L/K$ be a finite field extension and let $W$ be a finite-dimensional $L$-vector space.
  If $S \subseteq \GL({}_L W)$ is a weakly epimonomial subsemigroup and $K$ is power-splitting for $S$, then also $S \subseteq \GL({}_K W)$ is weakly epimonomial.
\end{lemma}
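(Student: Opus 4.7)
The plan is to show that the given $L$-weakly epimonomial decomposition $W = W_1 \oplus \cdots \oplus W_r$ serves directly as a $K$-weakly epimonomial decomposition when $W$ is viewed as the $K$-vector space ${}_K W$, with each $W_i$ regarded as a $K$-subspace. The direct-sum relation is unchanged by restriction of scalars, so I need only verify the three conditions of \cref{d:weakly-epimonomial} in the $K$-setting.

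Condition \ref{d-wm:permutation}, that each $\rho(t)$ permutes $W_1$, \dots,~$W_r$, is inherited verbatim since the underlying sets and maps are unchanged. For condition \ref{d-wm:splitting}, that $K$ is power-splitting for $S \subseteq \GL({}_K W)$, I argue as follows. Given $A \in S$, uniformize the $L$-power-splitting hypothesis over the finitely many $L$-eigenvalues of $A$ to obtain $N \ge 1$ with $\lambda^N \in K$ for each such $\lambda$. Then the $L$-characteristic polynomial $\chi^L_{A^N}(T) \in L[T]$ splits into linear factors with roots in $K$, hence in fact lies in $K[T]$. The standard identity $\chi^K_B(T) = \prod_\sigma \sigma(\chi^L_B)(T)$ (product over $K$-embeddings $\sigma\colon L \to \algc{K}$, with suitable multiplicities in the inseparable case) then yields $\chi^K_{A^N}(T) = (\chi^L_{A^N}(T))^{[L:K]}$. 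Every $K$-eigenvalue of $A^N$ therefore lies in $K$, and hence every $K$-eigenvalue $\mu$ of $A$ satisfies $\mu^N \in K$.

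Condition \ref{d-wm:intersection} is the heart of the matter. The crucial observation is that $K$-steadiness is strictly stronger than $L$-steadiness: every $L$-subspace of $W$ is automatically a $K$-subspace of ${}_K W$, and every $L$-eigenvalue $\lambda$ of $A$, witnessed by $v \in W$ with $Av = \lambda v$, is automatically a $K$-eigenvalue. Hence, if $A \in S$ is $K$-steady, then $A$ is $L$-steady and all its $L$-eigenvalues lie in $K$. The $L$-weakly epimonomial structure provides some $\lambda_i \in L$ with $W_i \subseteq \ker(A - \lambda_i)$, and by the preceding we must have $\lambda_i \in K$. Since the kernel $\ker(A - \lambda_i)$ computed in $W$ and in ${}_K W$ is the same set of vectors, this verifies that $W_i$ lies in a $K$-eigenspace of $A$.

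I do not expect a genuine obstacle; the argument rests on the elementary observation that restriction of scalars tightens both ingredients of steadiness, combined with the standard relation between the $L$- and $K$-characteristic polynomials that upgrades power-splitting of $L$-eigenvalues to power-splitting of $K$-eigenvalues.
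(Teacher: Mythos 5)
Your proposal is correct and takes essentially the same route as the paper: it keeps the given $L$-decomposition, notes that the permutation condition is independent of the base field, and reduces condition \ref{d-wm:intersection} to the observation that a $K$-steady element is $L$-steady, so that each $W_i$ sits in an eigenspace with eigenvalue already in $K$. The only difference is cosmetic: you verify the power-splitting condition for the $K$-eigenvalues explicitly via the identity $\chi^K_B(T)=\prod_\sigma \sigma(\chi^L_B)(T)$, a point the paper dismisses as holding ``by assumption''.
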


\begin{proof}
  For clarity, we say $A \in \GL({}_L W)$ is \defit{$L$-steady} when $A$ is steady as an $L$-vector space endomorphism, and \defit{$K$-steady} when $A$ is considered as a $K$-vector space endomorphism.
  If $A$ is $K$-steady, then it is also $L$-steady: indeed, all eigenvalues of $A$ are in $L$, because they are in $K$, and if $U \subseteq W$ is an $A^n$-invariant $L$-subspace of $W$, then $U$ is also an $A^n$-invariant $K$-subspace of $W$, so that $AU \subseteq U$ by $K$-steadyness of $A$.

  Let ${}_L W = W_1 \oplus \cdots \oplus W_r$ be a weakly epimonomial decomposition of $W$.
  So each $W_i$ is contained in an $L$-eigenspace of every $L$-steady element of $S \subseteq \GL({}_L W) \subseteq \GL({}_K W)$. 
  Property~\ref{d-wm:permutation} of \cref{d:weakly-epimonomial} does not depend on the base field, and~\ref{d-wm:splitting} holds by assumption.
  We have to verify~\ref{d-wm:intersection} when this decomposition is considered as one into $K$-vector spaces.
  However, if $A \in S$ is $K$-steady, then $A$ is $L$-steady, and hence $W_i$ is contained in an $L$-eigenspace of $A$, which is then also contained in a $K$-eigenspace.
\end{proof}

We prove our main results on representations with locally finitely generated spectrum.

\begin{proof}[{Proof of \cref{t:trilocmonomial}}]
  \begin{proofenumerate}
  \item[\ref{triloc:spec}$\,\Rightarrow\,$\ref{triloc:repr}:]
  Passing to a suitable finite extension field $L$ of $K$ we can find in $\widehat V \coloneqq L \otimes_K V$ a chain of $S$-invariant $L$-vector spaces
  \[
  0 = W_0 \subsetneq W_1 \subsetneq \cdots \subsetneq W_r = \widehat V,
  \]
  such that each $W_i/W_{i-1}$ is absolutely irreducible.
  The resulting $L$-representation $\overline\rho \colon S \to \GL({}_L \widehat V)$ is block-triangular with respect to this chain.
  Since $\overline\rho$ has locally finitely generated spectrum as an $L$-representation (as the eigenvalues do not change under scalar extension), the diagonal blocks $\overline\rho_{ii}$ also have locally finitely generated spectrum.
  By \cref{p:absirred-fg-bezivin}, each $\overline\rho_{ii}$ is weakly epimonomial.

  We now consider $\overline \rho$ as a $K$-representation and for clarity we change the name to $\widehat \rho\colon S \to \GL({}_K \widehat V)$.
  By \cref{l:locepimon-descent} the diagonals $\widehat\rho_{ii}$ (with respect to the block structure from before) are still weakly epimonomial, hence locally epimonomial by \cref{t:locally-bezivin-equivalent}.
  
  Let $b_1=1$, $b_2$, \dots,~$b_m \in L$ be a $K$-basis for $L$, so that $\widehat V \cong \bigoplus_{i=1}^m b_i V$, as $K$-vector space.
  Then $\overline\rho(s)(b_1 v_1+ \cdots + b_m v_m) = b_1 \rho(s)(v_1) + \cdots + b_m \rho(s)(v_m)$ for all $s \in S$ and $v_i \in V$.
  The inclusion $\iota\colon V \hookrightarrow \widehat V, v \mapsto 1\cdot v$ is split by the epimorphism $\pi\colon \widehat V \to V$ that maps $b_1$ to $1$ and $b_i$ to $0$ for $i \ne 1$.
  Since $\pi$ is $S$-equivariant, we get an epimorphism $(\widehat V, \widehat \rho) \to (V,\rho)$.

  In $V$, we obtain a chain of $S$-invariant subspaces
  \[
  0 = \pi(W_0) \subseteq \pi(W_1) \subseteq \cdots \subseteq \pi(W_r)=V.
  \]
  This chain induces a block-triangular structure on $\rho$.
  The diagonal block of $\rho$ on $\pi(W_i)/\pi(W_{i-1})$ is the image of the diagonal block $\widehat \rho_{ii}$ of $\widehat \rho$ on $W_i/W_{i-1}$, and it is therefore locally epimonomial (\cref{c:lepi-preserved}).

  \item[\ref{triloc:repr}$\,\Rightarrow\,$\ref{triloc:spec}:]
    Let $S' \subseteq S$ be a finitely generated subsemigroup, and let $G = \langle \rho(S') \rangle$.
    Then $G$ has epimonomial diagonal blocks.
    In a suitable basis, therefore $G$ has a finite-index subgroup $U$ consisting of upper-triangular matrices.
    Since $U$ is solvable, the group $G$ is virtually solvable.
    From \cref{l:virt-solvable-locfg} it follows that $G$ has locally finitely generated spectrum.
    Because the diagonal blocks of $G$ are epimonomial, also $K$ is power-splitting for $G$.
    \qedhere
  \end{proofenumerate}
\end{proof}

\begin{proof}[Proof of \cref{cor:trilocmonomial-lift}]
  By \cref{t:trilocmonomial} the representation $S \hookrightarrow \GL(V)$ can be assumed to have a block-triangular structure with locally epimonomial diagonal blocks.
  Let $G = \langle S \rangle$ and consider $\rho\colon G \hookrightarrow \GL(V)$.
  The diagonal blocks $\rho_{ii}$ are virtually simultaneously diagonalizable by \cref{cor:locbezivin-fg,cor:locbezivin-char0}.
  Let $D_{ii} \triangleleft \rho_{ii}(G)$ be a finite-index simultaneously diagonalizable subgroup for each $i$, and consider
  \[
  N = \begin{pmatrix} 
    D_{11} & \rho_{12}(G) &  \dots & \rho_{1n}(G) \\
    0 & D_{22} &  \dots & \rho_{2n}(G) \\
    \vdots & \ddots & \ddots & \vdots \\
    0 & \dots & 0 & D_{rr}
  \end{pmatrix}
  \]
  Then $N \le \rho(G)$ is a subgroup of finite index.
  The induced representation $\Ind^G_N(\tau|_N)$ is block-triangular with monomial diagonal blocks and has an epimorphism to $\tau$ by \cref{l:factor-through-monomial}.
\end{proof}

\section{Application to Automata} \label{sec:automata}

Let $K$ be a field and $X$ an alphabet (a nonempty finite set).
We set our notation for weighted finite automata (WFA), but will emphasize the (equivalent) viewpoint of \emph{linear representations} (of WFA), because it is easier to work with rigorously.
We refer to the books \cite{sakarovitch09,berstel-reutenauer11} for more background.
In this context, a \defit{linear representation} is a triple $\cA=(u,\mu,v)$ with a vector $u \in K^{1 \times d}$ of \defit{initial weights}, a vector $v \in K^{d \times 1}$ of \defit{terminal weights} and a semigroup homomorphism $\mu \colon X^* \to K^{d \times d}$ (describing the \defit{transitions}).
Thus, to each letter $x \in X$ we assign a matrix $\mu(x)$.
We say that $\cA$ is \defit{invertible} if $\mu(x)$ is an invertible matrix for every $x \in X$ (the inverse need not be contained in $\mu(X^*)$).

To $\cA$ we assign a function $\behav{\cA}\colon X^* \to K$ by defining $\behav{\cA}(w) \coloneqq u \mu(w) v$.
The function $\behav{\cA}$ is the \defit{behavior} of $\cA$ and is typically represented as a noncommutative (formal) series
\[
\behav{\cA} = \sum_{w \in X^*} \behav{\cA}(w) \, w \in K\llangle X \rrangle.
\]
By the fundamental Kleene--Schützenberger Theorem the series representable in this way are precisely the noncommutative rational series \cite[Chapter 1.7]{berstel-reutenauer11}.
Any given rational series can be represented by many linear representations. 
However, the minimal linear representations, that is, those of minimal dimension, are unique up to conjugation by an invertible matrix \cite[Theorem 2.4]{berstel-reutenauer11}.
Two linear representations $\cA$, $\cA'$ are \defit{equivalent} if $\behav{\cA} =\behav{\cA'}$.

From a computational perspective, we may think of a linear representation $\cA$ of dimension $d$ as a \defit{weighted finite automaton (WFA)} \cite[Chapter 1.6]{berstel-reutenauer11} on the set of states $Q=[d]$ as follows: A state $p \in Q$ is assigned an initial weight $u_p$ and terminal weight $v_p$ (where the subscripts denote the corresponding coordinates of the vectors).
For any two states $p$,~$q \in Q$ (not necessarily distinct) and every letter $x \in X$ we have a directed edge (transition) $p \to q$ with weight $\mu(x)_{pq}$ and label $x$ if $\mu(x)_{pq} \ne 0$; and no edge if $\mu(x)_{pq}=0$.

Given a word $w=x_1\cdots x_l \in X^*$, a \defit{run} for $w$ is then a finite sequence $(p_0,x_1,p_1)$, $(p_1,x_2,p_2)$, \dots, $(p_{l-1},x_l,p_l)$ with $p_i \in Q$.
The \defit{weight} (or \defit{output}) of a run is the product
\[ 
u_{p_0}\mu(x_0)_{p_0p_1} \mu(x_1)_{p_1p_2} \cdots \mu(x_l)_{p_{l-1}p_l} v_{p_l},
\]
and the run is \defit{successful} if its output is nonzero.
The \defit{output} of the word $w$ is then the sum of the output of all runs for $w$.

Thus, a WFA is represented as a directed graph (with loops), where each edge has a label (a letter) and a weight (an element of $K$).
The initial and terminal weights are represented by weighted edges that are only attached to the digraph by their sinks, respectively, sources.
A successful run for $w \in X^*$ is then a walk in the digraph, from an initial state to a terminal state, in which the edges are labelled by the letters of $w$.

From the way matrix multiplication is defined, it is easily seen that the output for $w$ is just $\behav{A}(w)$.
Indeed, every successful run corresponds to a nonzero summand when we fully expand the matrix product $u \mu(x_1) \cdots \mu(x_l) v$.
We think of the WFA as \emph{computing} the function $\behav{A} \colon X^* \to K$, that is, inputting a word $w$, the value $\behav{A}(w)$ is computed.

We say that $\cA$ is \defit{trim} if every state lies on a successful run.

The correspondence between WFA and linear representations is bijective up to a relabelling of the states (that is, conjugation by a permutation matrix on the linear representation side).
Since such a relabelling will not matter for any properties we consider, we shall tacitly identify WFA and their linear representations in the following.

\begin{definition} \label{d:wfa-ambiguity}
  A WFA $\cA$ is
  \begin{enumerate}
  \item \defit{deterministic} \textup(or \defit{sequential}\textup) if there is at most one initial state \textup(that is, at most one $p$ with $u_p \ne 0$\textup), and for every $p \in Q$ and $x \in X$ there exists at most one outgoing transition $p\to q$ labelled by $x$ \textup(that is, the $p$-th row of $\mu(x)$ has at most one nonzero entry\textup),
  \item \defit{$M$-ambiguous} \textup($M \ge 0$\textup) if every word has \emph{at most} $M$ successful runs,
  \item \defit{unambiguous} if it is $1$-ambiguous, 
  \item \defit{finitely ambiguous} if it is $M$-ambiguous for some $M$,
  \item \defit{polynomially ambiguous} if there is a constant $C$ and an exponent $k \ge 0$ such that every word has at most $C \length{w}^k$ successful runs, where $\length{w}$ is the length of the word.
  \end{enumerate}
\end{definition}

For a finitely ambiguous $\cA$, the \defit{ambiguity degree} is the minimal $M$ for which $\cA$ is $M$-ambiguous. 
Finitely ambiguous, respectively, polynomially ambiguous automata can be characterized by the absence of certain characteristic features of the underlying graph \cite[Theorems 3.1 and 4.1]{weber-seidl91}.
As a consequence, every WFA that is not polynomially ambiguous is \defit{exponentially ambiguous}, that is, there are words $u$, $v$, $w$ such that $uw^n v$ has at least $2^n$ successful runs.

\begin{remark}
More specifically, for a WFA $\cA$, let $g(n)$ be the maximal number of successful runs for a word of length $n$.
Then trivially $g(n) \in O(d^n)$ where $d$ is the number of states.
The automaton $\cA$ is finitely ambiguous if $g(n) \in O(1)$ and polynomially ambiguous if $g(n) \in O(n^k)$ for some $k \ge 1$.
The characterizations in \cite{weber-seidl91} imply that there are \emph{gaps} in the possible growths of the function $g(n)$: we always have $g(n) \in O(d^n)$, and if $g(n) \in o(c^n)$ for all $c \in \R_{>1}$, then already $g(n) \in O(n^k)$ for some $k \ge 1$.
So any automaton with subexponential ambiguity is already polynomially ambiguous.
Similarly, if $g(n) \in o(n)$ then $g(n) \in O(1)$, so any automaton with sublinear ambiguity is already finitely ambiguous.
From this point of view, the classes considered in \cref{d:wfa-ambiguity} give a natural hierarchy.
\end{remark}

A trim WFA that is polynomially ambiguous is also \defit{cycle-unambiguous}, that is, for every $p \in Q$ and every word $w$, there exists at most one walk from $p$ to itself that is labeled by $w$.
This is easy to see: otherwise, there would be some $q \in [d]$ with two distinct directed cycles, say of lengths $k_1$ and $k_2$ based off it; then $w^{\lcm(k_1,k_2) n}$ has at least $2^n$ closed directed walks based at $q$; using trimness, this contradicts polynomial ambiguity.
Conversely, cycle-unambiguous WFA are polynomially ambiguous \cite[Theorem 4.1]{weber-seidl91}, but this is harder to prove, and we will not need it.

We are now ready to prove the easier direction of the arithmetic characterization of ambiguity classes.
Here we do not need to assume invertibility of the WFA.

\begin{proposition} \label{p:wfa-necessary}
  Let $\cA^\circ=(u^\circ,\mu^\circ,v^\circ)$ be a trim linear representation and let $\cA=(u,\mu,v)$ be a minimal linear representation with $\behav{\cA}=\behav{\cA^\circ}$.
  \begin{enumerate}
  \item \label{wfa-necc:m-ambig} If $\cA^\circ$ is $M$-ambiguous, then there exists a finitely generated $\Gamma \le K^\times$ such that $\behav{\cA}(w) \in \sumset{M}{\Gamma_0}$ for all $w \in X^*$.
  \item \label{wfa-necc:bezivin} If $\llbracket \cA^\circ\rrbracket(X^*)$ is Bézivin, then so is $\mu(X^*)$. In particular, if $\cA^\circ$ is finitely ambiguous, then $\mu(X^*)$ is Bézivin.
  \item \label{wfa-necc:spec} If $\cA^\circ$ is polynomially ambiguous, then $\mu(X^*)$ has finitely generated spectrum\footnote{The \emph{nonzero} eigenvalues are contained in a finitely generated subgroup of $\algc{K}^\times$.} and $K$ is uniformly power-splitting for $\mu(X^*)$.
  \end{enumerate}
\end{proposition}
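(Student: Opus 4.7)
For part~\ref{wfa-necc:m-ambig}, let $\Gamma \le K^\times$ be the finitely generated subgroup generated by the nonzero entries of $u^\circ$, $v^\circ$, and $\mu^\circ(x)$ for $x \in X$. Every successful run of $\cA^\circ$ on a word $w$ contributes a single product of such entries, hence an element of $\Gamma$. Since there are at most $M$ successful runs, $\behav{\cA^\circ}(w) \in \sumset{M}{\Gamma_0}$, and because $\behav{\cA}=\behav{\cA^\circ}$ the claim follows.

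For part~\ref{wfa-necc:bezivin} I would exploit the minimality of $\cA$: for the minimal representation of dimension $d$, the row vectors $\{u\mu(w) : w \in X^*\}$ span $K^{1 \times d}$ and the column vectors $\{\mu(w)v : w \in X^*\}$ span $K^{d \times 1}$. Pick words $w_1,\dots,w_d$ and $w_1',\dots,w_d'$ so that the matrices $U \coloneqq (u\mu(w_i))_{i \in [d]}$ and $V \coloneqq (\mu(w_j')v)_{j \in [d]}$ are invertible. For any $w \in X^*$ the $(i,j)$-entry of $U\mu(w)V$ equals $\behav{\cA^\circ}(w_i w w_j')$ and hence lies in a fixed Bézivin set by hypothesis. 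Writing $\mu(w) = U^{-1}(U\mu(w)V)V^{-1}$ and applying (a mild variant of) \cref{l:bezivin-base-change} shows that $\mu(X^*)$ is Bézivin.

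For part~\ref{wfa-necc:spec} I would first reduce to the same statement for $\mu^\circ(X^*)$: a standard Hankel-type construction identifies the state space $V$ of $\cA$ with the subquotient $W^\circ/N^\circ$ of the state space $V^\circ$ of $\cA^\circ$, where $W^\circ \coloneqq \lspan_K\{\mu^\circ(w)v^\circ : w \in X^*\}$ and $N^\circ \coloneqq \{\, v \in W^\circ : u^\circ \mu^\circ(w)v=0 \text{ for all } w \in X^* \,\}$ are both $X^*$-invariant; consequently the eigenvalues of $\mu(w)$ form a sub-multiset of those of $\mu^\circ(w)$. Since polynomial ambiguity of the trim $\cA^\circ$ implies cycle-unambiguity (noted in the excerpt), for every state $q$ and word $u$ there is at most one closed walk $q \to q$ labeled~$u$. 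Fix $w \in X^*$, set $A \coloneqq \mu^\circ(w)$, and for each state $q$ let $k_q \ge 1$ be minimal such that some closed walk $q \to q$ is labeled $w^{k_q}$ (and $k_q = \infty$ otherwise). An extremal argument using cycle-unambiguity forces the intermediate states of the minimal walk to be pairwise distinct, giving $k_q \le \card{Q}$ when finite; moreover, the unique closed walk labeled $w^n$ (for $k_q \mid n$) must be the $(n/k_q)$-fold iterate of the minimal one and has weight $\alpha_q^{n/k_q}$, where $\alpha_q$ lies in the fixed finitely generated group $\Gamma_0 \le K^\times$ generated by the nonzero entries of $\mu^\circ(x)$, $x \in X$. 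Setting $K \coloneqq \lcm\{\, k_q : k_q < \infty \,\}$ (so $K \mid N \coloneqq \lcm(1,2,\dots,\card{Q})$) and $\beta_q \coloneqq \alpha_q^{K/k_q} \in \Gamma_0$, summing contributions gives $\Tr(A^{Kl}) = \sum_{q} \beta_q^l = \sum_i (\lambda_i^K)^l$ for all $l \ge 1$, where $\lambda_i$ are the eigenvalues of $A$ counted with algebraic multiplicity. Linear independence of distinct exponentials then identifies the nonzero $\lambda_i^K$'s with the nonzero $\beta_q$'s as multisets, so $\lambda_i^K \in \Gamma_0$ and hence $\lambda_i^N \in \Gamma_0$ uniformly in $w$. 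This yields uniform power-splitting, while \cref{l:bounded-roots} gives the finitely generated spectrum for $\mu^\circ(X^*)$, both of which transfer to $\mu(X^*)$ via the reduction above.

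The main obstacle is part~\ref{wfa-necc:spec}: transferring combinatorial cycle-unambiguity to uniform spectral control. The key insight is that cycle-unambiguity collapses $\Tr(\mu^\circ(w)^n)$ to a sum of at most $\card{Q}$ pure exponentials $\alpha_q^{n/k_q}$ with $k_q \le \card{Q}$ and $\alpha_q$ in a fixed finitely generated group, and linear independence of distinct exponentials then transports this bound to the eigenvalues uniformly in $w$.
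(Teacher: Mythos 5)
Parts \ref{wfa-necc:m-ambig} and \ref{wfa-necc:bezivin} of your proposal are correct and essentially identical to the paper's proof: the run-counting argument for \ref{wfa-necc:m-ambig}, and for \ref{wfa-necc:bezivin} the use of minimality to produce words $w_1,\dots,w_d$ and $w_1',\dots,w_d'$ whose associated row/column vectors form bases, so that every entry of $\mu(w)$ is a fixed $K$-linear combination of outputs $\behav{\cA}(w_i w w_j')$ (your $U^{-1}(U\mu(w)V)V^{-1}$ formulation is the same computation as in \cref{l:bezivin-base-change}).

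For part \ref{wfa-necc:spec} your route genuinely differs from the paper's, and it has a gap in positive characteristic. Your reduction to $\mu^\circ(X^*)$ via the Hankel subquotient is fine (the paper does the same via block-triangularization), and the combinatorial facts you use are fine: minimality of $k_q$ alone forces the checkpoint states to be distinct, so $k_q \le \card{Q}$, and cycle-unambiguity makes the unique closed walk labeled $w^{Kl}$ the iterate of the minimal one, giving $\Tr(\mu^\circ(w)^{Kl}) = \sum_q \beta_q^l$ with $\beta_q$ in the group generated by the weights. The problem is the final step, ``linear independence of distinct exponentials identifies the nonzero $\lambda_i^K$ with the $\beta_q$ as multisets'': the identity $\sum_i (\lambda_i^K)^l = \sum_q \beta_q^l$ for all $l$ only determines the multiplicities as elements of $K$, i.e.\ modulo $\characteristic K$. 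If $\characteristic K = p > 0$ and some value $\lambda^K$ occurs among the eigenvalues with multiplicity divisible by $p$ (and does not occur among the $\beta_q$), the trace identity simply cannot see it, and you cannot conclude $\lambda^K \in \Gamma'$. Since the proposition is stated over an arbitrary field, this is a genuine gap; power sums (and Newton-type identities) do not recover the spectrum with multiplicity in small characteristic.

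The characteristic-free fix is the paper's argument, which avoids traces altogether: for a fixed $w$, cycle-unambiguity forces the nontrivial strongly connected components of the ``reachability by $w$'' graph to be directed cycles of length at most $d$, so after passing to $w^{d!}$ and reordering the states, the matrix $\mu^\circ(w)^{d!}$ is upper-triangular; its diagonal entries are then \emph{all} of its eigenvalues with multiplicity, and each nonzero diagonal entry is the weight of the unique closed walk, hence lies in $\Gamma'$. This yields $\lambda^{d!} \in \Gamma' \cup \{0\}$ for every eigenvalue $\lambda$, uniformly in $w$, and \cref{l:bounded-roots} finishes as in your write-up. In characteristic zero your trace argument is a correct alternative, but as written it does not prove the stated result over general $K$.
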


\begin{proof}
  \begin{proofenumerate}
    \item[\ref{wfa-necc:m-ambig}] 
    Let $\Gamma$ be the group generated by all weights of $\cA$, that is, by the entries of $u$, $v$ and $\mu(x)$ for $x \in X$.
    The product of weights along a walk in $\cA$ gives an element of $\Gamma$, and since every word has at most $M$ successful runs, the claim follows.
    
    \item[\ref{wfa-necc:bezivin}]
    Let $M \ge 0$ and $\Gamma \le K^\times$ be finitely generated such that $u\mu(w)v \in \sumset{M}{\Gamma_0}$ for all $w \in X^*$.
    Because $\cA$ is minimal, there exists a basis of $K^{1 \times d}$ of the form $u \mu(w_1)$, \dots,~$u \mu(w_d)$ with $w_i \in X^*$, and a basis of $K^{d \times 1}$ of the form $\mu(w_1') v$, \dots,~$\mu(w_d')v$ with $w_i' \in X^*$ \cite[Proposition 2.2.1]{berstel-reutenauer11}.
    Let $e_1$, \dots,~$e_d$ denote the standard basis of $K^{d \times 1}$ and $e_1^*$, \dots,~$e_d^*$ its dual basis, so that the $ij$-entry of a matrix $A \in K^{d \times d}$ is just $e_i^* A e_j$. 
    We express $e_i = \sum_{k=1}^d t_{ik} \mu(w_k') v$ and $e_j^* = \sum_{l=1}^d s_{lj} u \mu(w_l)$ with $t_{ik}$,~$s_{lj} \in K$.
    For every word $w \in X^*$, then 
    \[
    e_i^* \mu(w) e_j = \sum_{k,l=1}^d t_{ik} s_{lj} u\mu(w_l)\mu(w)\mu(w_k')v = \sum_{k,l=1}^d t_{ik} s_{lj} u \mu(w_l w w_k')v.
    \]
    Since $u\mu(w_l w w_k)v \in \sumset{M}{\Gamma_0}$, therefore all entries of $\mu(w)$ are contained in $\sumset{d^2M}{\Gamma_0'}$ with $\Gamma'$ generated by $\Gamma$, $t_{ik}$, and $s_{lj}$.

    Finally, if $\cA^\circ$ is finitely ambiguous, then by \ref{wfa-necc:m-ambig} we get $M \ge 0$ and a finitely generated $\Gamma \le K^\times$ such that $u\mu(w)v \in \sumset{M}{\Gamma_0}$ for all $w \in X^*$.
    Hence, the semigroup $\mu(X^*)$ is Bézivin by what we just showed.

    \item[\ref{wfa-necc:spec}]
    Since $\cA^\circ$ is trim, it is cycle-unambiguous.
    Let $d$ be the dimension of $\cA^\circ$.
    After conjugation by a suitable invertible matrix, we can assume that $\mu^\circ(X^*)$ has block-triangular shape with one of the diagonal blocks being $\mu(X^*)$ \cite[Corollary 2.2.5]{berstel-reutenauer11}.
    It is therefore sufficient to establish that $\mu^\circ(X^*)$ has finitely generated spectrum and that $K$ is uniformly power-splitting for $\mu^\circ(X^*)$.
    Let $\Gamma' \le K^\times$ be the subgroup generated by the weights of $\cA^\circ$.
    
    Fix a word $w$ and consider the following directed graph (with loops) $G(w)$:
    the vertex set is $[d]$ and there is an edge $i \to j$ if there is a directed walk in $\cA^\circ$ from $i$ to $j$ that is labeled by $w$. 
    Since $\cA^\circ$ is cycle-unambiguous, the non-trivial strongly connected components of $G(w)$ must be directed cycles\footnote{A directed cycle is a closed directed walk in which all vertices except the first and the last are pairwise distinct.}, each of length at most $d$.
    It follows that the only non-trivial strongly connected components in $G(w^{d!})$ are loops.
    Therefore, the states $[d]$ can be totally ordered in such a way that $i < j$ implies that there is no path from $j$ to $i$ in $G(w^{d!})$. 

    After a corresponding permutation of the standard basis vectors, the matrix $\mu^\circ(w^{d!})=\mu^\circ(w)^{d!}$ is upper-triangular.
    Therefore, its diagonal entries are precisely its eigenvalues.
    The $p$-th diagonal entry of $\mu^\circ(w^{d!})$ is obtained as follows: for each directed walk from $p$ to $p$ in $\cA^\circ$, one first takes the product over all weights along the walk, and then sums these products over all directed walks from $p$ to $p$. 
    However, since $\cA^\circ$ is cycle-unambiguous, there exists at most one directed walk from $p$ to $p$.
    Thus, the nonzero diagonal entries of $\mu^\circ(w^{d!})$ are products of weights of $\cA^\circ$, and hence are contained in $\Gamma'$.
    
    Consequently, if $\lambda \in \algc{K}$ is an eigenvalue of $\mu(w)$, then $\lambda^{d!} \in \Gamma'$.
    Since $\Gamma = \{\, \gamma \in \algc{K} : \gamma^{d!} \in \Gamma' \,\}$ is finitely generated by \cref{l:bounded-roots}, the claim follows. \qedhere
  \end{proofenumerate}
\end{proof}

\begin{proposition} \label{p:nice-wfa}
  Let $\cA=(u,\mu,v)$ be an invertible linear representation.
  \begin{enumerate}
  \item \label{nice-wfa:bez} If $\mu(X^*)$ is Bézivin, then there exists a linear representation $\widehat \cA=(\widehat u, \widehat \mu, \widehat v)$ with $\behav{\widehat \cA}=\behav{\cA}$ and $\widehat \mu$ monomial. In particular, the automaton $\widehat \cA$ is finitely ambiguous.
  \item \label{nice-wfa:spec}  If $\mu(X^*)$ has finitely generated spectrum and $K$ is power-splitting for $\mu(X^*)$, then there exists a linear representation $\widehat \cA$ with $\behav{\widehat \cA}=\behav{\cA}$ and $\widehat \cA$ is block-triangular with monomial diagonal blocks.
  In particular, the automaton $\widehat \cA$ is polynomially ambiguous.
  \end{enumerate}
\end{proposition}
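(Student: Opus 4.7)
The plan is to apply the two structural corollaries \cref{cor:locbezivin-fg} and \cref{cor:trilocmonomial-lift} to the semigroup representation $\mu\colon X^* \to \GL(V)$ on $V = K^{d \times 1}$ induced by $\cA$, and then to lift the resulting well-structured $\widehat\mu$ to a full linear representation equivalent to $\cA$. Note that $X^*$ is finitely generated (as a semigroup) by $X$, and invertibility of $\cA$ puts $\mu(X^*)$ inside $\GL(V)$, so both corollaries apply. For part \ref{nice-wfa:bez}, the assumption that $\mu(X^*)$ is Bézivin is precisely condition \ref{lbzfg:bez} of \cref{cor:locbezivin-fg}, hence condition \ref{lbzfg:monomial} provides a monomial representation $\widehat\mu\colon X^* \to \GL(\widehat V)$ together with an $X^*$-equivariant epimorphism $\pi\colon \widehat V \twoheadrightarrow V$. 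For part \ref{nice-wfa:spec}, the assumption matches condition \ref{triloc:spec} of \cref{t:trilocmonomial}, so \cref{cor:trilocmonomial-lift} (in the finitely-generated-$S$ case) yields a block-triangular representation $\widehat\mu\colon X^* \to \GL(\widehat V)$ with monomial diagonal blocks, together with an analogous equivariant epimorphism $\pi$.

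In either case, pick any preimage $\widehat v \in \widehat V$ of $v$ under the surjection $\pi$ and set $\widehat u \coloneqq u \circ \pi$, viewed as a linear functional on $\widehat V$. Equivariance of $\pi$ yields
\[
\widehat u\,\widehat\mu(w)\,\widehat v \;=\; u\bigl(\pi(\widehat\mu(w)\widehat v)\bigr) \;=\; u\bigl(\mu(w)\pi(\widehat v)\bigr) \;=\; u\,\mu(w)\,v
\]
for every $w \in X^*$, so $\widehat\cA \coloneqq (\widehat u, \widehat\mu, \widehat v)$ satisfies $\behav{\widehat\cA} = \behav{\cA}$, as required.

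The remaining task is to read off the ambiguity of $\widehat\cA$ from the combinatorial shape of $\widehat\mu$. For \ref{nice-wfa:bez}, products of monomial matrices are monomial, so every $\widehat\mu(w)$ is monomial and in the underlying WFA each state has at most one outgoing transition per letter; a successful run is therefore determined by its initial state, giving at most $\dim \widehat V$ runs on every word, hence finite ambiguity. For \ref{nice-wfa:spec}, fix the decomposition $\widehat V = \widehat V_1 \oplus \cdots \oplus \widehat V_r$ witnessing block-triangularity. Along any run the block index of the current state is monotonic (by the triangular shape), so it changes value at most $r-1$ times during a word of length $n$; within a block the next state is forced (by the monomial diagonal block), while at each of the at most $r-1$ block changes there are at most $\dim \widehat V$ choices of destination. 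The total number of successful runs on a word of length $n$ is therefore $O(n^{r-1})$, and $\widehat\cA$ is polynomially ambiguous. The only real subtlety is this final counting argument in the block-triangular case; the other steps are direct invocations of results proven earlier in the paper.
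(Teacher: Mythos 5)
Your proposal is correct and follows essentially the same route as the paper: invoke \cref{cor:locbezivin-fg}, respectively \cref{cor:trilocmonomial-lift}, for the finitely generated semigroup $X^*$, transport $u$ and $v$ along the $X^*$-equivariant epimorphism (your $\widehat u = u\circ\pi$, $\widehat v \in \pi^{-1}(v)$ is exactly the paper's $T$-intertwiner construction), and read off finite, respectively polynomial, ambiguity from the monomial, respectively block-triangular-with-monomial-diagonal, shape. Your run-counting for the block-triangular case (initial state, the at most $r-1$ positions and destinations of block changes, everything else forced by monomiality of the diagonal blocks, giving $O(n^{r-1})$ runs) is in fact spelled out a bit more explicitly than in the paper, which leaves that count to the reader.
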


\begin{proof}
  Let $d$ be the dimension of $\cA$.
  We first make a general observation that will be useful for both claims.
  Suppose $\widehat \mu \colon X^* \to K^{n \times n}$ is a semigroup homomorphism, and $T\in K^{d \times n}$ has rank $d$, and is such that $T \widehat \mu(w) = \mu(w) T$ for all $w \in X^*$.
  Since $T \colon K^{n \times 1} \to K^{d \times 1}$ is surjective, there exists $\widehat v \in K^{n \times 1}$ such that $v = T\widehat v$.
  Let $\widehat u = u T$.
  Then $\widehat \cA=(\widehat u, \widehat \mu, \widehat v)$ is equivalent to $\cA$, because
  \[
  \widehat u \widehat \mu(w) \widehat v = u T \widehat \mu(w) \widehat v = u \mu(w) T \widehat v = u \mu(w) v \qquad\text{for all $w \in X^*$.}
  \]

  \begin{proofenumerate}
    \item[\ref{nice-wfa:bez}]
    By \cref{cor:locbezivin-fg} there exists a monomial semigroup representation $\widehat\mu\colon X^* \to \GL_n(K)$ and an $X^*$-invariant epimorphism $\widehat \mu \to \mu$.
    Since $\widehat\mu(X^*)$ is monomial, the resulting linear representation $(\widehat u, \widehat \mu, \widehat v)$ is finitely ambiguous.

    \item[\ref{nice-wfa:spec}] 
    By \cref{cor:trilocmonomial-lift} we can now find a linear representation $(\widehat u, \widehat \mu, \widehat v)$ with $\widehat \mu$ block-triangular with monomial diagonal blocks.
    Such a representation is polynomially ambiguous: corresponding to the block-triangular structure, we obtain a partition of the states $Q_1 \cup \dots \cup Q_r$ for which it is only possible to transition from a state in $Q_i$ to one in $Q_j$ if $j > i$.
    Furthermore, due to the monomial diagonals, transitions from states in $Q_i$ to states in the same set $Q_i$ are unambiguous. \qedhere
  \end{proofenumerate}
\end{proof}

The following shows that the minimal ambiguity degree achievable for a WFA computing a particular series is at the same time the minimal $M$ such that all outputs are contained in $\sumset{M}{\Gamma_0}$ for some finitely generated $\Gamma \le K^\times$.
It is obtained by taking a slightly closer look at the construction resulting from \cref{p:nice-wfa} (and choosing a suitable basis).

\begin{corollary} \label{cor:ambiguity-degree}
  Let $\cA=(u,\mu,v)$ be an invertible linear representation.
  If there exists $M \ge 0$ and a finitely generated $\Gamma \le K^\times$ with $\behav{\cA}(w) \in \sumset{M}{\Gamma_0}$ for all $w \in X^*$, then $\cA$ is equivalent to an $M$-ambiguous WFA.
\end{corollary}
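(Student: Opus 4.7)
The plan is to sharpen Proposition 6.3(1) by tracking the ambiguity degree, and the key tool is Proposition 5.1 applied to the monomial WFA in an appropriately chosen basis. First, I would apply Proposition 6.3(1) to obtain an equivalent monomial WFA $\widehat{\cA}=(\widehat u,\widehat\mu,\widehat v)$. Since $\widehat{\mu}(X^*)$ consists of monomial matrices, for every word $w$ and every state $p$ of $\widehat{\cA}$ there is a unique walk labeled by $w$ starting at $p$, so the number of successful runs for $w$ equals $\lvert\{p : \widehat u_p\ne 0 \text{ and } \widehat v_{\sigma_w(p)}\ne 0\}\rvert$, where $\sigma_w$ is the walk permutation determined by $w$.

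Next I would set up Proposition 5.1 for $\widehat{\cA}$. Since $\widehat{\mu}(X^*)$ is monomial (hence Bézivin) and weakly epimonomial by Theorem 5.7, a suitable power of any $\widehat{\mu}(x)$ yields steady elements inside $\widehat{\mu}(X^*)$. With $\psi = \widehat u$, $v = \widehat v$, and the given hypothesis $\widehat u\widehat\mu(w)\widehat v = u\mu(w)v \in \sumset{M}{\Gamma_0}$, Proposition 5.1 applied to a steady $\widehat A\in\widehat{\mu}(X^*)$ with eigendecomposition $\widehat V = \widehat V_1\oplus\cdots\oplus\widehat V_r$ bounds by $M$ the number of nonzero tuples $(i_0,\ldots,i_l)\in[r]^{l+1}$ arising from $w=x_1\cdots x_l$.

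The crucial step is arranging the one-dimensionality of the eigenspaces $\widehat V_i$ in a basis in which the transition matrices remain monomial. Because $\widehat A$ is monomial and steady, $\widehat A$ is automatically diagonal in the monomial basis; I would then exploit the flexibility in the monomial-lift construction of Lemma 5.10 by choosing the inducing subgroup $T\le G$ so that the characters $\chi_j\circ\operatorname{Ad}(a_i^{-1})$ acting on the $nd$ coordinates of the induced representation are pairwise distinct, and using a semigroup version of Lemma 5.14 to produce $\widehat A\in\widehat{\mu}(X^*)$ whose diagonal entries separate all basis vectors. In this refined basis each eigenspace is spanned by a single basis vector, so the nonzero tuples in Proposition 5.1 correspond bijectively to the successful runs of $\widehat{\cA}$, giving the $M$-ambiguity.

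The main obstacle is this combinatorial arrangement: guaranteeing, via the choice of inducing subgroup and possibly a further finite-index refinement, that a steady element of $\widehat{\mu}(X^*)$ with pairwise distinct eigenvalues can be found in the monomial basis. Once the pairwise-distinct-character condition is verified, the conclusion of Proposition 5.1 translates directly into the $M$-bound on successful runs, and combining this with Proposition 6.3(1) yields the equivalent $M$-ambiguous WFA asserted by the corollary.
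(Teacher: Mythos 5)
Your skeleton (pass to a monomial lift, apply \cref{p:full-term-bound} with $\psi=\widehat u$ and the vector $\widehat v$, and use a maximally separating steady element) is the same strategy as the paper's, but the step you yourself flag as the ``main obstacle'' is not a combinatorial arrangement that can be forced by a clever choice of inducing subgroup or characters --- it is impossible in general, and this is a genuine gap. Every steady element of the lifted monomial group is diagonal in the monomial basis (a power of it is diagonal, so steadyness forces it to preserve each coordinate line), and a diagonal element of an induced representation $\Ind_T^G$ is $\widehat\rho(g)$ for some $g$ with $a_i^{-1}ga_i\in T$ for all $i$; its diagonal entries are eigenvalues of the conjugates $a_i^{-1}ga_i$, hence eigenvalues of $g\in\GL_d(K)$, of which there are at most $d$ distinct ones. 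Since the lift has dimension $nd$ with $n=(G:T)\ge 2$ whenever a genuine lift is needed, no steady element of $\widehat\mu(X^*)$ (or of the lifted group) can have pairwise distinct eigenvalues, so the one-dimensional eigenspaces you need never exist. The degenerate cases are even starker: if all steady elements of $G=\langle\mu(X^*)\rangle$ are scalar (e.g.\ $G$ finite, or $G=\langle B\rangle$ with $B=\begin{psmallmatrix}0&2\\1&0\end{psmallmatrix}$ over $\Q$, so $B^2=2I$), then every steady element of the lift is scalar, the eigenspace decomposition is trivial, and \cref{p:full-term-bound} applied to it says nothing --- yet the corollary still has content there.

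The paper avoids needing fine eigenspaces by controlling the \emph{terminal vector} and exploiting monomiality instead. It first chooses the basis of each joint eigenspace $V_j$ so that the component $v_j$ of $v$ is a basis vector; then in the induced representation $\widehat v$ lies in the distinguished copy of $V$ and has at most one nonzero coordinate inside each block $W_j=\bigoplus_i A_iV_{\pi_i^{-1}(j)}$, where the $W_j$ are the (typically high-dimensional) eigenspaces of $\rho(A)$ for a separating steady $A\in D$ from \cref{p:maximally-separating-element}. \Cref{p:full-term-bound} is applied to this \emph{coarse} block decomposition, giving at most $M$ nonzero block products. Finally, by monomiality a successful run is determined by its terminal coordinate $f_{1j1}$, and the block tuple it traces out has last entry $j$, so distinct runs yield distinct nonzero block products; hence at most $M$ successful runs. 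Your proposal lacks both ingredients (the normalization of $\widehat v$ within each block, and the injection from runs to block tuples), and the route it relies on instead cannot be carried out; to repair it you would essentially have to reproduce the paper's argument.
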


\begin{proof}
  We may assume that $\cA=(u,\mu,v)$ is a minimal linear representation (using \cite[Corollary 2.2.5]{berstel-reutenauer11}).
  By \ref{wfa-necc:bezivin} of \cref{p:wfa-necessary}, then $\mu(X^*) \subseteq \GL_d(K)$ is Bézivin.
  Let $G \coloneqq \langle \mu(X^*) \rangle$.
  By \cref{t:locally-bezivin-equivalent} the finitely generated group $G \le \GL_d(K)$ is epimonomial.
  Let $K^{d \times 1} = V_1 \oplus \cdots \oplus V_r$ be an epimonomial decomposition for $G$, with $D \le G$ the corresponding diagonal.
  Decompose $v=v_1 + \cdots + v_r$ with $v_j \in V_j$, and for each $V_j$ choose a basis $e_{j1}$, \dots,~$e_{js_j}$, with $e_{j1}=v_j$ if $v_j \ne 0$.
  After changing to this basis, we can assume that $D$ consists of diagonal matrices and that each $v_j$ has at most one nonzero entry.

  Let $A_1=I$, $A_2$, \dots,~$A_n$ be a set of representatives for $G/D$ and consider the $dn$-dimensional representation $\rho \colon G \to \GL(W)$ induced from $D \hookrightarrow \GL_d(K)$, with
  \[
    W = \bigoplus_{i=1}^n A_i K^{d \times 1} = \bigoplus_{i=1}^n \bigoplus_{j=1}^r A_i V_j
  \]
  (as in \cref{l:factor-through-monomial}), and set $W_{ij}\coloneqq A_iV_j$.
  We choose as basis of $W_{ij}$ the vectors $f_{ijk} \coloneqq A_i e_{jk}$ (where $k \in [s_j]$), and identify $W = K^{dn \times 1}$.
  Then $\rho$ is monomial; explicitly, let $A \in G$ and let  $i \in [n]$, $j \in [r]$, $k \in [s_j]$.
  If $i' \in [n]$ and $C \in D$ are such that $AA_i=A_{i'}C$ and $C|_{V_j}=\lambda_j \id$ with $\lambda_j \in K$, then we have
  \[
  \rho(A) f_{ijk} = \lambda_j f_{i'jk}.
  \]

  The $G$-equivariant epimorphism
  \[
  \psi \colon W \to K^{d \times 1}, \ f_{ijk} \mapsto A_i e_{jk},
  \]
  is split by
  \[
  \iota \colon K^{d \times 1} \to W, \  e_{jk} \mapsto f_{1jk}.
  \]
  Let $\widehat v = \iota(v)$ and let $\widehat u \in K^{1 \times dn}$ such that $\widehat u f_{ijk} = uA_i e_{jk}$. 
  With $\widehat \mu = \rho \mu$, we obtain a monomial WFA $\widehat \cA=(\widehat u,\widehat \mu, \widehat v)$ that is equivalent to $\cA$ (as in \cref{p:nice-wfa}).

  Using \cref{p:maximally-separating-element}, choose $A \in D$ steady and such that the eigenvalues on all $V_j$ are pairwise distinct.
  Consider $\rho(A)$.
  Each $A_i$ permutes the eigenspaces of $A$, say $A_i V_j = V_{\pi_i(j)}$ with $\pi_i \in \mathfrak S_r$ a permutation.
  Then $A_i^{-1} A A_i$ is again diagonal with the same eigenspaces as $A$ itself, but the eigenvalues permuted: if $A$ has eigenvalue $\lambda_j$ on $V_j$, then $A_i^{-1} A A_i$ has the eigenvalue $\lambda_{\pi_i(j)}$ on $V_j$.
  Setting, for $j \in [r]$,
  \[
    W_j \coloneqq \bigoplus_{i=1}^n A_i V_{\pi_i^{-1}(j)} \subseteq W,
  \]
  we see that the $W_j$ are precisely the eigenspaces of the steady element $\rho(A)$.
  Since $\rho(G)$ is monomial, it is certainly Bézivin, and we can apply \cref{p:full-term-bound} to the block structure $W= W_1 \oplus \cdots \oplus W_r$.
  Thus, for all $x_1$, \dots,~$x_l \in X$ (using square brackets to denote the blocks), we have that
  \begin{equation} \label{eq:ambig-degree-block}
  \widehat u [\mu(x_1)]_{i_0i_1} [\mu(x_2)]_{i_1i_2} \cdots [\mu(x_l)]_{i_{l-1}i_l} [\widehat v]_{i_l} \ne 0,
  \end{equation}
  for at most $M$ choices of $(i_0,i_1,\dots,i_l) \in [r]^{l+1}$.
  We shall argue that each such nonzero block product corresponds to at most one successful run of $\widehat \cA$.

  For $x \in X$ let $\widehat\mu_{pq}(x)$ denote the $pq$ entry of the matrix $\widehat\mu(x)$.
  A successful run for a word $x_1\cdots x_l \in X^*$ is simply a tuple $(p_0,p_1,\dots,p_l) \in [dn]$ such that
  \[
  \widehat u_{p_0} \widehat \mu_{p_0p_1}(x_1) \widehat \mu_{p_1p_2}(x_2) \cdots \widehat \mu_{p_{l-1}p_l}(x_l) \widehat v_{p_l} \ne 0.
  \]
  
  This relates to the block structure in \cref{eq:ambig-degree-block} as follows: 
  Let $V \coloneqq V_1 \oplus \cdots \oplus V_r=K^{d \times 1}$. The terminal vector $\widehat v$ is contained in the summand $IV$ of $W$, and projected onto each $I V_j$, it has at most one nonzero component by the choice of the basis (namely, the component is a multiple of $f_{1j1}$).
  By construction of the induced representation, the matrices $\mu(x)$ permute the spaces $A_1 V$, \dots,~$A_n V \subseteq W$.
  Thus, for each $k \in [1,l]$ there is a unique $i$ with $\mu(x_k\cdots x_l) \widehat v \in A_i V$.
  Fix $k$.
  Then, for $j \in [r]$, there is a unique $t \in [r]$ with $\mu(x_k\cdots x_l) f_{1j1} \in A_i V_{t}$.
  But then in fact $j$ uniquely determines the space $W_{t'}$ for which $\mu(x_k\cdots x_l) f_{1j1} \in W_{t'}$
  (namely $t'=\pi_i(t)$).
  It follows that each successful run (each one necessary starting at a distinct $f_{1j1}$, by monomiality) contributes to a distinct block product of the form \cref{eq:ambig-degree-block}.
  Hence, the maximal number of nonzero such block products, namely $M$, bounds the maximal number of successful runs for a given word.
  Thus, the linear representation $\widehat \cA$ is at most $M$-ambiguous.
\end{proof}

\section{Decidability} \label{sec:decidability}

For finitely generated semigroups we now show decidability of the properties under discussion (and that the monomial, respectively block-triangular representations with monomial blocks are indeed computable, if they exist).
Obviously this requires some restrictions on the field $K$, in particular, we need to be able to perform exact arithmetic and equality tests, so that we can do exact linear algebra, use Gröbner bases techniques, and compute in finite field extensions of $K$ (as splitting fields of suitable polynomials).
We shall call such a field \emph{computable}.

Most importantly, number fields (finite-dimensional extensions of $\Q$) are computable.
In general, a finitely generated semigroup $S$ will always be contained in $K^{d \times d}$ with $K$ a field that is finitely generated over its prime field $K_0$.
Then $K$ is the field of fractions of an affine $K_0$-algebra $R$.
If $K$ can be specified by giving explicit generators and relations of $R$ as a $K_0$-algebra, then $K$ is computable.
For example, for the transcendental extension $K=\Q(X)$ with $X$ a nonempty finite set we trivially know such a representation, but for $K=\Q(e,\pi)$ we do not have such a representation, since we do not know whether there are algebraic relations between $e$ and $\pi$.
\emph{For the remainder of the section, fix a computable field $K$ and a vector space $V$.}

The \defit{linear Zariski topology} on $V$ is the topology having as closed sets the finite unions of vector subspaces (including the empty set as the empty union) \cite{bell-smertnig21,bell-smertnig23b}.
It is a noetherian topology.
If $K$ is infinite, the irreducible sets are precisely the subspaces of $V$.
If $K$ is finite, then the irreducible sets are the subspaces of dimension at most $1$.

The Zariski closure and the linear Zariski closure of a finitely generated group \cite{derksen-jeandel-koiran05,nosan-pouly-schmitz-shirmohammadi-worrell22}, and even a finitely generated semigroup \cite{hrushovski-ouaknine-pouly-worrell18,hrushovski-ouaknine-pouly-worrell23,bell-smertnig23b}, of matrices is computable.

The following observation will reduce everything to the computation of the linear Zariski closure.
The argument is similar to the one in the proof of \cite[Theorem 10]{bell-smertnig23b}.

\begin{proposition} \label{p:closure-of-diagonal}
  Let $G \subseteq \GL(V)$ be a locally Bézivin group, let $V = V_1 \oplus \cdots \oplus V_r$ be the decomposition into joint-eigenspaces of steady elements, and let $D \trianglelefteq G$ be the corresponding diagonal.
  Let $P_i \in \End(V)$ be such that $P_i|_{V_i}=\id$ and $P_i|_{V_j}=0$ for $j \ne i$.
  Then the linear Zariski closure $\overline D$ of $D$ in $\End(V)$ is
  \[
  \overline{D} = \lspan\{ P_1, \dots, P_r \}.
  \]
  In particular, if $G/D$ is finite with set of representatives $A_1$, \dots,~$A_n$, then the irreducible components of $\overline G$ are
  \[
  A_1 \lspan\{ P_1, \dots, P_r \}, \dots, A_n \lspan\{ P_1, \dots, P_r \}.
  \]
\end{proposition}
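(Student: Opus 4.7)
The easy inclusion $\overline D \subseteq \lspan\{P_1, \dots, P_r\}$ is immediate: every $A \in D$ equals $\sum_{i=1}^r \lambda_i(A) P_i$ with $\lambda_i(A) \in K^\times$ since $A$ acts as a scalar on each $V_i$, and the span is linear-Zariski-closed. Identifying $\lspan\{P_1, \dots, P_r\} \cong K^r$ via $\sum \alpha_i P_i \leftrightarrow (\alpha_1, \dots, \alpha_r)$, I will reduce the reverse inclusion to showing that $D$ is not contained in any finite union of hyperplanes $H_{\vec c^{(j)}} = \{\vec x \in K^r : \sum_i c_i^{(j)} x_i = 0\}$ with $\vec c^{(j)} \ne 0$, since any linear-Zariski-closed proper subset of $\lspan\{P_1, \dots, P_r\}$ is contained in such a finite union.

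To produce enough elements of $D$, I invoke \cref{p:maximally-separating-element} to fix one steady $A \in D$ whose eigenvalues $\lambda_1, \dots, \lambda_r \in K^\times$ on $V_1, \dots, V_r$ are pairwise distinct. The powers $A^n$ all lie in $D$ and correspond to $(\lambda_1^n, \dots, \lambda_r^n) \in K^r$. It therefore suffices to prove the stronger statement that for each single $\vec c \ne 0$, the set
\[
  N_{\vec c} \coloneqq \big\{ n \in \Z : \textstyle\sum_i c_i \lambda_i^n = 0 \big\}
\]
has zero natural density in $\Z$; a finite union of zero-density sets has zero density and hence cannot equal $\Z$.

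The main step, and the only nontrivial part, is a unit equation argument. For any $n \in N_{\vec c}$ I pick a minimal nonempty $I \subseteq \{i : c_i \ne 0\}$ with $\sum_{i \in I} c_i \lambda_i^n = 0$; by minimality this is a non-degenerate solution, and $\lvert I \rvert \ge 2$ since $c_i, \lambda_i \ne 0$. As there are only finitely many candidate subsets $I$, it suffices to bound the density of $N_I \coloneqq \{n : \sum_{i \in I} c_i \lambda_i^n = 0 \text{ non-degenerately}\}$. Fix distinct $i_0, i_1 \in I$. Steadyness of $A$ together with \cref{l:steady-basic}\ref{steady-basic:norootofunity} forces $\lambda_{i_0}/\lambda_{i_1}$ to have infinite order in $\Gamma \coloneqq \langle \lambda_1, \dots, \lambda_r \rangle \le K^\times$, so I can choose a homomorphism $\pi\colon \Gamma \to \Z$ with $c \coloneqq \pi(\lambda_{i_0}/\lambda_{i_1}) \ne 0$. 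Applying \cref{p:unit-equation-crude} to the unit equation $\sum_{i \in I} c_i X_i = 0$, the set $F = \{\pi(\lambda_{i_0}^n/\lambda_{i_1}^n) : n \in N_I\}$ has zero density in $\Z$; but $\pi(\lambda_{i_0}^n/\lambda_{i_1}^n) = nc$, so $N_I \subseteq \{n \in \Z : nc \in F\}$, and multiplication by $c \ne 0$ preserves zero density under preimage. This is the heart of the proof: the Evertse/Derksen--Masser control of non-degenerate unit equation solutions is channelled through the linear map $n \mapsto nc$ to bound the density of the exponents themselves.

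For the concluding ``irreducible components'' claim, left multiplication by $A_i$ is a linear automorphism of $\End(V)$ and hence a homeomorphism for the linear Zariski topology, so $\overline{A_i D} = A_i \overline D = A_i \lspan\{P_1, \dots, P_r\}$; taking the union over the representatives $A_1, \dots, A_n$ of $G/D$ yields $\overline G$. These $r$-dimensional subspaces are pairwise distinct: if $A_i \overline D = A_j \overline D$, then $A_j^{-1} A_i \in \overline D \cap G$, and the first part identifies $\overline D \cap G$ with the set of elements of $G$ acting as a scalar on each $V_i$, which is exactly $D$, contradicting that $A_i, A_j$ represent distinct cosets.
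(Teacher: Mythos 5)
Your proof is correct, but the key step is genuinely different from the paper's. The paper does not use unit equations here at all: it looks at the irreducible component $Z_0$ of $\overline{D}$ containing the identity, notes that multiplication by the separating steady element $A$ furnished by \cref{p:maximally-separating-element} permutes the finitely many components (so $A^N \in Z_0$ for some $N\ge 1$, using that $Z_0 \cap \GL_d(K)$ is a group, cited from \cite{bell-smertnig23b}), and then a Vandermonde determinant argument --- valid because the ratios $\lambda_i/\lambda_j$ are not roots of unity by \cref{l:steady-basic}\ref{steady-basic:norootofunity}, so the $\lambda_i^N$ remain pairwise distinct --- shows that $\lspan\{A^{Nn} : n \in \Z\}$ already has dimension $r$, forcing $Z_0 = \lspan\{P_1,\dots,P_r\}$ and hence $\overline D = \lspan\{P_1,\dots,P_r\}$. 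You instead reduce the reverse inclusion to the impossibility of covering $\{A^n : n \in \Z\} \subseteq D$ by finitely many hyperplanes of $\lspan\{P_1,\dots,P_r\} \cong K^r$, and rule that out by rerunning the unit-equation/zero-density mechanism of \cref{p:full-term-bound} via \cref{p:unit-equation-crude} (your minimal-subset trick to extract non-degenerate solutions, the choice of $\pi$ with $\pi(\lambda_{i_0}/\lambda_{i_1})\ne 0$, and the transfer of zero density through $n \mapsto nc$ are all sound). The trade-off: your argument stays entirely inside the paper's own toolkit and avoids the external facts about the identity component and the permutation of components, but it imports the Evertse/Derksen--Masser machinery where a purely linear-algebraic Vandermonde computation suffices; both routes rest on \cref{p:maximally-separating-element} and the infinite order of eigenvalue ratios of steady elements. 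Your handling of the ``in particular'' clause (pairwise distinctness of the $A_i\lspan\{P_1,\dots,P_r\}$ via $\overline D \cap G = D$, plus equal dimension to rule out containments) is correct and in fact more explicit than the paper, which treats that part as immediate.
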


\begin{proof}
  Identify $\GL(V)=\GL_d(K)$ using a basis that refines the decomposition $V=V_1\oplus\cdots \oplus V_r$.
  Let $\overline{D} = Z_0 \cup \cdots \cup Z_l$ with $Z_k \subseteq V$ the irreducible components of $\overline{D}$.
  Because $D \subseteq \lspan\{ P_1, \dots, P_r \}$, also $Z_0 \cup \cdots \cup Z_l \subseteq \lspan\{ P_1, \dots, P_r \}$.
  Note that $\overline{D} \cap \GL_d(K)$ is the set of $K$-rational points of a linear algebraic group.
  There exists a unique component, say $Z_0$, containing the identity, and $Z_0 \cap \GL_d(K)$ is a subgroup of $G$ \cite[Lemma 9]{bell-smertnig23b}.
  
  By \cref{p:maximally-separating-element} there exists a steady $A \in G$ that has distinct eigenvalues on the $V_i$, say $A= \lambda_1 P_1 + \cdots + \lambda_r P_r$ with $\lambda_i \in K$ such that $\lambda_i/\lambda_j$ is not a root of unity for $i \ne j$.
  Since multiplication by $A$ permutes the components $Z_0$, \dots,~$Z_l$, there exists an $N \ge 1$ such that $A^N \in Z_0$.
  Then $A^{Nn} = \lambda_1^{Nn} P_1 + \cdots + \lambda_r^{Nn} P_r \in Z_0$ for all $n \in \Z$, and, since $Z_0$ is a vector space, we have $W \coloneqq \lspan\{ A^{Nn} :n \in \Z  \} \subseteq Z_0$.
  Keeping in mind that $\lambda_i^N \ne \lambda_j^N$ for all $i \ne j \in [r]$, a Vandermonde determinant argument shows $\dim W = r$.
  Because of $W \subseteq Z_0 \subseteq \lspan\{P_1,\dots,P_r\}$ we must have equality throughout, hence $\overline{D}=\lspan \{ P_1,\dots, P_r\}$.
\end{proof}

\begin{proposition} \label{p:decide-semigroup}
For a finitely generated semigroup $S \subseteq \GL_d(K)$, given by generators $B_1$, \dots,~$B_m \in S$, it is decidable whether $S$
\begin{enumerate}
\item \label{decidesgrp:bezivin} is Bézivin;
\item \label{decidesgrp:fgspec} has finitely generated spectrum and $K$ is power-splitting for $S$.
\end{enumerate}
One can moreover compute a representation $\rho\colon S \to \GL_{d'}(K)$ and an epimorphism of representations $\rho \twoheadrightarrow (S \hookrightarrow \GL_d(K))$ such that $\rho$ is monomial in the first case, and such that $\rho$ is block-triangular with monomial diagonal blocks in the second case.
\end{proposition}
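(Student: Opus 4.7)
The plan is to reduce both parts to two ingredients: the computability of the linear Zariski closure of a finitely generated matrix semigroup, due to Hrushovski, Ouaknine, Pouly, and Worrell \cite{hrushovski-ouaknine-pouly-worrell18,hrushovski-ouaknine-pouly-worrell23} (see also \cite{bell-smertnig23b}), and the structural characterizations provided by \cref{cor:locbezivin-fg,t:trilocmonomial,cor:trilocmonomial-lift}, together with the explicit shape of the identity component from \cref{p:closure-of-diagonal}. The algorithms themselves are then natural: for (1), inspect the identity component of $\overline{\langle S \rangle}$; for (2), iterate (1) on the $K[S]$-module composition factors of $V$.

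For part (1), compute $\overline{G}$ with $G \coloneqq \langle S \rangle$ as a finite union of vector subspaces $Z_0 \cup Z_1 \cup \cdots \cup Z_l$ of $\End(V)$, and let $Z_0$ be the irreducible component containing the identity. By \cref{p:closure-of-diagonal} combined with \cref{cor:locbezivin-fg}, the semigroup $S$ is Bézivin if and only if $Z_0$ admits a basis of pairwise orthogonal idempotents $P_1, \ldots, P_r \in \End(V)$ with $P_1 + \cdots + P_r = \id$. This is a decidable condition: verify that $Z_0$ is commutative on a computed basis, that $Z_0$ is a reduced $K$-subalgebra, and that its elements are simultaneously diagonalizable over $K$ with eigenvalues in $K$; then extract the primitive idempotents by standard linear algebra. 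When the test succeeds, enumerate products of the generators to obtain coset representatives for $G/D$, where $D \coloneqq Z_0 \cap G$ is a finite-index simultaneously diagonal subgroup (one representative per component $Z_i$; the enumeration terminates because $G$ is dense in $\overline G$), and form the induced representation $\widehat\rho \coloneqq \Ind_{D}^{G}(\rho|_D)$ as in \cref{l:factor-through-monomial}. This is the required monomial lift, and it is explicitly computable.

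For part (2), first compute a $K[S]$-composition series $0 = W_0 \subsetneq W_1 \subsetneq \cdots \subsetneq W_r = V$, a routine linear-algebra task over the computable field $K$. For each composition factor $W_i/W_{i-1}$ with its induced $S$-action, apply the algorithm from part (1) to decide whether that factor is Bézivin. The semigroup $S$ has finitely generated spectrum with $K$ power-splitting if and only if every composition factor passes the Bézivin test: since the spectrum of $S$ is the union of spectra of its composition factors and power-splitting descends to subquotients, finite generation of the spectrum together with power-splitting for $S$ implies the same for each factor, and each factor is then Bézivin by \cref{p:absirred-fg-bezivin} applied after passage to $\overline K$ (handling the not-necessarily-absolutely-irreducible case via Galois descent) combined with \cref{p:equiv-field}; conversely, if each factor is Bézivin, then each has finitely generated spectrum inside $K^\times$ and so does $S$. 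In the affirmative case, \cref{cor:trilocmonomial-lift} together with the blockwise application of the part (1) construction produces the desired representation that is block-triangular with monomial diagonal blocks.

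The hardest step to justify rigorously is that working with $K$-composition factors (rather than with an absolutely irreducible series over some unspecified finite extension) is an adequate basis for the decision in part (2). This relies crucially on \cref{p:equiv-field} together with \cref{l:locepimon-descent}, which show that the Bézivin property and weak epimonomiality transfer cleanly between $K$ and its finite extensions precisely when $K$ is power-splitting; but this is exactly the hypothesis being decided, so the equivalence is not automatic and needs careful use of the results already developed. Beyond this conceptual point, everything else—idempotents in $Z_0$, coset representatives in $G/D$, composition factors, and splitting-field computations—reduces to Gaussian elimination, characteristic-polynomial computations, and finite extension constructions over $K$, all of which are effective for any computable field.
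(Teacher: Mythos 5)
Your part~\ref{decidesgrp:bezivin} is essentially the paper's argument: compute the linear Zariski closure of $G=\langle S\rangle$, and by \cref{p:closure-of-diagonal} together with \cref{cor:locbezivin-fg} reduce to deciding whether the identity component $Z_0$ is simultaneously diagonalizable over $K$; your test (commutativity on a basis plus diagonalizability over $K$ of each basis element, then extraction of the orthogonal idempotents) is a valid and in fact slightly more direct replacement for the paper's iterative refinement, which repeatedly extracts non-diagonal invertible elements of $Z_0$ via \cite[Lemma 17]{bell-smertnig23b}. The coset representatives you propose to find by enumeration are in the paper obtained for free from the closure computation (one $A_i\in S$ per irreducible component), but either way works.

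Part~\ref{decidesgrp:fgspec} is where you genuinely diverge, and where the gaps are. The paper does \emph{not} compute a $K[S]$-composition series: it computes a finite extension $L/K$ and a chain of invariant subspaces over $L$ with \emph{absolutely irreducible} quotients (via the Pl\"ucker-coordinate variety and Gr\"obner bases, which naturally yields subspaces defined only over some finite extension), then restricts scalars to $K$ and applies part~\ref{decidesgrp:bezivin} to the diagonal blocks, using \cref{p:absirred-fg-bezivin} and \cref{l:locepimon-descent}. Your route instead needs invariant subspaces defined over $K$ itself; calling this ``a routine linear-algebra task'' is the first gap. Computing a composition series over a general computable field is the effective MeatAxe problem (computing the radical of $K[\rho(S)]$, splitting semisimple modules over a non-closed field); it is decidable for the fields in question but requires real machinery and a citation, and it is exactly the step the paper's detour through $L$ is designed to avoid. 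The second gap is your justification of the forward implication ``f.g.\ spectrum $+$ power-splitting $\Rightarrow$ every $K$-composition factor is B\'ezivin'': passing to $\overline K$ and invoking \cref{p:absirred-fg-bezivin} plus ``Galois descent'' does not go through as stated, since in positive characteristic $\overline K\otimes_K W$ need not be semisimple for $W$ irreducible over $K$, and block-triangular with B\'ezivin diagonal blocks does not imply B\'ezivin (unipotent upper-triangular groups are the standard counterexample), so \cref{p:equiv-field} cannot be fed the extension-field hypothesis without an argument. The clean fix stays inside the paper's results: apply \cref{t:trilocmonomial}\ref{triloc:spec}$\Rightarrow$\ref{triloc:repr} directly to each factor; since the factor is irreducible over $K$, the block-triangular structure must consist of a single block, so the factor is (locally) epimonomial, hence B\'ezivin by \cref{t:locally-bezivin-equivalent}. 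With that repair, and with \cref{l:bezivin-is-power-splitting} supplying the power-splitting half of your converse (the eigenvalues of $\rho(s)$ on $V$ are the union of those on the factors), your equivalence is correct and yields a valid alternative decision procedure, modulo the composition-series computability issue above.
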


\begin{proof}
  The linear Zariski closure $\overline{S}$ is computable; more explicitly, one can compute a basis of the irreducible component $Z_0$ containing the identity, and $A_0=I$, $A_1$, \dots,~$A_n \in S$ such that $\overline{S} = \bigcup_{i=0}^n A_i Z_0$, with $A_i Z_0 \ne A_j Z_0$ for $j \ne i$.
  The closure $\overline{S}$ contains the identity matrix and $A^{-1}$ for every $A \in S$, so that in fact $\overline{S}$ is also the closure of the group $G$ generated by $S$ \cite[Lemma 9]{bell-smertnig23b}.

  \begin{proofenumerate}
    \item[\ref{decidesgrp:bezivin}]
    In light of \cref{p:closure-of-diagonal} and \cref{t:locally-bezivin-equivalent}, it is enough to check whether $Z_0$ is simultaneously diagonalizable over $K$.
    This is easily done iteratively: suppose the component $Z_0$ is not simultaneously diagonal in the given basis. 
    We can then compute a non-diagonal $A \in Z_0 \cap \GL_d(K)$ using \cite[Lemma 17]{bell-smertnig23b} and the fact that $Z_0 \cap \GL_d(K) \supseteq Z_0 \cap S$ is dense in $Z_0$ in the linear Zariski topology.
    If $A$ is not diagonalizable over $K$, then $S$ is not Bézivin and we abort.
    Otherwise, we change the basis to one in which $A$ is diagonal.
    We now check if $Z_0$ has block-diagonal structure with respect to the eigenspaces of $A$ (it is sufficient to check this on the basis vectors): if not, then $S$ is not Bézivin, and we abort. If $Z_0$ has the required block-diagonal structure, but is not diagonal, then we can pick a non-diagonal $A' \in Z_0 \cap \GL_d(K)$ (again using \cite[Lemma 17]{bell-smertnig23b}), and we attempt to diagonalize it.
    This will either result in a finer-grained block-diagonal structure, or show that $S$ is not Bézivin.
    Repeating this finitely many times, we will either have diagonalized all of $Z_0$ (showing that $S$ is Bézivin), or discovered that $S$ is not Bézivin.

    \item[\ref{decidesgrp:fgspec}]
    We first compute a finite field extension $L/K$ and a chain of $G$-invariant $L$-subspaces $0 = W_1 \subsetneq W_2 \subsetneq \cdots \subsetneq W_k=L^{d \times 1}$ such that each $W_{i}/W_{i-1}$ is absolutely irreducible as a $G$-representation (that is, there exists no $G$-invariant $\algc{K}$-subspace $\algc{K} \otimes_K W_{i-1} \subsetneq W' \subsetneq \algc{K} \otimes_K W_{i}$).
    This is possible, because the $\algc{K}$-invariant subspaces of a $K$-vector space $V$ can be described as the $\algc{K}$-points on a projective variety that is defined over $K$, by using exterior powers and Plücker coordinates, see \cite{arapura-peterson04}.
    Testing whether this variety is empty can be done using Gröbner basis techniques, and if it is not empty, one can compute a finite field extension $L/K$ and a point defined over $L$,%
    \footnote{It suffices to treat the affine case. One possibility is outlined in \cite[Section 3.3]{hrushovski-ouaknine-pouly-worrell23}, based on the fact that one can compute projections of \emph{constructible} sets \cite[Corollary 4.12]{schauenburg07}  \cite[Chapter 4.7]{cox-little-osheah15}.} %
    giving rise to an invariant subspace. 
    Iterating this procedure finitely many times, we find the required decomposition.

    Viewing $L^{d \times 1}$ as a $K$-vector space, we obtain a $K$-representation $\rho$ of $G$.
    If $G$ has locally finitely generated spectrum and $K$ is power-splitting for $G$, then the resulting diagonal blocks of $\rho$ are epimonomial (using \cref{p:absirred-fg-bezivin} and \cref{t:locally-bezivin-equivalent}), and we can check this using \ref{decidesgrp:bezivin}.
    So, if one of the diagonal blocks is not epimonomial, then $G$ did not have locally finitely generated spectrum or $K$ is not power-splitting.
    If they are all epimonomial, then $S \to \GL_d(K)$ factors through a representation with monomial diagonal blocks by \cref{cor:trilocmonomial-lift}.
  \end{proofenumerate}

  Computing the claimed representations is now a matter of computing suitable induced representations.
  To compute the induced representation, we need a set of representatives for $G / (G \cap Z_0)$.
  However, such a set is given by the elements $A_0$, \dots, ~$A_n$ computed at the beginning.
\end{proof}

\begin{corollary} \label{cor:decide-wfa}
  Let $\cA$ be a linear representation of a WFA.
  \begin{enumerate}
  \item \label{decidewfa:class}  It is decidable whether $\cA$ is equivalent to a polynomially ambiguous, respectively, a finitely ambiguous automaton.
  \item \label{decidewfa:M} If $\cA$ is equivalent to a finitely ambiguous automaton, it is also decidable which is the minimal $M \ge 0$ such that $\cA$ is $M$-ambiguous
  \end{enumerate}
\end{corollary}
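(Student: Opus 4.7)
The plan is to reduce both parts to the decision and construction procedures already in place, applied to the semigroup of transition matrices. After minimizing $\cA$ (a classical computation on its Hankel matrix), we may assume that $\cA=(u,\mu,v)$ is the minimal linear representation, so that $\mu(X^*) \subseteq \GL_d(K)$ is a finitely generated matrix semigroup given explicitly by the generators $\mu(x)$, $x \in X$. Invertibility of $\cA$ is inherited by the minimal representation.

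For part \ref{decidewfa:class}, I combine \cref{t:wfa-ambiguity} with \cref{p:wfa-necessary} to reformulate the two decision questions purely in terms of the semigroup $\mu(X^*)$: the WFA $\cA$ is equivalent to a finitely ambiguous WFA if and only if $\mu(X^*)$ is Bézivin, and equivalent to a polynomially ambiguous WFA if and only if $\mu(X^*)$ has finitely generated spectrum and $K$ is power-splitting for $\mu(X^*)$. Both properties are decidable for finitely generated matrix semigroups by \cref{p:decide-semigroup}\ref{decidesgrp:bezivin} and \ref{decidesgrp:fgspec} respectively.

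For part \ref{decidewfa:M}, suppose $\mu(X^*)$ has been certified Bézivin. The algorithm of \cref{p:decide-semigroup}\ref{decidesgrp:bezivin} returns not only the yes/no answer but also an explicit epimonomial decomposition $K^{d\times 1}=V_1\oplus\cdots\oplus V_r$ and coset representatives $A_1,\dots,A_n$ for $G/D$, where $G=\langle\mu(X^*)\rangle$ and $D$ is the diagonal. Feeding these data into the construction in the proof of \cref{cor:ambiguity-degree} produces an explicit equivalent \emph{monomial} linear representation $\widehat\cA$. Because $\widehat \cA$ is monomial it is finitely ambiguous, and its exact ambiguity degree can be computed from its underlying labelled directed graph by the effective procedure of Weber and Seidl \cite{weber-seidl91}.

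It remains to argue that the number produced is the true minimum $M_*$ over all equivalent WFAs. By \cref{p:wfa-necessary}\ref{wfa-necc:m-ambig}, if $\cA$ is equivalent to any $M$-ambiguous WFA, then $\behav{\cA}(w)\in\sumset{M}{\Gamma_0}$ for all $w\in X^*$ and some finitely generated $\Gamma\le K^\times$. Conversely, \cref{cor:ambiguity-degree} upgrades any such output restriction into an equivalent $M$-ambiguous WFA; moreover, the monomial $\widehat\cA$ built in the proof of that corollary has ambiguity at most $M$. Thus $M_*$ equals the minimum $M$ for which the outputs lie in $\sumset{M}{\Gamma_0}$ for some finitely generated $\Gamma$, and the ambiguity of $\widehat\cA$ realizes this minimum. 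The main obstacle is thus not conceptual but a matter of care in the bookkeeping: one must verify that the basis choices and coset data supplied by \cref{p:decide-semigroup} are precisely those needed to invoke the construction in \cref{cor:ambiguity-degree}, and that the appeal to \cite{weber-seidl91} truly yields an effective computation of the ambiguity degree (not merely a decision procedure) on a finitely ambiguous WFA.
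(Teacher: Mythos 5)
Your proposal is correct and follows the paper's own route almost exactly: minimize, reduce part \ref{decidewfa:class} via \cref{p:wfa-necessary} and \cref{p:nice-wfa} (equivalently \cref{t:wfa-ambiguity}) to the two semigroup properties decided by \cref{p:decide-semigroup}, and for part \ref{decidewfa:M} build the monomial automaton of \cref{cor:ambiguity-degree} from the data (diagonalized $Z_0$, coset representatives $A_0,\dots,A_n$) that the algorithm of \cref{p:decide-semigroup} already produces, then compute that automaton's ambiguity degree; your argument that this degree equals the true minimum (sandwiching it between the bound from \cref{p:wfa-necessary}\ref{wfa-necc:m-ambig} and the bound from \cref{cor:ambiguity-degree}) is exactly the justification the paper leaves implicit. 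The one place you diverge is the final computation: you invoke an effective procedure for the exact ambiguity degree of a finitely ambiguous automaton, attributed to \cite{weber-seidl91}, and you rightly flag that this needs checking (the paper cites Weber--Seidl only for the finite/polynomial/exponential classification; exact-degree computability is true but would need a separate justification, e.g.\ a product construction plus their finite bound). The paper avoids this entirely by exploiting monomiality: mapping each $\widehat\mu(w)$ to its underlying permutation matrix is a homomorphism onto a finite group, so one can enumerate all patterns of nonzero entries of $\widehat\mu(w)\widehat v$ and count their overlaps with the support of $\widehat u$, which gives the degree directly and keeps the argument self-contained. So there is no genuine gap, but the paper's last step is simpler and does not rest on an external exact-degree result.
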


\begin{proof}
  \begin{proofenumerate}
  \item[\ref{decidewfa:class}] 
  From a linear representation one can compute a minimal linear representation \cite[Chapter 2.3]{berstel-reutenauer11}.
  So without restriction assume that $\cA=(u,\mu,v)$ is minimal.
  By \cref{p:nice-wfa,p:wfa-necessary} it is now sufficient to decide whether $\mu(X^*)$ is Bézivin, respectively, whether $\mu(X^*)$ has finitely generated spectrum and $K$ is power-splitting for $\mu(X^*)$.
  These properties are decidable by \cref{p:decide-semigroup}.

  \item[\ref{decidewfa:M}]
  The construction described in \cref{cor:ambiguity-degree} produces an automaton $(u,\mu,v)$ of minimal ambiguity degree.
  Hence, it is a matter of computing its ambiguity.
  
  The matrices in $\mu(X^*)$ are all monomial matrices, so that any application of a matrix $\mu(x)$ permutes the entries in the vector and then possibly multiplies them by (different) weights.
  Thus, for every word $w \in X^*$ the vector $\mu(w) v$ will have the same number of nonzero entries as the vector $v$, but possibly in different places.

  The number of successful runs for $w$ is then simply the number of $i \in [d]$ for which the $i$-th entry of both $u$ and $\mu(w) v$ is nonzero.
  Deciding the degree of ambiguity therefore boils down to finding the possibly patterns of nonzero entries in $\mu(w) v$ as $w$ ranges through $X^*$.
  This is easily done, because mapping each $\mu(w)$ to the corresponding permutation matrix $\overline{\mu(w)}$ (replacing every nonzero entry by $1$) is a group homomorphism to a finite group, and so we can compute all the permutations $\{\, \overline{\mu(w)} : w \in X^* \,\}$. \qedhere
\end{proofenumerate}
\end{proof}

\begin{remark}
The algorithm also produces a polynomially ambiguous, respectively, finitely ambiguous automaton.
Since \cref{cor:ambiguity-degree} is constructive, we can also find a finitely ambiguous automaton of minimal ambiguity degree $M \ge 0$ (if there exists one).
Because of the monomial structure, the only ambiguity arises from the multiple initial and terminal states. 
The constructed automaton can therefore be easily transformed into a sum of $M$ deterministic automata.
(It is known that every finitely ambiguous automaton is a sum of unambiguous automata.)
\end{remark}

\begin{remark}[Nonefficiency] \label{rem:nonefficiency}
  The algorithms presented here are conceptually simple, but not efficient.
  For computing the linear Zariski closure of a semigroup, the only known upper bounds are \emph{double-exponential} in the dimension, as recently shown by Benalioua, Lhote, and Reynier \cite{benalioua-lhote-reynier24}.
  The linear Zariski closure can have \emph{super-exponentially} many irreducible components \cite[Remark 7]{bell-smertnig23b}, and so we cannot expect efficient decidability algorithms that first compute the entire (linear) Zariski closure.
  
  \begin{enumerate}
  \item In case we are willing to decide existence of a block-triangular representation with monomial diagonal blocks over a finite field extension (so that the power-splitting condition plays no role), checking virtual solvability, that is, deciding the Tits' alternative, is sufficient by \cref{c:intro-spec}.
  
  This algorithmic problem is well-studied and efficient and practical algorithms exist.
  For instance, Beals \cite{beals99,beals01} proved that, over a number field, virtual solvability is decidable in \emph{polynomial time}.
  Further algorithms can be found in the works of Detinko, Flannery, and O'Brien \cite{detinko-flannery-obrien11,detinko-flannery-obrien11}.
  For subgroups of $\GL_d(\Z)$, virtual solvability is equivalent to virtual polycyclicity, which can be decided by work of Baumslag, Cannonito, Robinson, and Segal \cite{baumslag-cannonito-robinson-segal91} and Ostheimer \cite{ostheimer99}.

  \item Very recently, Jecker, Mazowiecki, and Purser \cite{jecker-mazowiecki-purser24} showed (over $K=\Q$): if one is given a polynomially ambiguous WFA, then it is possible to decide whether it is equivalent to a deterministic or an unambiguous one in \emph{polynomial space} (\emph{PSPACE}).
  \end{enumerate}
\end{remark}

\section{Conclusion and Open Questions} \label{sec:conclusion}

In the present paper we comprehensively resolve the problem of characterizing, by arithmetic properties, the representability of a linear \emph{group} by monomial respectively block-triangular matrices with monomial diagonal blocks over any field.
For WFA \emph{with invertible transition matrices} we used this to relate ambiguity classes of rational series to arithmetic properties, and resolve the arising decidability questions.
In particular, it is possibly to decide whether an invertible WFA is equivalent to a finitely ambiguous, respectively, a polynomial ambiguous one.

\begin{enumerate}
  \item Given the applications to WFA, the most pressing issue is whether analogous results to the main theorem can be established for finitely generated semigroups $S \subseteq \End(V)$ of possibly non-invertible matrices, that is, to find sufficiently nice representations of Bézivin semigroups and semigroups with finitely generated spectrum.
  
  Not every such semigroup can have a monomial representation.
  For instance, if $\lambda \in \Q\setminus \{\pm 1\}$ the semigroup generated by
  \[
  \begin{pmatrix}
  \lambda & 0 \\
  0 & 1
  \end{pmatrix},
  \begin{pmatrix}
  0 & 1 \\
  0 & 1
  \end{pmatrix}
  \]
  has infinitely many idempotents $\begin{psmallmatrix} 0 & \lambda^n \\ 0 & 1 \end{psmallmatrix}$, which is impossible in an epimorphic image of a monomial semigroup.
  However, there are some positive results. 

  \begin{enumerate}
  \item In the case that all outputs of the WFA are contained in $\Gamma_0$ (Pólya property), it is known that the linear representation of the WFA factors through a \defit{semimonomial} representation \cite{bell-smertnig21}.
  Here the matrices have a block structure: each row of blocks contains at most one nonzero blocks, and each column of each block contains at most one nonzero entry.

  \item There are several proofs showing that finitely generated torsion matrix semigroups are finite, that is, Burnside--Schur extends to semigroups (see \cite{steinberg12} for many references).

  \item Suppose that $S$ contains an invertible steady $A$ that has $d$ distinct eigenvalues.
  Then applying \cref{c:term-bound} immediately shows that $S$ is finitely ambiguous, and so we get the following.
  \begin{corollary} \label{c:automaton-one-separating}
    Let $\cA=(u,\mu,v)$ be a $d$-dimensional linear representation of a WFA over the field $K$.
    If $\mu(\Sigma^*)$ is a Bézivin semigroup, and it contains at least one steady invertible matrix that has $d$ pairwise distinct eigenvalues, then a change of basis transforms $\cA$ into a finitely ambiguous linear representation.
  \end{corollary}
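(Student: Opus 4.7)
The plan is to deduce the corollary directly from \cref{c:term-bound} once we put the single distinguished steady matrix $A$ into diagonal form. First I would choose a basis of $V$ adapted to $A$: since $A$ is steady, all $d$ of its eigenvalues $\lambda_1,\dots,\lambda_d$ lie in $K$, and since they are pairwise distinct while $\dim V=d$, each eigenspace $V_i$ is one-dimensional and $V=V_1\oplus\cdots\oplus V_d$ is a full diagonalization of $A$ over $K$ (this is the required change of basis). After identifying $\GL(V)\cong\GL_d(K)$ via an eigenbasis $e_1,\dots,e_d$, the matrix $A$ becomes $\mathrm{diag}(\lambda_1,\dots,\lambda_d)$ and the block decomposition of an endomorphism $B$ with respect to $V_1\oplus\cdots\oplus V_d$ reduces to its individual scalar entries: $[B]_{ij}$ is simply $b_{ij}$ viewed as a map $V_j\to V_i$.

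Next I would apply \cref{c:term-bound} to the Bézivin semigroup $S=\mu(\Sigma^*)\subseteq\End(V)$ with the steady invertible element $A\in S\cap\GL(V)$. This yields a constant $C\in\Z_{\ge 0}$, depending only on $\cA$, such that for every $l\ge 0$ and every $B_1,\dots,B_l\in S$,
\[
\bigl|\{(i_0,\dots,i_l)\in[d]^{l+1}: [B_1]_{i_0i_1}[B_2]_{i_1i_2}\cdots [B_l]_{i_{l-1}i_l}\ne 0\}\bigr|\le C.
\]
Because the blocks $[B]_{ij}$ are the scalar matrix entries in the chosen basis, each tuple $(i_0,\dots,i_l)$ with nonzero product is exactly a directed walk $i_0\to i_1\to\cdots\to i_l$ in the underlying graph of $\cA$ whose transition weights along the word $w=x_1\cdots x_l$ (with $B_k=\mu(x_k)$) are all nonzero.

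Finally I would translate this into an ambiguity bound. For any word $w\in\Sigma^*$, a successful run is a sequence $(i_0,\dots,i_l)$ with $u_{i_0}\ne 0$, $v_{i_l}\ne 0$, and nonzero product of transition weights. In particular, every successful run contributes a tuple counted by the left-hand side above, so the number of successful runs for $w$ is bounded by $C$, independent of $l=|w|$. Hence $\cA$ (in the new basis) is $C$-ambiguous, which is the desired finite ambiguity. I do not foresee a genuine obstacle: the content of the corollary is essentially that the one-dimensionality of the eigenspaces, forced by the hypothesis of $d$ distinct eigenvalues, lets one identify the abstract block-product count of \cref{c:term-bound} with the concrete run count of the automaton, so \cref{c:term-bound} is strong enough to give the bound without any further work.
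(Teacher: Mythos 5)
Your proposal is correct and follows essentially the same route as the paper: the paper's justification is precisely that \cref{c:term-bound}, applied to the Bézivin semigroup $\mu(\Sigma^*)$ with the steady invertible element $A$, immediately bounds the number of successful runs once one passes to an eigenbasis of $A$, where the one-dimensionality of the eigenspaces (forced by the $d$ distinct eigenvalues in $K$) identifies block products with products of matrix entries. Your write-up just supplies the routine details that the paper leaves implicit.
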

  \end{enumerate}

  The algebraic structure of finitely generated matrix semigroups and their Zariski closure is well-understood based on work of Putcha and Okniński \cite{putcha88,okninski98,renner05}.
  In particular, if one considers the (linear) Zariski closure, there is a finite chain of semigroup ideals, the successive Rees quotients of which are either completely $0$-simple or nilpotent.
  This structure has already played a key role in the computation of the (linear) Zariski closure \cite{hrushovski-ouaknine-pouly-worrell18,hrushovski-ouaknine-pouly-worrell23,bell-smertnig23b}.
  While the semigroup case makes use of the group case, in these instances, it turned out that the step from groups to arbitrary semigroups is the most complex one.
  
  \item From the point of view of theoretical computer science, it would be interesting to establish which complexity classes the decidability problems arising in \cref{sec:decidability} actually fall into.
  This is particularly interesting because the algorithms based on the (linear) Zariski closure are necessarily quite inefficient, while more efficient algorithms are already known for certain subproblems and the decidability of virtual solvability (\cref{rem:nonefficiency}).
  
  Further, there is a trade-off between ambiguity and the number of states.
  From this point of view it is interesting to what degree our constructions are optimal (in minimizing the number of states in the given complexity class) and what the minimum number of states for a WFA in a given complexity class, as a function of the number of states of a minimal WFA, is.

  Little appears to be known here. 
  For instance, it is well-known that the maximal subgroups of $\GL_d(\Q)$ have cardinality $2^d d!$ for $d$ sufficiently large (and can be realized by signed permutation matrices).
  For $n$-generated subsemigroups of $\Q^{d \times d}$, the only known upper bounds appear to be double-exponential in $d$.
\end{enumerate}

\begin{adjustwidth}{-1cm}{-1cm}
\begin{singlespace}
  \printbibliography
\end{singlespace}
\end{adjustwidth}

\end{document}